\colorlet{lgray}{white!85!black}
 \colorlet{dgray}{white!45!black}
\colorlet{lred}{white!85!red}
\colorlet{dred}{white!35!red}
\colorlet{lgreen}{white!60!green}
\colorlet{dgreen}{black!30!green}
\colorlet{lpurple}{white!60!purple}
\colorlet{lblue}{white!60!blue}
\definecolor{green}{rgb}{0.1,0.8,0.1}
\definecolor{yellow}{rgb}{1.0,0.85,0.25}
\definecolor{purple}{rgb}{1.0, 0, 1.0}
\definecolor{blue}{rgb}{0, 0, 1.0}
\tikzstyle{unfused}=[line width=1.5pt, draw=lgray, arrows={-Stealth[scale=0.6, length=10, width=10, dgray]}]
\tikzstyle{unfused*}=[lgray, line width=1.5pt]
\tikzstyle{fused}=[lgray, line width=4pt, arrows={-Stealth[scale=1.1,length=10, width=10,dgray]}]
\tikzstyle{fused*}=[lgray, line width=4pt]
\tikzstyle{cont}=[lred, line width=4pt, arrows={-Stealth[scale=1.1,length=10,width=10,dred]}]
\tikzstyle{dual}=[black, line width=1pt, dashed]
\tikzstyle{lightdual}=[black, line width=0.5pt, dashed]
\tikzstyle{cut}=[black, line width=1.0pt]
 \renewcommand{\tikz}[2]{
\begin{tikzpicture}[scale=#1,baseline=(current bounding box.center),>=stealth]
#2
\end{tikzpicture}}
\newcommand{\tikzbase}[3]{
\begin{tikzpicture}[scale=#1,baseline={([yshift=#2]current bounding box.center)},>=stealth]
#3
\end{tikzpicture}}
\newcommand\restr[2]{{
  \left.\kern-\nulldelimiterspace 
  #1 
  \vphantom{\big|} 
  \right|_{#2} 
 }}
 \DeclarePairedDelimiter\floor{\lfloor}{\rfloor}
\newtheorem{prop}{Proposition}[section]
\newtheorem{theo}[prop]{Theorem}
\newtheorem{conj}[prop]{Conjecture}
\newtheorem{lem}[prop]{Lemma}
\newtheorem{cor}[prop]{Corollary}
\newtheorem{ass}[prop]{Assumption}
\newtheorem{rem}[prop]{Remark}
\numberwithin{equation}{section}
\newcommand{\be}{\begin{equation*}}
\newcommand{\ee}{\end{equation*}}
\renewcommand{\(}{\left (}
\renewcommand{\)}{\right )}
\newcommand{\1}{\mathbbm 1}
\renewcommand{\i}{\mathbf i}
\DeclareMathOperator{\Beta}{Beta}
\DeclareMathOperator{\sign}{sign}
\DeclareMathOperator{\Row}{Row}
\DeclareMathOperator{\Col}{Col}
\DeclareMathOperator{\Diag}{Diag}
\newcommand{\ve}{\varepsilon}
\newcommand{\bA}{\bm A}
\newcommand{\bB}{\bm B}
\newcommand{\bC}{\bm C}
\newcommand{\bD}{\bm D}
\newcommand{\bI}{\bm I}
\newcommand{\bJ}{\bm J}
\newcommand{\bK}{\bm K}
\newcommand{\bL}{\bm L}
\newcommand{\bR}{\bm R}
\newcommand{\bP}{\bm P}
\newcommand{\bX}{\bm X}
\newcommand{\bY}{\bm Y}
\newcommand{\bZ}{\bm Z}
\newcommand{\e}{\mathbf e}
\newcommand{\bw}{\mathbf w}
\newcommand{\bc}{\mathbf c}
\newcommand{\bx}{\mathbf x}
\newcommand{\by}{\mathbf y}
\newcommand{\E}{\mathbb E}
\renewcommand{\P}{\mathbb P}
\newcommand{\A}{\mathcal A}
\newcommand{\B}{\mathcal B}
\newcommand{\W}{\mathcal W}
\newcommand{\K}{\mathcal K}
\newcommand{\D}{\mathcal D}
\newcommand{\Z}{\mathfrak Z}
\newcommand{\tv}{\widetilde{v}}
\newcommand{\tz}{\widetilde{z}}
\let\Re\relax
\DeclareMathOperator{\Re}{Re}
\let\Im\relax
\DeclareMathOperator{\Im}{Im}
\DeclareMathOperator{\Ai}{Ai}
\newcommand{\Q}{\mathcal Q}
\newcommand{\C}{\mathcal C}
\renewcommand{\L}{\mathcal L}
\newcommand{\cH}{\mathcal H}
\renewcommand{\S}{{\sf S}}
\newcommand{\NN}{{\sf N}}
\newcommand{\MM}{{\sf M}}
\newcommand{\LL}{{\sf L}}
\begin{document}

\title{Hidden diagonal integrability of \MakeLowercase{q}-Hahn vertex model and Beta polymer model}

\author{Sergei Korotkikh}

\maketitle

\begin{abstract}
We study a new integrable probabilistic system, defined in terms of a stochastic colored vertex model on a square lattice. The main distinctive feature of our model is a new family of parameters attached to diagonals rather than to rows or columns, like in other similar models. Because of these new parameters the previously known results about vertex models cannot be directly applied, but nevertheless the integrability remains, and we prove explicit integral expressions for $q$-deformed moments of the (colored) height functions of the model. Following known techniques our model can be interpreted as a $q$-discretization of the Beta polymer model from \cite{BC15b} with a new family of parameters, also attached to diagonals. To demonstrate how integrability with respect to the new diagonal parameters works, we extend the known results about Tracy-Widom large-scale fluctuations of the Beta polymer model.
\end{abstract}

\tableofcontents

\section{Introduction}\label{introSect}

\subsection{Overview} During the last decades a new conjectural universality law has been actively researched under the name \emph{KPZ universality}, originating from \cite{KPZ86}. Unfortunately, despite the significant breakthroughs made in recent years we are still unable to prove or sometimes even formulate the desired universality statements. However, for some exceptional models one can directly verify the large-scale properties expected from the KPZ class models by finding explicit expressions describing the precise behavior of natural observables. Such expressions come from intricate algebraic or combinatorial properties, usually referred to as \emph{integrability}, and various recent examples of large-scale analysis of integrable models can be found in \cite{TW07}, \cite{TW08a}, \cite{TW08b}, \cite{ACQ10}, \cite{BCF12}, \cite{FV13},  \cite{BCG14}, \cite{CSS14}, \cite{OO14}, \cite{BC15a}, \cite{BC15b}, \cite{BR19}, \cite{BCD20} and references therein. 

We focus on two integrable models, namely, the \emph{$q$-Hahn vertex model} and the \emph{directed Beta polymer model}. The former model was introduced in \cite{Pov13} (see also  \cite{Cor14}, \cite{CP15} for alternative descriptions) and it belongs to the family of stochastic solvable vertex models, which are of a particular interest for the following twofold reason. On one hand, vertex models have a rich algebraic integrability structure coming from the celebrated Yang-Baxter equation for $\mathcal R$-matrix of the underlying quantum groups. In particular, this allows to find explicit integral expressions for $q$-deformed moments of naturally defined \emph{height functions}, \emph{cf} \cite{BCG14}, \cite{BP16}, \cite{BW20}, \cite{BK20}. On the other hand, the stochastic vertex models can be degenerated to numerous other integrable models, including ASEP, $q$-Hahn and $q$-boson particle systems, and the mentioned directed Beta polymer model, \emph{cf.} \cite{CP15}, \cite{BP16}, \cite{BGW19}. During this transition the explicit integral expressions for the vertex models reduce to expressions describing natural observables of the other models, which allow to explicitly study various properties. In this way integrability of a number of probabilistic systems can be inferred from the Yang-Baxter integrability of the vertex models. 

The directed Beta polymer was originally introduced in \cite{BC15b} using two equivalent descriptions: it can either be viewed as a polymer model with Beta distributed weights or as a random walk in a random time-dependent Beta environment. This dual description already distinguished the Beta polymer model among the other integrable discrete polymers, namely, log-Gamma \cite{Sep09}, strict-weak \cite{CSS14}, \cite{OO14} and inverse-Beta \cite{TLD15} polymers, which cannot be readily interpreted as random walks in random environments (RWRE).  Moreover, there is a whole cluster of integrable models obtainable as degenerations of the Beta polymer, this cluster includes strict-weak polymer, uniform sticky Brownian motions \cite{LJL03}, \cite{HW06}, \cite{BR19}, Bernoulli-Exponential directed first passage percolation \cite{BC15b} as well as other percolation models. Going in the other direction, the Beta polymer itself can be obtained as a $q\to 1$ limit of the $q$-Hahn vertex model, and this connection is closely related to the approach used originally in \cite{BC15b} to obtain integrability and asymptotics.\footnote{The $q$-Hahn vertex model is closely related to the $q$-Hahn TASEP, which was originally used to establish the integrability of the Beta polymer model.}  Overall, these numerous connections  to other models give rise to a lot of interesting properties and conjectures, making Beta polymer a popular object in recent research, \emph{cf.} \cite{LDT17}, \cite{BRS18}, \cite{BLD19}, \cite{BR19}.

The present work is  devoted to a new layer of integrability of the $q$-Hahn vertex model, which has remained unnoticed until now. Namely, we find a new family of parameters, which are attached to the diagonals of the square lattice defining the model and which can be added to the model without breaking the integrability. To capture the most general situation containing the new parameters we introduce  \emph{diagonally inhomogeneous colored $q$-Hahn vertex model}, and the first main result of this work is the proof of integral expressions for the $q$-deformed moments of \emph{the colored height functions} of this model. Such explicit expressions justify our claims about integrability and encapsulate the behavior of the model in a form suitable for subsequent analysis. As a direct application, we demonstrate that the shift-invariance property from \cite{BGW19}, \cite{Gal20} holds for our model, indicating that a broad class of symmetries is compatible with the additional parameters.

Existence of such integrable parameters attached to diagonals was surprising to us, and it is so far unique for the solvable vertex models: integrability is usually justified by the Yang-Baxter equation, which is well-suited only for the parameters attached to rows and columns of the model, but other parameters are typically incompatible with the Yang-Baxter equation. Accordingly, all other known solvable vertex models only have parameters attached to rows or columns. The $q$-Hahn model itself was known to have families of natural integrable column and row parameters coming from fusion of the six-vertex model, however, that construction did not indicate existence of the diagonal parameters in any way. Moreover, to the best of our knowledge the new diagonally inhomogeneous $q$-Hahn model cannot be obtained from any other model via known procedures. We believe that this is one of the reasons why the integrability with respect to diagonals has been hidden. 

As an immediate consequence of our first result, we are able to degenerate the diagonal integrability to the Beta polymer model, finding an integrable inhomogeneous extension containing three families of parameters, attached to columns, rows and diagonals. Previously, only Beta polymer models inhomogeneous along two directions at most were known to be integrable.
Moreover, to the best of our knowledge, this is the first integrable example of a directed polymer model inhomogeneous in three directions (there exist several related models with two families of parameters, studied in \cite{COSZ11}, \cite{TV18}, \cite{BCD20}). We expect that a lot of currently known properties of the Beta polymer can be extended to our inhomogeneous version, moreover, some new properties might emerge based on the new extension. To reinforce this claim, as the second main result of this work we find the Tracy-Widom asymptotics for the inhomogeneous model, extending one of the original results about the Beta polymer from \cite{BC15b}.

Below we briefly describe the models and our results.

\subsection{Main results: integral expressions for the diagonally inhomogeneous $q$-Hahn model} Let $q\in (0,1)$ and $(\mu_0, \mu_1, \mu_2, \dots)$, $(\kappa_1, \kappa_2, \dots)$, $(\lambda_1,\lambda_2,\dots)$ be real parameters satisfying
\be
\lambda_d<\kappa_j<\mu_i,\qquad i\in\mathbb Z_{\geq 0},\  j\in\mathbb{Z}_{>0},\ d\in\mathbb Z_{\geq 0}.
\ee
The \emph{colored diagonally inhomogeneous $q$-Hahn vertex model} consists of a random ensemble of colored up-right paths in a positive quadrant $\mathbb Z_{\geq 0}^2$, with colors labeled by positive integers. The paths are sampled according to the following procedure:
\begin{itemize}
\item All paths start from the left boundary of the model $\{0\}\times\mathbb Z_{\geq 1}$, with $b_j$ paths of color $j$ starting at $(0,j)$. The numbers $b_j$ are independent and distributed according to
\be 
\mathbb P(b_j=b)=(\kappa_j/\mu_0)^b\frac{(\lambda_j/\kappa_j;q)_b}{(q;q)_b}\frac{(\kappa_j/\mu_0;q)_\infty}{(\lambda_j/\mu_0;q)_\infty}, \qquad b\in\mathbb Z_{\geq 0}.
\ee
\item The remaining configuration is sampled sequentially and independently around each integer point $(i,j)$, starting from the bottom and going along the rows. At each integer point all paths coming from the left turn upwards, while the paths coming from below might turn right following the probabilities
\be
\mathbb P\left(\tikzbase{0.5}{-0.56ex}{
	\draw[lgray, line width=3pt] (-1,0) -- (1,0);
	\draw[lgray, line width=3pt] (0,-1) -- (0,1);
	\node[left] at (-1,0) {\tiny $\bB$};\node[right] at (1,0) {\tiny $\bD$};
	\node[below] at (0,-1) {\tiny $\bA$};\node[above] at (0,1) {\tiny $\bA+\bB-\bD$};
} \ \Bigg|\  \bA,\bB\right)=\ (\kappa_j/\mu_i)^{|\bD|}\frac{(\kappa_j/\mu_i;q)_{|\bA|-|\bD|}(\lambda_{j-i}/\kappa_j;q)_{|\bD|}}{(\lambda_{j-i}/\mu_i;q)_{|\bA|}}q^{\sum_{i<j}D_i(A_j-D_j)}\prod_{i=1}^n\binom{A_i}{D_i}_q,
\ee
where $\bA=(A_1, A_2, \dots)$ is a finite integer sequence with $A_c$ equal to the number of paths of color $c$ entering from below, $\bB,\bC,\bD$ similarly encode the colors of paths along the left, top and the right edges respectively, and we set $|\bA|=A_1+A_2+\dots, |\bD|=D_1+D_2+\dots$.
\end{itemize}
The name for the model comes from the probabilities above: one can notice that, for a specific choice of parameters, the probabilities resemble the orthogonality weights of the $q$-Hahn polynomials. On the other hand, the probabilities depend on the parameters $\mu_i,\kappa_j,\lambda_{j-i}$ attached to columns $i=const$, rows $j=const$ or diagonals $j-i=const$ respectively, hence we call the model inhomogeneous, see Figure \ref{qHahnFigureIntro} for a schematic description of the parameters. Finally, the diagonal parameters make our model unique among the other vertex models, hence inhomogeneity in this direction is emphasized. 

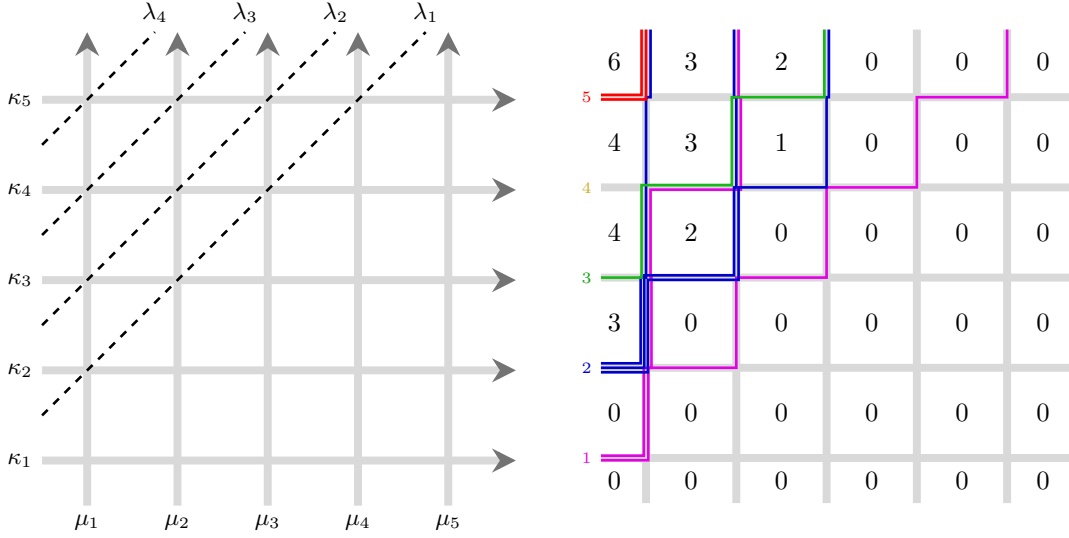
\begin{figure*}
\centerline{
\tikz{0.6}{
	\foreach\y in {1,...,5}{
		\draw[lgray, line width=3pt, arrows={-Stealth[scale=0.9,length=10,width=10,dgray]}] (1,2*\y) -- (11.5,2*\y);
	}
	\foreach\x in {1,...,5}{
		\draw[lgray, line width=3pt, arrows={-Stealth[scale=0.9,length=10,width=10,dgray]}] (2*\x,1) -- (2*\x,11.5);
	}
	\draw[black, line width = 1pt, dashed] (1,3) -- (9.5, 11.5) node[above] {\small $\lambda_1$};
	\draw[black, line width = 1pt, dashed] (1,5) -- (7.5, 11.5) node[above] {\small $\lambda_2$};
	\draw[black, line width = 1pt, dashed] (1,7) -- (5.5, 11.5) node[above] {\small $\lambda_3$};
	\draw[black, line width = 1pt, dashed] (1,9) -- (3.5, 11.5) node[above] {\small $\lambda_4$};
	\node[left] at (1,2) {\small $\kappa_1$};
	\node[left] at (1,4) {\small $\kappa_2$};
	\node[left] at (1,6) {\small $\kappa_3$};
	\node[left] at (1,8) {\small $\kappa_4$};
	\node[left] at (1,10) {\small $\kappa_5$};
	\node[below] at (10,1) {\small$\mu_5$};
	\node[below] at (8,1) {\small$\mu_4$};
	\node[below] at (6,1) {\small$\mu_3$};
	\node[below] at (4,1) {\small$\mu_2$};
	\node[below] at (2,1) {\small$\mu_1$};
}\qquad
\begin{tikzpicture}[xscale=0.6, yscale=0.6, baseline={([yshift=0]current bounding box.center)}]
	\foreach\y in {1,...,5}{
		\draw[lgray, line width=3pt] (1,2*\y) -- (11.5,2*\y);
	}
	\foreach\x in {1,...,5}{
		\draw[lgray, line width=3pt] (2*\x,1) -- (2*\x,11.5);
	}	
	\draw[purple!90!black, line width = 1pt] (1,1.95) -- (2.05,1.95) --  (2.05, 4) -- (4, 4) -- (4, 6) -- (6, 6) -- (6, 8) -- (8, 8) -- (8,10) -- (10, 10) -- (10, 11.5);
	\draw[purple!90!black, line width = 1pt] (1,2.05) -- (1.95, 2.05) -- (1.95, 4) -- (2.12, 4) -- (2.12, 6) -- (2.1, 6) -- (2.1, 7.95) -- (4.1,7.95) -- (4.1, 10) -- (4.05, 10) -- (4.05,11.5);
	
	\draw[blue!80!black, line width = 1pt] (1, 3.9) -- (2.04, 3.9) -- (2.04, 5.95) -- (4.05, 5.95) -- (4.05, 8) -- (6, 8) -- (6, 10) -- (6.05, 10) -- (6.05, 11.5);
	\draw[blue!80!black, line width = 1pt] (1, 4) -- (1.96, 4) -- (1.96, 6.05) -- (3.95, 6.05) -- (3.95, 8) -- (4, 8) -- (4, 10) -- (3.95, 10) -- (3.95, 11.5);
	\draw[blue!80!black, line width = 1pt] (1, 4.1) -- (1.88, 4.1) -- (1.88, 6) -- (2, 6) -- (2, 10) -- (2.1, 10) -- (2.1, 11.5);
	
	\draw[green!90!black, line width = 1pt] (1, 6) -- (1.9, 6) -- (1.9, 8.05) -- (3.9, 8.05) -- (3.9, 10) -- (5.95, 10) -- (5.95, 11.5);
	
	\draw[red, line width = 1pt] (1, 9.95) -- (2, 9.95) -- (2, 11.5);
	\draw[red, line width = 1pt] (1, 10.05) -- (1.9, 10.05) -- (1.9, 11.5);
	
	\node[left, purple!90!black] at (1,2) {\tiny $1$};
	\node[left, blue!80!black] at (1,4) {\tiny $2$};
	\node[left, green!80!black] at (1,6) {\tiny $3$};
	\node[left, yellow!80!black] at (1,8) {\tiny $4$};
	\node[left, red] at (1,10) {\tiny $5$};
	
	\node[black] at (1.3, 1.5) {$0$};
	\node[black] at (1.3, 3) {$0$};
	\node[black] at (1.3, 5) {$3$};
	\node[black] at (1.3, 7) {$4$};
	\node[black] at (1.3, 9) {$4$};
	\node[black] at (1.3, 10.8) {$6$};
	
	\node[black] at (3, 1.5) {$0$};
	\node[black] at (3, 3) {$0$};
	\node[black] at (3, 5) {$0$};
	\node[black] at (3, 7) {$2$};
	\node[black] at (3, 9) {$3$};
	\node[black] at (3, 10.8) {$3$};

	\node[black] at (5, 1.5) {$0$};
	\node[black] at (5, 3) {$0$};
	\node[black] at (5, 5) {$0$};
	\node[black] at (5, 7) {$0$};
	\node[black] at (5, 9) {$1$};
	\node[black] at (5, 10.8) {$2$};

	\node[black] at (7, 1.5) {$0$};
	\node[black] at (7, 3) {$0$};
	\node[black] at (7, 5) {$0$};
	\node[black] at (7, 7) {$0$};
	\node[black] at (7, 9) {$0$};
	\node[black] at (7, 10.8) {$0$};
	
	\node[black] at (9, 1.5) {$0$};
	\node[black] at (9, 3) {$0$};
	\node[black] at (9, 5) {$0$};
	\node[black] at (9, 7) {$0$};
	\node[black] at (9, 9) {$0$};
	\node[black] at (9, 10.8) {$0$};
	
	\node[black] at (10.8, 1.5) {$0$};
	\node[black] at (10.8, 3) {$0$};
	\node[black] at (10.8, 5) {$0$};
	\node[black] at (10.8, 7) {$0$};
	\node[black] at (10.8, 9) {$0$};
	\node[black] at (10.8, 10.8) {$0$};
	
\end{tikzpicture}}
\caption{\label{qHahnFigureIntro} Left: parameters of the $q$-Hahn model. Right: a configuration of the model along with the values of the height function $h_{\geq 2}$ for this configuration.}
\end{figure*}

 The configurations of our model can be described in terms of \emph{colored height functions} $h_{\geq c}^{(x,y)}$, which are assigned to the facets $(x,y)\in\(\mathbb Z+\frac{1}{2}\)^2$ formed by rows and columns and count the number of paths of color $\geq c$ passing below the facet, as depicted on Figure \ref{qHahnFigureIntro}. We consider the following observable: for collections 
 \be
x_1 \leq x_2\leq\dots\leq x_k,\qquad y_1\geq y_2\geq\dots\geq y_k, \qquad\qquad x_a,y_a\in\mathbb Z_{\geq 0}+\frac{1}{2};
\ee
\be
c_1\leq c_2\leq\dots\leq c_k,\qquad c_a\in\mathbb Z_{\geq 0};
\ee
and a permutation $\tau\in S_k$ set $\tau.\bc=(c_{\tau^{-1}(1)}, c_{\tau^{-1}(2)}, \dots, c_{\tau^{-1}(k)})$ and
\be
\Q_{\geq \tau.\bc}^{(\bx,\by)}:=\prod_{a=1}^kq^{h^{(x_{\tau(a)},y_{\tau(a)})}_{\geq c_{a}}}.
\ee
\begin{theo}[Theorem \ref{qHahnResultTheo} in the text] \label{theoFirstIntro}With the notation above we have
\begin{multline}\label{introIntegral}
\E\left[\Q^{\bx,\by}_{\geq\tau.\bc}(\Sigma)\right]=\frac{(-1)^kq^{\frac{k(k-1)}{2}-l(\tau)}}{(2\pi\i)^k}\oint_{\Gamma_1}\cdots\oint_{\Gamma_k}\prod_{a<b}\frac{w_b-w_a}{w_b-qw_a}\ T_\tau\(\prod_{a=1}^k\prod_{i=1}^{c_a-1}\frac{1-\lambda_iw_a}{1-\kappa_iw_a}\)\\
\times \prod_{a=1}^k\(\prod_{i=0}^{i<x_a}\frac{1}{1-\mu_iw_a}\prod_{j=1}^{j<y_a}(1-\kappa_jw_a)\prod_{d=1}^{d\leq y_a-x_a}\frac{1}{1-\lambda_dw_a}\)\frac{dw_a}{w_a},
\end{multline}
where the positively oriented contours $\Gamma_b$ are chosen to enclose $\mu_i^{-1}$ and $q\Gamma_a$ for $a<b$, with other singularities being outside of the contours, and $T_{\tau}$ denotes the polynomial representation of the Hecke algebra generated by the Demazure-Lusztig operators
\be
T_i=q+\frac{w_{i+1}-qw_{i}}{w_{i+1}-w_{i}}(\mathfrak{s}_i-1),\qquad \mathfrak{s}_if(\dots, w_i, w_{i+1}, \dots)=f(\dots, w_{i+1}, w_{i}, \dots).
\ee
\end{theo}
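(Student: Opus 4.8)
The plan is to establish \eqref{introIntegral} by showing that both sides satisfy the same recursion in the geometric data $(\bx,\by)$, with the same boundary data, and then to invoke uniqueness of the solution. Because the observation points are weakly ordered ($x_1\le\cdots\le x_k$ and $y_1\ge\cdots\ge y_k$), the random variable $\Q^{(\bx,\by)}_{\geq\tau.\bc}$ is measurable with respect to a nested staircase region of the lattice, and I would first rewrite its expectation as the pairing of the explicit boundary state (the generating vector of the independent counts $b_j$) with a column-by-column product of transfer operators, into which the factors $q^{h_{\geq c}}$ are inserted between consecutive columns; in this picture $\mu_i$ sits inside the $i$-th column operator while $\kappa_j$ and the running diagonal parameter $\lambda_{j-i}$ enter vertex by vertex. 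The goal is then to push the inserted observables through the transfer operators until everything collapses onto the boundary state, producing the claimed nested contour integral.

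The heart of the argument is a local commutation identity for the $q$-Hahn vertex weights: moving one inserted observable $q^{h_{\geq c}}$ across a single column (together with the adjacent vertex on the relevant diagonal) multiplies by exactly one of the scalar factors $(1-\mu_iw)^{-1}$, $(1-\kappa_jw)$, $(1-\lambda_dw)^{\pm1}$ appearing in \eqref{introIntegral}, while two observables that meet at the same site or colour produce a two-body correction equal to the Demazure--Lusztig operator $T_i$. In every previously studied vertex model the analogue of this commutation is a consequence of fusion and the Yang--Baxter equation, which is insensitive to data attached to diagonals; with the parameters $\lambda_{j-i}$ present no such equation is available, so I expect the identity to be checked directly, reducing after cancellation of $q$-Pochhammer symbols to a single basic hypergeometric summation of $q$-Chu--Vandermonde / $q$-Gauss type that --- and this is the ``hidden'' phenomenon --- continues to hold with the extra diagonal parameter in place. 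Iterating the identity column by column and row by row then yields the recursion, and the Hecke relations $(T_i-q)(T_i+1)=0$ and $T_iT_{i+1}T_i=T_{i+1}T_iT_{i+1}$ drop out of the colour-merging symmetry of the coloured weights, so that the general permutation $\tau$ is handled by its action $T_\tau$.

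On the right-hand side the same structure is verified by contour calculus. At the boundary --- for instance when some $y_a=\tfrac12$, so that $q^{h_{\geq c}}\equiv1$ and the rank drops, and ultimately when all observation points lie on the left edge --- one shrinks the contours $\Gamma_a$ onto the poles at $w_a\in\{\kappa_1^{-1},\kappa_2^{-1},\dots\}$, and summing the residues together with the $q$-binomial theorem reproduces the generating function $\sum_{b\ge0}\P(b_j=b)\,q^{(\cdots)b}$ of the boundary weights. For the inductive step, replacing $x_a\mapsto x_a+1$ multiplies the integrand by $(1-\mu_{x_a}w_a)^{-1}$ and drops one factor from the diagonal product, and since $\Gamma_a$ is prescribed to enclose every $\mu_i^{-1}$ already, the contour is left unchanged and no spurious residue is created; the upward shift $y_a\mapsto y_a+1$ is analogous with the factor $(1-\kappa_{y_a}w_a)$. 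Finally $\prod_{a<b}\frac{w_b-w_a}{w_b-qw_a}$ is precisely the intertwiner with respect to which the operators $T_i$ are compatible with the interchange $w_i\leftrightarrow w_{i+1}$ inside the contours (here one uses that $\Gamma_b$ encloses $q\Gamma_a$ for $a<b$), so the $\tau$-dependence of the two sides matches; uniqueness then gives \eqref{introIntegral}.

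The step I expect to be the main obstacle is establishing the local commutation identity in the presence of the diagonal parameter $\lambda_{j-i}$ --- precisely the ingredient that Yang--Baxter cannot supply --- and then checking that these local identities remain mutually consistent as the diagonal index $j-i$ changes from one vertex to the next along a column. By contrast, the operator bookkeeping of the first step and the residue and Hecke-algebra combinatorics of the third run closely parallel to the existing coloured vertex model literature.
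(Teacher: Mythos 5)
Your overall skeleton --- show that both sides of \eqref{introIntegral} satisfy the same recursion in $(\bx,\by)$, match the boundary data, and conclude by uniqueness --- is exactly the paper's strategy, and your treatment of the base cases (shrinking contours onto poles and using the $q$-binomial theorem to reproduce the generating function of the boundary counts $b_j$, plus a rank drop when an observation point degenerates) corresponds to Propositions \ref{integralReduction} and \ref{initialCondition}. But there is a genuine gap at the key step, and you have also mislocated the difficulty. The recursion satisfied by the left-hand side is not a commutation that ``multiplies by exactly one scalar factor'': decreasing the $y$-coordinates of a cluster of $r$ coincident observation points produces a \emph{sum} over $p\in[0,r]$ and over ordered permutations $\rho\in S^{p|r-p}$, with coefficients $(\kappa/\mu)^p\frac{(\kappa/\mu;q)_{r-p}(\lambda/\kappa;q)_p}{(\lambda/\mu;q)_r}q^{l(\rho)}$, of expectations at shifted points and permuted colours (equation \eqref{nonlocalQ}). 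For the induction to close you must prove that the \emph{integrand} on the right-hand side satisfies the identical recursion, i.e.\ that $\prod_{a}\frac{1-\kappa w_a}{1-\lambda w_a}$ equals the corresponding sum of Hecke operators $T_{\rho^{-1}}$ applied to $\prod_{a\le p}\frac{1-\mu w_a}{1-\lambda w_a}$ with the same coefficients. This is Proposition \ref{localRelationRat}, a nontrivial identity of rational functions in which the diagonal parameter genuinely enters, and your proposal contains no route to it: your description of the right-hand side step (adding a factor $(1-\mu_{x_a}w_a)^{-1}$ and noting that no spurious residue is created) only checks that the contours need not move, which is not the recursion.

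Second, your premise that no Yang--Baxter equation is available in the presence of the diagonal parameters is the opposite of how the argument actually runs. The one-vertex probabilistic relation (Proposition \ref{localRelationAlg} and Corollary \ref{localRelationSub}) is proved from the \emph{ordinary} Yang--Baxter equation \eqref{WYB} for the $W_{t,s}$ weights via an analytic continuation of one line; the diagonal parameter enters only through the specialization $t^2=\lambda_{j-i}/\kappa_j$, $s^2=\lambda_{j-i}/\mu_i$ at each vertex. The rational-function relation --- the genuinely new ingredient --- is proved from the \emph{deformed} Yang--Baxter equation \eqref{defhsYB} of \cite{BK21}, whose extra degree of freedom $\eta$ is precisely what accommodates the diagonals. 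A direct $q$-Chu--Vandermonde verification of the one-vertex probabilistic identity is plausible (in the colourless case it is essentially such a summation, and the coloured case adds only combinatorial bookkeeping), but even granting it, without an argument for Proposition \ref{localRelationRat} the two sides of \eqref{introIntegral} are not shown to obey the same recursion and the proof does not close.
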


The proof is based on the idea which has already appeared  in \cite{BK20}: we show that both sides of \eqref{introIntegral} satisfy the same discrete recurrence relations, called \emph{local relations}. However, the required local relations and our way of proving them are novel: we use Yang-Baxter equations for this purpose, directly connecting the existence of local relations with the integrability of vertex models. Surprisingly, one of the needed equations is the recently discovered \emph{deformed Yang-Baxter equation} from \cite{BK21}, which has appeared in the context of spin $q$-Whittaker functions.

Theorem \ref{theoFirstIntro} and the recurrence relations based on the Yang-Baxter equations indicate that our model is integrable, however, they do not clearly explain where the integrability and the diagonal parameters come from.  To partially remedy this, let us briefly outline a more constructive approach to Theorem \ref{theoFirstIntro}, which however does not work in full generality. The main idea comes from \cite{BP16}, \cite{BW20}: By definition, the average of an observable $\mathcal O$ can be written as
\be
\E\left[\mathcal O\right]=\sum_{\lambda} \mathcal O(\lambda) \mathbb P[\lambda].
\ee
It turns out that for certain observables $\mathcal O$ similar to $\Q_{\geq \tau.\bc}^{(\bx,\by)}$ one can relate both $\mathcal O(\lambda)$ and $\mathbb P[\lambda]$ to spin deformations of Hall-Littlewood or $q$-Whittaker symmetric functions, which are constructed using higher spin six-vertex model (\emph{cf.} \cite{Bor14}, \cite{BP16}, \cite{BW18}) or $q$-Hahn model (\emph{cf.} \cite{BW17}, \cite{MP20}, \cite{BK21}). In this situation to compute  $\E\left[\mathcal O\right]$ one can first find a suitable Cauchy-type summation identity, which identifies the sum above with a single evaluation of a higher-spin symmetric function, and then find an integral expression for the latter.  

For this constructive approach to work, one needs Cauchy-type summation identities and integral expressions for higher-spin functions, which in the context of vertex models are usually found with a ``zipper"-argument based on a suitable Yang-Baxter equation. Unfortunately, we do not know how to do it for our model in full generality. However we can find suitable Yang-Baxter equations in two situations: In the previously known case when all parameters $\lambda_d$ attached to diagonals are equal this was done in \cite{BP16}, \cite{BW20} using Yang-Baxter equations coming from the colored six vertex-model. In the other case, when all column parameters $\mu_i$ are equal and we ignore the colors of the model, the integral expression can be constructed using the deformed Yang-Baxter equation from \cite{BK21}, which has an additional degree of freedom giving rise to the diagonal parameters. The exact form of our model and Theorem \ref{theoFirstIntro} were initially heuristically conjectured by taking superposition of these two cases.

\subsection{Main results: limit theorem for inhomogeneous Beta polymer} Consider a lattice on the plane formed by the edges $(i,j)\to(i+1,j+1)$ and $(i,j)\to(i,j+1)$ for $(i,j)\in\mathbb Z_{\geq0}^2$ such that $j\geq i$. Let $\widetilde{\sigma}_i,\widetilde{\rho}_j,\widetilde{\omega}_d$ be a family of real parameters attached to columns $i=const$, rows $j=const$ and diagonals $j-i=const$ respectively and satisfying 
\be
\widetilde{\omega}_d<\widetilde{\rho}_j<\widetilde{\sigma}_i,\qquad i\in\mathbb Z_{\geq 0},\  j\in\mathbb{Z}_{>0},\ d\in\mathbb Z_{> 0}.
\ee
To each edge we assign a weight in the following way: for each integer point $(i,j)$ we first independently sample a random variable $\eta_{i,j}\sim\Beta(\widetilde{\sigma}_i-\widetilde{\rho}_j, \widetilde{\rho}_j-\widetilde{\omega}_{j-i})$ using the Beta distribution
$$
f_{\eta_{i,j}}(x)=\1_{0< x< 1}\ x^{\widetilde{\sigma}_i-\widetilde{\rho}_j-1} (1-x)^{\widetilde{\rho}_j-\widetilde{\omega}_{j-i}-1}\frac{\Gamma(\widetilde{\sigma}_i-\widetilde{\omega}_{j-i})}{\Gamma(\widetilde{\sigma}_i-\widetilde{\rho}_j)\Gamma(\widetilde{\rho}_j-\widetilde{\omega}_{j-i})},
$$
and then we set
\be
w(e)=\begin{cases} \eta_{i,j}\quad &\text{if}\ e=(i,j-1)\to(i,j),\\ 
1-\eta_{i,j}\quad&\text{if}\ e=(i-1,j-1)\to(i,j).
\end{cases}
\ee
The \emph{delayed Beta polymer partition function} $\Z_{x,y}^{(r)}$ is defined as
\be
\Z_{x,y}^{(r)}=\sum_{\pi_0=(0,r)\to\pi_1\to\dots\to\pi_{y-r}=(x,y)}\ \prod_{i=f(\pi)}^{y-r-1}w(\pi_{i}\to\pi_{i+1}),
\ee
where the sum is over all directed lattice paths $\pi$ from $(0,r)$ to $(x,y)$, and $f(\pi)=\min\{i\mid \pi_{i+1}-\pi_i=(0,1)\}$, so the path starts accumulating its weight with a delay, waiting until the first vertical step, see Figure \ref{betaPolymerIntro}. 

The partition function $\Z_{x,y}^{(r)}$ also has an alternative description, in terms of a random walk in a random environment (RWRE). Fix $x,y$ and let $\eta_{i,j}\sim\Beta(\widetilde{\sigma}_i-\widetilde{\rho}_j, \widetilde{\rho}_j-\widetilde{\omega}_{j-i})$ be the random variables as in the polymer description above.  Then, conditionally on the random environment $\{\eta_{i,j}\}_{i,j}$, we consider a random walk $X_t$ on $\mathbb Z$, starting at $X_0=2x-y$ and doing independent $\pm1$ steps with probabilities
$$
{\sf P}(X_{t+1}=X_{t}+1\mid X_{t})=\eta_{\frac{X_{t}+y-t}{2},y-t},\qquad  {\sf P}(X_{t+1}=X_{t}-1\mid X_{t})=1-\eta_{\frac{X_{t}+y-t}{2},y-t},
$$  
where we use $\sf P$ to denote the probability space of the random walk, conditioned on the environment $\{\eta_{i,j}\}_{i,j}$. With this setup, we have $\Z_{x,y}^{(r)}={\sf P}(X_{y-r}\geq -r)$, which can be checked as in \cite{BC15b}\footnote{One way to check it is to identify the polymer paths contributing to $\Z_{x,y}^{(r)}$ with trajectories $(X_t,t)$ of the random walk reaching the half-line $X_t\geq r, X_t+t=y-2r$, using the change of coordinates $(i,j)=(\frac{X+y-t}{2}, y-t)$.}. Note that in the RWRE description the parameters $\widetilde{\sigma}_i,\widetilde{\rho}_j,\widetilde{\omega}_d$ are attached to the lines $t-X={const}$, $t=const$ and $t+X=const$ respectively.

\begin{figure}
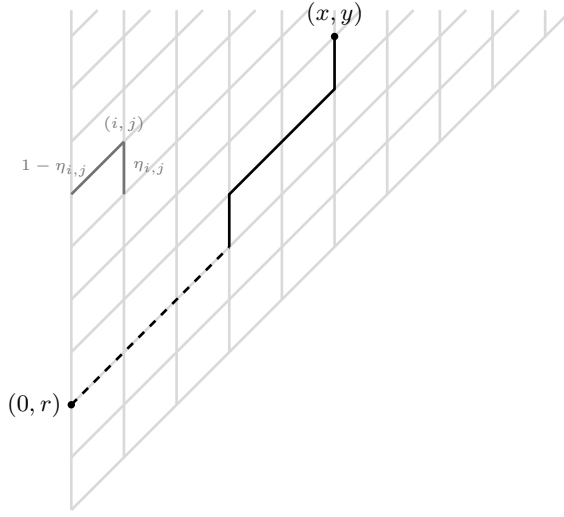

\centerline{
\tikz{0.7}{
	\foreach\y in {0,...,8}{
		\foreach\x in {0,...,\y}{
			\draw[lgray, line width=1pt] (\x,\y) -- (\x+1,\y+1);
			\draw[lgray, line width=1pt] (\x,\y) -- (\x,\y+1);
		}
	}
	\foreach\x in {0,...,9}{
		\draw[lgray, line width=1pt] (\x,9) -- (\x+0.5,9.5);
		\draw[lgray, line width=1pt] (\x,9) -- (\x,9.5);
	}
	\draw[black, dashed, line width = 1pt] (0,2) node[left] {\small $(0,r)$} -- (3,5);
	\draw[black, line width = 1pt] (3,5) -- (3,6) -- (5,8) -- (5,9) node[above] {\small $(x,y)$};
	\fill[black] (0,2) circle[radius=2pt];
	\fill[black] (5,9) circle[radius=2pt];
	
	\draw[dgray, line width=1pt] (0,6) -- (1,7);
	\draw[dgray, line width=1pt] (1,6) -- (1,7);
	\node[dgray, right] at (1, 6.5) {\tiny $\eta_{i,j}$};
	\node[dgray, left] at (0.5, 6.5) {\tiny $1-\eta_{i,j}$};
	\node[dgray, above] at (1, 7) {\tiny $(i,j)$};
}}
\caption{\label{betaPolymerIntro} Example of a path contributing to the Beta polymer partition function $\Z^{(r)}_{x,y}$. Due to the delay, the dashed edges do not contribute their weights.}
\end{figure}

Under a $q\to 1$ limit the diagonally inhomogeneous $q$-Hahn model converges to the inhomogeneous Beta polymer, and the expression from Theorem \ref{theoFirstIntro} produces an explicit integral expression for the moments:
\be
\E\left[\(\Z^{(r)}_{x,y}\)^{k}\right]=\oint_{\mathcal S_1}\cdots\oint_{\mathcal S_k}\prod_{a<b}\frac{v_b-v_a}{v_b-v_a-1}\prod_{a=1}^k\(\prod_{i=r+1}^{i\leq y}(v_a-\widetilde{\rho}_i)\prod_{i=0}^{i\leq x}\frac{1}{v_a-\widetilde{\sigma}_i}\prod_{i=r+1}^{i\leq y-x}\frac{1}{v_a-\widetilde{\omega}_i}\)\frac{dv_a}{2\pi\i},
\ee
where the contour $\mathcal S_b$ surrounds $\widetilde{\sigma}_i$ and $\mathcal S_a+1$ for $a<b$. Such integral expressions are well-suited for asymptotic analysis and the standard workflow consists of first obtaining a Fredholm determinant expression for the Laplace transform of $\Z^{(r)}_{x,y}$ (which is analytic since $\Z^{(r)}_{x,y}\in (0,1)$ almost surely) and then performing a steep descent analysis. Below we list our results.

Consider the large scale limit of $\Z^{(0)}_{\floor{xt},\floor{yt}}$ when the parameters $\widetilde{\sigma}_i,\widetilde{\rho}_j,\widetilde{\omega}_d$ have finite number of possible values $\sigma_i, \rho_j, \omega_d$ repeated with frequencies $\alpha_i,\beta_j,\gamma_d$. For example, when $\gamma_1=\gamma_2=\frac{1}{2}$ we assume that half of the diagonals have parameter $\widetilde{\omega}_d=\omega_1$ while the other half has parameter $\omega_2$; see Section \ref{limitSect} for a precise description of the model. We conjecture (Conjecture \ref{conjecture} in the text) that the following limit relation holds:
\begin{equation}
\label{introConjecture}
\lim_{t\to\infty}\P\(\frac{\ln\Z^{(0)}_{\floor{xt},\floor{yt}}+It}{c t^{1/3}}\leq r\)=F_{GUE}(r),
\end{equation}
where the slope $x/y\in (0, \frac{\sum \beta_i\rho_i-\sum\gamma_i\omega_i}{\sum\alpha_i\sigma_i-\sum\gamma_i\omega_i})$ is fixed,\footnote{The range for the slope looks more natural in the RWRE description: $\frac{\sum \beta_i\rho_i-\sum\gamma_i\omega_i}{\sum\alpha_i\sigma_i-\sum\gamma_i\omega_i}$ corresponds to the average drift of the walk, and we are looking at fluctuations at non-typical velocities. See \cite{BC15b} for more details about the RWRE interpretation of the model.} $F_{GUE}(s)$ denotes the GUE Tracy-Widom distribution \cite{TW92} and the constants $I,c$ are defined in an implicit way involving an auxiliary parameter $\theta\in(\max_i\sigma_i, \infty)$ and polygamma functions $\Psi_k$:
\be
x/y=\frac{\sum_i\beta_i\Psi_1(\theta-\rho_i)-\sum_i\gamma_i\Psi_1(\theta-\omega_i)}{\sum_i\alpha_i\Psi_1(\theta-\sigma_i)-\sum_i\gamma_i\Psi_1(\theta-\omega_i)},
\ee
\be
I=x \sum_i\alpha_i\Psi_0(\theta-\sigma_i)-y \sum_i\beta_i\Psi_0(\theta-\rho_i)+ (y-x)\sum_i\gamma_i\Psi_0(\theta-\omega_i),
\ee
\be
c^3=-\frac{x}{2}\sum_i\alpha_i\Psi_2(\theta-\sigma_i)+\frac{y}{2}\sum_i\beta_i\Psi_2(\theta-\rho_i)-\frac{y-x}{2}\sum_i\gamma_i\Psi_2(\theta-\omega_i).
\ee

Under assumptions dictated by purely technical reasons we are also able to partially verify our conjecture:
\begin{theo}[Theorem \ref{mainBetaResult} in the text] When $\theta\in (0,\frac{1}{2})$ and
 \be
 \sigma_1=\sigma_2=\dots=0,\qquad \rho_1=\rho_2=\dots=-1, \qquad \omega_1,\omega_2, \dots< -1 
 \ee
 we have 
\be
\lim_{t\to\infty}\P\(\frac{\ln\Z^{(0)}_{\floor{xt},\floor{yt}}+It}{c t^{1/3}}\leq r\)=F_{GUE}(r),
\ee
with the notation specified above.
\end{theo}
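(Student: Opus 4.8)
The plan is to follow the route from a moment formula to Tracy--Widom fluctuations pioneered in \cite{BC15b}, carrying the diagonal parameters through each step. First I would take the integral expression for $\E[(\Z^{(r)}_{x,y})^k]$ displayed above, specialize $r=0$, $\widetilde\sigma_i\equiv 0$, $\widetilde\rho_j\equiv -1$, let $\widetilde\omega_d$ run through the finitely many values $\omega_\ell<-1$ with the prescribed frequencies $\gamma_\ell$, and substitute $x\mapsto\floor{xt}$, $y\mapsto\floor{yt}$. The single-variable part of the integrand then becomes $g_t(v)=\exp\bigl(t\,\mathcal G(v)+O(\log t)\bigr)$ for an action $\mathcal G$ built from the three products over columns, rows and diagonals, and the contours $\mathcal S_b$ collapse to nested loops around the single singularity at $\widetilde\sigma_i=0$, shifted by integers because of the cross terms $\prod_{a<b}\frac{v_b-v_a}{v_b-v_a-1}$. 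This is the stage at which the three families of parameters and the frequencies $\alpha_i,\beta_j,\gamma_d$ enter the asymptotics.

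Next I would pass from the moments to the Laplace transform. Since the excerpt records that $\Z^{(0)}_{\floor{xt},\floor{yt}}\in(0,1)$ almost surely, all its moments are bounded by $1$, the moment problem is determinate, and $\E\bigl[e^{-\zeta\Z^{(0)}_{\floor{xt},\floor{yt}}}\bigr]=\sum_{k\ge 0}\tfrac{(-\zeta)^k}{k!}\E[(\Z^{(0)}_{\floor{xt},\floor{yt}})^k]$ is entire in $\zeta$. The nested-contour shape of the moment formula is exactly what is needed for the Borodin--Corwin resummation lemma, which rewrites this series as a Fredholm determinant $\det(I+K_\zeta)$ on $L^2$ of a small contour around $0$, with $K_\zeta$ a Mellin--Barnes kernel assembled from $g_t$, a factor $\tfrac{\pi}{\sin(\pi s)}\,\zeta^{s}$, and the Cauchy-type kernel $\tfrac{1}{v+s-v'}$. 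Here I would need to verify that the hypotheses of that lemma hold for our specialized integrand and that the determinant is finite, which is routine given the parameter restrictions.

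I would then set $\zeta=\exp\bigl(It-c\,t^{1/3}r\bigr)$ with $I,c$ as in the statement, so that $e^{-\zeta\Z^{(0)}_{\floor{xt},\floor{yt}}}$ approximates $\1\bigl\{\tfrac{\ln\Z^{(0)}_{\floor{xt},\floor{yt}}+It}{ct^{1/3}}\le r\bigr\}$, and carry out a steepest-descent analysis of $\det(I+K_\zeta)$. The relevant saddle is the $\theta\in(0,\tfrac12)$ of the statement: it is the double critical point of the full action obtained by combining $t\,\mathcal G$ with the linear-in-$s$ term $s\,It$; the equation expressing $x/y$ through the trigamma values is precisely the vanishing of the second derivative there, the constant $c^3$ is (up to sign and a factor $2$) its third derivative --- hence the $\Psi_2$'s --- and $I$ is read off from the first-order coefficient, producing the digamma expression. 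After deforming the $v$- and $s$-contours to pass through $\theta$ along steepest-descent curves, rescaling $v=\theta+\hat v/(c t^{1/3})$ and $s$ correspondingly, one shows that $K_\zeta$ converges to the Airy kernel in the rescaled variables and that the contributions of the contour tails and of the Mellin--Barnes integral away from $\theta$ are negligible; this gives $\det(I+K_\zeta)\to F_{GUE}(r)$, the Fredholm determinant of the Airy kernel on $(r,\infty)$ \cite{TW92}, which is the claim.

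The main obstacle is the steepest-descent step: one must produce honest steepest-descent contours through $\theta$ that avoid every singularity of $g_t$ --- the high-order pole at $0$ coming from the column product and the poles at the $\omega_\ell$ coming from the diagonal product --- and obtain decay estimates along the tails that are uniform in $t$, including in the Mellin--Barnes variable. This is exactly where the technical hypotheses $\theta\in(0,\tfrac12)$, $\widetilde\sigma_i\equiv0$, $\widetilde\rho_j\equiv-1$, $\widetilde\omega_d<-1$ are spent: they force $\theta$ to lie strictly between $0$ and the other singularities, guarantee that a steepest-descent contour with the correct sign of $\Re\bigl(\mathcal G(v)-\mathcal G(v')\bigr)$ exists inside the admissible region, and keep the determinant under control; for general parameters such a contour need not exist, which is why only this special case of Conjecture~\ref{conjecture} is established. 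The remaining ingredients --- validity of the Fredholm determinant identity, convergence of the rescaled kernel to the Airy kernel, and the fact that $\det(I+K_\zeta)$ indeed captures the limiting distribution function --- are adaptations of \cite{BC15b} that interact with the diagonal parameters only through bookkeeping.
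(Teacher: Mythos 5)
Your plan is correct and follows essentially the same route as the paper: moment formula $\to$ single-contour resummation $\to$ Fredholm determinant for the Laplace transform (using $\Z\in(0,1)$ a.s.) $\to$ Mellin--Barnes $\to$ steepest descent at the double critical point $\theta$, with the hypotheses $\theta\in(0,\tfrac12)$, $\sigma_i\equiv0$, $\rho_j\equiv-1$ spent exactly where you say, namely on producing an admissible descent contour for the $v$-variable (the paper uses the circle $|v|=\theta$) and on deforming contours without crossing poles. The only substantive work your sketch defers --- the explicit descent contours, the polygamma inequalities proving the descent property, and the uniform tail/kernel estimates --- is precisely the technical content of the paper's Section on the limit theorem and its appendix.
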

In other words, we are able to partially verify the conjectural limit statement for the model having only diagonal parameters $\omega_d<-1$, with Beta distributions $\Beta(1, -1-\omega_d)$. Note that even in the homogeneous case our result is more general than the analogous one from \cite{BC15b}, where the full proof is given only for the model with $\Beta(1,1)$ distributions. 

Shortly after this paper has been written, a more general limit result for the homogeneous Beta polymer has appeared in \cite{OPR21}, and it covers the model with $\Beta(\alpha,\beta)$ distributions for arbitrary $\alpha,\beta\in\mathbb R_{>0}$. It would be interesting to see in what generality the methods and results from \cite{OPR21} can be transferred to the inhomogeneous model.

\subsection{Layout of the paper} Sections \ref{vmSection}-\ref{shiftSect} are devoted to the integrability of the $q$-Hahn vertex model with diagonal parameters: In Section \ref{vmSection} we describe the needed background on vertex models and Yang-Baxter equations,  which is immediately used in Section \ref{localSect} to establish local recurrence relations for the $q$-Hahn model. In Section \ref{qHahnSec} we define the $q$-Hahn vertex model and prove one of the central results of this work, Theorem \ref{qHahnResultTheo}, using relations from the previous section. Section \ref{shiftSect} is a brief overview of the shift-invariance property for the vertex models, which can be established using Theorem \ref{qHahnResultTheo}. Then we pass to the Beta polymer side of the work, starting with Section \ref{reductionSect} where we describe the transition from the $q$-Hahn vertex model to the Beta polymer model, and continuing with Section \ref{fredSect} where we use integral expressions coming from Theorem \ref{qHahnResultTheo} to write a Fredholm determinant expression for the Laplace transform of the polymer partition function. Finally, Section \ref{limitSect} is devoted to the large-scale analysis of the inhomogeneous Beta polymer model, with the results stated in Conjecture \ref{conjecture} and Theorem \ref{mainBetaResult}.

\subsection{Notation} Throughout the work we follow the following standard notation. For $k\in\mathbb Z_{>0}$ the symmetric group of rank $k$ consists of permutations of $k$ elements and is denoted by $S_k$. For a permutation $\pi\in S_k$ its length is denoted by $l(\pi)$ and is defined as the number of pairs $(i,j)$ such that $i<j$ and $\pi(i)>\pi(j)$. A partition $\lambda$ is a finite integer sequence $\lambda_1\geq\lambda_2\geq\dots\geq\lambda_l>0$, with $l=l(\lambda)$ being called the length of $\lambda$. If $\lambda_1+\dots+\lambda_{l(\lambda)}=n$ we write $\lambda\vdash n$. The part multiplicities $m_k(\lambda)$ are defined by $m_k(\lambda)=\#\{i\mid \lambda_i=k\}$.

For any $n\in\mathbb Z$ the $q$-Pochhammer symbol is defined as 
\be
(x;q)_n:=\begin{cases}\prod_{i=1}^{n}(1-xq^{i-1}),\quad &n\geq 0;\\ \prod_{i=0}^{-n-1}(1-xq^{-i-1})^{-1},\quad &n\leq 0.\end{cases}
\ee
In particular, $(x;q)_0:=1$. We also set $(x;q)_\infty:=\prod_{i=1}^{\infty}(1-xq^{i-1})$. For $n,m\geq 0$ the $q$-binomial coefficient $\binom{n}{m}_q$ is given by
\be
\binom{n}{m}_q=\frac{(q;q)_{n}}{(q;q)_m(q;q)_{n-m}}=\frac{(q^{n-m+1};q)_m}{(q;q)_m}.
\ee 
With the assumption $|q|<1$, all expressions above make sense both numerically and formally in the space of power series in $q$. 

For a pair of parameters $(\alpha,\beta)\in\mathbb{R}_{\geq 0}^2$ the Beta distribution is defined by the following density function with respect to the Lebesgue measure: 
\be
f_{\Beta(\alpha,\beta)}(x)=\1_{0< x< 1}\ x^{\alpha-1} (1-x)^{\beta-1}\frac{\Gamma(\alpha+\beta)}{\Gamma(\alpha)\Gamma(\beta)},
\ee
where $\Gamma(z)$ is the Gamma function. We use $\Beta(\alpha,\beta)$ to denote a Beta-distributed random variable with the corresponding parameters. 

For a trace-class operator $K$ over $L^2(X)$ we use $\det(I+K)_{L^{2}(X)}$ to denote its Fredholm determinant. The only operators $K$ considered in this work are integral operators of the form
\be
[Kf](x)=\int_{X}f(y)K(x,y)dy,
\ee
where $K(x,y)$ is called the kernel of $K$. In this case the Fredholm determinant is given explicitly by
\be
\det(I+K)_{L^2(X)}=\sum_{l\geq 0}\frac{1}{l!}\int_{X}dx_1\dots\int_Xdx_l\det\left[K(x_a,x_b)\right]_{a,b=1}^l.
\ee
We often use Fredholm determinants over complex integration contours, so for a contour $\C\subset\mathbb C$ we let $L^2(\C)$ denote the space of functions on $\C$ with the complex valued integration measure $\frac{dz}{2\pi\i}$.

The Tracy-Widom GUE cumulative distribution function $F_{GUE}(r)$ is defined by
\be
F_{GUE}(r)=\det(I-K_{\mathrm{Ai}})_{L^2(r,\infty)},
\ee
\be
K_{\mathrm{Ai}}(\lambda,\mu)=\frac{\Ai(\lambda)\Ai'(\mu)-\Ai(\mu)\Ai'(\lambda)}{\lambda-\mu}=\int_{e^{-2\pi\i/3}\infty}^{e^{2\pi\i/3}\infty}\frac{d v}{2\pi \i}\int_{e^{-\pi\i/3}\infty}^{e^{\pi\i/3}\infty}\frac{dz}{2\pi \i}\ \frac{1}{z-v}\frac{e^{z^3/3-z\lambda}}{e^{v^3/3-v\mu}},
\ee
where $\Ai(z)$ is the Airy function and the integration contours in the last expression do not intersect and go to infinity along the prescribed directions.  

\subsection{Acknowledgements}
The author is grateful to Alexei Borodin for numerous helpful comments about the work. The author would also like to thank Guillaume Barraquand and Ivan Corwin for their clarifications regarding the work \cite{BC15b}.

\section{Vertex models and Yang-Baxter equation}\label{vmSection} The focus of this section is the Yang-Baxter equation, which can be seen as the fundamental reason for integrability of the models presented in this work. More precisely, here we introduce our notation for vertex models, describe vertex weights and provide known forms and consequences of the Yang-Baxter equation.

\subsection{Vertex configurations and weights.} Vertex models are described in terms of \emph{vertices}, which we graphically represent as intersections of oriented lines. Segments of the lines separated by the vertices are called \emph{edges}, so each vertex has a pair of incoming edges and a pair of outgoing edges. A \emph{configuration} of a vertex is an assignment of four labels to the adjacent edges. Finally, to each vertex configuration we assign a \emph{vertex weight}, depending on these four labels.
 
Configurations of our models are labeled by \emph{compositions} assigned to the edges, that is, by $n$-tuples of nonnegative integers $\bA=(A_1,A_2, \dots, A_n)\in\mathbb Z_{\geq 0}^n$. We often present configurations as ensembles of oriented colored lattice paths directed along the edges, with colors labeled by integers $1, 2, \dots, n$\footnote{The color $0$ is commonly reserved for the absence of a path.}. The two descriptions are identified by encoding the colors of paths occupying a single edge by a composition $\bA=(A_1, \dots, A_n)$, where $A_i$ is the number of paths with color $i$. 

Throughout the text we use the following notation regarding compositions. For a composition $\bA=(A_1, \dots, A_n)$ we set $|\bA|:=A_1+A_2+\dots+A_n$. More generally, for integers $1\leq i,j\leq n$ let
\begin{equation}
\label{Aijdef}
\bA_{[i,j]}:=\begin{cases}A_i+A_{i+1}+\dots+A_j,\quad \text{if}\ i\leq j,\\ 0\quad\text{otherwise}.
\end{cases}
\end{equation}
Finally, given a composition $\bA$ we can define an \emph{ordered} $|\bA|$-tuple of integers $1^{A_1}2^{A_2}\dots n^{A_n}$ which consists of $1$ repeated $A_1$ times, followed by $A_2$ copies of $2$, $A_3$ copies of $3$ and so on.

The main family of vertex weights used in this work is the family of \emph{$q$-Hahn}\footnote{In the case $n=1$ the vertex weights \eqref{Wdef} reproduce the orthogonality weights for the $q$-Hahn polynomials, hence the name.} vertex weights, denoted in one of the following ways
\begin{equation}
\label{Wnot}
\tikz{1}{
	\draw[fused] (-1,0) -- (1,0);
	\draw[fused] (0,-1) -- (0,1);
	\node[left] at (-1,0) {\tiny $\bB$};\node[right] at (1,0) {\tiny $\bD$};
	\node[below] at (0,-1) {\tiny $\bA$};\node[above] at (0,1) {\tiny $\bC$};
	\node[above right] at (0,0.1) {\tiny $W_{t,s}$};
}=
\tikz{1}{
	\draw[fused] (-1,0) -- (1,0);
	\draw[fused] (0,-1) -- (0,1);
	\node[left] at (-1,0) {\tiny $\bB$};\node[right] at (1,0) {\tiny $\bD$};
	\node[below] at (0,-1) {\tiny $\bA$};\node[above] at (0,1) {\tiny $\bC$};
	\node[above right] at (0,0.1) {\tiny ${(t,s)}$};
}=W_{t,s}(\bA,\bB;\bC,\bD)
\end{equation}
and given explicitly by
\begin{equation}
\label{Wdef}
W_{t,s}(\bA,\bB;\bC,\bD)=\1_{\bA+\bB=\bC+\bD}\ (s^2/t^2)^{|\bD|}\frac{(s^2/t^2;q)_{|\bA|-|\bD|}(t^2;q)_{|\bD|}}{(s^2;q)_{|\bA|}}q^{\sum_{i<j}D_i(A_j-D_j)}\prod_{i=1}^n\binom{A_i}{D_i}_q.
\end{equation}
Here $\bA=(A_1, \dots, A_n), \bB=(B_1, \dots, B_n), \bC=(C_1, \dots, C_n), \bD=(D_1, \dots, D_n)$ are compositions, $q$ is a globally fixed \emph{quantization} parameter and $s,t$ are \emph{spin} parameters. Note that the weights $W_{t,s}(\bA,\bB;\bC,\bD)$ vanish unless the following \emph{conservation law} holds:
\begin{equation}
\label{conservationLaw}
\bA+\bB=\bC+\bD.
\end{equation}
Moreover, the weights $W_{t,s}(\bA,\bB;\bC,\bD)$ vanish unless $\bA\geq\bD$, that is, $A_i\geq D_i$ for all $i$. Moreover, the weights $W_{t,s}(\bA,\bB;\bC,\bD)$ do not depend on $\bB$ and $\bC$ as long as the conservation law \eqref{conservationLaw} holds.

Another family of vertex weights used in this work consists of \emph{higher spin six-vertex} weights and is denoted by 
\begin{equation}
\label{hsnot}
\tikz{1}{
	\draw[unfused] (-1,0) -- (1,0);
	\draw[fused] (0,-1) -- (0,1);
	\node[left] at (-1,0) {\tiny $j$};\node[right] at (1,0) {\tiny $l$};
	\node[below] at (0,-1) {\tiny $\bI$};\node[above] at (0,1) {\tiny $\bK$};
	\node[above right] at (0,0) {\tiny $w_{z;s}$};
}
=\tikz{1}{
	\draw[unfused] (-1,0) -- (1,0);
	\draw[fused] (0,-1) -- (0,1);
	\node[left] at (-1,0) {\tiny $j$};\node[right] at (1,0) {\tiny $l$};
	\node[below] at (0,-1) {\tiny $\bI$};\node[above] at (0,1) {\tiny $\bK$};
	\node[above right] at (0,0) {\tiny ${(z;s)}$};
}
=w_{z;s}(\bI,j;\bK,l),
\end{equation}
where $j,l$ are integers from $[0, n]$ and $\bI,\bK$ are compositions. To keep consistency with the edge labelling by compositions, we can interpret integers $j,l$ as compositions $\bJ,\bL$ satisfying $|\bJ|,|\bL|\leq 1$: if $j=0$ set $\bJ=\e^0=\mathbf 0=(0, \dots, 0)$, while for $j\neq 0$ set $\bJ$ equal to the standard basis vector $\e^j\in\mathbb Z^n$:
\be
\e^j_i=\1_{i=j}.
\ee 
The values of the weights $w_{z;s}$ are summarized in the table below:
\begin{align}
\label{hsdef}
\begin{tabular}{|c|c|c|}
\hline
\quad
\tikz{0.7}{
	\draw[lgray,line width=1.5pt,->] (-1,0) -- (1,0);
	\draw[lgray,line width=4pt,->] (0,-1) -- (0,1);
	\node[left] at (-1,0) {\tiny $0$};\node[right] at (1,0) {\tiny $0$};
	\node[below] at (0,-1) {\tiny $\bI$};\node[above] at (0,1) {\tiny $\bI$};
}
\quad
&
\quad
\tikz{0.7}{
	\draw[lgray,line width=1.5pt,->] (-1,0) -- (1,0);
	\draw[lgray,line width=4pt,->] (0,-1) -- (0,1);
	\node[left] at (-1,0) {\tiny $i$};\node[right] at (1,0) {\tiny $i$};
	\node[below] at (0,-1) {\tiny $\bI$};\node[above] at (0,1) {\tiny $\bI$};
}
\quad
&
\quad
\tikz{0.7}{
	\draw[lgray,line width=1.5pt,->] (-1,0) -- (1,0);
	\draw[lgray,line width=4pt,->] (0,-1) -- (0,1);
	\node[left] at (-1,0) {\tiny $0$};\node[right] at (1,0) {\tiny $i$};
	\node[below] at (0,-1) {\tiny $\bI$};\node[above] at (0,1) {\tiny $\bI-\e^i$};
}
\quad
\\[1.3cm]
\quad
$\dfrac{1-s z q^{I_{[1,n]}}}{1-sz}$
\quad
& 
\quad
$\dfrac{(s^2q^{I_i}-sz) q^{I_{[i+1,n]}}}{1-sz}$
\quad
& 
\quad
$\dfrac{sz(q^{I_i}-1) q^{I_{[i+1,n]}}}{1-sz}$
\quad
\\[0.7cm]
\hline
\quad
\tikz{0.7}{
	\draw[lgray,line width=1.5pt,->] (-1,0) -- (1,0);
	\draw[lgray,line width=4pt,->] (0,-1) -- (0,1);
	\node[left] at (-1,0) {\tiny $i$};\node[right] at (1,0) {\tiny $0$};
	\node[below] at (0,-1) {\tiny $\bI$};\node[above] at (0,1) {\tiny $\bI+\e^i$};
}
\quad
&
\quad
\tikz{0.7}{
	\draw[lgray,line width=1.5pt,->] (-1,0) -- (1,0);
	\draw[lgray,line width=4pt,->] (0,-1) -- (0,1);
	\node[left] at (-1,0) {\tiny $i$};\node[right] at (1,0) {\tiny $j$};
	\node[below] at (0,-1) {\tiny $\bI$};\node[above] at (0,1) 
	{\tiny $\bI+\e^i-\e^j$};
}
\quad
&
\quad
\tikz{0.7}{
	\draw[lgray,line width=1.5pt,->] (-1,0) -- (1,0);
	\draw[lgray,line width=4pt,->] (0,-1) -- (0,1);
	\node[left] at (-1,0) {\tiny $j$};\node[right] at (1,0) {\tiny $i$};
	\node[below] at (0,-1) {\tiny $\bI$};\node[above] at (0,1) {\tiny $\bI+\e^j-\e^i$};
}
\quad
\\[1.3cm] 
\quad
$\dfrac{1-s^2 q^{I_{[1,n]}}}{1-sz}$
\quad
& 
\quad
$\dfrac{sz(q^{I_j}-1) q^{I_{[j+1,n]}}}{1-sz}$
\quad
&
\quad
$\dfrac{s^2(q^{I_i}-1)q^{I_{[i+1,n]}}}{1-sz}$
\quad
\\[0.7cm]
\hline
\end{tabular} 
\end{align}
where $i,j$ are integers from $[1,n]$ satisfying $i<j$,  $\bI=(I_1, \dots, I_n)$ is a composition and all unlisted weights are assumed to be $0$ since they violate the conservation law.  The parameter $q$ is the same quantization parameter as in the $q$-Hahn weights, while the parameters $z$ and $s$ are \emph{spectral} and \emph{spin} parameters respectively.

Both vertex weights $W_{t,s}$ and $w_{z;s}$ are particular cases of more general weights $\mathcal W_{z}^{\NN,\MM}(\bA,\bB;\bC,\bD)$\footnote{To avoid possible confusion with the weights $W_{t,s}$, we refrain from using any graphical notation for the weights $\W^{\NN,\MM}_z$.}, which are defined for compositions satisfying
\begin{equation}
\label{NMrestr}
|\bA|,|\bC|\leq \MM,\qquad |\bB|,|\bD|\leq \NN
\end{equation}
and are given explicitly by
\begin{multline}
\label{fullyfusedexpression}
\W_{z}^{\NN,\MM}\(\bA,\bB;\bC,\bD\)=\1_{\bA+\bB=\bC+\bD} z^{|\bD|-|\bB|}q^{|\bA|\NN-|\bD|\MM}\\
\times \sum_{\bP}\Phi(\bC-\bP, \bC+\bD-\bP;q^{\NN-\MM}z;q^{-\MM}z)\Phi(\bP,\bB;q^{-\NN}/z;q^{-\NN}),
\end{multline} 
where the sum is over compositions $\bP=(P_1, \dots, P_n)$ such that $P_i\leq \min(B_i, C_i)$ and we set
\be
\Phi(\bA,\bB;x,y):=(y/x)^{|\bA|}\frac{(x;q)_{|\bA|}(y/x;q)_{|\bB-\bA|}}{(y;q)_{|\bB|}}q^{\sum_{i<j}(B_i-A_i)A_j}\prod_{i=1}^n\binom{B_i}{A_i}_q.
\ee
The weights $W_{t,s}$ and $w_{u;s}$ are recovered using the following specializations:
\begin{equation}
\label{spec}
W_{t,s}(\bA,\bB;\bC, \bD)=\restr{\W_{1}^{\NN,\MM}(\bA,\bB;\bC,\bD)}{\substack{q^{-\NN}=t^2\\q^{-\MM}=s^2}},\qquad w_{z;s}(\bI,j;\bK,l)=\restr{\W_{z/s}^{1,\MM}(\bI,\e^j;\bK, \e^l)}{q^{-\MM}=s^2},
\end{equation}
where we use the rational dependence of the weights $\W_{z}^{\NN,\MM}$ on $q^{-\NN}$ and $q^{-\MM}$ to perform analytic continuations, replacing $q^{-\NN}$ and $q^{-\MM}$ by $t^2$ and $s^2$ respectively. 

Though it is not needed for our purposes, let us briefly elaborate on the origins of these weights. The weights $\mathcal W_{z}^{\NN,\MM}(\bA,\bB;\bC,\bD)$ originate from the quantum group $U_q(\widehat{\mathfrak{sl}}_{n+1})$ and can be seen as the matrix coefficients of a renormalized $\mathcal R$-matrix acting on symmetric tensor representations, see \cite{KMMO16}. The explicit expression \eqref{fullyfusedexpression} was obtained in \cite{BM16} using methods of three-dimensional solvability. See \cite[Appendix C]{BW18} for a condensed summary of the properties of $\mathcal W_{z}^{\NN,\MM}(\bA,\bB;\bC,\bD)$ relevant for the current work. The colored higher spin six-vertex weights are coefficients of the $\mathcal R$-matrix above when one of the representations is the standard representation, and these weights are related to a spin deformation of the non-symmetric Hall-Littlewood polynomials, see \cite{BW18} and Remark \ref{spinHLRemark}. The $q$-Hahn weights are not as well-understood as the other weights; we limit ourselves to stating that in one-color case they are related to spin deformations of the $q$-Whittaker symmetric functions, see \cite{BW17}, \cite{MP20}, \cite{BK21}.

\subsection{Yang-Baxter equations.} The vertex weights described above are distinguished by an algebraic relation called the \emph{Yang-Baxter equation}. In the most general situation relevant to this work the Yang-Baxter equation is given in \cite[Appendix C]{BW18}:
\begin{multline}
\label{masterYB}
\sum_{\bK_1,\bK_2,\bK_3}\W^{\NN,\MM}_{x/y}(\bA_2,\bA_1;\bK_2,\bK_1)\W_{x/z}^{\NN,\LL}(\bA_3,\bK_1;\bK_3,\bB_1)\W_{y/z}^{\MM,\LL}(\bK_3,\bK_2;\bB_3,\bB_2)\\
=\sum_{\bK_1,\bK_2,\bK_3}\W_{y/z}^{\MM,\LL}(\bA_3,\bA_2;\bK_3,\bK_2)\W_{x/z}^{\NN,\LL}(\bK_3,\bA_1;\bB_3,\bK_1)\W_{x/y}^{\NN,\MM}(\bK_2,\bK_1;\bB_2,\bB_1),
\end{multline}
where both sums are taken over arbitrary triples of compositions $\bK_1, \bK_2, \bK_3$. Note that both sums are actually finite due to the non-negativity of compositions and the conservation law. 

Setting $x=y=z$ and using specialization \eqref{spec} we obtain the Yang-Baxter equation for the weights $W_{t,s}$:
\begin{multline}
\label{WYBeq}
\sum_{\bK_1,\bK_2,\bK_3}W_{t_1,t_2}(\bA_2,\bA_1;\bK_2,\bK_1)W_{t_1,t_3}(\bA_3,\bK_1;\bK_3,\bB_1)W_{t_2,t_3}(\bK_3,\bK_2;\bB_3,\bB_2)\\
=\sum_{\bK_1,\bK_2,\bK_3}W_{t_2,t_3}(\bA_3,\bA_2;\bK_3,\bK_2)W_{t_1,t_3}(\bK_3,\bA_1;\bB_3,\bK_1)W_{t_1,t_2}(\bK_2,\bK_1;\bB_2,\bB_1).
\end{multline}
Throughout the text we usually write such equations using graphical notation. For instance, \eqref{WYBeq} is represented by the following diagrammatic equation:
\begin{equation}
\label{WYB}
\tikzbase{1.2}{-3}{
	\draw[fused]
	(-2,0.5) node[above,scale=0.6] {\color{black} $\bA_1$} -- (-1,-0.5) -- (1,-0.5) node[right,scale=0.6] {\color{black} $\bB_1$};
	\draw[fused] 
	(-2,-0.5) node[below,scale=0.6] {\color{black} $\bA_2$} -- (-1,0.5)  -- (1,0.5) node[right,scale=0.6] {\color{black} $\bB_2$};
	\draw[fused] 
	(0,-1.5) node[below,scale=0.6] {\color{black} $\bA_3$} -- (0,0) -- (0,1.5) node[above,scale=0.6] {\color{black} $\bB_3$};
	\node[above right] at (0,-0.4) {\tiny{ $W_{t_1,t_3}$}};
	\node[above right] at (0,0.6) {\tiny{ $W_{t_2,t_3}$}};
	\node[right] at (-1.5,0) {\ \tiny{$W_{t_1,t_2}$}};
}\qquad
=\qquad
\tikzbase{1.2}{-3}{
	\draw[fused] 
	(-1,1) node[left,scale=0.6] {\color{black} $\bA_1$} -- (1,1) -- (2,0) node[below,scale=0.6] {\color{black} $\bB_1$};
	\draw[fused] 
	(-1,0) node[left,scale=0.6] {\color{black} $\bA_2$} -- (1,0) -- (2,1) node[above,scale=0.6] {\color{black} $\bB_2$};
	\draw[fused] 
	(0,-1) node[below,scale=0.6] {\color{black} $\bA_3$} -- (0,0.5) -- (0,2) node[above,scale=0.6] {\color{black} $\bB_3$};
	\node[above right] at (0,0) {\tiny{ $W_{t_2,t_3}$}};
	\node[above right] at (0,1) {\tiny{ $W_{t_1,t_3}$}};
	\node[right] at (1.5,0.5) {\ \tiny{$W_{t_1,t_2}$}};
}
\end{equation}
More precisely, the diagrams like the above one denote \emph{partition functions} of the corresponding models.  That is, given such a diagram with a fixed labelling of the boundary edges and an assignment of vertex weights to the vertices, the corresponding partition function is defined by taking the sum of products of vertex weights over all possible configurations of the internal edges. 

Another instance of the Yang-Baxter equation is obtained by setting $\NN=1, y=z=1$ in \eqref{masterYB} and applying \eqref{spec}. The result is graphically represented by
\begin{equation}
\label{hsYB}
\tikzbase{1.2}{-3}{
	\draw[unfused]
	(-2,0.5) node[above,scale=0.6] {\color{black} $\bA_1$} -- (-1,-0.5) -- (1,-0.5) node[right,scale=0.6] {\color{black} $\bB_1$};
	\draw[fused] 
	(-2,-0.5) node[below,scale=0.6] {\color{black} $\bA_2$} -- (-1,0.5)  -- (1,0.5) node[right,scale=0.6] {\color{black} $\bB_2$};
	\draw[fused] 
	(0,-1.5) node[below,scale=0.6] {\color{black} $\bA_3$} -- (0,0) -- (0,1.5) node[above,scale=0.6] {\color{black} $\bB_3$};
	\node[above right] at (0,-0.5) {\tiny{ $w_{xs;s}$}};
	\node[above right] at (0,0.6) {\tiny{ $W_{t,s}$}};
	\node[right] at (-1.5,0) {\ \tiny{$w_{xt;t}$}};
}\qquad
=\qquad
\tikzbase{1.2}{-3}{
	\draw[unfused] 
	(-1,1) node[left,scale=0.6] {\color{black} $\bA_1$} -- (1,1) -- (2,0) node[below,scale=0.6] {\color{black} $\bB_1$};
	\draw[fused] 
	(-1,0) node[left,scale=0.6] {\color{black} $\bA_2$} -- (1,0) -- (2,1) node[above,scale=0.6] {\color{black} $\bB_2$};
	\draw[fused] 
	(0,-1) node[below,scale=0.6] {\color{black} $\bA_3$} -- (0,0.5) -- (0,2) node[above,scale=0.6] {\color{black} $\bB_3$};
	\node[above right] at (0,0) {\tiny{ $W_{t,s}$}};
	\node[above right] at (0,1) {\tiny{ $w_{xs;s}$}};
	\node[right] at (1.5,0.5) {\ \tiny{$w_{xt;t}$}};
}
\end{equation}
Note that we again use diagrams to write the equation, but this time the lines have different thickness. Throughout the text we follow the following convention: thick edges can be labelled by any compositions, while thin edges are labeled by compositions $\bI$ with $|\bI|\leq 1$. 

In \cite{BK21} it was shown that the Yang Baxter equations involving the weights $W_{t,s}$ can be sometimes deformed by adding an additional parameter to the equation in a nontrivial way. More precisely, the following \emph{deformed Yang-Baxter} equations hold:
\begin{equation}
\label{defWYB}
\tikzbase{1.2}{-3}{
	\draw[fused]
	(-2,0.5) node[above,scale=0.6] {\color{black} $\bA_1$} -- (-1,-0.5) -- (1,-0.5) node[right,scale=0.6] {\color{black} $\bB_1$};
	\draw[fused] 
	(-2,-0.5) node[below,scale=0.6] {\color{black} $\bA_2$} -- (-1,0.5)  -- (1,0.5) node[right,scale=0.6] {\color{black} $\bB_2$};
	\draw[fused] 
	(0,-1.5) node[below,scale=0.6] {\color{black} $\bA_3$} -- (0,0) -- (0,1.5) node[above,scale=0.6] {\color{black} $\bB_3$};
	\node[above right] at (0,-0.4) {\tiny{ $W_{\eta t_1,\eta t_3}$}};
	\node[above right] at (0,0.6) {\tiny{ $W_{t_2,t_3}$}};
	\node[right] at (-1.5,0) {\ \tiny{$W_{t_1, t_2}$}};
}\qquad
=\qquad
\tikzbase{1.2}{-3}{
	\draw[fused] 
	(-1,1) node[left,scale=0.6] {\color{black} $\bA_1$} -- (1,1) -- (2,0) node[below,scale=0.6] {\color{black} $\bB_1$};
	\draw[fused] 
	(-1,0) node[left,scale=0.6] {\color{black} $\bA_2$} -- (1,0) -- (2,1) node[above,scale=0.6] {\color{black} $\bB_2$};
	\draw[fused] 
	(0,-1) node[below,scale=0.6] {\color{black} $\bA_3$} -- (0,0.5) -- (0,2) node[above,scale=0.6] {\color{black} $\bB_3$};
	\node[above right] at (0,0) {\tiny{ $W_{\eta t_2,\eta t_3}$}};
	\node[above right] at (0,1) {\tiny{ $W_{t_1,t_3}$}};
	\node[right] at (1.5,0.5) {\ \tiny{$W_{\eta t_1,\eta t_2}$}};
}
\end{equation}
\begin{equation}
\label{defhsYB}
\tikzbase{1.2}{-3}{
	\draw[unfused]
	(-2,0.5) node[above,scale=0.6] {\color{black} $\bA_1$} -- (-1,-0.5) -- (1,-0.5) node[right,scale=0.6] {\color{black} $\bB_1$};
	\draw[fused] 
	(-2,-0.5) node[below,scale=0.6] {\color{black} $\bA_2$} -- (-1,0.5)  -- (1,0.5) node[right,scale=0.6] {\color{black} $\bB_2$};
	\draw[fused] 
	(0,-1.5) node[below,scale=0.6] {\color{black} $\bA_3$} -- (0,0) -- (0,1.5) node[above,scale=0.6] {\color{black} $\bB_3$};
	\node[above right] at (0,-0.5) {\tiny{ $w_{xs;s}$}};
	\node[above right] at (0,0.6) {\tiny{ $W_{t,s}$}};
	\node[right] at (-1.5,0) {\ \tiny{$w_{xt/\eta;t\eta}$}};
}\qquad
=\qquad
\tikzbase{1.2}{-3}{
	\draw[unfused] 
	(-1,1) node[left,scale=0.6] {\color{black} $\bA_1$} -- (1,1) -- (2,0) node[below,scale=0.6] {\color{black} $\bB_1$};
	\draw[fused] 
	(-1,0) node[left,scale=0.6] {\color{black} $\bA_2$} -- (1,0) -- (2,1) node[above,scale=0.6] {\color{black} $\bB_2$};
	\draw[fused] 
	(0,-1) node[below,scale=0.6] {\color{black} $\bA_3$} -- (0,0.5) -- (0,2) node[above,scale=0.6] {\color{black} $\bB_3$};
	\node[above right] at (0,0) {\tiny{ $W_{t,s}$}};
	\node[above right] at (0,1) {\tiny{ $w_{xs/\eta;s\eta}$}};
	\node[right] at (1.5,0.5) {\ \tiny{$w_{xt;t}$}};
}
\end{equation}
where $\eta$ in both cases is the added deformation parameter. 

Since the context of \cite{BK21} required only one-colored models, the deformed Yang-Baxter equations were only proved in that case, while for the colored situation the proofs can be repeated verbatim. However, for the completeness sake, we provide the proofs for the colored setting in Appendix \ref{app:deformed}.

\subsection{Stochasticity} Apart from the Yang-Baxter equation the vertex weights $W_{t,s}$ and $w_{z;s}$ satisfy another property, namely, they are \emph{stochastic}: the sum of vertex weights with the same valid incoming configuration is equal to $1$. In other words, the following identities hold
\begin{equation}
\label{stochW}
\sum_{\bC,\bD}
\tikzbase{1}{-0.56ex}{
	\draw[fused] (-1,0) -- (1,0);
	\draw[fused] (0,-1) -- (0,1);
	\node[left] at (-1,0) {\tiny $\bB$};\node[right] at (1,0) {\tiny $\bD$};
	\node[below] at (0,-1) {\tiny $\bA$};\node[above] at (0,1) {\tiny $\bC$};
	\node[above right] at (0,0.1) {\tiny{ $W_{t,s}$}};
}
=\sum_{\bC,\bD}W_{t,s}(\bA,\bB;\bC,\bD)=1,
\end{equation}
\begin{equation}
\label{stochhs}
\sum_{\bK,l}
\tikzbase{1}{-0.56ex}{
	\draw[unfused] (-1,0) -- (1,0);
	\draw[fused] (0,-1) -- (0,1);
	\node[left] at (-1,0) {\tiny $j$};\node[right] at (1,0) {\tiny $l$};
	\node[below] at (0,-1) {\tiny $\bI$};\node[above] at (0,1) {\tiny $\bK$};
	\node[above right] at (0,0) {\tiny{ $w_{z;s}$}};
}
=\sum_{\bK,l}w_{z;s}(\bI,j;\bK,l)=1,
\end{equation}
where for the first identity $\bA,\bB$ are arbitrary compositions, and for the second one $\bI$ is an arbitrary composition and $j\in [0, n]$.

The importance of the stochasticity comes from the probabilistic interpretation of the vertex models, where the stochastic weights can be used to define a stochastic sampling rule for an outgoing configuration of a vertex given an incoming one. We postpone the detailed discussion of this construction until Section \ref{qHahnSec}.

There are several ways to establish relations \eqref{stochW}, \eqref{stochhs}. One option is to verify everything by a direct computation, which can be readily performed for the weights $w_{z;s}$, but gets more involved for the weights $W_{t,s}$. Another approach is to use the general weights $\W_{z}^{\NN,\MM}$, which turn out to be stochastic as well:
\be
\sum_{\bC,\bD}\W_z^{\NN,\MM}(\bA,\bB;\bC,\bD)=1,
\ee
where $\bA,\bB$ are arbitrary compositions satisfying the constraints \eqref{NMrestr}. The latter fact can be derived, for example, from the fusion construction of the weights $\W_{z}^{\NN,\MM}$, see \cite[Appendix]{BGW19}.

Finally, for the weights $W_{t,s}$ one can use the Yang-Baxter equation to prove \eqref{stochW}. This approach is described in Section \ref{localSect}, where a more general \emph{local relation} is considered.

\subsection{Exchange relations and Hecke algebras.} Given a vertex model, it is usual to consider row-to-row transfer matrices. For a fixed $L$ let $V_L$ denote an infinite dimensional vector space with basis enumerated by $L$-tuples of compositions. We denote the elements of this basis by $|\bI_1, \dots, \bI_L\rangle$ and we let $\langle\bI_1, \dots, \bI_L|\in V_L^*$ denote the elements of the basis dual to $\{|\bI_1, \dots, \bI_L\rangle\}_{\bI_1, \dots, \bI_L}$.

For a fixed pair of families $\Xi=(\xi_1, \dots, \xi_L)\in \mathbb R^L,\S=(s_1, \dots, s_L)\in\mathbb R^L$, define operators $\C_i(u\mid \Xi,\S)$ on $V_L$ by
\begin{equation}
\label{rowdef}
\langle \bJ_1, \dots, \bJ_L|\ \C_i(u\mid \Xi,\S)\ |\bI_1, \dots, \bI_L\rangle:= 
\tikzbase{0.8}{-1}{
	\draw[unfused*] (1,2) -- (5.5,2);
	\draw[unfused] (6.5,2) -- (9,2);
	\foreach\x in {2,3}{
		\draw[fused] (\x*2-2,1) -- (\x*2-2,3);
	}
	\draw[fused] (7.5,1) -- (7.5,3);
	\node at (6,2) {$\cdots$};
	\node[above] at (7.5,3) {$\bJ_{L}$};
	\node[above] at (6,3) {$\cdots$};
	\node[above] at (4,3) {$\bJ_2$};
	\node[above] at (2,3) {$\bJ_1$};
	\node[left] at (1,2) {$i$};
	 \node[below] at (7.5,1) {$\bI_{L}$};
	\node[below] at (6,1) {$\cdots$};
	\node[below] at (4,1) {$\bI_2$};
	\node[below] at (2,1) {$\bI_1$};
	\node[right] at (9,2) {$0$};
	\node[above right] at (2,2) {\tiny $(u\xi_1; s_1)$};
	\node[above right] at (4,2) {\tiny $(u\xi_2; s_2)$};
	\node[above right] at (7.5,2) {\tiny $(u\xi_L; s_L)$};
},
\end{equation}
where $1\leq i\leq n$ and the partition function on the right-hand side consists of $L$ vertices with the weights $w_{u\xi_1;s_1}, w_{u\xi_2;s_2}, \dots, w_{u\xi_L, s_L}$. For any $1\leq i<j\leq n$ the operators $\C_i,\C_j$ satisfy the following commutation relations:
\begin{equation}
\C_i(v\mid \Xi,\S)\C_i(u\mid\Xi,\S)=\C_i(u\mid \Xi,\S)\C_i(v\mid\Xi,\S),
\end{equation}
\begin{equation}
\label{exch2}
q\C_j(v\mid\Xi,\S)\C_i(u\mid\Xi,\S)=\frac{u-qv}{u-v}\C_i(u\mid\Xi,\S)\C_j(v\mid\Xi,\S)-\frac{(1-q)u}{u-v}\C_i(v\mid \Xi,\S)\C_j(u\mid\Xi,\S),
\end{equation}
\begin{equation}
\label{exch3}
\C_i(v\mid\Xi,\S)\C_j(u\mid\Xi,\S)=\frac{u-qv}{u-v}\C_j(u\mid\Xi,\S)\C_i(v\mid\Xi,\S)-\frac{(1-q)v}{u-v}\C_j(v\mid\Xi,\S)\C_i(u\mid\Xi,\S).
\end{equation}
All these relations can be proved using the Yang-Baxter equation, namely, set $\NN=\MM=1$ in \eqref{masterYB} and apply a zipper-like argument to move a vertex with weights $\W^{1,1}_{u/v}$ between a pair of rows. The exact argument is given in \cite[Theorem 3.2.2]{BW18}.\footnote{In \cite[Theorem 3.2.2]{BW18} only the \emph{homogeneous} case is considered, when both families $(\xi_1, \dots, \xi_n)$ and $(s_1, \dots, s_n)$ consist of equal parameters. However, the proof given in \cite[Theorem 3.2.2]{BW18} works verbatim for general sequences $\Xi,\S$.}

For the purposes of this work we only need the commutation relation \eqref{exch2}, which we rewrite as
\begin{equation}
\label{Texch}
q\langle \bJ_1, \dots| \C_j(w_1\mid\Xi,\S)\C_i(w_2\mid\Xi,\S)|\bI_1, \dots \rangle=T^{(w_1,w_2)}\langle \bJ_1, \dots|\C_i(w_1\mid\Xi,\S)\C_j(w_2\mid\Xi,\S)|\bI_1, \dots \rangle,
\end{equation}
where $T^{(w_1,w_2)}$ is an operator acting on rational functions in $w_1,w_2$ by
\be
T^{(w_1,w_2)}=q+\frac{w_{2}-qw_1}{w_2-w_1}(\mathfrak{s}^{(w_1, w_2)}-1),\qquad \mathfrak{s}^{(w_1,w_2)}f(w_1,w_2)=f(w_2,w_1).
\ee

Operators $T^{(w_1,w_2)}$ are closely related to the polynomial representation of the \emph{Hecke algebra} via \emph{Demazure-Lusztig operators}. The Hecke algebra $\mathcal H_k$ is a $\mathbb C$-algebra with basis $\{T_\pi\}_{\pi\in S_k}$ enumerated by permutations $\pi\in S_k$ and satisfying the following defining relations
\be
T_{\pi}T_{\tau}=T_{\pi\tau}\quad \text{if}\ \ l(\pi)+l(\tau)=l(\pi\tau),\qquad (T_i-q)(T_i+1)=0,
\ee
where $T_i=T_{\sigma_i}$ denote the basis elements corresponding to the simple transpositions $\sigma_i$ exchanging $i$ and $i+1$. Note that the elements $T_i$ generate $\mathcal H_k$, and thus we can define a representation of $\mathcal H_k$ on the rational functions in $w_1, \dots, w_k$ by setting
\begin{equation}
\label{Tdef}
T_i=q+\frac{w_{i+1}-qw_{i}}{w_{i+1}-w_{i}}(\mathfrak{s}_i-1).
\end{equation}
where $\mathfrak{s}_i$ denotes the operator exchanging the variables $w_i$ and $w_{i+1}$.\footnote{Throughout the text the Hecke algebra appears only via this faithful representation, so identification of the basis elements $T_\pi$ with the operators $T_\pi$ should not cause confusion.} Note that these operators preserve polynomials in $w_1, \dots, w_k$, hence for any rational function $f$ in $w_1,\dots, w_k$ the function $T_\pi f$ is well defined on the domain of $f$, that is, $T_\pi f$ does not have any additional singularities compared to $f$.

\section{Local relations} \label{localSect} 
In this section we use the Yang-Baxter equation to derive two recurrence relations: the first relation captures the local behavior of $q$-moments of the colored height functions, defined in the next section, while the other relation compares rational function of certain form. While the underlying objects behind these relations are different, surprisingly, the relations themselves turn out to be identical. This observation will play a central role in the proof of one of the main results of this work, namely Theorem \ref{qHahnResultTheo}.

\subsection{Ordered permutations.}\label{permSect} Both local relations in this section can be described in terms of {order-preserving} length $p$ subsequences of a generic sequence $\bc=(c_1, c_2, \dots, c_r)$, which can be naturally encoded by subsets $\mathcal I=\{i_1, i_2, \dots, i_p\}\subset \{1, 2, \dots, r\}$:
\be
\bc[\mathcal I]=(c_{i_1}, \dots, c_{i_p}),\quad \text{where}\ i_1<i_2<\dots<i_p.
\ee 
But for our purposes it is more convenient to encode such subsequences using permutations. Recall that the action of the symmetric group on sequences is defined by
\be
\pi.\bc=\(c_{\pi^{-1}(1)}, c_{\pi^{-1}(2)}, \dots, c_{\pi^{-1}(r)}\).
\ee
Then any subsequence of $\bc$ can be constructed as 
\be
\pi.\bc[1,p]:=(c_{\pi^{-1}(1)}, \dots, c_{\pi^{-1}(p)})
\ee
for a permutation $\pi$ satisfying
\be
\pi^{-1}(1)<\pi^{-1}(2)<\dots<\pi^{-1}(p).
\ee
This leads us to the following definitions.

A permutation $\pi$ is called $[a,b]$-ordered if
\be
\pi^{-1}(a)<\pi^{-1}(a+1)<\dots<\pi^{-1}(b).
\ee
More generally, a permutation $\pi$ is $[a_1,b_1]\times [a_2,b_2]$-ordered if it is simultaneously $[a_1,b_1]$-ordered and $[a_2,b_2]$-ordered. With this notation the following can be readily verified:
\begin{lem}\label{simpleLemma}
There is a one-to-one correspondence between integer subsets $\{i_1, \dots, i_p\}\subset [1,r]$ and {\mbox{$[1,p]\times[p+1,r]$}}-ordered permutations $\pi$ determined by setting $\pi^{-1}(a)=i_a$ for $1\leq a\leq p$, where we have ordered $i_1<i_2<\dots<i_p$. Moreover, for a $[1,p]\times[p+1,r]$-ordered permutation $\pi$ we have
\be
l(\pi)=\sum_{i=1}^p \(\pi^{-1}(i)-i\)=\sum_{i=1}^p \pi^{-1}(i) -  \frac{p(p+1)}{2}.
\ee
\end{lem}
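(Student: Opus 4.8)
The plan is to work throughout with the inverse permutation $w:=\pi^{-1}$, because the condition that $\pi$ be $[a,b]$-ordered is exactly the monotonicity $w(a)<w(a+1)<\dots<w(b)$, and because $l(\pi)=l(\pi^{-1})=l(w)$. So both assertions of the lemma become statements purely about the bijection $w$ of $[1,r]$.

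For the correspondence, I would first observe that the prescription $\pi^{-1}(a)=i_a$ for $1\leq a\leq p$, together with the requirement that $\pi$ be $[p+1,r]$-ordered, determines $\pi$ uniquely: once $w=\pi^{-1}$ is fixed on $[1,p]$ with the distinct values $i_1<\dots<i_p$, the remaining values $w(p+1),\dots,w(r)$ form some enumeration of the complement $[1,r]\setminus\mathcal I$, and $[p+1,r]$-orderedness forces this enumeration to be increasing. This gives a map from subsets to permutations; the resulting $w$ is automatically $[1,p]$-ordered since $i_1<\dots<i_p$, so $\pi=w^{-1}$ is indeed $[1,p]\times[p+1,r]$-ordered. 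In the other direction, any $[1,p]\times[p+1,r]$-ordered $\pi$ yields the subset $\{\pi^{-1}(1),\dots,\pi^{-1}(p)\}$, whose increasing rearrangement recovers $(i_1,\dots,i_p)$ by $[1,p]$-orderedness. I would then check these two maps are mutually inverse, which is immediate once one notes that the subset determines $w$ on $[1,p]$ and the complement (enumerated increasingly) on $[p+1,r]$.

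For the length formula, I would count the inversions of $w$, i.e.\ pairs $a<b$ with $w(a)>w(b)$. The $[1,p]$- and $[p+1,r]$-ordering conditions eliminate every pair with $a,b$ both in $[1,p]$ or both in $[p+1,r]$, so every inversion has $a\in[1,p]$ and $b\in[p+1,r]$ (and then $a<b$ holds automatically). For fixed $a$, the number of such $b$ equals the number of elements of $[1,r]\setminus\mathcal I$ that are smaller than $w(a)=i_a$; since exactly $a-1$ of the $i_a-1$ integers in $[1,i_a-1]$ lie in $\mathcal I$ (namely $i_1,\dots,i_{a-1}$), this count is $(i_a-1)-(a-1)=i_a-a$. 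Summing over $a=1,\dots,p$ then gives
\be
l(\pi)=\sum_{a=1}^{p}\bigl(\pi^{-1}(a)-a\bigr)=\sum_{a=1}^{p}\pi^{-1}(a)-\frac{p(p+1)}{2}.
\ee

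The hard part is essentially nonexistent: the only points needing care are invoking $l(\pi)=l(\pi^{-1})$ to reduce everything to monotonicity properties of $w$, and recognizing that all inversions of $w$ are ``mixed'' pairs with one index in each block — after that the count is the elementary complement-counting above.
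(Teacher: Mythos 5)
Your proof is correct and complete. The paper states this lemma without proof (it is marked as readily verified), and your argument — passing to $w=\pi^{-1}$ so that orderedness becomes monotonicity on the two blocks, observing that every inversion is a mixed pair, and counting complement elements below $i_a$ to get $i_a-a$ — is exactly the standard elementary verification the author leaves to the reader.
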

\qed

 So, each order-preserving length $p$ subsequence of $\bc=(c_1, \dots, c_r)$ can be uniquely described as $\pi.\bc[1,p]$ for a $[1,p]\times[p+1, r]$-ordered permutation $\pi$\footnote{Though we do not use this terminology, $[1,p]\times[p+1,r]$-ordered permutations are commonly called \emph{$(p,r-p)$-shuffles}.}. We denote the set of such permutations by $S^{p\mid r-p}$.

\subsection{Local relation: $q$-moments.} The first local relation describes the local behavior of a certain observable of the weights $W_{s,t}(\bA,\bB;\bC,\bD)$, which are treated as a stochastic sampling rule for compositions $(\bC,\bD)$ given $(\bA,\bB)$. 

\begin{prop}\label{localRelationAlg} For any compositions $\bA,\bB, \bR\in\mathbb Z_{\geq 0}^{n}$ we have
\begin{multline}
\label{localRelationAlgEq}
\sum_{\bC,\bD}q^{\sum_{i\leq j}R_iD_j} W_{t,s}(\bA,\bB;\bC,\bD)\\
=\sum_{\bP\leq\bR}(s^2/t^2)^{|\bP|}\frac{(s^2/t^2;q)_{|\bR|-|\bP|}(t^2;q)_{|\bP|}}{(s^2;q)_{|\bR|}}q^{\sum_{i<j}(R_i-P_i)P_j}\prod_{i=1}^n\binom{R_i}{P_i}_qq^{\sum_{i\leq j}P_iA_j},
\end{multline}
where the sum in the right-hand side is over compositions $\bP=(P_1, \dots, P_n)$ such that $P_i\leq R_i$.
\end{prop}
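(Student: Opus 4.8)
The plan is to recognize the left-hand side of \eqref{localRelationAlgEq} as a partition function built from $q$-Hahn vertex weights and to massage it via the Yang-Baxter equation \eqref{WYBeq} (or its specialization to stochasticity). First I would rewrite the prefactor $q^{\sum_{i\le j}R_iD_j}$ as the output of an auxiliary vertex: note that if one stacks a second $q$-Hahn vertex $W_{t',s'}$ on top of the one carrying $(\bA,\bB;\bC,\bD)$, feeding in the auxiliary input $\bR$ from the left, then the leading monomial of that weight in an appropriate limit of the spin parameters produces exactly $q^{\sum_{i\le j}R_i D_j}$ times a delta-function identifying the through-edge. Concretely, I would use the explicit formula \eqref{Wdef}: in the limit $s'\to 0$ (equivalently $q^{-\MM'}\to\infty$ via the analytic continuation of \eqref{fullyfusedexpression}–\eqref{spec}) the weight $W_{t',s'}(\bR,\bC;\bullet,\bullet)$ degenerates so that the only surviving configuration sends all of $\bR$ straight through and contributes the power of $q$ we want. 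This reinterprets the left side as a two-vertex partition function.

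Next I would invoke stochasticity \eqref{stochW} to sum out the top edge freely, and then apply the Yang-Baxter equation \eqref{WYBeq} to pull the auxiliary vertex past the $(\bA,\bB)$-vertex, so that the auxiliary line now interacts directly with $\bA$ rather than with $\bC,\bD$. After the interchange, one of the two vertices can again be summed away by stochasticity, and what remains is a single $q$-Hahn vertex with incoming data $(\bR,\bA)$ (up to the same degenerate-spin prescription), whose weight is read off from \eqref{Wdef}. Writing that weight out with summation over the internal composition $\bP$ (the analogue of $\bD$ but now for the $\bR$-$\bA$ vertex) gives precisely the right-hand side: the factor $(s^2/t^2)^{|\bP|}\frac{(s^2/t^2;q)_{|\bR|-|\bP|}(t^2;q)_{|\bP|}}{(s^2;q)_{|\bR|}}q^{\sum_{i<j}(R_i-P_i)P_j}\prod_i\binom{R_i}{P_i}_q$ is the $q$-Hahn weight for $(\bR,\ast;\ast,\bP)$, and the residual interaction with $\bA$ produces the trailing monomial $q^{\sum_{i\le j}P_iA_j}$ exactly as the degenerate auxiliary vertex did before.

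An alternative, more hands-on route — which I would keep in reserve in case the Yang-Baxter bookkeeping with the singular spin limit is delicate — is a direct induction on $|\bR|$. The base case $\bR=\mathbf 0$ is just stochasticity \eqref{stochW}. For the inductive step one removes a single path of some color $c$ from $\bR$; on the left side this multiplies the summand by $q^{\sum_{j\ge c}D_j}$, which one can absorb by a shift of the spectral/spin parameters in the $q$-Hahn weight using the explicit form \eqref{Wdef}, and on the right side the $q$-binomial Vandermonde-type identity $\binom{R_i}{P_i}_q$-recursion matches this shift. This is essentially the computation behind the fusion of $q$-Hahn weights.

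I expect the main obstacle to be the careful handling of the degenerate spin specialization: the monomial $q^{\sum_{i\le j}R_iD_j}$ is not literally a $q$-Hahn vertex weight at generic parameters, so one must justify the limiting procedure (or, in the combinatorial approach, verify that the $q$-Pochhammer and $q$-binomial manipulations are valid termwise, which is fine since $|q|<1$). Once the left side is correctly identified as a genuine (possibly degenerate) two-vertex partition function, the Yang-Baxter equation does the rest essentially mechanically, and the right side falls out by reading off a single vertex weight.
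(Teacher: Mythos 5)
You have correctly guessed the right tool: the paper's proof of Proposition \ref{localRelationAlg} is indeed an application of the Yang--Baxter equation \eqref{WYB} in which the auxiliary line carries a degenerate spin parameter $\varepsilon\to 0$. But the concrete mechanism you propose for manufacturing the monomial $q^{\sum_{i\leq j}R_iD_j}$ does not work, and the missing piece is precisely the technical heart of the argument. A single auxiliary vertex stacked on top of the $(\bA,\bB;\bC,\bD)$-vertex sees only $\bC$, not $\bD$; and even granting access to the right edge, the exponent $\sum_{i\leq j}R_iD_j$ contains the diagonal terms $R_iD_i$ and cannot be read off as the exponent $\sum_{i<j}D_i(A_j-D_j)$ appearing in \eqref{Wdef} under any specialization of $t',s'$ alone. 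What the paper actually does is place the auxiliary \emph{vertical} line so that it crosses \emph{both} outgoing edges of the $W_{t,s}$-vertex (this is exactly the three-vertex YBE configuration \eqref{localProofInitialYB}), sends $\varepsilon\to0$, and then \emph{analytically continues the vertical edge labels} from $q^{\bm X}$ to generic $\bm\alpha\in\mathbb C^n$ (legitimate because both sides are polynomials in $\bm\alpha$). The lower degenerate vertex absorbs $\bD$ entirely, shifting the label to $\bm\alpha q^{\bD}$, and after setting $\alpha_1=\dots=\alpha_n=\alpha$ the desired monomial appears as the coefficient of the \emph{top-degree term in $\alpha$} of the upper vertex's weight, via $\prod_i(\alpha q^{D_i})^{R_{[1,i]}}$. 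Both sides of \eqref{localRelationAlgEq} are then the top-degree coefficients of the two sides of the continued YBE. Without this continuation-and-leading-coefficient extraction there is no way to isolate the monomial, so your outline as written cannot be completed.

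Two further points. First, your repeated appeals to stochasticity to ``sum out'' edges are off the mark: no edge is summed away by \eqref{stochW} in this argument (the internal sums over $\bC,\bD$ and over $\bP$ are exactly the sums appearing in the identity), and the paper even remarks that stochasticity is \emph{not} used here — rather, Proposition \ref{localRelationAlg} at $\bR=\bm0$ \emph{implies} \eqref{stochW}. Relatedly, the right-hand side is not literally ``the $q$-Hahn weight for $(\bR,\ast;\ast,\bP)$'': the exponent $q^{\sum_{i<j}(R_i-P_i)P_j}$ differs from the exponent $q^{\sum_{i<j}P_i(R_j-P_j)}$ that \eqref{Wdef} would give, and the correct factor emerges only as the product of leading coefficients of the two continued vertices after the YBE swap. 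Second, your fallback induction on $|\bR|$ rests on the claim that multiplying the summand by $q^{\sum_{j\geq c}D_j}$ can be ``absorbed by a shift of the spectral/spin parameters''; in the colored setting this is false as stated, because the cross terms $q^{\sum_{i<j}D_i(A_j-D_j)}$ and the product of $q$-binomials do not transform covariantly under such a shift, so that route would also need a genuinely new idea.
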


The proof of Proposition \ref{localRelationAlg} will be given shortly, but first we want to explain the probabilisitic interpretation of \eqref{localRelationAlgEq}.  For a sequence $\bc=(c_1, c_2, \dots, c_r)$ and a composition $\bA=(A_1,\dots, A_n)$ define
\begin{equation}
\label{defoflocalQ}
\Q_{\geq\bc}(\bA)=\prod_{i=1}^r\prod_{j\geq c_i}q^{A_j}=q^{A_{[c_1,n]}+A_{[c_2,n]}+\dots+A_{[c_r,n]}}.
\end{equation}
where we use the notation $A_{[i,j]}$ from \eqref{Aijdef}. Since the weights $W_{t,s}(\bA,\bB;\bC,\bD)$ are stochastic we can define a (complex-valued) probability measure on configurations of the outgoing edges $(\bC,\bD)$ conditioned on a given fixed incoming configuration $(\bA,\bB)$. We denote this probability measure as
\be
\mathbb P_{t,s}\left[(\bC,\bD)\mid(\bA, \bB)\right]=W_{t,s}(\bA,\bB;\bC,\bD)
\ee
and let $\E_{t,s}$ denote the expectation with respect to this probability measure.

\begin{cor} \label{localRelationSub}  For a sequence $\bc=(c_1, \dots, c_r)$ such that
\be
c_1\leq c_2\leq\dots\leq c_r
\ee
and arbitrary compositions $\bA,\bB$ the following relation holds
\begin{equation}
\label{localRelationSubEqI}
\E_{t,s}\left[\Q_{\geq\bc}(\bD)\ \big|\ (\bA,\bB)\right]=\sum_{\mathcal I}(s^2/t^2)^{l(\mathcal I)}\frac{(s^2/t^2;q)_{r-l(\mathcal I)}(t^2;q)_{l(\mathcal I)}}{(s^2;q)_{r}}q^{-\binom{l(\mathcal I)+1}{2}+\sum_{i\in\mathcal I}i}\ \Q_{\geq\bc[\mathcal I]}(\bA),
\end{equation}
where the sum is over subsets $\mathcal I\subset\{1, 2, \dots, r\}$, $l(\mathcal I)$ denotes the number of elements in $\mathcal I$ and 
\be
\bc[\mathcal I]:=(c_{i_1}, \dots, c_{i_{|\mathcal I|}}).
\ee
Equivalently, 
\begin{equation}
\label{localRelationSubEqT}
\E_{t,s}\left[\Q_{\geq\bc}(\bD)\ \big|\ (\bA,\bB)\right]=\sum_{p=0}^r\sum_{\tau\in S^{p|r-p}}(s^2/t^2)^{p}\frac{(s^2/t^2;q)_{r-p}(t^2;q)_{p}}{(s^2;q)_{r}}q^{l(\tau)} \Q_{\geq\tau.\bc[1,p]}(\bA),
\end{equation}
where the sum is over all $[1,p]\times [p+1, r]$-ordered permutations and 
\be
\tau.\bc[1,p]:=\(c_{\tau^{-1}(1)}, \dots, c_{\tau^{-1}(p)}\).
\ee
\end{cor}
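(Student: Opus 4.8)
The plan is to derive Corollary \ref{localRelationSub} directly from Proposition \ref{localRelationAlg} by a careful but routine bookkeeping argument translating the composition-level identity \eqref{localRelationAlgEq} into the language of ordered subsequences. First I would recall that by definition of the conditional expectation and of $\Q_{\geq\bc}$ in \eqref{defoflocalQ}, we have
\be
\E_{t,s}\left[\Q_{\geq\bc}(\bD)\mid(\bA,\bB)\right]=\sum_{\bC,\bD}q^{A_{[c_1,n]}\text{-analogue for }\bD}\,W_{t,s}(\bA,\bB;\bC,\bD),
\ee
so the key point is to recognize the left-hand side of \eqref{localRelationAlgEq} as exactly this expectation for an appropriate choice of $\bR$. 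Indeed, given the ordered sequence $\bc=(c_1\leq\dots\leq c_r)$ I would set $\bR$ to be the composition with $R_j=\#\{i\mid c_i=j\}$, i.e. the part-multiplicity vector of $\bc$; then $\sum_{i\leq j}R_iD_j = \sum_{i=1}^r D_{[c_i,n]}$, which is precisely the exponent appearing in $\Q_{\geq\bc}(\bD)$. With this identification Proposition \ref{localRelationAlg} gives us the expectation in terms of a sum over sub-compositions $\bP\leq\bR$, and it remains to match that sum against the right-hand side of \eqref{localRelationSubEqI}.

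The second step is the translation $\{\bP\le\bR\}\leftrightarrow\{\text{ordered subsequences }\bc[\mathcal I]\}$. A sub-composition $\bP\leq\bR$ records, for each color/value $j$, how many of the $R_j$ copies of $j$ in $\bc$ are selected; since $\bc$ is weakly increasing, the multiset of selected values is the same as an order-preserving subsequence $\bc[\mathcal I]$, and conversely. The correspondence is not quite one-to-one on the nose: a fixed $\bP$ corresponds to $\prod_j\binom{R_j}{P_j}$ choices of $\mathcal I$, but all these $\mathcal I$ give the same subsequence $\bc[\mathcal I]$ (as a sequence) because equal $c_i$'s are interchangeable; one must check that the weight attached to $\mathcal I$ in \eqref{localRelationSubEqI}, namely $q^{-\binom{l(\mathcal I)+1}{2}+\sum_{i\in\mathcal I}i}$, when summed over the $\prod_j\binom{R_j}{P_j}$ subsets $\mathcal I$ compatible with $\bP$, reproduces $q^{\sum_{i<j}(R_i-P_i)P_j}\prod_i\binom{R_i}{P_i}_q$ times the pure power $q^{\sum_{i\le j}P_iA_j}=\Q_{\ge\bc[\mathcal I]}(\bA)$. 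This is a $q$-Vandermonde / $q$-binomial bookkeeping identity: grouping the chosen indices by value and using $\sum_{i\in\mathcal I}i$ additivity over blocks, the sum over placements of the $P_j$ chosen indices among the $R_j$ equal slots produces exactly the Gaussian binomial $\binom{R_j}{P_j}_q$ (up to the standard $q$-shift that accounts for the offset of block $j$), while the cross-terms $q^{\sum_{i<j}(R_i-P_i)P_j}$ come from the relative order of blocks. I would verify this by a direct computation using the identity $\sum_{\text{subsets}}q^{\sum(\text{positions})}=q^{(\text{min})} \binom{R}{P}_q$ for a single block and then assembling blocks.

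The equivalence of \eqref{localRelationSubEqI} and \eqref{localRelationSubEqT} is then immediate from Lemma \ref{simpleLemma}: the bijection there sends a subset $\mathcal I$ of size $p$ to a $[1,p]\times[p+1,r]$-ordered permutation $\tau$ with $l(\tau)=\sum_{i\in\mathcal I}i-\binom{p+1}{2}$, so the factor $q^{-\binom{l(\mathcal I)+1}{2}+\sum_{i\in\mathcal I}i}$ with $l(\mathcal I)=p$ becomes $q^{l(\tau)}$, and $\bc[\mathcal I]=\tau.\bc[1,p]$ by construction. Splitting the sum over $\mathcal I$ according to $p=|\mathcal I|$ and noting that $(s^2/t^2)^{|\mathcal I|}$ and the Pochhammer factors depend only on $p$ finishes the passage to \eqref{localRelationSubEqT}.

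The main obstacle I anticipate is the middle step: getting the $q$-powers to match exactly. One must be careful that $\Q_{\ge\bc[\mathcal I]}(\bA)=q^{\sum_{i\le j}P_iA_j}$ only when $\mathcal I$ (equivalently $\bP$) is interpreted correctly — here the ordering of $\bc$ is essential so that $\bc[\mathcal I]$ as a weakly increasing sequence has part-multiplicities exactly $\bP$ — and that the ``internal'' $q$-exponent $-\binom{p+1}{2}+\sum_{i\in\mathcal I}i$, when averaged over the $\binom{R_j}{P_j}$ ways of choosing which copies of value $j$ go into $\mathcal I$, correctly builds up the Gaussian binomials and the $q^{\sum_{i<j}(R_i-P_i)P_j}$ factor on the right of \eqref{localRelationAlgEq}. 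This is purely combinatorial and standard, but it is the place where a sign/offset error is easiest to make, so I would carry it out explicitly for a single color block first and then argue multiplicativity over blocks. (The proof of Proposition \ref{localRelationAlg} itself, via the Yang–Baxter equation, is a separate matter handled before this corollary.)
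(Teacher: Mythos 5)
Your proposal is correct and follows essentially the same route as the paper's own reduction: identify $\bR$ with the part-multiplicity vector of $\bc$ so that $\Q_{\geq\bc}(\bD)=q^{\sum_{i\leq j}R_iD_j}$, then match the sum over $\bP\leq\bR$ in Proposition \ref{localRelationAlg} with the sum over subsets $\mathcal I$ by verifying block-by-block that $\sum_{\mathcal I}q^{-\binom{l(\mathcal I)+1}{2}+\sum_{i\in\mathcal I}i}$ over the subsets compatible with $\bP$ produces $q^{\sum_{i<j}(R_i-P_i)P_j}\prod_i\binom{R_i}{P_i}_q$ via the standard $q$-binomial identity, and finally pass to the permutation form via Lemma \ref{simpleLemma}. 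The single-block computation you flag as the delicate point is exactly the one the paper carries out (citing the classical generating-function expression for Gaussian binomials), and your bookkeeping of the offsets is consistent with it.
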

\begin{proof}[Reduction of Corollary \ref{localRelationSub} to Proposition \ref{localRelationAlg}]
The second part follows at once from Lemma~\ref{simpleLemma}. So we focus on the first part.

Let $\bc=1^{R_1}2^{R_2}\dots n^{R_n}$ for a composition $\bR=(R_1,\dots, R_n)$ with $|\bR|=r$, so that
\be
R_i=\#\{j\in[1;r]\mid c_j=i\}.
\ee
 Then for any composition $\bD$ we have
\be
\Q_{\geq\bc}(\bD)=q^{D_{[c_1,n]}+D_{[c_2,n]}+\dots+D_{[c_r,n]}}=q^{R_1D_{[1,n]}+R_2D_{[2,n]}+\dots+R_nD_n}=q^{\sum_{i\leq j}R_iD_j}.
\ee
Similarly, if $\bc[\mathcal I]=1^{P_1}2^{P_2}\dots n^{P_n}$ for a composition $\bP$, then $\Q_{\geq\bc[\mathcal I]}(\bA)=q^{\sum_{i\leq j}P_iA_j}$. Thus, \eqref{localRelationSubEqI} is equivalent to
\be
\E_{t,s}\left[q^{\sum_{i\leq j}R_iD_j}\ \big|\ (\bA,\bB)\right]=\sum_{\bP\leq\bR}\sum_{\substack{\mathcal I\subset\{1,\dots, |\bR|\}\\\bc[\mathcal I]=1^{P_1}2^{P_2}\dots}}(s^2/t^2)^{|\bP|}\frac{(s^2/t^2;q)_{|\bR|-|\bP|}(t^2;q)_{|\bP|}}{(s^2;q)_{|\bR|}} q^{-\binom{l(\mathcal I)+1}{2}+\sum_{i\in\mathcal I}i}\ q^{\sum_{i\leq j}P_iA_j}.
\ee
Note that $P_i=\left|\mathcal I\cap \left\{R_{[1,i-1]}+1,\dots,R_{[1,i]}\right\}\right|$, so to reduce Corollary \ref{localRelationSub} to Proposition \ref{localRelationAlg} it is enough to show for any compositions $\bP\leq\bR$ that
\be
\sum_{\substack{\mathcal I\subset\{1,\dots, |\bR|\}:\\  \left|\mathcal I\cap \left\{R_{[1,i-1]}+1,\dots,R_{[1,i]}\right\}\right|=P_i}}q^{-\binom{l(\mathcal I)+1}{2}+\sum_{i\in\mathcal I}i}=q^{\sum_{i<j}(R_i-P_i)P_j}\prod_{i=1}^n\binom{R_i}{P_i}_q.
\ee
The latter relation readily follows from the standard expression for the $q$-binomial coefficients, \emph{cf.} \cite[Theorem 6.1]{KC02}:
\be
\sum_{\substack{\mathcal I_k\subset\{R_{[1,k-1]}+1,\dots, R_{[1,k]}\}\\ |\mathcal I_k|=P_k}}q^{-\binom{P_k+1}{2}-R_{[1,k-1]}P_k+\sum_{i\in\mathcal I_k}i}=\binom{R_k}{P_k}_q,
\ee
applied for $1\leq k\leq n$ to the decomposition $\mathcal I=\mathcal I_1\sqcup\mathcal I_2\sqcup\dots\sqcup\mathcal I_n$, with $\mathcal I_k:=\mathcal I\cap\left\{R_{[1,k-1]}+1,\dots, R_{[1,k]}\right\}$.
\end{proof}

\begin{proof}[Proof of Proposition \ref{localRelationAlg}]
The main idea is to see the relation \eqref{localRelationAlgEq} as a particular instance of the Yang-Baxter equation. We start with the Yang-Baxter equation \eqref{WYB} with the following parameters and boundary conditions
\begin{equation}
\label{localProofInitialYB}
\tikzbase{1.2}{-3}{
	\draw[fused]
	(-2,0.5) node[above,scale=0.7] {\color{black} $\bB$} -- (-1,-0.5) -- (1.5,-0.5) node[right,scale=0.6] {\color{black} $\bm 0$};
	\draw[fused] 
	(-2,-0.5) node[below,scale=0.7] {\color{black} $\bA$} -- (-1,0.5)  -- (1.5,0.5) node[right,scale=0.6] {\color{black} $\bR$};
	\draw[fused] 
	(0,-1.5) node[below,scale=0.7] {\color{black} $\bm X$} -- (0,0) -- (0,1.5) node[above,scale=0.6] {\color{black} $\bm Y$};
	\node[above right] at (0,-0.5) {\tiny{ $W_{t,\varepsilon}$}};
	\node[above right] at (0,0.5) {\tiny{ $W_{s,\varepsilon}$}};
	\node[right] at (-1.5,0) {\ \tiny{$W_{t,s}$}};
}\quad
=
\quad
\tikzbase{1.2}{-3}{
	\draw[fused] 
	(-1,1) node[left,scale=0.7] {\color{black} $\bB$} -- (1.2,1) -- (2.2,0) node[below,scale=0.6] {\color{black} $\bm 0$};
	\draw[fused] 
	(-1,0) node[left,scale=0.7] {\color{black} $\bA$} -- (1.2,0) -- (2.2,1) node[above,scale=0.6] {\color{black} $\bR$};
	\draw[fused] 
	(0,-1) node[below,scale=0.7] {\color{black} $\bm X$} -- (0,0.5) -- (0,2) node[above,scale=0.6] {\color{black} $\bm Y$};
	\node[above right] at (0,0) {\tiny{ $W_{s,\varepsilon}$}};
	\node[above right] at (0,1) {\tiny{ $W_{t,\varepsilon}$}};
	\node[right] at (1.7,0.5) {\ \tiny{$W_{t,s}$}};
}
\end{equation}
where $t,s,\bA,\bB,\bR$ are the same as in \eqref{localRelationAlgEq}, while $\varepsilon$ is an arbitrary parameter and $\bm X,\bm Y$ are arbitrary compositions satisfying
\be
\bm Y=\bm X+\bA+\bB-\bR.
\ee
The next step is to take $\ve\to 0$ and to analytically continue the labels of the vertical edges. Namely, consider the following vertex weights:
\be
\tikz{1}{
	\draw[fused] (-1,0) -- (0,0) -- (1,0);
	\draw[cont] (0,-1) -- (0,0) -- (0,1);
	\node[left] at (-1,0) {\tiny $\bJ$};\node[right] at (1,0) {\tiny $\bL$};
	\node[below] at (0,-1) {\tiny $\bm \alpha$};\node[above] at (0,1) {\tiny $\bm \beta$};
	\node[above right] at (0,0.1) {\tiny $\widetilde{W}_{t}$};
}=\widetilde{W}_{t}(\bm\alpha,\bJ;\bm\beta,\bL):=\1_{\bm\alpha q^{\bJ}=\bm\beta q^{\bL}}\ t^{-2|\bL|}(t^2;q)_{|\bL|}q^{\sum_{i<j}-L_iL_j}\prod_{i=1}^n\alpha_i^{L_{[1,i-1]}}\frac{(q^{1-L_i}\alpha_i;q)_{L_i}}{(q;q)_{L_i}},
\ee
where $\bm \alpha=(\alpha_1, \dots, \alpha_n)\in\mathbb C^n$ and $\bm \beta=(\beta_1, \dots, \beta_n)\in\mathbb C^n$ are analytically continued compositions and we use the notation $\bm\alpha q^{\bI}=(\alpha_1q^{I_1}, \dots, \alpha_nq^{I_n})$. The weights $\widetilde{W}_t$ are obtained from the weights $W_{t,\ve}$ by taking $\ve\to0$ and performing analytic continuation: using \eqref{Wdef} one can readily verify that
\begin{equation}
\label{localProofCont}
\tikz{1}{
	\draw[fused] (-1,0) -- (0,0) -- (1,0);
	\draw[cont] (0,-1) -- (0,0) -- (0,1);
	\node[left] at (-1,0) {\tiny $\bJ$};\node[right] at (1,0) {\tiny $\bL$};
	\node[below] at (0,-1) {\tiny $q^{\bI}$};\node[above] at (0,1) {\tiny $q^{\bK}$};
	\node[above right] at (0,0.1) {\tiny $\widetilde{W}_{t}$};
}=\restr{\(\varepsilon^{-2|\bL|}
\tikz{1}{
	\draw[fused] (-1,0) -- (0,0) -- (1,0);
	\draw[fused] (0,-1) -- (0,0) -- (0,1);
	\node[left] at (-1,0) {\tiny $\bJ$};\node[right] at (1,0) {\tiny $\bL$};
	\node[below] at (0,-1) {\tiny ${\bI}$};\node[above] at (0,1) {\tiny ${\bK}$};
	\node[above right] at (0,0.1) {\tiny ${W}_{t,\varepsilon}$};
}\)}{\varepsilon=0}.
\end{equation}
Note that the weight $\widetilde{W}_{t}(\bm\alpha,\bJ;\bm\alpha q^{\bJ-\bL},\bL)$ is a polynomial in $\alpha_1, \alpha_2, \dots, \alpha_n$, with the highest degree term\footnote{The degree of a monomial $\alpha_1^{l_1}\dots\alpha_n^{l_n}$ is $l_1+\dots+l_n$.} having an explicit expression
\begin{equation}
\label{localProofHighestDegree}
\widetilde{W}_{t}(\bm\alpha,\bJ;\bm\alpha q^{\bJ-\bL},\bL)=(-1)^{|\bL|}t^{-2|\bL|}(t^2;q)_{|\bL|}q^{-|\bL|(|\bL|-1)/2}\prod_{i=1}^n\frac{\alpha_i^{L_{[1,i]}}}{(q;q)_{L_i}}+\ \text{lower\ degree\ terms}.
\end{equation}

Multiplying both sides of \eqref{localProofInitialYB} by $\varepsilon^{-2|\bR|}$, taking $\varepsilon=0$ and applying \eqref{localProofCont} we obtain
\begin{equation}
\label{localProofContYB}
\sum_{\substack{\bC,\bD:\\ \bC+\bD=\bA+\bB}}
\quad
\tikzbase{1.2}{-3}{
	\draw[fused]
	(-2,0.5) node[above,scale=0.7] {\color{black} $\bB$} -- (-1,-0.5) node[below,scale=0.7] {\color{black} $\bD$} -- (1.5,-0.5) node[right,scale=0.6] {\color{black} $0$};
	\draw[fused] 
	(-2,-0.5) node[below,scale=0.7] {\color{black} $\bA$} -- (-1,0.5) node[above,scale=0.7] {\color{black} $\bC$} -- (1.5,0.5) node[right,scale=0.6] {\color{black} $\bR$};
	\draw[cont] 
	(0,-1.5) node[below,scale=0.7] {\color{black} $\bm \alpha$} -- (0,0) node[left,scale=0.7] {\color{black} $\bm \alpha q^{\bD}$} -- (0,1.5) node[above,scale=0.6] {\color{black} $\bm \alpha q^{\bA+\bB-\bR}$};
	\node[above right] at (0,-0.5) {\tiny{ $\widetilde W_{t}$}};
	\node[above right] at (0,0.5) {\tiny{ $\widetilde W_{s}$}};
	\node[right] at (-1.5,0) {\ \tiny{$W_{t,s}$}};
}\quad
=
\sum_{\bP\leq\bR}
\quad
\tikzbase{1.2}{-3}{
	\draw[fused] 
	(-1,1) node[left,scale=0.7] {\color{black} $\bB$} -- (1.2,1)  node[above,scale=0.6] {\color{black} $\bP$} -- (2.2,0) node[below,scale=0.6] {\color{black} $0$};
	\draw[fused] 
	(-1,0) node[left,scale=0.7] {\color{black} $\bA$} -- (1.2,0) node[below,scale=0.6] {\color{black} $\bR-\bP$} -- (2.2,1) node[above,scale=0.6] {\color{black} $\bR$};
	\draw[cont] 
	(0,-1) node[below,scale=0.7] {\color{black} $\bm \alpha$} -- (0,0.5) node[left,scale=0.7] {\color{black} $\bm \alpha q^{\bA-\bR+\bP}$} -- (0,2) node[above,scale=0.6] {\color{black} $\bm \alpha q^{\bA+\bB-\bR}$};
	\node[above right] at (0,0) {\tiny{ $\widetilde W_{s}$}};
	\node[above right] at (0,1) {\tiny{ $\widetilde W_{t}$}};
	\node[right] at (1.7,0.5) {\ \tiny{$W_{t,s}$}};
}
\end{equation}
for any $\bm \alpha=(q^{X_1}, q^{X_2}, \dots, q^{X_n})$, where $\bm X=(X_1, \dots, X_n)$ is a composition satisfying $\bm X+\bR-\bA-\bB\geq0$. Note that we have explicitly indicated the summations over internal edges in both sides of \eqref{localProofContYB}, specifying the internal labels. In particular, both sides are clearly finite sums of polynomial functions in $\alpha_1, \dots, \alpha_n$ over $\mathbb C(q,s,t)$, hence the equality for $\bm \alpha=q^{\bm X}$ can be continued to any $\bm\alpha=(\alpha_1, \dots, \alpha_n)\in\mathbb C^n$. 

Having established \eqref{localProofContYB} for arbitrary $\bm \alpha$, we can now set $\bm{\alpha}=(\alpha, \alpha, \dots, \alpha)$, that is, all $\alpha_i$ are equal to a single variable $\alpha$. Then both sides of \eqref{localProofContYB} become polynomials in $\alpha$ of degree $nR_1+(n-1)R_2+\dots+ R_n$. Taking the highest degree coefficients using \eqref{localProofHighestDegree} we get
\begin{multline*}
\sum_{\substack{\bC,\bD:\\ \bC+\bD=\bA+\bB}}\ W_{t,s}(\bA,\bB;\bC,\bD)\times(-1)^{|\bR|}s^{-2|\bR|}(s^2;q)_{|\bR|}q^{-\binom{|\bR|}{2}}\prod_{i=1}^n\frac{q^{D_iR_{[1,i]}}}{(q;q)_{R_i}}\\
=\sum_{\bP\leq\bR}(-1)^{|\bR|}s^{-2|\bR|+2|\bP|}t^{-2|\bP|}(s^2/t^2;q)_{|\bR|-|\bP|}(t^2;q)_{|\bP|}q^{-\binom{|\bR|-|\bP|}{2}-\binom{|\bP|}{2}}\prod_{i=1}^n\frac{q^{(A_i-R_i+P_i)P_{[1,i]}}}{(q;q)_{P_i}(q;q)_{R_i-P_i}},
\end{multline*}
which is equivalent to \eqref{localRelationAlgEq} after elementary algebraic manipulations.
\end{proof}

\begin{rem}
\normalfont
We have not used \eqref{stochW} during the proof of Proposition \ref{localRelationAlg}, so one can use the Yang-Baxter equation \eqref{WYB} to show that the weights $W_{t,s}$ are stochastic.
\end{rem}

\begin{rem}\normalfont The local relation above also holds for more general weights $\W^{\NN,\MM}_{z}$ from \eqref{fullyfusedexpression}, with the analogue of Proposition \ref{localRelationAlg} having the following form:
\begin{multline}
\label{fullyFusedLocal}
\sum_{\bC,\bD}q^{\sum_{i\leq j}R_iD_j} \W^{\NN,\MM}_{z}(\bA,\bB;\bC,\bD)=\sum_{\substack{\bX,\bY,\bZ\\ \bX+\bY+\bZ=\bR}}q^{-\MM|\bX|}\(zq^{\NN-\MM}\)^{|\bY|}\frac{(z;q)_{|\bX|}(q^{-\NN}/z;q)_{|\bY|}(zq^{\NN-\MM};q)_{|\bZ|}}{(zq^{-\MM};q)_{|\bR|}}\\
q^{\sum_{i<j}Y_iX_j+Z_iX_j+Z_iY_j}\prod_{i=1}^n\frac{(q;q)_{R_i}}{(q;q)_{X_i}(q;q)_{Y_i}(q;q)_{Z_i}}q^{\sum_{i\leq j}Y_iA_j+X_i(A_j+B_j)}.
\end{multline}
Using the same methods as in Section \ref{qHahnSec} below, this identity can be written in a form similar to \eqref{nonlocalQ}, describing the local behavior of the height functions around a vertex with weights $\W^{\NN,\MM}_{z}$. Since \eqref{fullyFusedLocal} is not used in the present work we omit its proof, but the identity from Proposition \ref{localRelationAlg}  constitutes the majority of that proof: plugging the explicit expression \eqref{fullyfusedexpression} into the left-hand side of \eqref{fullyFusedLocal} one obtains a double summation which can be taken in the order allowing two subsequent applications of Proposition \ref{localRelationAlg}, readily leading to the right-hand side of \eqref{fullyFusedLocal} after algebraic manipulations.
\end{rem}

\begin{rem}\normalfont
For the six-vertex model the recurrence relation analogous to Corollary \ref{localRelationSub} can be found, in increasing generality, in \cite{BG18}, \cite{BGW19} and \cite{BK20} (in the former two the relation is called \emph{the four point relation}). We note that the local relation for the $q$-Hahn model from the current work does not directly follows from those earlier results, and vice versa, the local relations for the six-vertex model cannot be obtained as degenerations of Corollary \ref{localRelationSub}.
\end{rem}

\subsection{Local relation: rational functions} Now we turn to the second relation, which considers certain rational functions in $w_1, \dots, w_r$. Recall that the action of the Hecke algebra $\mathcal H_r$ on the rational functions in $w_1, \dots, w_r$ is generated by 
\be
T_i=q+\frac{w_{i+1}-qw_i}{w_{i+1}-w_i}(\mathfrak s_i-1),\qquad \mathfrak s_{i}f(\dots, w_i, w_{i+1} \dots)=f(\dots, w_{i+1}, w_i, \dots).
\ee

\begin{prop} \label{localRelationRat}For any integer $r\in\mathbb Z_{\geq 0}$ and any complex parameters $t,s,\lambda$ such that $s^2\neq q^{-n}$ for any $n\in\mathbb Z_{\geq 0}$ the following equality of rational functions in $w_1, \dots, w_r$ holds
\be
\prod_{i=1}^r\frac{1-\lambda t^{-2}w_i}{1-\lambda w_i}=\sum_{p=0}^r\sum_{\tau\in S^{p|r-p}}(s^2/t^2)^p\frac{(s^2/t^2;q)_{r-p}(t^2;q)_p}{(s^2;q)_r}\ T_{\tau^{-1}}\(\prod_{i=1}^p\frac{1-\lambda s^{-2}w_i}{1-\lambda w_i}\),
\ee
where the sum is taken over $[1,p]\times[p+1, r]$-ordered permutations $\tau$, that is, permutations satisfying
\be
\tau^{-1}(1)<\tau^{-1}(2)<\dots<\tau^{-1}(p),\qquad \tau^{-1}(p+1)<\tau^{-1}(p+2)<\dots<\tau^{-1}(r).
\ee
\end{prop}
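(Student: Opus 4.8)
The statement to prove, Proposition \ref{localRelationRat}, is an identity of rational functions, but the striking observation from the section's introduction is that it is \emph{formally identical} to the $q$-moment local relation of Corollary \ref{localRelationSub}. So my plan is to exploit this coincidence rather than prove it by brute force. The right-hand side of Proposition \ref{localRelationRat} and the right-hand side of \eqref{localRelationSubEqT} have the same combinatorial shape: a sum over $p$ and over $[1,p]\times[p+1,r]$-ordered permutations $\tau$, with the same prefactor $(s^2/t^2)^p\frac{(s^2/t^2;q)_{r-p}(t^2;q)_p}{(s^2;q)_r}$, and with $T_{\tau^{-1}}$ acting on a product over the first $p$ variables playing the role of the $q^{l(\tau)}\Q_{\geq\tau.\bc[1,p]}(\bA)$ term. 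The strategy is therefore to show that the operators $T_{\tau^{-1}}$ applied to the ``seed'' function $\prod_{i=1}^p\frac{1-\lambda s^{-2}w_i}{1-\lambda w_i}$ reproduce, under a suitable specialization/matching of variables, exactly the exchange-relation combinatorics that govern $\Q_{\geq\tau.\bc[1,p]}$. Concretely, I would connect this to the exchange relation \eqref{Texch}: the operator identity $q\,\C_j(w_1)\C_i(w_2)=T^{(w_1,w_2)}\C_i(w_1)\C_j(w_2)$ is precisely what turns a single ordered product into a $T_\tau$-weighted sum over shuffles, and the single-factor functions $\frac{1-\lambda s^{-2}w}{1-\lambda w}$ should arise as eigenvalue-type quantities of a rank-one specialization of the row operators $\C_i$.

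Here is the order I would carry this out. First, I would prove the $p=r$, ``diagonal'' base identity and an Abelian reduction: check that when $s=t$ both sides collapse (the prefactor vanishes unless $p=r$, where it is $1$, and $T_e=\mathrm{id}$), and check the case $r=1$ directly (two terms, $p=0$ and $p=1$, and the identity $\frac{1-\lambda t^{-2}w}{1-\lambda w}=\frac{s^2/t^2(1-s^2/t^2)\cdot 0!\cdots}{\ldots}+\ldots$ reduces to a one-line computation using $(s^2/t^2;q)_1=1-s^2/t^2$ and $(t^2;q)_1=1-t^2$, $(s^2;q)_1=1-s^2$). Second, and this is the crux, I would set up an induction on $r$: split the sum on the right according to the position of the index $r$ among $\{\tau^{-1}(1),\dots,\tau^{-1}(p)\}$ versus $\{\tau^{-1}(p+1),\dots\}$, i.e. according to whether the ``last'' variable $w_r$ is among the ``active'' $p$ variables of the seed product or not. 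In the first case $T_{\tau^{-1}}$ factors (using $T_{\pi}T_{\tau}=T_{\pi\tau}$ when lengths add) through a product of adjacent $T_i$'s moving $w_r$ into position, producing via \eqref{Tdef} a rational telescoping that contributes the extra factor; in the second case $w_r$ is passive and one peels it off, lowering $r$ to $r-1$. Matching the two contributions against the recursive structure of $\prod_{i=1}^r\frac{1-\lambda t^{-2}w_i}{1-\lambda w_i}=\frac{1-\lambda t^{-2}w_r}{1-\lambda w_r}\prod_{i=1}^{r-1}(\cdots)$ should close the induction, with the $q$-Pochhammer prefactors recombining exactly as they do in the purely combinatorial identity $\sum_{\mathcal I}q^{\cdots}=q^{\cdots}\prod\binom{R_i}{P_i}_q$ used inside the proof of Corollary \ref{localRelationSub}.

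Alternatively — and this might be cleaner — I would try to realize both sides of Proposition \ref{localRelationRat} as rank-one specializations of the same vertex-model partition function, deriving it from the Yang--Baxter equation \eqref{WYB} in the same spirit as the proof of Proposition \ref{localRelationAlg}: there, specializing vertical edges to $q^{\bI}$ and taking highest-degree coefficients turned the Yang--Baxter equation into \eqref{localRelationAlgEq}; here one would instead take a ``dual'' specialization of the \emph{horizontal} (thin) edges, using the $\C_i$-operators of \eqref{rowdef} with a one-dimensional auxiliary space so that the $T_\tau$ operators enter through the commutation relations \eqref{exch2}. The factor $\frac{1-\lambda s^{-2}w}{1-\lambda w}$ should then be the single-site ratio of $w_{z;s}$ weights from \eqref{hsdef} with $I_{[1,n]}\le 1$.

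\textbf{Main obstacle.} The hard part is bookkeeping the $T_{\tau^{-1}}$ action: unlike plain permutations, the Demazure--Lusztig operators do not simply permute the seed product's arguments, they also generate lower-order ``error'' terms via the $\frac{w_{i+1}-qw_i}{w_{i+1}-w_i}(\mathfrak s_i-1)$ piece, and one must verify these errors reassemble correctly — in the induction this is exactly the step where the identity $(T_i-q)(T_i+1)=0$ and the shuffle-splitting $S^{p|r-p}=S^{p-1|r-p}\sqcup(\text{shuffles with }r\text{ active})$ have to be matched term-by-term against the $q$-binomial recurrence. I expect that getting the prefactor $q$-Pochhammers and the $T_\tau$ error terms to cancel simultaneously, rather than either one alone, is where the real work lies; the parallel with the (already proved) combinatorial identity inside Corollary \ref{localRelationSub} is the guiding principle that makes me confident it goes through.
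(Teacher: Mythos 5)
Your second, ``alternative'' route is the one the paper actually takes, but your sketch of it stops exactly where the real content begins, and the missing ingredient is not something the exchange relations can supply. Concretely: the paper forms a two-column partition function $f_{\bm 1^r-\bP\mid\bP}(w_1,\dots,w_r\mid x,y)$ built from the row operators $\C_i$ with column parameters $(x,y)$, and the exchange relation \eqref{Texch} plus an explicit ``frozen'' computation indeed produce the right-hand side structure $T_{\widetilde\tau_{\bP}^{-1}}$ applied to $\prod_{i\le|\bP|}\frac{1-\lambda s^{-2}w_i}{1-\lambda w_i}$ times the correct Pochhammer prefactors --- this much of your plan is sound. But the left-hand side $\prod_{i=1}^r\frac{1-\lambda t^{-2}w_i}{1-\lambda w_i}$ does not appear from this construction; it is the fully frozen partition function of the \emph{same} lattice with the two column parameters swapped, $f_{\bm 1^r\mid\bm 0}(\cdot\mid y,x)$. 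The identity equating that to $\sum_{\bP}\frac{(x^2/y^2;q)_{|\bP|}}{(\epsilon^2/y^2;q)_{|\bP|}}f_{\bm 1^r-\bP\mid\bP}(\cdot\mid x,y)$ is obtained by a zipper argument with the \emph{deformed} Yang--Baxter equation \eqref{defhsYB}, whose extra parameter $\eta$ is precisely what lets one commute an $\mathcal R$-vertex of type $W_{\epsilon/x,\epsilon/y}$ past a row while trading the column data $(x,y)\mapsto(y,x)$. The undeformed equation \eqref{WYB}/\eqref{hsYB} that you propose to use ``in the same spirit as Proposition \ref{localRelationAlg}'' cannot do this, because it has no free parameter decoupling the spectral and spin arguments of the two columns; this is the whole reason the paper needs the deformed equation from \cite{BK21}. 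Without naming this step your plan has no mechanism producing the left-hand side at all.

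Your primary route (induction on $r$ by splitting shuffles according to whether $r$ is active) is also not a proof as written: the step you yourself flag as the crux --- that the Demazure--Lusztig error terms from moving $w_r$ into position recombine with the $q$-Pochhammer prefactors --- is asserted, not verified, and it is not a consequence of the already-proved combinatorial identity inside Corollary \ref{localRelationSub}. Note that the formal coincidence between \eqref{localRelationSubEqT} and Proposition \ref{localRelationRat} is the \emph{conclusion} the paper is engineering (it is what makes the two sides of \eqref{qHahnIntExpression} satisfy the same recursion), not a tool one can cite to transfer the proof: in the corollary the coefficient of each shuffle is the scalar $q^{l(\tau)}$, whereas here it is the operator $T_{\tau^{-1}}$ acting on a non-symmetric rational function, and these agree only after pairing against symmetric data. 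So as it stands the proposal identifies the correct objects (row operators, exchange relations, shuffles) but is missing the one idea --- the column-exchange via the deformed Yang--Baxter equation --- that actually closes the argument.
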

\begin{proof}

The main idea of the proof is to take two columns of vertices with weights $w_{u;s}$, find an explicit expression for their partition function using exchange relations and then apply the deformed Yang-Baxter equation \eqref{defhsYB} to exchange the columns. The resulting identity turns out to be equivalent to the claim. Throughout the proof we use additional complex parameters $x,y,\ve$ and we fix a composition $\bm 1^r=(1, 1, \dots, 1)\in \mathbb Z^r$. 

We start with defining the two-column functions mentioned above. For any composition $\bP\leq\bm 1^r$ define
\be
f_{\bm 1^r-\bP\mid\bP}(w_1, \dots, w_r\mid x,y)=\langle \bm 1^r-\bP, \bP|\ \C_{r}(w_1\mid\Xi_{x,y},\S_{x,y})\dots\C_{1}(w_r\mid\Xi_{x,y},\S_{x,y})\ |\bm 0, \bm 0\rangle
\ee
where we set
\be
\Xi_{x,y}=\(\frac{\lambda}{xs}, \frac{\lambda}{y\epsilon}\), \quad \S_{x,y}=\(\frac{s}{x}, \frac{\epsilon}{y}\),
\ee
and $\C_i$ are the row operators \eqref{rowdef}. Alternatively, we can depict these functions as
\be
f_{\bm 1^r-\bP\mid\bP}(w_1, \dots, w_r\mid x,y)\quad =\quad\tikz{1}{
	\foreach\y in {2,...,5}{
		\draw[unfused] (1,\y) -- (6,\y);
	}
	\draw[fused] (2,1) -- (2,6);
	\draw[fused] (4,1) -- (4,6);
	\node[above right] at (2,5) {\small $\(\frac{\lambda w_1}{xs}; \frac{s}{x}\)$};
	\node[above right] at (4,5) {\small $\(\frac{\lambda w_1}{y\epsilon}; \frac{\epsilon}{y}\)$};
	
	\node[above right] at (2,3) {\small $\(\frac{\lambda w_{r-1}}{xs}; \frac{s}{x}\)$};
	\node[above right] at (4,3) {\small $\(\frac{\lambda w_{r-1}}{y\epsilon}; \frac{\epsilon}{y}\)$};
	
	\node[above right] at (2,2) {\small $\(\frac{\lambda w_r}{xs}; \frac{s}{x}\)$};
	\node[above right] at (4,2) {\small $\(\frac{\lambda w_r}{y\epsilon}; \frac{\epsilon}{y}\)$};
	\node[above] at (4,6) {$\bP$};
	\node[above] at (2,6) {$\bm{1}^r-\bP$};
	\node[left] at (1,5) {\small $r$};
	\node[left] at (1,4) {\small $\vdots$};
	\node[left] at (1,3) {\small $2$};
	\node[left] at (1,2) {\small $1$};
	\node[below] at (4,1) {$\bm 0$};
	\node[below] at (2,1) {$\bm 0$};
	\node[right] at (6,5) {\small $0$};
	\node[right] at (6,4) {\small $\vdots$};
	\node[right] at (6,3) {\small $0$};
	\node[right] at (6,2) {\small $0$};
}
\ee
where the diagram consists of $2r$ vertices with the weights $w_{z,t}$ from \eqref{hsnot}. The vertex at the intersection of row $i$ and the first column has the spectral and spin parameters $\(\frac{\lambda w_{r-i+1}}{xs}; \frac{s}{x}\)$, while the parameters of the other vertex in the same row are $\(\frac{\lambda w_{r-i+1}}{y\epsilon}; \frac{\epsilon}{y}\)$.

Now we want to use the exchange relations to obtain an explicit expression for $f_{\bm 1^r-\bP\mid\bP}$. Since $x,y$ are fixed in this part of an argument, for now we write $\Xi,\S$ instead of $\Xi_{x,y}, \S_{x,y}$. The exchange relation \eqref{Texch} implies that
\begin{multline*}
\langle \bm 1^r-\bP, \bP|\ \C_{\pi(r)}(w_1\mid\Xi,\S)\dots\C_{\pi(1)}(w_r\mid\Xi,\S)\ |\bm 0, \bm 0\rangle\\
=q^{-1}T_{r-i}\ \langle \bm 1^r-\bP, \bP|\ \C_{\pi \sigma_i(r)}(w_1\mid\Xi,\S)\dots\C_{\pi \sigma_i(1)}(w_r\mid\Xi,\S)\ |\bm 0, \bm 0\rangle
\end{multline*}
for any permutation $\pi$ such that $l(\pi)<l(\pi\sigma_i)$, where $\sigma_i$ is the transposition exchanging $i$ and $i+1$. The index $r-i$ of the inverse Hecke algebra operator comes from the fact that $w$ are ordered in the reverse order compared to $\C$. Iterating the identity above for a reduced expression $\tau=\sigma_{i_1}\dots\sigma_{i_l}$ we obtain
\begin{multline}
\label{localRatLemmaProof}
\langle \bm 1^r-\bP, \bP|\ \C_{r}(w_1\mid\Xi,\S)\dots\C_{1}(w_r\mid\Xi,\S)\ |\bm 0, \bm 0\rangle\\
=q^{-l(\tau)}T_{r-i_l}\dots T_{r-i_1} \langle \bm 1^r-\bP, \bP|\ \C_{\tau^{-1}(r)}(w_1\mid\Xi,\S)\dots\C_{\tau^{-1}(1)}(w_r\mid\Xi,\S)\ |\bm 0, \bm 0\rangle\\
=q^{-l(\tau)}T_{\widetilde{\tau}^{-1}} \langle \bm 1^r-\bP, \bP|\ \C_{\tau^{-1}(r)}(w_1\mid\Xi,\S)\dots\C_{\tau^{-1}(1)}(w_r\mid\Xi,\S)\ |\bm 0, \bm 0\rangle,
\end{multline}
where $\widetilde{\tau}$ is the image of $\tau$ under the automorphism of $S_r$ sending $\sigma_i$ to $\sigma_{r-i}$.

For now we focus on the partition function in the right-hand side of \eqref{localRatLemmaProof}, namely
\begin{equation}
\label{partition}
\langle \bm 1^r-\bP, \bP|\ \C_{\tau^{-1}(r)}(w_1\mid\Xi,\S)\dots\C_{\tau^{-1}(1)}(w_r\mid\Xi,\S)\ |\bm 0, \bm 0\rangle=\tikz{1}{
	\foreach\y in {2,...,5}{
		\draw[unfused] (1,\y) -- (6,\y);
	}
	\draw[fused] (2,1) -- (2,6);
	\draw[fused] (4,1) -- (4,6);
	\node[above right] at (2,5) {\small $\(\frac{\lambda w_1}{xs}; \frac{s}{x}\)$};
	\node[above right] at (4,5) {\small $\(\frac{\lambda w_1}{y\epsilon}; \frac{\epsilon}{y}\)$};
	
	\node[above right] at (2,3) {\small $\(\frac{\lambda w_{r-1}}{xs}; \frac{s}{x}\)$};
	\node[above right] at (4,3) {\small $\(\frac{\lambda w_{r-1}}{y\epsilon}; \frac{\epsilon}{y}\)$};
	
	\node[above right] at (2,2) {\small $\(\frac{\lambda w_r}{xs}; \frac{s}{x}\)$};
	\node[above right] at (4,2) {\small $\(\frac{\lambda w_r}{y\epsilon}; \frac{\epsilon}{y}\)$};
	\node[above] at (4,6) {$\bP$};
	\node[above] at (2,6) {$\bm{1}^r-\bP$};
	\node[left] at (1,5) {\small $\tau^{-1}(r)$};
	\node[left] at (1,4) {\small $\vdots$};
	\node[left] at (1,3) {\small $\tau^{-1}(2)$};
	\node[left] at (1,2) {\small $\tau^{-1}(1)$};
	\node[below] at (4,1) {$\bm 0$};
	\node[below] at (2,1) {$\bm 0$};
	\node[right] at (6,5) {\small $0$};
	\node[right] at (6,4) {\small $\vdots$};
	\node[right] at (6,3) {\small $0$};
	\node[right] at (6,2) {\small $0$};
}
\end{equation}
It turns out that the partition function above can be explicitly computed for a certain choice of $\tau$. Namely, for a composition $\bP\leq \bm 1^r$ let $\tau_{\bP}\in S_r$ be the unique permutation satisfying
\be
\tau_{\bP}^{-1}(1)<\tau_{\bP}^{-1}(2)<\dots<\tau_{\bP}^{-1}(r-|\bP|),\qquad \tau_{\bP}^{-1}(r-|\bP|+1)<\dots<\tau_{\bP}^{-1}(r),
\ee
\be
P_{\tau_{\bP}^{-1}(1)}=\dots =P_{\tau_{\bP}^{-1}(r-|\bP|)}=0,\qquad P_{\tau_{\bP}^{-1}(r-|\bP|+1)}=\dots =P_{\tau_{\bP}^{-1}(r)}=1.
\ee
In other words, $\tau_{\bP}$ is the minimal permutation such that $\tau^{-1}_{\bP}(r-|\bP|+1), \dots, \tau^{-1}_{\bP}(r)$ are the colors present in $\bP$. Then for $\tau=\tau_{\bP}$ the partition function \eqref{partition} has only one configuration with non-vanishing contribution: the path of color $\tau^{-1}_{\bP}(i)$ for $1\leq i\leq r-|\bP|$ enters at row $i$ and immediately goes upwards, exiting through the first column, while the path of color $\tau^{-1}_{\bP}(i)$ for $r-|\bP|< i\leq r$ enters at row $i$, crosses the first column and exits through the second one. Since each horizontal edge can be occupied by at most one path, this is the unique possible configuration. Computing the vertex weights for this configuration gives
\begin{multline*}
\langle \bm{1}^r-\bP, \bP|\ \C_{\tau^{-1}_{\bP}(r)}(w_1\mid\Xi,\S)\dots\C_{\tau^{-1}_{\bP}(1)}(w_r\mid\Xi,\S)\ |\bm 0, \bm 0\rangle\\
=\prod_{i=1}^{r-p}\frac{1-s^2x^{-2}q^{i-1}}{1-\lambda x^{-2}w_{r+1-i}}\prod_{i=r-p+1}^{r}\frac{(s^2x^{-2}-\lambda x^{-2}w_{r+1-i})q^{\#\left\{j\leq r-p \mid \tau^{-1}_{\bP}(j)>\tau^{-1}_{\bP}(i)\right\}}}{1-\lambda x^{-2}w_{r+1-i}}\frac{1-\epsilon^2y^{-2}q^{i-r+p-1}}{1-\lambda y^{-2}w_{r+1-i}}\\
=(s^2/x^2)^p(s^2/x^2;q)_{r-p}(\epsilon^2/y^2;q)_{p}\ q^{l(\tau_{\bP})}\prod_{i=1}^r\frac{1}{1-\lambda x^{-2}w_i}\prod_{i=1}^{p}\frac{1-\lambda s^{-2}w_i}{1-\lambda y^{-2}w_i}.
\end{multline*}
where we set $p:=|\bP|$ and we use the definition of $\tau_{\bP}$ to deduce
\be
\#\left\{i>r-p, j\leq r-p \mid \tau^{-1}_{\bP}(j)>\tau^{-1}_{\bP}(i)\right\}=l(\tau_{\bP}).
\ee
Plugging the expression above for the ``frozen" partition function into \eqref{localRatLemmaProof} we get
\be
f_{\bm{1}^{r}-\bP\mid\bP}(w_1, \dots, w_r\mid x,y)=(s^2/x^2)^{|\bP|}(s^2/x^2;q)_{r-|\bP|}(\epsilon^2/y^2;q)_{|\bP|}\ T_{\widetilde{\tau}_{\bP}^{-1}}\(\prod_{i=1}^r\frac{1}{1-\lambda x^{-2} w_i}\prod_{i=1}^{|\bP|}\frac{1-\lambda s^{-2}w_i}{1-\lambda y^{-2} w_i}\).
\ee

The next step is based on the deformed Yang Baxter equation \eqref{defhsYB}, used in the following form:
\begin{equation}
\label{localRatProofDefYB}
\tikzbase{1.2}{-3}{
	\draw[fused]
	(-1,-1) node[below,scale=0.7] {\color{black} $\bI$} -- (-1,0.5) -- (0.5,2) node[right,scale=0.6] {\color{black} $\bL$};
	\draw[fused] 
	(0.5,-1) node[below,scale=0.7] {\color{black} $\bJ$} -- (0.5,0.5)  -- (-1,2) node[left,scale=0.6] {\color{black} $\bK$};
	\draw[unfused] 
	(-2,-0.5) node[left, scale=0.7] {\color{black} $i$} -- (0,-0.5) -- (1.7,-0.5) node[right ,scale=0.6] {\color{black} $0$};
	\node[above right] at (-1,-0.5) {\tiny{ ${\(\frac{w\lambda}{xs};\frac{s}{x}\)}$}};
	\node[above right] at (0.5,-0.5) {\tiny{ ${\(\frac{w\lambda}{y\epsilon};\frac{\epsilon}{y}\)}$}};
	\node[right] at (-0.25,1.25) {\ \tiny{$\({\frac{\epsilon}{x},\frac{\epsilon}{y}}\)$}};
}\quad
=
\quad
\tikzbase{1.2}{-3}{
	\draw[fused]
	(-1,-1) node[below,scale=0.7] {\color{black} $\bI$} -- (0.5,0.5) -- (0.5,2) node[right,scale=0.6] {\color{black} $\bL$};
	\draw[fused] 
	(0.5,-1) node[below,scale=0.7] {\color{black} $\bJ$} -- (-1,0.5)  -- (-1,2) node[left,scale=0.6] {\color{black} $\bK$};
	\draw[unfused] 
	(-2,1) node[left, scale=0.7] {\color{black} $i$} -- (0,1) -- (1.7,1) node[right ,scale=0.6] {\color{black} $0$};
	\node[above right] at (-1,1) {\tiny{ ${\(\frac{w\lambda}{ys};\frac{s}{y}\)}$}};
	\node[above right] at (0.5,1) {\tiny{ ${\(\frac{w\lambda}{x\epsilon};\frac{\epsilon}{x}\)}$}};
	\node[right] at (-0.25,-0.25) {\ \tiny{$\({\frac{\epsilon}{x},\frac{\epsilon}{y}}\)$}};
}
\end{equation}
Here the tilted vertices have the weights $W_{\epsilon/x,\epsilon/y}$, while the other vertices have $w$-weights with indicated parameters. Introducing a new operator $\mathcal R$ with coefficients
\be
\langle \bK,\bL\mid \mathcal R \mid\bI,\bJ\rangle=W_{\epsilon/x, \epsilon/y}(\bJ, \bI; \bK, \bL),
\ee
we can rewrite \eqref{localRatProofDefYB} as
\be
\mathcal R\ \C_i(w\mid \Xi_{x,y}, \S_{x,y})=\C_i(w\mid \Xi_{y,x}, \S_{y,x})\mathcal R.
\ee
Using this exchange relation, we obtain
\begin{multline*}
f_{\bm{1}^r|\bm 0}(w_1, \dots, w_r\mid y,x)=\langle \bm{1}^r, \bm 0|\ \C_{r}(w_1\mid\Xi_{y,x},\S_{y,x})\dots\C_{1}(w_r\mid\Xi_{y,x},\S_{y,x})\mathcal R\ |\bm 0, \bm 0\rangle\\
=\langle \bm{1}^r, \bm 0|\ \mathcal R\ \C_{r}(w_1\mid\Xi_{x,y},\S_{x,y})\dots\C_{1}(w_r\mid\Xi_{x,y},\S_{x,y})\ |\bm 0, \bm 0\rangle=\sum_{\bP\leq \bm{1}^r}\frac{(x^2/y^2;q)_{|\bP|}}{(\epsilon^2/y^2;q)_{|\bP|}}f_{\bm{1}^r-\bP|\bm \bP}(w_1, \dots, w_r\mid x,y),
\end{multline*}
where in the last equation we have explicitly evaluated the action of $\mathcal R$ on $\langle \bm{1}^r, \bm 0|$. Plugging the explicit expressions for the functions $f_{\bm{1}^r-\bP|\bm \bP}$ gives
\begin{multline*}
(s^2/y^2;q)_{r}\prod_{i=1}^r\frac{1}{1-\lambda y^{-2} w_i}\\
=\sum_{\bP\leq \bm{1}^r}(s^2/x^2)^{|\bP|}(s^2/x^2;q)_{r-|\bP|}(x^2/y^2;q)_{|\bP|}\ T_{\widetilde{\tau}_{\bP}^{-1}}\(\prod_{i=1}^r\frac{1}{1-\lambda x^{-2} w_i}\prod_{i=1}^{|\bP|}\frac{1-\lambda s^{-2}w_i}{1-\lambda y^{-2} w_i}\).
\end{multline*}
The remainder of the proof consists of algebraic manipulations bringing the identity above to the desired form. First, we set $x=t$ and $y=1$. Then, recalling the discussion from Section \ref{permSect}, the summation over $\bP \leq \bm{1}^r$ is equivalent to the summation over subsets $\mathcal I\subset \{1,2,\dots,r\}$, which is in turn equivalent to the summation over pairs $(p,\pi)$, where $p\leq r$ and $\pi\in S^{r-p|p}$. Moreover, tracing back these equivalences, one can readily see that $\bP$ corresponds to the pair $(|\bP|, \tau_{\bP})$, so we get
\be
\prod_{i=1}^r\frac{1}{1-\lambda w_i}=\sum_{p=0}^r\sum_{\tau\in S^{r-p|p}}(s^2/t^2)^{p}\frac{(s^2/t^2;q)_{r-p}(t^2;q)_{p}}{(s^2;q)_{r}}\ T_{\widetilde{\tau}^{-1}}\(\prod_{i=1}^r\frac{1}{1-\lambda t^{-2} w_i}\prod_{i=1}^{p}\frac{1-\lambda s^{-2}w_i}{1-\lambda w_i}\).
\ee
The claim follows since the action of the Hecke algebra commutes with the multiplication by $\prod_{i=1}^r(1-\lambda t^{-2} w_i)^{-1}$ and $\tau\in S^{r-p\mid p}$ if and only if $\widetilde{\tau}\in S^{p\mid r-p}$.
\end{proof}
\begin{rem}\label{spinHLRemark} \normalfont
The functions $f_{\bm{1}^r-\bP|\bP}$ used throughout the proof of Proposition \ref{localRelationRat} are particular cases of the \emph{non-symmetric spin Hall-Littlewood functions} introduced in \cite{BW18}. The first part of the proof partially reproduces the exchange relations from \cite[Section 5]{BW18}, while the second part can be interpreted as a new recurrence relation based on the deformed Yang-Baxter relation.
\end{rem}


\section{$q$-moments of height function}\label{qHahnSec} In this section we describe and prove the first main result of this work, namely, the integral expression for $q$-moments of the colored height functions of the colored diagonally inhomogeneous $q$-Hahn vertex model.

\subsection{The model.}\label{themodel} As noted before, the weights $W_{t,s}(\bA,\bB;\bC,\bD)$ are stochastic, so they can be used to interpret a vertex as a random sampling of an outgoing configuration given an incoming one. Now we  extend this sampling construction by considering a grid of vertices with a certain choice of the spin parameters of the weights $W_{t,s}(\bA,\bB;\bC,\bD)$. Then, given an incoming boundary conditions, we can sample a random configuration of the grid, that is, an assignment of a random composition to each lattice edge. Let us describe the construction in more detail.

Fix the number of colors $n$ and consider a square grid of size $N\times N$. We identify the grid with the subset of $\mathbb Z_{\geq 0}^2$ formed by intersections of $N$ rows $\mathbb Z\times\{j\}$ directed to the right and $N$ columns $\mathbb Z\times\{i\}$ directed upwards, where $1\leq i,j\leq N$. We treat this grid as a vertex model with vertices $(i,j)$ for $i,j\in[1,N]$, vertical edges $(i, j)\to(i,j+1)$ for $i\in [1,N], j\in[0,N]$ and horizontal edges $(i, j)\to(i+1,j)$ for $i\in[0,N], j\in[1,N]$. A \emph{configuration} $\Sigma$ of the model is an assignment of compositions to the edges listed above. We use $\bA^{(i,j)}$ (resp. $\bB^{(i,j)}$) to denote the composition of the vertical (resp. horizontal) edge starting at $(i,j)$.

\begin{figure}
\tikz{0.9}{
	\foreach\y in {2,...,5}{
		\draw[fused] (1,\y*1.5-1) -- (9.5,\y*1.5-1);
	}
	\foreach\x in {1,...,4}{
		\draw[fused] (2*\x,1) -- (2*\x,8);
	}
	
	\node[above right] at (2,6.5) {\tiny $\sqrt{\frac{\lambda_3}{\kappa_4}}, \sqrt{\frac{\lambda_3}{\sigma_1}}$};
	\node[above right] at (4,6.5) {\tiny $\sqrt{\frac{\lambda_2}{\kappa_4}}, \sqrt{\frac{\lambda_2}{\sigma_2}}$};
	\node[above right] at (6,6.5) {\tiny $\sqrt{\frac{\lambda_1}{\kappa_4}}, \sqrt{\frac{\lambda_1}{\sigma_3}}$};
	\node[above right] at (8,6.5) {\tiny $\sqrt{\frac{\lambda_0}{\kappa_4}}, \sqrt{\frac{\lambda_0}{\sigma_4}}$};
	
	\node[above right] at (2,5) {\tiny $\sqrt{\frac{\lambda_2}{\kappa_3}}, \sqrt{\frac{\lambda_2}{\sigma_1}}$};
	\node[above right] at (4,5) {\tiny $\sqrt{\frac{\lambda_1}{\kappa_3}}, \sqrt{\frac{\lambda_1}{\sigma_2}}$};
	\node[above right] at (6,5) {\tiny $\sqrt{\frac{\lambda_0}{\kappa_3}}, \sqrt{\frac{\lambda_0}{\sigma_3}}$};
	
	\node[above right] at (2,3.5) {\tiny $\sqrt{\frac{\lambda_1}{\kappa_2}}, \sqrt{\frac{\lambda_1}{\sigma_1}}$};
	\node[above right] at (4,3.5) {\tiny $\sqrt{\frac{\lambda_0}{\kappa_2}}, \sqrt{\frac{\lambda_0}{\sigma_2}}$};
	
	\node[above right] at (2,2) {\tiny $\sqrt{\frac{\lambda_{0}}{\kappa_1}}, \sqrt{\frac{\lambda_0}{\sigma_1}}$};

	\node[above] at (1,2) {\small $\bB^{(0,1)}$};
	\node[above] at (1,3.5) {\small $\bB^{(0,2)}$};
	\node[above] at (1,5) {\small $\bB^{(0,3)}$};
	\node[above] at (1,6.5) {\small $\bB^{(0,4)}$};
	\node[below] at (8,1) {\small$\bm{0}$};
	\node[below] at (6,1) {\small$\bm{0}$};
	\node[below] at (4,1) {\small$\bm{0}$};
	\node[below] at (2,1) {\small$\bm{0}$};
}\quad
\begin{tikzpicture}[xscale=0.8, yscale=0.9, baseline={([yshift=0]current bounding box.center)}]
	\foreach\y in {2,...,5}{
		\draw[black!10!white, line width=4 pt] (1,\y*1.5-1) -- (9,\y*1.5-1);
	}
	\foreach\x in {1,...,4}{
		\draw[black!10!white,line width=4pt] (2*\x,1) -- (2*\x,7.5);
	}	
	\draw[blue, line width = 0.8pt] (1,1.95) -- (2.05,1.95) --  (2.05, 3.5) -- (4, 3.5) -- (4, 5) -- (6, 5) -- (6, 6.5) -- (8, 6.5) -- (8,7.5);
	\draw[blue, line width = 0.8pt] (1,2.05) -- (1.95, 2.05) -- (1.95, 3.5) -- (2.1, 3.5) -- (2.1, 5) -- (2.06, 5) -- (2.06, 6.45) -- (4.06,6.45) -- (4.06,7.5);
	\draw[green!80!black, line width = 0.8pt] (1, 3.43) -- (2.03, 3.43) -- (2.03, 4.96) -- (4.05, 4.96) -- (4.05, 6.5) -- (6, 6.5) -- (6, 7.5);
	\draw[green!80!black, line width = 0.8pt] (1, 3.5) -- (1.97, 3.5) -- (1.97, 5.04) -- (3.95, 5.04) -- (3.95, 6.5) -- (4, 6.5) -- (4, 7.5);
	\draw[green!80!black, line width = 0.8pt] (1, 3.57) -- (1.9, 3.57) -- (1.9, 5) -- (1.94, 5) -- (1.94, 6.5) -- (2.05, 6.5) -- (2.05, 7.5);
	\draw[green!80!black, line width = 0.8pt] (1, 5) -- (2, 5) -- (2, 6.55) -- (3.94, 6.55) -- (3.94, 7.5);
	\draw[red, line width = 0.8pt] (1, 6.5) -- (1.95, 6.5) -- (1.95, 7.5);
	
	\node[left, blue] at (1,2) {\tiny $1$};
	\node[left, green!80!black] at (1,3.5) {\tiny $2$};
	\node[left, green!80!black] at (1,5) {\tiny $2$};
	\node[left, red] at (1,6.5) {\tiny $3$};
	
	\node[black] at (1.3, 1.5) {$0$};
	\node[black] at (1.3, 2.75) {$0$};
	\node[black] at (1.3, 4.25) {$3$};
	\node[black] at (1.3, 5.75) {$4$};
	\node[black] at (1.3, 7) {$5$};
	
	\node[black] at (3, 1.5) {$0$};
	\node[black] at (3, 2.75) {$0$};
	\node[black] at (3, 4.25) {$0$};
	\node[black] at (3, 5.75) {$2$};
	\node[black] at (3, 7) {$3$};

	\node[black] at (5, 1.5) {$0$};
	\node[black] at (5, 2.75) {$0$};
	\node[black] at (5, 4.25) {$0$};
	\node[black] at (5, 5.75) {$0$};
	\node[black] at (5, 7) {$1$};

	\node[black] at (7, 1.5) {$0$};
	\node[black] at (7, 2.75) {$0$};
	\node[black] at (7, 4.25) {$0$};
	\node[black] at (7, 5.75) {$0$};
	\node[black] at (7, 7) {$0$};
	
	\node[black] at (8.5, 1.5) {$0$};
	\node[black] at (8.5, 2.75) {$0$};
	\node[black] at (8.5, 4.25) {$0$};
	\node[black] at (8.5, 5.75) {$0$};
	\node[black] at (8.5, 7) {$0$};
	
\end{tikzpicture}
\caption{\label{qHahnFigure} Left: parameters of the $q$-Hahn vertex weights. Right: a configuration of the $q$-Hahn vertex model depicted using colored paths, along with the values of the height function $h_{\geq 2}$ for this configuration. Here $\bI=(1,2,1)$.}
\end{figure}
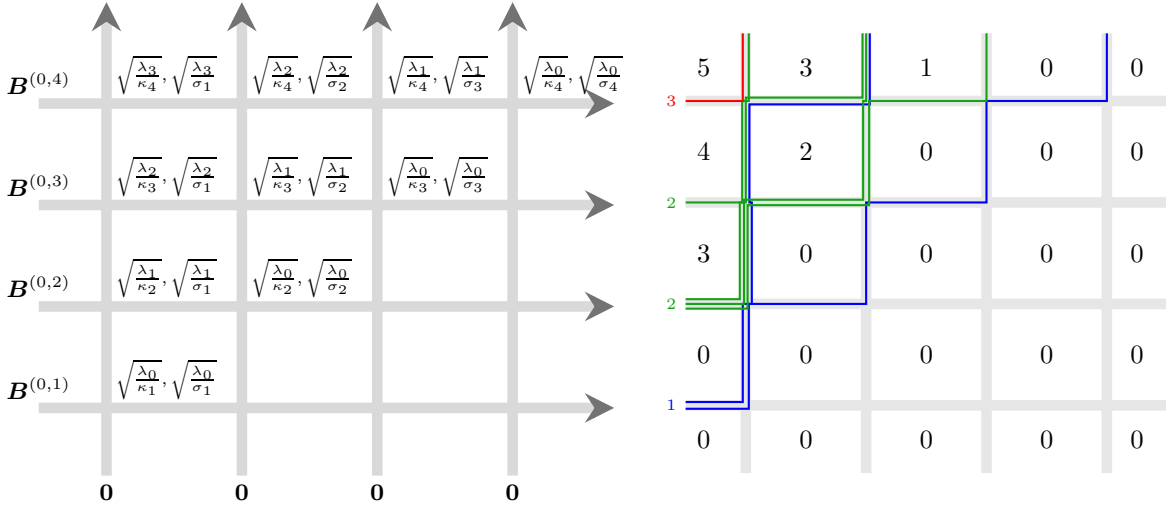

Assume that we are given $q\in (0,1)$, a composition $\bI$ such that $|\bI|=N$ and three families of real parameters $(\mu_0, \mu_1, \mu_2, \dots)$, $(\kappa_1, \kappa_2, \dots)$, $(\dots,\lambda_{-1},\lambda_0,\lambda_1,\lambda_2,\dots)$ satisfying
\begin{equation}
\label{paramqHahnRestr}
\lambda_d<\kappa_j<\mu_i,\qquad i\in\mathbb Z_{\geq 0};\  j\in\mathbb{Z}_{>0};\ d\in\mathbb Z.
\end{equation}
We construct a random configuration $\Sigma$ of the model according to the following sampling procedure:
\begin{itemize}
\item The compositions $\bA^{(i,0)}$ of all the bottom incoming vertical edges $(i,0)\to(i,1)$ are set to $\bm 0$.
\item To construct compositions of the incoming left horizontal edges we first sample a collection of independent random nonnegative integers $(b_1, b_2, \dots, b_N)$ with distributions
\begin{equation}
\label{leftProb}
\mathbb P(b_j=b)=(\kappa_j/\mu_0)^b\frac{(\lambda_j/\kappa_j;q)_b}{(q;q)_b}\frac{(\kappa_j/\mu_0;q)_\infty}{(\lambda_j/\mu_0;q)_\infty}, \qquad b\in\mathbb Z_{\geq 0}.
\end{equation}
Then the composition $\bB^{(0,j)}$ of the $j$th incoming horizontal edge $(0,j)\to (1,j)$ has the form $\bB^{(0,j)}=(0,\dots, 0, b_j, 0,\dots, 0)$, where all positions but $B_c^{(0,j)}=b_j$ are equal to $0$. The non-vanishing color $c$ is determined as the unique color satisfying $I_{[1,c-1]}<j\leq I_{[1,c]}$. In other words, the left incoming edges of the first $I_1$ rows have color $1$, the next $I_2$ rows have color $2$ and so on.
\item The compositions of the remaining edges are sampled sequentially in $N^2$ steps, which correspond to the vertices $(i,j)$ of the model and are performed in lexicographical order with respect to $(i,j)$, starting from the vertex $(1,1)$.
\item At each step we have a vertex $(i,j)$ with already sampled compositions $\bA^{(i,j-1)}$ and $\bB^{(i-1,j)}$ of the incoming edges $(i,j-1)\to (i,j)$ and $(i-1,j)\to(i,j)$, due to the order of the steps. Then we treat the vertex weights $W_{\sqrt{\lambda_{j-i}/\kappa_j},\sqrt{\lambda_{j-i}/\mu_i}}$ as probabilities for a stochastic sampling algorithm, transforming the incoming compositions into the outgoing ones $\bA^{(i,j)},\bB^{(i,j)}$:
\begin{multline}
\label{vertexSampling}
\mathbb P(\bA^{(i,j)}=\bC, \bB^{(i,j)}=\bD \mid \bA^{(i,j-1)}=\bA, \bB^{(i-1,j)}=\bB)=W_{\sqrt{\frac{\lambda_{j-i}}{\kappa_j}}, \sqrt{\frac{\lambda_{j-i}}{\mu_i}}}(\bA,\bB;\bC,\bD)\\
=\1_{\bA+\bB=\bC+\bD}\ (\kappa_j/\mu_i)^{|\bD|}\frac{(\kappa_j/\mu_i;q)_{|\bA|-|\bD|}(\lambda_{j-i}/\kappa_j;q)_{|\bD|}}{(\lambda_{j-i}/\mu_i;q)_{|\bA|}}q^{\sum_{i<j}D_i(A_j-D_j)}\prod_{i=1}^n\binom{A_i}{D_i}_q.
\end{multline}
\end{itemize}
Note that all the probabilities are nonnegative for $q\in (0,1)$ and $\mu_i,\kappa_j,\lambda_{d}$ satisfying \eqref{paramqHahnRestr}. Since the weights at vertex $(i,j)$ depend on $\sigma_i, \kappa_j$ and $\lambda_{j-i}$, we treat these parameters as attached to column $i=const$, row $j=const$ and diagonal $j-i=const$ respectively, see Figure \ref{qHahnFigure} for a depiction of the parameters of the weights in the model. 

In the description of the model we have also used that \eqref{leftProb} gives a well-defined probability distribution. This follows from the identity
\begin{equation}\label{qbinom}
\sum_{k\geq 0}x^k\frac{(y;q)_k}{(q;q)_k}=\frac{(yx;q)_\infty}{(x;q)_{\infty}},
\end{equation}
which can be proved by setting $y=q^{-n}, n\in\mathbb Z_{\geq0}$ and reducing the claim to the $q$-binomial theorem.

Finally, note that the vertex sampling is trivial outside of the region $i<j$: since the weight $W_{t,s}(\bA,\bB;\bC,\bD)$ vanishes unless $\bA\geq\bD$, a vertex $(i,j)$ with the incoming bottom configuration $\bA^{(i,j-1)}=\bm 0$ is forced to have a deterministic sampling with $\bB^{(i,j)}=\bm 0$ and $\bA^{(i,j)}=\bB^{(i-1,j)}$, that is, the top outgoing configuration is equal to the left incoming one. These rules force all vertices $(i,j)$ with $i>j$ to have edge labels $\bm 0$ around them, while the sampling at the vertices $(i,i)$ is deterministic. In particular, the model does not depend on the parameters $\{\lambda_{d}\}_{d\leq 0}$, thus we can freely omit them from now on. 

\begin{rem}\label{originRem}\normalfont
The $q$-Hahn model described in this section originates from the work \cite{BK21}, where the one-colored case of this model was used to construct \emph{inhomogeneous spin $q$-Whittaker} functions. Existence of an integrable vertex model with diagonal parameters not attached to its lines is surprising: usually integrability is caused by the Yang-Baxter equation, where parameters are attached to the lines of the model. Actually, the $q$-Hahn model without diagonal parameters was already known and it was previously obtained from the six-vertex model directly via fusion, see \cite{BP16}, \cite{BW17}.

However, the ordinary $q$-Hahn model without diagonal parameters was in a certain sense ``incomplete", which was indicated in \cite{BW17}. Namely, the $q$-Hahn model can be used to construct spin $q$-Whittaker functions, which are dual in the sense of the dual Cauchy identity to the spin Hall-Littlewood functions, constructed using higher spin six-vertex model. The latter functions have a natural inhomogeneous version, coming from a model with three families of parameters (one family is attached to rows and two families are attached to columns of the higher spin six-vertex model), but the corresponding inhomogeneous version of the spin $q$-Whittaker functions turned out to be unreachable using the ordinary $q$-Hahn model. This was remedied with the discovery of the deformed Yang-Baxter equations in \cite{BK21}, which have naturally led to the vertex model construction for the inhomogeneous spin $q$-Whittaker functions featuring the additional diagonal parameters.
\end{rem}

\subsection{The colored height functions and the formula for $q$-moments.}  The vertex model just introduced has a collection of natural observables, called \emph{colored height functions}. We denote them by $h_{\geq c}^{(x,y)}(\Sigma)$, where $\Sigma$ is a configuration of the model, $c\in[1,n]$ is an integer representing a color and $(x,y)\in\(\mathbb Z_{\geq 0}+\frac{1}{2}\)^2$ is a point treated as a facet of the grid between columns $x\pm\frac{1}{2}$ and rows $y\pm\frac{1}{2}$.

Informally, the height function $h_{\geq c}^{(x,y)}$ indicates the number of paths of color $\geq c$ passing below $(x,y)$. More precisely, for a given fixed configuration $\Sigma$ we define the corresponding height functions following two local rules:
\begin{itemize}
\item If $\bA^{(x-\frac{1}{2},y-\frac{1}{2})}$ denotes the label of the vertical edge $(x-\frac{1}{2},y-\frac{1}{2})\to(x-\frac{1}{2}, y+\frac{1}{2})$  then
\begin{equation}
\label{localrulesheight1}
h^{(x-1,y)}_{\geq c}(\Sigma)=h^{(x,y)}_{\geq c}(\Sigma) + A_{[c, n]}^{(x-\frac{1}{2},y-\frac{1}{2})}.
\end{equation}
\item If $\bB^{(x-\frac{1}{2},y+\frac{1}{2})}$ denotes the label of the horizontal edge $(x-\frac{1}{2},y+\frac{1}{2})\to(x+\frac{1}{2}, y+\frac{1}{2})$  then
\begin{equation}
\label{localrulesheight2}
h^{(x,y +1)}_{\geq c}(\Sigma)=h^{(x,y)}_{\geq c}(\Sigma) + B_{[c, n]}^{(x-\frac{1}{2},y+\frac{1}{2})}.
\end{equation}
\end{itemize}
Due to the conservation law around each vertex, these local rules give a well-defined height function $h^{(x,y)}_{\geq c}(\Sigma)$, see Figure \ref{qHahnFigure}. The resulting height functions are unique up to a global additive shift; we fix the normalization by requiring 
\be
h^{(\frac{1}{2},\frac{1}{2})}_{\geq c}(\Sigma)=0
\ee
for any color $c\in [1,n]$ and configuration $\Sigma$.

To simplify expressions in the following sections we also introduce the following notation for the multi-point height function: for a sequence of colors $\bc=(c_1, \dots, c_k)$, a pair of sequences of half integers $\bx=(x_1, \dots, x_k), \by=(y_1, \dots, y_k)$ and a configuration $\Sigma$ we set
\be
\cH_{\geq \bc}^{(\bx,\by)}(\Sigma):=\sum_{i=1}^k h_{\geq c_{i}}^{(x_i,y_i)}(\Sigma).
\ee

\subsection{The integral expression for $q$-moments.} The main result of this section expresses a certain observable of the $q$-Hahn model as a nested contour integral. The observable in question is denoted by $\Q_{\geq \tau.\bc}^{(\bx,\by)}(\Sigma)$ and for a configuration of the model $\Sigma$, an ordered sequence of colors $\bc=(c_1, \dots, c_k)$, a permutation $\tau\in S_k$ and a pair of sequences of half integers $\bx=(x_1, \dots, x_k), \by=(y_1, \dots, y_k)$ it is defined by
\be
\Q_{\geq \tau.\bc}^{(\bx,\by)}(\Sigma):=q^{\cH_{\geq \tau.\bc}^{(\bx,\by)}(\Sigma)}=\prod_{a=1}^kq^{h^{(x_a,y_a)}_{\geq c_{\tau^{-1}(a)}}(\Sigma)},
\ee
where we follow the notation
\be
\tau.\bc=\(c_{\tau^{-1}(1)}, c_{\tau^{-1}(2)}, \dots, c_{\tau^{-1}(k)}\).
\ee

To formulate the result we also need to specify integration contours. They are denoted by $\Gamma_1, \Gamma_2, \dots, \Gamma_k$ and are assumed to satisfy the following conditions:
\begin{itemize}
\item Every contour $\Gamma_a$ is a union of simple positively oriented closed contours in $\mathbb C$;
\item The points from $\{\mu^{-1}_i\}_{i\geq 0}$ and $\{\kappa^{-1}_j\}_{j\geq 1}$ are inside every contour $\Gamma_a$, while $0$ and the points from $\{\lambda^{-1}_d\}_{d\geq 1}$ are outside of the contours;
\item For any $a<b$ the contour $\Gamma_b$ encircles both contours $\Gamma_a$ and $q\Gamma_a$. Here $q\Gamma_a$ denotes the contour obtained by multiplying all points of $\Gamma_a$ by $q$.
\end{itemize}
See Figure \ref{GammaFigure} for a possible configuration of the contours. Note that the conditions on the parameters \eqref{paramqHahnRestr} guarantee the existence of such contours.

\begin{figure}
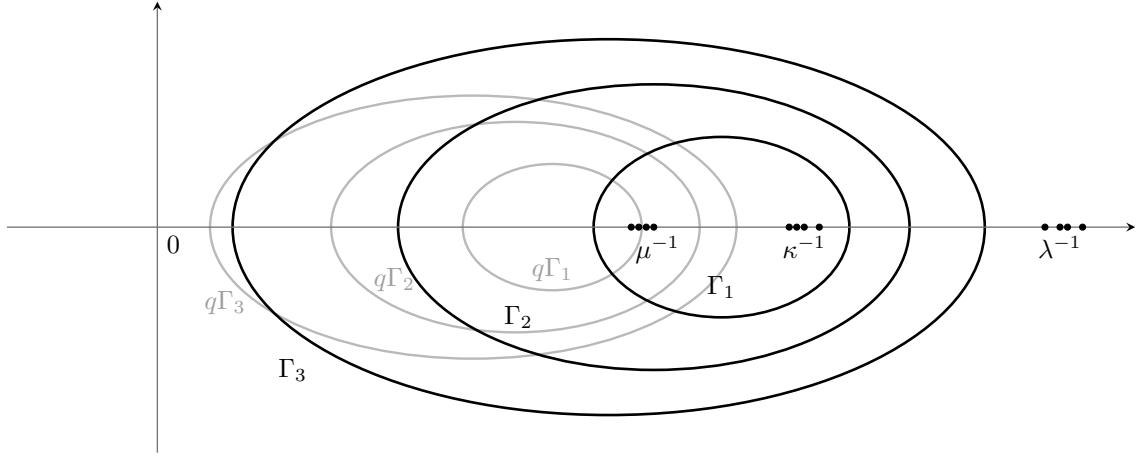

\tikz{1}{

\draw[line width=0.9pt, draw=dgray!50] (5.25,0) ellipse (1.19cm and 0.84cm);
\draw[line width=0.9pt, draw=dgray!50] (4.76,0) ellipse (2.45cm and 1.4cm);
\draw[line width=0.9pt, draw=dgray!50] (4.2,0) ellipse (3.5cm and 1.75cm);

\draw[line width=1pt, draw=black] (7.5,0) ellipse (1.7cm and 1.2cm);
\draw[line width=1pt, draw=black] (6.6,0) ellipse (3.4cm and 1.9cm);
\draw[line width=1pt, draw=black] (6,0) ellipse (5cm and 2.5cm);

\node[] at (7.5,-0.8) {$\Gamma_1$};
\node[] at (4.8,-1.2) {$\Gamma_2$};
\node[] at (1.8,-1.9) {$\Gamma_3$};

\node[text=dgray!70] at (5.25,-0.56) {$q\Gamma_1$};
\node[text=dgray!70] at (3.15,-0.7) {$q\Gamma_2$};
\node[text=dgray!70] at (0.9,-1) {$q\Gamma_3$};

\draw[line width=0.5pt, draw=dgray, arrows={->[scale=4]}] (-2,0) -- (13,0);
\draw[line width=0.5pt, draw=dgray, arrows={->[scale=4]}] (0,-3) -- (0,3);
\foreach \point in {(8.6,0), (8.5,0), (8.4,0), (8.8, 0)}
\draw[fill=black] \point circle (0.04);
\node[below] at (8.6,0) {$\kappa^{-1}$};
\foreach \point in {(12,0), (12.1,0), (12.3,0), (11.8, 0)}
\draw[fill=black] \point circle (0.04);
\node[below] at (12,0) {$\lambda^{-1}$};
\foreach \point in {(6.5,0), (6.6,0), (6.4,0), (6.3, 0)}
\draw[fill=black] \point circle (0.04);
\node[below] at (6.65,0) {$\mu^{-1}$};

\node[below right] at (0,0) {$0$};

}
\caption{\label{GammaFigure}A possible of configuration of the contours $\Gamma_i$}
\end{figure}

\begin{theo}\label{qHahnResultTheo} Let $\Sigma$ be a random configuration of the $q$-Hahn model with the weights defined by parameters $\{\mu_i\}_{i\geq 0}$, $\{\kappa_j\}_{j\geq 1}$, $\{\lambda_d\}_{d\geq 1}$ and the left boundary condition given by a composition $\bI$ as in Section \ref{themodel}. Then for any sequences $\bc=(c_1, \dots, c_k)$, $\bx=(x_1, \dots, x_k)$ and $\by=(y_1, \dots, y_k)$ satisfying
\be
x_1 \leq x_2\leq\dots\leq x_k,\qquad y_1\geq y_2\geq\dots\geq y_k, \qquad x_a\leq y_a, \qquad x_a,y_a\in\mathbb Z_{\geq 0}+\frac{1}{2};
\ee
\be
c_1\leq c_2\leq\dots\leq c_k,\qquad c_a\in\mathbb Z_{\geq 0};
\ee
and for any permutation $\tau\in S_k$ the following holds:
\begin{multline}
\label{qHahnIntExpression}
\E\left[\Q^{\bx,\by}_{\geq\tau.\bc}(\Sigma)\right]=\frac{(-1)^kq^{\frac{k(k-1)}{2}-l(\tau)}}{(2\pi\i)^k}\oint_{\Gamma_1}\cdots\oint_{\Gamma_k}\prod_{a<b}\frac{w_b-w_a}{w_b-qw_a}\ T_\tau\(\prod_{a=1}^k\prod_{i=1}^{l_a}\frac{1-\lambda_iw_a}{1-\kappa_iw_a}\)\\
\times \prod_{a=1}^k\(\prod_{i=0}^{i<x_a}\frac{1}{1-\mu_iw_a}\prod_{j=1}^{j<y_a}(1-\kappa_jw_a)\prod_{d=1}^{d\leq y_a-x_a}\frac{1}{1-\lambda_dw_a}\)\frac{dw_a}{w_a},
\end{multline}
where the integration contours $\Gamma_a$ are specified before the theorem, $T_\tau$ denotes the action of the Hecke algebra defined by \eqref{Tdef} and the integers $l_a$ are defined by $l_a=I_{[1,c_a-1]}$.
\end{theo}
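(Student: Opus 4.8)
The strategy is the one outlined after Theorem \ref{theoFirstIntro}: I would show that the two sides of \eqref{qHahnIntExpression} — write $F$ for the expectation and $G$ for the nested integral — obey one and the same system of discrete recurrences in the data $(\bx,\by,\bc,\tau,\bI)$, together with a common value in a degenerate ``base'' configuration, and that this pins down the quantity. As a first reduction I would remove the permutation: by the exchange relation \eqref{Texch} for the row operators $\C_i$ (the mechanism behind \eqref{localRatLemmaProof}), replacing $\tau$ by $\tau\sigma_a$ multiplies both $F$ and $G$ by the \emph{same} Demazure--Lusztig operator $T_a$ of \eqref{Tdef}. So it suffices to establish \eqref{qHahnIntExpression} for $\tau=\mathrm{id}$ and then apply $T_\tau$ to both sides.

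The core is a recurrence that removes one layer of the model — equivalently, slides a maximal block of $r$ coincident height-function probes located at some extremal position $(x,y)$ (largest $y$, smallest $x$, as permitted by the orderings) one lattice step toward the origin. Using the local rules \eqref{localrulesheight1}--\eqref{localrulesheight2}, the observable factors as $q$ raised to an edge label at the vertex $(i,j)$ adjacent to the slid block, times the observable for the moved block, where that vertex carries the $q$-Hahn weights $W_{\sqrt{\lambda_{j-i}/\kappa_j},\,\sqrt{\lambda_{j-i}/\mu_i}}$. Conditioning on the part of the randomness already fixed when the model's sampling (in a suitable order) reaches that vertex — arranged, using the orderings on $(\bx,\by)$ and the $i>j$ triviality of the model, so that the remaining factors are measurable — and averaging over the single vertex $(i,j)$, Corollary \ref{localRelationSub} with $t^2=\lambda_{j-i}/\kappa_j$ and $s^2=\lambda_{j-i}/\mu_i$ turns $F$ into exactly the sum over $p$ and over $\tau'\in S^{p\mid r-p}$, with the stated $q$-Pochhammer coefficients and $q^{l(\tau')}$-weighted Hecke operators, of $F$ for the moved block. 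Iterating drives every probe to $\bigl(\tfrac12,\tfrac12\bigr)$, where $h\equiv 0$ and so $F\equiv 1$; thus $F$ is determined by this recurrence and base value.

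Running the identical manipulation on $G$ forces $F=G$, because Proposition \ref{localRelationRat} is — under the dictionary $\Q_{\ge\bc}(\bD)\leftrightarrow\prod\frac{1-\lambda t^{-2}w}{1-\lambda w}$, $\Q_{\ge\tau'.\bc[1,p]}(\bA)\leftrightarrow\prod_{i=1}^{p}\frac{1-\lambda s^{-2}w_i}{1-\lambda w_i}$, $q^{l(\tau')}(\text{reorder})\leftrightarrow T_{(\tau')^{-1}}$ — the \emph{same} identity as Corollary \ref{localRelationSub}. On the integral side, sliding the block amounts to pulling out of the integrand the factor that distinguishes the two geometries; after the parameter identification $\lambda=\lambda_{j-i}$, $t^2=\lambda_{j-i}/\kappa_j$, $s^2=\lambda_{j-i}/\mu_i$ this factor is precisely a product of $\frac{1-\lambda t^{-2}w_a}{1-\lambda w_a}=\frac{1-\kappa_j w_a}{1-\lambda_{j-i}w_a}$, and applying Proposition \ref{localRelationRat} reproduces, term for term, the expansion obtained on the $F$-side (with the surviving factors $\frac{1-\lambda s^{-2}w_a}{1-\lambda w_a}=\frac{1-\mu_i w_a}{1-\lambda_{j-i}w_a}$ again being exactly the change in $G$'s integrand produced by sliding the corresponding probes). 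The contour conditions ($\mu_i^{-1},\kappa_j^{-1}$ inside every $\Gamma_a$, $0$ and $\lambda_d^{-1}$ outside, $\Gamma_b\supset q\Gamma_a$ for $a<b$) guarantee the Demazure--Lusztig operators can be moved under the integral sign without crossing poles. In the base configuration all $\mu$-, $\kappa$-, $\lambda$-products collapse, and besides the pole at $w_k=0$ (which lies outside $\Gamma_k$) the $w_k$-integrand has poles only inside $\Gamma_k$ and decays like $w_k^{-2}$, so the residues inside $\Gamma_k$ sum to $-\,\mathrm{Res}_{w_k=0}$; integrating out $w_k$ and inducting on $k$ gives $G=1$. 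Hence $F=G$.

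The real work is the bookkeeping welding the two local relations into a proof: choosing a well-founded complexity (say $\sum_a(x_a+y_a)$) that strictly drops at each step; choosing the sampling order and the conditioned $\sigma$-algebra so the recurrence step is genuinely valid — this is where the orderings on $(\bx,\by)$, the diagonal $x_a=y_a$, and the triangular region $i>j$ enter — and, in particular, handling several coincident probes correctly, which is the very source of the sum over $S^{p\mid r-p}$ (the permutation $\tau'$ records which sub-block ``catches'' the crossed path); keeping the $\tau$-reductions via \eqref{Texch} synchronized on both sides; and threading the spin parameters $t^2=\lambda_{j-i}/\kappa_j$, $s^2=\lambda_{j-i}/\mu_i$ through the telescoping so that the $\mu$-, $\kappa$- and $\lambda$-products in \eqref{qHahnIntExpression}, together with the $i=0$ contributions coming from the left boundary distribution \eqref{leftProb} (reconciled via the $q$-binomial identity \eqref{qbinom}), come out with exactly the stated ranges $i<x_a$, $j<y_a$, $d\le y_a-x_a$, $i\le l_a$.
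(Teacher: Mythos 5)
Your skeleton is the paper's: establish that the expectation and the nested integral satisfy the same recurrence, with Corollary \ref{localRelationSub} driving the probabilistic side and Proposition \ref{localRelationRat} driving the integral side, the two being literally the same expansion under the dictionary $t^2=\lambda_{j-i}/\kappa_j$, $s^2=\lambda_{j-i}/\mu_i$. But your opening reduction to $\tau=\mathrm{id}$ is not a valid step. The quantity $F=\E\bigl[\Q^{\bx,\by}_{\geq\tau.\bc}(\Sigma)\bigr]$ is a scalar; the Demazure--Lusztig operators act on rational functions of $w_1,\dots,w_k$, and the exchange relation \eqref{Texch} is a statement about the row operators $\C_i$ inside the proof of Proposition \ref{localRelationRat}, not about the observable. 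There is no sense in which ``replacing $\tau$ by $\tau\sigma_a$ multiplies $F$ by $T_a$'': the expectations for different $\tau$ are genuinely different numbers with no operator relating them, and likewise the integral for $\tau$ is not $T_\tau$ applied to the integral for $\mathrm{id}$ (the operator sits inside the integrand). The reduction is also self-defeating: your own recurrence produces a sum over $\tau'\in S^{p\mid r-p}$, so after one step you need the statement for nontrivial permutations anyway. The paper instead carries general $\tau$ through the whole induction; the only permutation normalization it performs (and the one you actually need) is passing to the minimal representative of the coset in $S_{[m+1,m+r]}\backslash S_k$, which is required because Corollary \ref{localRelationSub} demands the colors within the slid block be weakly ordered.

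The second problem is your base case. The recurrence moves the extremal block from $(x,y)$ to $(x-\ve,y-1)$ with $\ve\in\{0,1\}$ and cannot be run once $x=\tfrac12$, since the would-be vertex sits in column $0$ where the model has a random boundary condition rather than a $W$-sampling. So iteration does \emph{not} drive every probe to $(\tfrac12,\tfrac12)$, and the terminal value is not $1$: it is $\prod_j(\kappa_j/\mu_0;q)_{r_j}/(\lambda_j/\mu_0;q)_{r_j}$, obtained on the expectation side by resumming the boundary law \eqref{leftProb} via \eqref{qbinom} and on the integral side by an explicit residue computation at $w_a=q^{a-1}\mu_0^{-1}$ (the paper's Proposition \ref{initialCondition}). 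You gesture at this in your last paragraph, but it contradicts your stated base value $F\equiv1$, and this matching is a substantial separate induction, not bookkeeping. Similarly, the other terminal case $x_k=y_k$ requires the explicit $w_k$-integration of Proposition \ref{integralReduction}, including the reindexing cycle $\rho$ that permutes the $g_a$'s so that the reduced data again satisfies the ordering hypotheses; listing ``the diagonal $x_a=y_a$'' as a concern does not supply this step.
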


The proof of the theorem occupies the remainder of the section, but before it we explain how Theorem \ref{qHahnResultTheo} simplifies in the one-colored case, when $n=1$. Then all colors in $\bc$ are equal to $1$, the colors of the incoming left boundary edges are defined by $\bI=(N)$ and the operators $T_\tau$ act on a constant $1$, multiplying it by $q^{l(\tau)}$. Since the only nontrivial height function in the color-blind case is $h_{\geq1}$, we omit $\bc$ and write just $\Q^{\A,\B}(\Sigma)$ instead of $\Q^{\A,\B}_{\geq(1,\dots, 1)}(\Sigma)$ in the one-colored case:
\begin{cor}
Let $\Sigma$ be a random configuration sampled from the one-colored $q$-Hahn model with parameters $\{\mu_i\}_{i\geq 0}, \{\kappa_j\}_{j\geq 1}, \{\lambda_d\}_{d\geq 1}$. Then for any sequences $\bx=(x_1, \dots, x_k)$ and $\by=(y_1, \dots, y_k)$ satisfying
\be
x_1 \leq x_2\leq\dots\leq x_k,\qquad y_1\geq y_2\geq\dots\geq y_k, \qquad x_a\leq y_a, \qquad x_a, y_b\in\mathbb Z_{\geq 0}+\frac{1}{2},
\ee
 the following holds:
\begin{multline}
\label{oneColorqHahnIntExpression}
\E\left[\Q^{\bx,\by}(\Sigma)\right]=\frac{(-1)^kq^{\frac{k(k-1)}{2}}}{(2\pi\i)^k}\oint_{\Gamma_1}\cdots\oint_{\Gamma_k}\prod_{a<b}\frac{w_b-w_a}{w_b-qw_a}\\
 \times \prod_{a=1}^k\(\prod_{i=0}^{i<x_a}\frac{1}{1-\mu_iw_a}\prod_{j=1}^{j<y_a}(1-\kappa_jw_a)\prod_{d=1}^{d\leq y_a-x_a}\frac{1}{1-\lambda_dw_a}\)\frac{dw_a}{w_a},
\end{multline}
where the integration contours $\Gamma_a$ are simple positively oriented curves satisfying
\begin{itemize}
\item The points from $\{\mu^{-1}_i\}_{i\geq 0}$ are inside every contour $\Gamma_a$, while $0$ and the points from $\{\lambda^{-1}_d\}_{d\geq 1}$ are outside of the contours;
\item For any $a<b$ the contour $\Gamma_b$ encircles both contours $\Gamma_a$ and $q\Gamma_a$. 
\end{itemize}
\end{cor}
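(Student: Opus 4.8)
The plan is to obtain the corollary as the straightforward $n=1$ specialization of Theorem \ref{qHahnResultTheo}, so that essentially no new work is needed beyond bookkeeping. First I would note that for $n=1$ every composition labelling an edge is a single nonnegative integer, the only available color is $1$, the color sequence is therefore forced to be $\bc=(1,\dots,1)$, and the only nontrivial height function is $h_{\geq 1}$; this justifies writing $\Q^{\bx,\by}(\Sigma)$ for $\Q^{\bx,\by}_{\geq(1,\dots,1)}(\Sigma)$. Likewise $|\bI|=N$ forces $\bI=(N)$, so $l_a=I_{[1,c_a-1]}=I_{[1,0]}=0$ for every $a$, and the factor $\prod_{a=1}^k\prod_{i=1}^{l_a}\frac{1-\lambda_iw_a}{1-\kappa_iw_a}$ appearing inside $T_\tau$ in \eqref{qHahnIntExpression} collapses to the empty product $1$.

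Next I would evaluate the Hecke operator on the constant function. Since $\mathfrak s_i$ fixes $1$, the generator $T_i=q+\frac{w_{i+1}-qw_i}{w_{i+1}-w_i}(\mathfrak s_i-1)$ satisfies $T_i 1=q$, and iterating over any reduced word for $\tau$ gives $T_\tau 1=q^{l(\tau)}$. Hence the scalar prefactor of \eqref{qHahnIntExpression} reduces to $(-1)^kq^{\frac{k(k-1)}{2}-l(\tau)}q^{l(\tau)}=(-1)^kq^{\frac{k(k-1)}{2}}$, which is independent of $\tau$; this matches the prefactor in \eqref{oneColorqHahnIntExpression} and explains why $\tau$ has disappeared from the statement (one may simply take $\tau=\mathrm{id}$). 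At this point \eqref{qHahnIntExpression} has literally turned into \eqref{oneColorqHahnIntExpression}, with integration contours inherited from Theorem \ref{qHahnResultTheo}.

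The last point to address -- and the only one requiring an argument rather than pure substitution -- is that the corollary asserts the formula under a \emph{weaker} contour condition: it no longer demands that the points $\kappa_j^{-1}$ lie inside the $\Gamma_a$. For this I would examine the specialized integrand of \eqref{oneColorqHahnIntExpression}: it is a product of the cross term $\frac{w_b-w_a}{w_b-qw_a}$, the rational factors $\frac{1}{1-\mu_iw_a}$ and $\frac{1}{1-\lambda_dw_a}$, the \emph{polynomial} factors $\prod_j(1-\kappa_jw_a)$, and $\frac{1}{w_a}$. Thus in each variable the only singularities are at $\mu_i^{-1}$, $\lambda_d^{-1}$, $0$, and $w_a=q^{-1}w_b$ for $b>a$: there is no pole at $\kappa_j^{-1}$ because the $\kappa$-dependence is entire. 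Consequently the contours of Theorem \ref{qHahnResultTheo} may be deformed across the points $\kappa_j^{-1}$ without changing the value of the integral, so the formula holds for any choice of contours satisfying the two bullet conditions of the corollary; such contours exist thanks to \eqref{paramqHahnRestr}. The only place where one must be slightly careful is to check that this deformation can be carried out while keeping the nesting $\Gamma_b\supset\Gamma_a\cup q\Gamma_a$ intact, which is immediate since the $\kappa_j^{-1}$ do not enter that constraint; I do not expect any genuine obstacle here.
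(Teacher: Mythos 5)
Your proposal is correct and follows essentially the same route as the paper: specialize Theorem \ref{qHahnResultTheo} to $n=1$, observe that $l_a=0$ so $T_\tau$ acts on the constant $1$ giving $q^{l(\tau)}$, which cancels the $q^{-l(\tau)}$ in the prefactor, and note that the $\kappa_j$-dependence of the one-color integrand is polynomial, so no poles at $\kappa_j^{-1}$ arise and the contour condition on these points can be dropped. Your extra remark about preserving the nesting $\Gamma_b\supset\Gamma_a\cup q\Gamma_a$ during the deformation is a harmless refinement of the paper's one-line observation.
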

Note that, contrary to the colored case, in the one-colored situation the integrand has no singularities at $\kappa_j^{-1}$, so we need no restrictions on the positions of $\kappa^{-1}_j$ with respect to the contours $\Gamma_a$.

\begin{rem}\normalfont
When all parameters $\{\lambda_d\}_{d\geq1}$ are equal and there is no inhomogeneity assigned to diagonals, the $q$-Hahn model can be obtained from the \emph{colored higher-spin six-vertex model} using \emph{stochastic fusion}. In this case Theorem \ref{qHahnResultTheo} can be obtained from analogous integral expressions for the $q$-moments of the higher-spin model: for one-color case this was done in \cite{BP16}, while the colored situation is covered by \cite{BW20}, \cite{BK20}. But it seems that the general case with arbitrary parameters $\{\lambda_d\}_{d\geq1}$ cannot be reached with fusion.
\end{rem}

\subsection{Explicit integral computations.} The proof of Theorem \ref{qHahnResultTheo} consists of two ingredients: explicit computations of the contour integrals in certain degenerate cases, and recurrence relations given in Section \ref{localSect}. Here we focus on the former.

We fix the contours $\Gamma_a$ as in the theorem, and let $\bw=(w_1, \dots, w_k)$ denote the collection of the integration variables. We need the following fact describing the operators adjoint to $T_\tau$ with respect to the scalar product
\be
\langle F(\bw), G(\bw)\rangle_k:=\oint_{\Gamma_1}\cdots\oint_{\Gamma_k}\prod_{a<b}\frac{w_b-w_a}{w_b-qw_a} F(\bw)G(\bw)\prod_{a=1}^k\frac{dw_a}{w_a}.
\ee
\begin{prop}[\cite{BW18}]\label{selfAdjProp}
Assume that $F(w_1, \dots, w_k)=F(\bw)$ and $G(w_1, \dots, w_k)=G(\bw)$ are rational functions having only singularities of the form $w_a=\mu_i^{-1}$, $w_a=\kappa_j^{-1}$ or $w_a=\lambda_d^{-1}$. Then for any permutation $\tau$ the following relation holds:
\begin{equation}
\label{selfAdj}
\oint_{\Gamma_1}\cdots\oint_{\Gamma_k}\prod_{a<b}\frac{w_b-w_a}{w_b-qw_a} T_\tau(F(\bw))G(\bw)\prod_{a=1}^k\frac{dw_a}{w_a}=\oint_{\Gamma_1}\cdots\oint_{\Gamma_k}\prod_{a<b}\frac{w_b-w_a}{w_b-qw_a} F(\bw)T_{\tau^{-1}}(G(\bw))\prod_{a=1}^k\frac{dw_a}{w_a},
\end{equation}
or, equivalently,
\be
\langle T_\tau F(\bw), G(\bw)\rangle_k=\langle F(\bw), T_{\tau^{-1}}G(\bw)\rangle_k.
\ee
\end{prop}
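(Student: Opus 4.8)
The plan is to reduce the claimed adjointness to the case of a single simple transposition, and then to establish that case by a change of integration variables combined with a contour deformation.

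First I would reduce to $\tau=\sigma_i$. Since the $T_i$ generate the Hecke algebra and $T_\tau=T_{i_1}\cdots T_{i_l}$ for any reduced word $\tau=\sigma_{i_1}\cdots\sigma_{i_l}$ (so $l=l(\tau)$), and since $\langle\cdot,\cdot\rangle_k$ is a symmetric bilinear form, it is enough to prove $\langle T_iF,G\rangle_k=\langle F,T_iG\rangle_k$ for every index $i$ and every pair $F,G$ as in the proposition: granting this, one writes $\tau^{-1}=\sigma_{i_l}\cdots\sigma_{i_1}$ (again reduced, of length $l$) and moves the operators $T_{i_1},T_{i_2},\dots$ one at a time from the left argument to the right, where they accumulate in the reversed order — using along the way that applying any $T_j$ to a rational function with poles only at the prescribed points keeps it of the same form, as recorded at the end of Section \ref{vmSection}. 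Writing $T_i=q+c_i(\mathfrak s_i-1)$ with $c_i=\frac{w_{i+1}-qw_i}{w_{i+1}-w_i}$ — whose only pole $w_i=w_{i+1}$ never occurs on $\Gamma_i\times\Gamma_{i+1}$ — the ``$q$'' and ``$-c_i$'' terms contribute identically to both sides, because multiplication by a fixed function commutes with the pairing. Hence everything reduces to the single identity $\langle c_i\,\mathfrak s_iF,\,G\rangle_k=\langle F,\,c_i\,\mathfrak s_iG\rangle_k$.

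The key algebraic observation is that $c_i$ cancels exactly the $(i,i+1)$--factor of the integration kernel,
\[
c_i(\bw)\prod_{a<b}\frac{w_b-w_a}{w_b-qw_a}\;=\;\prod_{\substack{a<b\\(a,b)\neq(i,i+1)}}\frac{w_b-w_a}{w_b-qw_a}\;=:\;\mathcal K'(\bw),
\]
and that $\mathcal K'(\bw)$ is invariant under $w_i\leftrightarrow w_{i+1}$: the factors not involving both $w_i$ and $w_{i+1}$ pair up into the $\mathfrak s_i$--symmetric products $\frac{(w_i-w_a)(w_{i+1}-w_a)}{(w_i-qw_a)(w_{i+1}-qw_a)}$ for $a<i$ and $\frac{(w_b-w_i)(w_b-w_{i+1})}{(w_b-qw_i)(w_b-qw_{i+1})}$ for $b>i+1$, while the one factor that breaks the symmetry is precisely the excluded one; the measure $\prod_a\frac{dw_a}{w_a}$ is $\mathfrak s_i$--symmetric as well. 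Consequently the two sides of the identity at issue equal, respectively, $\oint_{\Gamma_1}\cdots\oint_{\Gamma_k}\mathcal K'(\bw)\,(\mathfrak s_iF)(\bw)\,G(\bw)\prod_a\frac{dw_a}{w_a}$ and $\oint_{\Gamma_1}\cdots\oint_{\Gamma_k}\mathcal K'(\bw)\,F(\bw)\,(\mathfrak s_iG)(\bw)\prod_a\frac{dw_a}{w_a}$. Performing the substitution $w_i\leftrightarrow w_{i+1}$ in the first turns its integrand into that of the second, at the cost of interchanging the contours $\Gamma_i$ and $\Gamma_{i+1}$, so what remains is to deform the contour of $w_i$ from $\Gamma_{i+1}$ back to $\Gamma_i$ and that of $w_{i+1}$ from $\Gamma_i$ back to $\Gamma_{i+1}$ without crossing a pole.

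This contour deformation is the step I expect to be the main obstacle, and it is exactly where the cancellation above is essential: having removed the $(i,i+1)$--factor, $\mathcal K'$ is regular along $w_{i+1}=qw_i$ (and along $w_i=qw_{i+1}$ and $w_i=w_{i+1}$), so the contours of $w_i$ and $w_{i+1}$ may be slid past one another freely. The remaining singularities of the integrand are the poles $w_b=qw_a$ of $\mathcal K'$ with $(a,b)\neq(i,i+1)$, and the poles of $F$ and $G$, which by hypothesis lie at points of the form $\mu^{-1}$ and $\kappa^{-1}$ (enclosed by every $\Gamma_a$) or $\lambda^{-1}$ (outside every $\Gamma_a$). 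Using the defining properties of the contours — positively oriented, $\Gamma_a$ inside $\Gamma_b$, and $\Gamma_b$ enclosing $q\Gamma_a$ whenever $a<b$ — one checks that every such pole lies either strictly inside $\Gamma_i$ or strictly outside $\Gamma_{i+1}$: for instance $w_i=qw_a$ with $a<i$ lies in $q\Gamma_a$, enclosed by $\Gamma_i$, whereas $w_i=w_b/q$ with $b>i+1$ lies outside $\Gamma_{i+1}$ since $\Gamma_b$ encloses $q\Gamma_{i+1}$. Hence no pole lies in the region swept out during the deformation, the two integrals agree, and the proposition follows. This pole/contour bookkeeping is precisely the content of the corresponding statement in \cite{BW18}, to which one may alternatively appeal directly.
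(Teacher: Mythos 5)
Your proposal is correct and follows essentially the same route as the paper (and as \cite{BW18}, to which the paper defers): reduce to the generators $T_i$, observe that the multiplication parts of $T_i=q+c_i(\mathfrak s_i-1)$ are trivially self-adjoint, and handle $c_i\mathfrak s_i$ by noting that $c_i$ cancels the singularity of the kernel at $w_{i+1}=qw_i$, after which the substitution $w_i\leftrightarrow w_{i+1}$ and a pole-free contour exchange finish the argument. Your write-up merely makes explicit the pole bookkeeping that the paper leaves to the reference, so no changes are needed.
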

\begin{proof}
The complete proof is given in \cite[Proposition 8.1.3]{BW18}, see also \cite[Proposition 4.1]{BK20}. The main idea is to prove that the generators $T_i$ are self-adjoint, which is enough since $T_i$ generate the Hecke algebra $\mathcal H_k$. Each $T_i$ is a sum of a multiplication by a constant and $\frac{w_{i+1}-qw_i}{w_{i+1}-w_i}\mathfrak s_i$. The former is clearly self-adjoint, while for the latter note that the multiplication by $\frac{w_{i+1}-qw_i}{w_{i+1}-w_i}$ removes the singularity of the integrand at $w_{i+1}=qw_i$, allowing to exchange the contours $\Gamma_i$ and $\Gamma_{i+1}$ without changing the integral. Performing a change of variables swapping $w_i$ and $w_{i+1}$, one can see that  $\frac{w_{i+1}-qw_i}{w_{i+1}-w_i}\mathfrak s_i$ is also self-adjoint.
\end{proof}

The next statement describes the situation when one of the integrals can be easily computed, reducing the number of integration variables by one. In the context of Theorem \ref{qHahnResultTheo} we will use this computation when $x_k=y_k$ and the integral has no residue at $w_k=\lambda_d^{-1}$ for any $d$, see Step 2 in Section \ref{mainproofsubsection}.

\begin{prop} \label{integralReduction}Assume that $f_1, \dots, f_k$ and $g_1, \dots, g_k$ are rational functions in one variable satisfying 
\begin{itemize}
\item For every $a\in[1,k]$ the function $f_k(w)g_a(w)$ is holomorphic outside of $\Gamma_k$;
\item For every $a\in[1,k]$ the poles of $f_a(w)$ and $g_a(w)$ are inside of $\{\mu_i^{-1}\}_{i\geq 0}\cup\{\kappa_j^{-1}\}_{j\geq 1}\cup\{\lambda_d^{-1}\}_{d\geq 1}$;
\item For every $a\in [1,k]$ we have $f_a(0)=g_a(0)=1$.
\end{itemize}
Then
\begin{multline*}
\frac{(-1)^kq^{\frac{k(k-1)}{2}-l(\tau)}}{(2\pi\i)^k}\oint_{\Gamma_1}\cdots\oint_{\Gamma_k}\prod_{a<b}\frac{w_b-w_a}{w_b-qw_a}T_{\tau}\(\prod_{a=1}^kf_a(w_a)\)\ \prod_{a=1}^kg_a(w_a)\prod_{a=1}^k\frac{dw_a}{w_a}\\
=\frac{(-1)^{k-1}q^{\frac{(k-1)(k-2)}{2}-l(\tau')}}{(2{\pi}\i)^{k-1}}\oint_{\Gamma_1}\cdots\oint_{\Gamma_{k-1}}\prod_{a<b\leq k-1}\frac{w_b-w_a}{w_b-qw_a}T_{\tau'}\(\prod_{a=1}^{k-1}f_a(w_a)\)\ \prod_{a=1}^{k-1}g_{\rho(a)}(w_a)\prod_{a=1}^{k-1}\frac{dw_a}{w_a}
\end{multline*}
where $\rho=\sigma_{k-1}\sigma_{k-2}\dots\sigma_{\tau(k)}$ is the cycle sending $\tau(k), \tau(k)+1, \dots, k$ to $\tau(k)+1, \tau(k)+2, \dots,  k, \tau(k)$ respectively, while leaving other elements intact, and the permutation $\tau'$ is defined by $\tau'=\rho^{-1}\tau$.
\end{prop}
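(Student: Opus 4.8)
The plan is to integrate out the variable $w_k$. After moving the Hecke operator into a convenient position, the $w_k$-dependence of the integrand becomes simple enough that the $w_k$-contour integral can be evaluated by a single residue, and this is exactly what produces the constant $-q^{1-\tau(k)}$ (equivalently, the replacement of $\binom{k}{2}-l(\tau)$ by $\binom{k-1}{2}-l(\tau')$ in the prefactor) and the reindexing $g_a\mapsto g_{\rho(a)}$.

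First I would factorize $T_\tau$. Set $m=\tau(k)$ and let $\rho$, $\tau'$ be as in the statement, so that $\tau'\in S_{k-1}$ (it fixes $k$) and $\tau=\rho\tau'$. Since $\rho$ is the minimal-length representative of the left coset $\tau S_{k-1}=\{\pi:\pi(k)=m\}$, one has $l(\rho)=k-m$ and $l(\tau)=l(\rho)+l(\tau')$, hence $T_\tau=T_\rho T_{\tau'}$ in the Hecke algebra. As $\tau'$ does not involve $w_k$,
\[
T_\tau\Bigl(\prod_{a=1}^k f_a(w_a)\Bigr)=T_\rho\Bigl(f_k(w_k)\,T_{\tau'}\bigl(\textstyle\prod_{a=1}^{k-1}f_a(w_a)\bigr)\Bigr).
\]
Next I would apply Proposition~\ref{selfAdjProp} to transfer $T_\rho$ from the $f$-side to the $g$-side; its hypotheses are satisfied because the operators $T_i$ preserve the class of rational functions whose only poles are at $\mu_i^{-1},\kappa_j^{-1},\lambda_d^{-1}$ (the factor $w_{i+1}-qw_i$ over $w_{i+1}-w_i$ multiplies something divisible by $w_{i+1}-w_i$). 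The left-hand side of the proposition becomes
\[
\frac{(-1)^kq^{\binom{k}{2}-l(\tau)}}{(2\pi\i)^k}\oint_{\Gamma_1}\!\!\cdots\!\oint_{\Gamma_k}\prod_{a<b}\frac{w_b-w_a}{w_b-qw_a}\,f_k(w_k)\,T_{\tau'}\bigl(\textstyle\prod_{a<k}f_a(w_a)\bigr)\,T_{\rho^{-1}}\bigl(\textstyle\prod_{a=1}^k g_a(w_a)\bigr)\prod_{a=1}^k\frac{dw_a}{w_a}.
\]

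The core of the proof is then the evaluation of the inner $w_k$-integral. On one hand, $T_{\rho^{-1}}$ is a staircase product of Demazure--Lusztig operators $T_i$ with $m\le i\le k-1$; expanding it and repeatedly using the explicit form of $T_i$ together with the normalization $g_a(0)=1$ yields the \emph{collapse identity}
\[
\Bigl(T_{\rho^{-1}}\bigl(\textstyle\prod_{a=1}^k g_a(w_a)\bigr)\Bigr)\Big|_{w_k=0}=q^{\,k-m}\prod_{a=1}^{k-1}g_{\rho(a)}(w_a),
\]
whose base case $k-m=1$ is the one-line computation $T_{k-1}\bigl(g_{k-1}(w_{k-1})g_k(w_k)\bigr)\big|_{w_k=0}=q\,g_k(w_{k-1})$, and whose inductive step peels off one operator at a time. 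On the other hand, as a function of $w_k$ the integrand $\prod_{a<k}\frac{w_k-w_a}{w_k-qw_a}\,f_k(w_k)\,T_{\rho^{-1}}(\prod g_a)\,w_k^{-1}$ is holomorphic outside $\Gamma_k$ apart from the simple pole at the origin coming from $w_k^{-1}$: the poles of each $f_k(w_k)g_a(w_k)$ lie inside $\Gamma_k$ by hypothesis, the factors $\frac{w_k-w_a}{w_k-qw_a}$ contribute only the poles $w_k=qw_a$, which lie inside $\Gamma_k$ as well, and the hypothesis that $f_kg_a$ is holomorphic outside $\Gamma_k$ (in particular, decays at infinity) makes the residue at $w_k=\infty$ vanish. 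Therefore the $w_k$-integral equals minus the residue at $w_k=0$, i.e.
\[
\frac1{2\pi\i}\oint_{\Gamma_k}\prod_{a<k}\frac{w_k-w_a}{w_k-qw_a}\,f_k(w_k)\,T_{\rho^{-1}}\bigl(\textstyle\prod_a g_a(w_a)\bigr)\frac{dw_k}{w_k}=-q^{1-k}f_k(0)\Bigl(T_{\rho^{-1}}\bigl(\textstyle\prod_a g_a\bigr)\Bigr)\Big|_{w_k=0}=-q^{1-m}\prod_{a<k}g_{\rho(a)}(w_a),
\]
using $f_k(0)=1$, $\prod_{a<k}\frac{0-w_a}{0-qw_a}=q^{1-k}$, and the collapse identity.

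Finally I would reassemble and check the prefactors: substituting the evaluated $w_k$-integral leaves
\[
\frac{(-1)^kq^{\binom{k}{2}-l(\tau)}(-q^{1-m})}{(2\pi\i)^{k-1}}\oint_{\Gamma_1}\!\!\cdots\!\oint_{\Gamma_{k-1}}\prod_{a<b\le k-1}\frac{w_b-w_a}{w_b-qw_a}\,T_{\tau'}\bigl(\textstyle\prod_{a<k}f_a(w_a)\bigr)\prod_{a<k}g_{\rho(a)}(w_a)\prod_{a<k}\frac{dw_a}{w_a},
\]
and using $l(\tau)=l(\tau')+(k-m)$ together with $\binom{k}{2}-\binom{k-1}{2}=k-1$ one computes $(-1)^kq^{\binom{k}{2}-l(\tau)}(-q^{1-m})=(-1)^{k-1}q^{\binom{k-1}{2}-l(\tau')}$, which turns the displayed expression into exactly the right-hand side of the proposition. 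The main obstacle I expect is organizational: fixing the permutation-composition conventions so that the reduced word for $T_{\rho^{-1}}$, the coset-representative property of $\rho$, and the reindexing $a\mapsto\rho(a)$ on the $g$'s all line up, and carrying the collapse identity cleanly through the induction; a secondary technical point is ensuring the residue at infinity genuinely vanishes, which is precisely the role of the ``holomorphic outside $\Gamma_k$'' hypothesis on the products $f_kg_a$.
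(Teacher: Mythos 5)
Your proof is correct and follows essentially the same route as the paper's: factor $T_\tau=T_\rho T_{\tau'}$, use Proposition \ref{selfAdjProp} to move $T_\rho$ onto the $g$-side as $T_{\rho^{-1}}$, evaluate the $w_k$-integral by the single residue at $w_k=0$ (the hypothesis on $f_kg_a$ killing all other exterior contributions), and establish the collapse identity $\restr{T_{\rho^{-1}}(\prod_a g_a)}{w_k=0}=q^{l(\rho)}\prod_{a<k}g_{\rho(a)}(w_a)$ by peeling off one Demazure--Lusztig generator at a time. The prefactor bookkeeping via $l(\tau)=l(\rho)+l(\tau')$ and $\binom{k}{2}-\binom{k-1}{2}=k-1$ matches the paper exactly.
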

\begin{proof}
The proof is similar to \cite[Lemma 4.2.1]{BK20}. Note that $T_{\tau}=T_{\rho}T_{\tau'}$, so by Proposition \ref{selfAdjProp} the left-hand side can be rewritten as
\begin{equation}
\label{lemmaProof}
\frac{(-1)^kq^{\frac{k(k-1)}{2}-l(\tau)}}{(2\pi\i)^k}\oint_{\Gamma_1}\cdots\oint_{\Gamma_k}\prod_{a<b}\frac{w_b-w_a}{w_b-qw_a}T_{\tau'}\(\prod_{a=1}^kf_a(w_a)\)\ T_{\rho^{-1}}\(\prod_{a=1}^kg_a(w_a)\)\prod_{a=1}^k\frac{dw_a}{w_a}.
\end{equation}
Now we are taking the integral with respect to $w_k$ by computing its residues outside of $\Gamma_k$. Note that $\tau'(k)=k$, hence
\be
T_{\tau'}\(\prod_{a=1}^kf_a(w_a)\)=f_k(w_k)T_{\tau'}\(\prod_{a=1}^{k-1}f_a(w_a)\).
\ee
By the conditions of the claim the function
\be
f_k(w_k) T_{\rho^{-1}}\(\prod_{a=1}^kg_a(w_a)\)=\(\prod_{a=1}^{k-1}f_k(w_a)\)^{-1} T_{\rho^{-1}}\(\prod_{a=1}^kf_k(w_a)g_a(w_a)\)
\ee
is holomorphic with respect to $w_k$ outside of $\Gamma_k$, so the only non-vanishing residue of the integrand in \eqref{lemmaProof} outside of $\Gamma_k$ is the residue at $w_k=0$. Computing it, we obtain
\be
\frac{(-1)^{k-1}q^{\frac{(k-1)(k-2)}{2}-l(\tau)}}{(2\pi\i)^{k-1}}\oint_{\Gamma_1}\cdots\oint_{\Gamma_{k-1}}\prod_{a<b\leq k-1}\frac{w_b-w_a}{w_b-qw_a}T_{\tau'}\(\prod_{a=1}^{k-1}f_a(w_a)\)\ \restr{ \( T_{\rho^{-1}}\prod_{a=1}^kg_a(w_a)\)}{w_k=0} \prod_{a=1}^{k-1}\frac{dw_a}{w_a}.
\ee
To finish the proof, note that for any function $G(\bw)$ nonsingular at $w_{a}=0$ for every $a$ we have
\be
\restr{\(T_i G(\bw)\)}{w_{i+1}=0}=\restr{\(\frac{(q-1)w_{i+1}}{w_{i+1}-w_i}G(\bw)+\frac{w_{i+1}-qw_i}{w_{i+1}-w_i}G(\sigma_i. \bw)\)}{w_{i+1}=0}=q\restr{G(\sigma_i.\bw)}{w_{i+1}=0},
\ee
where $\sigma_i$ is a simple transposition acting on the variables $\bw$ by permutation. A straightforward induction yields 
\begin{multline*}
\restr{T_{\rho^{-1}}\(\prod_{a=1}^kg_a(w_a)\)}{w_k=0}=\restr{T_{k-1}T_{k-2}\dots T_{\tau(k)}\(\prod_{a=1}^kg_a(w_a)\)}{w_k=0}\\
=\restr{q^{l(\rho)}\prod_{a=1}^kg_a(w_{\rho^{-1}(a)})}{w_k=0}=q^{l(\rho)}\prod_{a=1}^{k-1}g_{\rho(a)}(w_{a}).
\end{multline*}
\end{proof}

Another integral we need to explicitly compute by taking residues is the right-hand side of \eqref{qHahnIntExpression} when $x_1=\dots=x_k=\frac{1}{2}$. For convenience, set $p_i:=y_i-\frac{1}{2}$ and recall that $l_a$ corresponds to the position of the color $c_a$ on the left boundary.

\begin{prop} \label{initialCondition} Fix $N>0$. For any integer sequences $l_1, l_2, \dots, l_k$ and $p_1, p_2, \dots, p_k$ such that
\be
0\leq l_1\leq l_2\leq\dots\leq l_k\leq N,\qquad N\geq p_1\geq p_2\geq \dots\geq p_k\geq 0,
\ee
and any permutation $\tau$ we have
\begin{multline}
\label{initialConditionEq}
\prod_{j=1}^N\frac{(\kappa_j/\mu_0;q)_{r_j}}{(\lambda_j/\mu_0;q)_{r_j}}=\frac{(-1)^kq^{\frac{k(k-1)}{2}-l(\tau)}}{(2\pi\i)^k}\oint_{\Gamma_1}\dots\oint_{\Gamma_k}\prod_{a<b}\frac{w_b-w_a}{w_b-qw_a}\ T_\tau\(\prod_{a=1}^k\prod_{j=1}^{l_a}\frac{1-\lambda_jw_a}{1-\kappa_jw_a}\)\\
\times \prod_{a=1}^k\prod_{j=1}^{p_a}\frac{1-\kappa_jw_a}{1-\lambda_jw_a}\ \prod_{a=1}^k\frac{dw_a}{w_a(1-\mu_0 w_a)},
\end{multline}
where for every integer $j\in [1, N]$ we set
\be
r_j:=\#\{a\in[1,k] \mid l_{\tau^{-1}(a)}<j\leq p_a\},
\ee
that is, $r_j$ is the number of intervals $[l_{\tau^{-1}(a)}+1, p_a]$ containing $j$. 
\end{prop}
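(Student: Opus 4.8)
The plan is a direct evaluation of the contour integral on the right of \eqref{initialConditionEq} by iterated residues.

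Although \eqref{initialConditionEq} is a statement about the contour integral alone, it is worth recalling where its left-hand side comes from: specializing $\bx=(\tfrac12,\dots,\tfrac12)$ and $\by=(p_1+\tfrac12,\dots,p_k+\tfrac12)$ in the model, the exponent in $\Q^{(\bx,\by)}_{\geq\tau.\bc}=q^{\cH^{(\bx,\by)}_{\geq\tau.\bc}}$ becomes the linear statistic $\sum_jr_jb_j$ of the independent left-boundary occupations $b_j$ of \eqref{leftProb}, and summing the resulting series with the $q$-binomial identity \eqref{qbinom} gives exactly $\prod_j\frac{(\kappa_j/\mu_0;q)_{r_j}}{(\lambda_j/\mu_0;q)_{r_j}}$. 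So the content of the Proposition is that the nested integral reproduces this product; since \eqref{initialConditionEq} is the base case in the proof of Theorem~\ref{qHahnResultTheo} we may not appeal to that theorem, and the plan uses only Propositions~\ref{selfAdjProp} and~\ref{localRelationRat} together with elementary $q$-Pochhammer manipulations.

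The mechanism is transparent when $\tau=\mathrm{id}$. Writing $f_a(w)=\prod_{j\leq l_a}\frac{1-\lambda_jw}{1-\kappa_jw}$ and $g_a(w)=\prod_{j\leq p_a}\frac{1-\kappa_jw}{1-\lambda_jw}$, the integrand is $\prod_{a<b}\frac{w_b-w_a}{w_b-qw_a}\,\prod_af_a(w_a)g_a(w_a)\frac{1}{w_a(1-\mu_0w_a)}$, and one integrates $w_1,\dots,w_k$ starting from the innermost contour. If $l_a<p_a$, the $\kappa_j^{-1}$-poles of $f_a$ are killed by zeros of $g_a$, the points $0$ and $\lambda_j^{-1}$ lie outside $\Gamma_a$, and the factors $\frac{w_a-w_b}{w_a-qw_b}$ left by the earlier residues at $w_b=q^{m_b}\mu_0^{-1}$ cancel the poles at $q^{m}\mu_0^{-1}$ for every already-used $m$ while creating a fresh simple pole at $w_a=q^{m_a}\mu_0^{-1}$, where $m_a$ is the number of earlier $\mu_0^{-1}$-type residues; taking that residue releases $f_ag_a(q^{m_a}\mu_0^{-1})=\prod_{l_a<j\leq p_a}\frac{1-q^{m_a}\kappa_j/\mu_0}{1-q^{m_a}\lambda_j/\mu_0}$. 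If instead $l_a\geq p_a$, one checks that $f_ag_a=\prod_{p_a<j\leq l_a}\frac{1-\lambda_jw_a}{1-\kappa_jw_a}$ has no $\lambda_j^{-1}$-poles and that the $w_a$-integrand decays like $w_a^{-2}$ at infinity, so its residue may be computed outside $\Gamma_a$, where the only contribution is at $w_a=0$: it equals $1$ and creates no shift. Because $l_1\leq\dots\leq l_k$ and $p_1\geq\dots\geq p_k$, for each fixed $j$ the set of indices $a$ contributing a factor at that $j$ is an initial segment $\{1,\dots,r_j\}$, along which $m_a=a-1$, so the telescope collapses to $\prod_j\prod_{i=0}^{r_j-1}\frac{1-q^i\kappa_j/\mu_0}{1-q^i\lambda_j/\mu_0}=\prod_j\frac{(\kappa_j/\mu_0;q)_{r_j}}{(\lambda_j/\mu_0;q)_{r_j}}$, and the powers of $q$ released at the shifted points cancel the prefactor $q^{k(k-1)/2}$.

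For general $\tau$ I would organize the argument as an induction on $k$ that peels off the innermost variable $w_1$ (which carries the largest $p_a$): after taking its residue the remaining $(k-1)$-fold integral has the same shape with the extra boundary factor $\prod_{b\geq2}\frac{w_b-\mu_0^{-1}}{w_b-q\mu_0^{-1}}$, which is precisely what shifts $\mu_0\mapsto q\mu_0$ in the effective data and cuts $T_\tau$ down to a Hecke operator on $k-1$ variables. The genuinely delicate point — and the one I expect to consume most of the work — is controlling $T_\tau$ through this peeling: since $T_i$ mixes $w_i$ and $w_{i+1}$ it is not immediate that the $f$-factor governing the residue at $w_a=q^{m_a}\mu_0^{-1}$ is the ``expected'' $f_{\tau^{-1}(a)}$, that the lower-order terms of $T_\tau$ are holomorphic inside the contour being collapsed (this hinges on comparing $l_{\tau^{-1}(a)}$ with $p_a$), and that the contributions reassemble into $r_j=\#\{a\mid l_{\tau^{-1}(a)}<j\leq p_a\}$ with the indices still running consecutively. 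I would attack this by first using $T_if=qf$ on functions symmetric in $w_i,w_{i+1}$ to replace $\tau$ by the minimal representative of its coset modulo the Young subgroup fixing the blocks of equal colors (absorbing the change into the $q^{-l(\tau)}$ prefactor), then using Proposition~\ref{selfAdjProp} to move $T_\tau$ onto whichever side keeps the residue bookkeeping cleanest, and finally strengthening the inductive hypothesis so that it tracks the shuffle and the list of already-used shifted points simultaneously; an alternative that avoids manipulating $T_\tau$ directly is to iterate Proposition~\ref{localRelationRat}, peeling off one parameter $\lambda_j$ (equivalently one ``$j$-layer'') at a time until the integral becomes trivial.
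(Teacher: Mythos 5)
Your residue mechanism is the right one, and for $\tau=\mathrm{id}$ your picture (residues at $w_a=q^{a-1}\mu_0^{-1}$ inside the contours for the indices with $l_a<p_a$, residues at $w_a=0$ outside for the rest, telescoping to $\prod_j\frac{(\kappa_j/\mu_0;q)_{r_j}}{(\lambda_j/\mu_0;q)_{r_j}}$) is essentially what happens. But the induction you propose for general $\tau$ has a genuine gap at its central step: the claim that taking the residue of the \emph{innermost} variable $w_1$ at $\mu_0^{-1}$ ``cuts $T_\tau$ down to a Hecke operator on $k-1$ variables.'' The only identity available for pushing an evaluation through the Hecke action is $\restr{(T_iG)}{w_{i+1}=0}=q\restr{G(\mathfrak s_i.\bw)}{w_{i+1}=0}$, and it relies on two special features simultaneously: the evaluation point is $0$ (so the term $\frac{(q-1)w_{i+1}}{w_{i+1}-w_i}G$ dies and $\frac{w_{i+1}-qw_i}{w_{i+1}-w_i}$ becomes the constant $q$), and the variable being killed is the one $T_i$ treats as $w_{i+1}$, i.e.\ ultimately the \emph{last} variable. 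Evaluating $T_\tau\bigl(\prod_af_a(w_a)\bigr)$ at $w_1=\mu_0^{-1}\neq0$ produces neither a constant multiple of a symmetrized product nor a Hecke-operator image of a product in $w_2,\dots,w_k$; none of your proposed fixes (coset reduction, which needs symmetry of $F$ that is absent; Proposition~\ref{selfAdjProp}, which merely moves $T_\tau$ onto the measure side) addresses this. There is also a secondary ordering problem in your $\tau=\mathrm{id}$ sketch: if you process variables innermost-first and want to evaluate some $w_a$ by residues \emph{outside} $\Gamma_a$ while $w_b$, $b>a$, are still on their contours, you meet the poles of $\frac{w_b-w_a}{w_b-qw_a}$ at $w_a=w_b/q$, which lie outside $\Gamma_a$; the $0$-residues must therefore be taken outermost-first.

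The paper's proof resolves exactly this tension by never reducing $k$ through an inner residue. It runs a double induction on $(k,N)$: when $l_k=N$ it removes the \emph{outermost} variable $w_k$ via Proposition~\ref{integralReduction} (the residue at $w_k=0$, where the Hecke reduction is legitimate), and when $p_1=\dots=p_r=N>p_{r+1}$ it localizes $w_1,\dots,w_r$ at $\mu_0^{-1},q\mu_0^{-1},\dots,q^{r-1}\mu_0^{-1}$ but keeps all $k$ variables and the operator $T_\tau$ completely untouched: since the localization points are forced by the measure and cross-factors alone (the $\kappa_j^{-1}$-poles of $T_\tau(F)$ being cancelled by $G$), one only factors out of $G$ the single layer $\frac{(\kappa_N/\mu_0;q)_r}{(\lambda_N/\mu_0;q)_r}$ and recognizes the remainder as the same integral with $p_a\mapsto p_a-1$ for $a\le r$, i.e.\ with $N$ reduced by one. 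This ``reduce $N$, not $k$, at the inner residues'' step is the missing idea; your closing suggestion of peeling off one $j$-layer at a time points in this direction, but Proposition~\ref{localRelationRat} is not the tool for it -- the layer is stripped from $G$ inside the residue computation, with $T_\tau(F)$ carried along as a spectator.
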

\begin{proof}
The proof is by induction on $k$ and $N$, where at each step we either reduce $k$ or $N$ by $1$, keeping $N,k\geq 0$. The base case $k=0$ is trivial.

Assume that $k> 0$ and let $\mathbf l=(l_1, \dots, l_k)$ and $\mathbf p=(p_1, \dots, p_k)$. Set 
\be
F^{\mathbf l}(\bw)=\prod_{a=1}^k\prod_{j=1}^{l_a}\frac{1-\lambda_jw_a}{1-\kappa_jw_a},\qquad G^{\mathbf p}(\bw)=\prod_{a=1}^k\prod_{j=1}^{p_a}\frac{1-\kappa_jw_a}{1-\lambda_jw_a}.
\ee
We have three cases:

\emph{Case 1: $l_k<N$ and $p_1<N$:} Then for any $a$ we have $l_a, p_a<N$, so $r_N=0$, $N>0$ and we can freely reduce $N$ by $1$ without changing the claim.

\emph{Case 2: $l_k=N$:} Here we can compute the integral with respect to $w_k$ by applying Proposition \ref{integralReduction} for the functions
\be
f_a=\prod_{j=1}^{l_a}\frac{1-\lambda_jw}{1-\kappa_jw}, \qquad g_a=\frac{1}{1-\mu_0w}\prod_{j=1}^{p_a}\frac{1-\kappa_jw}{1-\lambda_jw}.
\ee
Since $l_k=N\geq p_a$ for any $a$, after the multiplication by $f_k(w)$ the poles of $g_a(w)$ at $w=\lambda_j^{-1}$ vanish, so the function $f_k(w)g_a(w)$  has no nonzero residues outside of $\Gamma_k$ and Proposition \ref{integralReduction} implies that
\begin{multline*}
\frac{(-1)^kq^{k(k-1)/2-l(\tau)}}{(2\pi\i)^k}\oint_{\Gamma_1}\cdots\oint_{\Gamma_k}\prod_{a<b}\frac{w_b-w_a}{w_b-qw_a}T_\tau\(F^{\mathbf l}(\bw)\)G^{\mathbf p}(\bw)\prod_{a=1}^k\frac{dw_a}{w_a(1-\mu_0w_a)}\\
=\frac{(-1)^{k-1}q^{(k-1)(k-2)/2-l(\tau')}}{(2\pi\i)^{k-1}}\oint_{\Gamma_1}\cdots\oint_{\Gamma_{k-1}}\prod_{a<b\leq k-1}\frac{w_b-w_a}{w_b-qw_a}T_{\tau'}\(F^{{\mathbf l}'}(\bw')\)G^{{\mathbf p}'}({\bw}')\prod_{a=1}^{k-1}\frac{dw_a}{w_a(1-\mu_0w_a)},
\end{multline*}
where  the permutation ${\tau}'$ as defined by $\tau=\rho\tau'$ for the same cycle $\rho$ as in Proposition \ref{integralReduction} and
\be
{\mathbf w}'=(w_1, \dots, w_{k-1}),\qquad {\mathbf l}'=(l_1, \dots, l_{k-1}),
\ee
\be
{\mathbf p}'=(p_{\rho(1)}, \dots, p_{\rho(k-1)})=(p_{1},\dots, p_{\tau(k)-1}, p_{\tau(k)+1}, \dots, p_{k}).
\ee
Hence, the right-hand sides of \eqref{initialConditionEq} for the triples $(\mathbf l, \mathbf p, \tau)$ and $({\mathbf l}', {\mathbf p}', {\tau}')$ coincide. 

At the same time, the families of intervals $\{[l_{\tau^{-1}(a)}+1, p_a]\}_{a\in[1,k]}$ and $\{[l_{{\tau}'^{-1}(a)}+1, {p}_{\rho(a)}]\}_{a\in[1,k-1]}$ differ only by the empty interval $[l_k+1, p_{\tau(k)}]$, implying that the left-hand sides of the claim for $({{\mathbf l}},{\mathbf p},{\tau})$ and $({{\mathbf l}}', {\mathbf p}', {\tau}')$ coincide as well. Thus, we can reduce $k$ by $1$ and the induction step hold.

\emph{Case 3: $l_k<N$ and $N=p_1=p_2=\dots=p_r>p_{r-1}$:} To tackle this case we start computing the integrals with respect to $w_1, \dots, w_r$, taking residues inside $\Gamma_1, \dots, \Gamma_r$. Note that possible simple poles at $w_1=\kappa^{-1}_j, \dots, w_r=\kappa^{-1}_j$ coming from $F^{\mathbf l}(\bw)$ are cancelled by $\prod_{a=1}^r\prod_{j=1}^N(1-\kappa_jw_a)$ from $G^{\mathbf p}(\bw)$, so all non-vanishing residues inside $\Gamma_1, \dots, \Gamma_r$ come from the singularities of 
\be
\prod_{a=1}^k\frac{1}{w_a(1-\mu_0w_a)}\prod_{a<b}\frac{w_b-w_a}{w_b-qw_a}.
\ee
Taking into account the geometry of the contours, the integral with respect to $w_1$ has only one residue inside $\Gamma_1$, which is at $w_1=\mu_0^{-1}$. Computing this residue for the expression above, we obtain
\be
\restr{\frac{1}{-\mu_0w_1}\prod_{a=2}^{k}\frac{1}{w_a(1-\mu_0w_a)}\prod_{a<b}\frac{w_b-w_a}{w_b-qw_a}}{w_1=\mu_0^{-1}}=-q^{-k+1}\prod_{a=2}^{k}\frac{1}{w_a(1-q^{-1}\mu_0w_a)}\prod_{2\leq a<b}\frac{w_b-w_a}{w_b-qw_a}.
\ee
Hence, after taking the residue at $w_1=\mu_0^{-1}$, the integral with respect to $w_2$ again has only one non-vanishing residue inside $\Gamma_2$, this time at $w_2=q\mu^{-1}_0$. Continuing this argument, we see that the only non-vanishing contribution in the computation of the right-hand side of \eqref{initialConditionEq} comes from the residue at $w_1=\mu_0^{-1}, w_2=q\mu_0^{-1}, \dots, w_r=q^{r-1}\mu_0^{-1}$. Note that 
\be
\restr{G^{\mathbf p}(\bw)}{\substack{w_a=q^{a-1}\mu_0^{-1}\\ a\in [1,r]}}=\frac{(\kappa_N/\mu_0;q)_r}{(\lambda_N/\mu_0;q)_r}\restr{G^{{\mathbf p}'}(\bw)}{\substack{w_a=q^{a-1}\mu_0^{-1}\\ a\in [1,r]}},
\ee
where ${\mathbf p}'=(p_1-1, p_2-1, \dots, p_r-1, p_{r+1}, \dots, p_k)$. Hence
\begin{multline*}
\frac{(-1)^kq^{k(k-1)/2-l(\tau)}}{(2\pi\i)^k}\oint_{\Gamma_1}\cdots\oint_{\Gamma_k}\prod_{a<b}\frac{w_b-w_a}{w_b-qw_a}T_\tau\(F^{\mathbf l}(\bw)\)G^{\mathbf p}(\bw)\prod_{a=1}^k\frac{dw_a}{w_a(1-\mu_0w_a)}\\
=\frac{(\kappa_N/\mu_0;q)_r}{(\lambda_N/\mu_0;q)_r}\frac{(-1)^kq^{k(k-1)/2-l(\tau)}}{(2\pi\i)^k}\oint_{\Gamma_1}\cdots\oint_{\Gamma_k}\prod_{a<b}\frac{w_b-w_a}{w_b-qw_a}T_\tau\(F^{\mathbf l}(\bw)\)G^{{\mathbf p}'}(\bw)\prod_{a=1}^k\frac{dw_a}{w_a(1-\mu_0w_a)},
\end{multline*}
At the same time, note that $r_N=r$ for the triple of data $({\mathbf l}, \mathbf p, \tau)$, while the numbers $r_i$ for $i<N$ are the same for both triples $({\mathbf l}, \mathbf p, \tau)$ and $(\mathbf l, \mathbf p', \tau)$. So, factoring out the term $\frac{(\kappa_N/\mu_0;q)_r}{(\lambda_N/\mu_0;q)_r}$, we reduce the claim for  $({\mathbf l}, \mathbf p, \tau)$ to the claim for $({\mathbf l}, \mathbf p', \tau)$, and the latter one allows to freely reduce $N$ by $1$, finishing the step of induction. 
\end{proof}

\subsection{Proof of Theorem \ref{qHahnResultTheo}.}\label{mainproofsubsection} The main idea of the proof is to use induction, applying the local relations from Section \ref{localSect} to reduce the coordinates $\bx$ and $\by$. This process has to eventually reach either the case $x_k=y_k$, when we can use Proposition \ref{integralReduction} to reduce $k$,  or the case $x_1=x_2=\dots=x_{k}=\frac{1}{2}$, in which the claim holds by Proposition \ref{initialCondition}. Throughout the whole argument the parameters of the model, the contours $\Gamma_a$ and the sequence $\bc$ are fixed.

\paragraph*{{\bfseries Step 1.\ }}\ Fix $\bx,\by,\tau$ as in the claim and assume that $\frac{1}{2}<x_k<y_k$. In this main step we use the local relations to show that the claim for $(\bx,\by,\tau)$ follows from the claims for $(\bx',\by',\tau')$ with smaller coordinates $x'_a\leq x_a, y'_b\leq y_b$, $\by'\neq\by$.  

More precisely, we start by finding integers $m\in\mathbb Z_{\geq 0}, r\in\mathbb Z_{>0}$ such that
\be
x_m<x_{m+1}=\dots=x_{m+r}\leq x_{m+r+1},\qquad y_m\geq y_{m+1}=\dots=y_{m+r}> y_{m+r+1},
\ee
\be
y_{m+1}>x_{m+1}>\frac{1}{2},
\ee
where $m+r\leq k$ and if $m=0$ or $m+r=k$ we omit inequalities involving $x_0,y_0$ or $x_{k+1}, y_{k+1}$ respectively. In other words, $m$ is chosen in a way that all points $(x_a,y_a)$ are either equal to the point $(x_{m+1},y_{m+1})$, strictly below it or strictly to the left of it, while $r$ is the number of the points equal to $(x_{m+1},y_{m+1})$. To show existence of such $m,r$ one can take $m$ equal to the maximal index such that $x_m<x_{k}$ (if $x_1=x_k$ take $m=0$) while $r$ is the maximal index such that $y_{m+r}=y_{m+1}$. For any $a\leq b$ let $\e^{[a,b]}\in\mathbb Z^k$ denote the vector with $1$ entries at positions $a, a+1, \dots, b$, and $0$s elsewhere:
\be
\e^{[a,b]}=\sum_{i=a}^b \e^i=(\dots, 0, 0, 1, 1, \dots, 1, 0, 0 \dots).
\ee
It turns out that the claim for $\bx,\by$ and arbitrary $\tau$ follows from the claims for $\bx-\e^{[m+1,m+p]}, \by-\e^{[m+1,m+r]}$ with $p=0, \dots, r$. To show it we proceed in three sub-steps: first we simplify the permutation $\tau$ and then apply the local relation to both sides of \eqref{qHahnIntExpression}.

\paragraph*{{\bfseries Step 1.1.}}\ For now we want to show that it is enough to consider permutations $\tau$ which are $[m+1, m+r]$-ordered:
\be
\tau^{-1}(m+1)<\tau^{-1}(m+2)<\dots<\tau^{-1}(m+r).
\ee
For the left-hand side of \eqref{qHahnIntExpression} note that for any permutation $\rho\in S_{[m+1,m+r]}$ permuting only numbers $m+1, \dots, m+r$ we have
\be
\Q^{\bx,\by}_{\geq\tau.\bc}=\Q^{\rho^{-1}.\bx, \rho^{-1}.\by}_{\geq\tau.\bc}=\Q^{\bx, \by}_{\geq\rho\tau.\bc},
\ee
thus the left-hand side depends only on the right coset of $\tau$ in $S_{[m+1,m+r]}\backslash S_{k}$. On the other hand, we can rewrite the right-hand side of \eqref{qHahnIntExpression} as 
\be
\frac{(-1)^kq^{\frac{k(k-1)}{2}-l(\tau)}}{(2\pi\i)^k}\oint_{\Gamma_1}\cdots\oint_{\Gamma_k}\prod_{a<b}\frac{w_b-w_a}{w_b-qw_a}\ G(\bw)\ T_{\tau^{-1}}\( F^{\bx,\by}(\bw)\) \frac{dw_a}{w_a},
\ee
where
\begin{equation}
\label{GFnotationTheoProof}
G(\bw)=\prod_{a=1}^k\prod_{i=1}^{l_a}\frac{1-\lambda_iw_a}{1-\kappa_iw_a},
\end{equation}
\be
F^{\bx,\by}(\bw)=\prod_{a=1}^k\(\prod_{i=0}^{i<x_a}\frac{1}{1-\mu_iw_a}\prod_{j=1}^{j<y_a}(1-\kappa_jw_a)\prod_{d=1}^{d\leq y_a-x_a}\frac{1}{1-\lambda_dw_a}\).
\ee
Note that $F^{\bx,\by}(\bw)$ is symmetric in $w_{m+1}, w_{m+2},\dots, w_{m+r}$, hence for any $\rho\in S_{[m+1, m+r]}$ we have
\begin{equation}
\label{tmpFident}
q^{-l(\rho)}T_{\rho^{-1}} F^{\bx,\by}(\bw)=F^{\bx,\by}(\bw),
\end{equation}
since $T_i f(w_i, w_{i+1})=qf(w_i, w_{i+1})$ for any $f$ symmetric in $w_{i},w_{i+1}$. Multiplying \eqref{tmpFident} on the left by $T_{\tau_{min}}^{-1}$ for a minimal permutation $\tau_{min}$ of a coset in $S_{[m+1,m+r]}\backslash S_{k}$, one can readily see that the right-hand side of \eqref{qHahnIntExpression} also depends only on the left coset of $\tau$, rather than $\tau$ itself. So, we can assume that $\tau$ is a minimal representative of a left coset in  $S_{[m+1,m+r]}\backslash S_{k}$, or, equivalently, $\tau$ is $[m+1, m+r]$-ordered.

\paragraph*{{\bfseries Step 1.2.}}\ Now, assuming that $\tau$ is $[m+1, m+r]$-ordered, we want to apply an appropriate local relation to the left-hand side of \eqref{qHahnIntExpression}. Let $v=(x_{m+1}-\frac{1}{2},y_{m+1}-\frac{1}{2})$ denote the vertex immediately to the south-west of the facet $(x,y):=(x_{m+1}, y_{m+1})=\dots=(x_{m+r}, y_{m+r})$. It has vertex weights $W_{\sqrt{\lambda/\kappa},\sqrt{\lambda/\mu}}$, where 
\be
\mu:=\mu_{x-\frac{1}{2}},\qquad \kappa:=\kappa_{y-\frac{1}{2}},\qquad \lambda:=\lambda_{y-x}.
\ee  
Now we want to change the order in which we perform the sampling of the model: first we sample the outgoing configurations for the vertices $(i,j)$ either below $v$ or to the left of $v$, obtaining a configuration $\Sigma_{\swarrow v}$, and then we sample the remaining configuration $\Sigma$. Since all our samplings are independent, their order does not matter as long as each vertex is sampled after the vertices directly below and to the left of it, so the resulting configuration is distributed in the same way as in the initial model. But note that $\Sigma_{\swarrow v}$ determines the value of the height functions at the facets $(x_{a}, y_{a})$ for $a\leq m$, because they are to the left of $v$, and at the facets $(x_{a}, y_{a})$ for $m+r< a$ because they are below $v$. 

At the same time, the height functions at $(x,y)$ are determined only by $\Sigma_{\swarrow v}$ and the labels of the edges around $v$, which are determined by the sampling at $v$. Let $(\bA,\bB)$ denote the compositions of the incoming bottom and left edges of $v$, determined by $\Sigma_{\swarrow_v}$, and let $(\bC, \bD)$ denote the compositions of the outgoing top and right edges of $v$. Then the local rule \eqref{localrulesheight2} from the definition of the height functions implies that
\begin{equation}
\label{useLocalHeight}
\sum_{a=m+1}^{m+r}h^{(x,y)}_{\geq c_{\tau^{-1}(a)}}(\Sigma)=\sum_{a=m+1}^{m+r}h^{(x,y-1)}_{\geq c_{\tau^{-1}(a)}}(\Sigma_{\swarrow v}) +\sum_{a=m+1}^{m+r}D_{[c_{\tau^{-1}(a)},n]},
\end{equation}
and hence
\be
\Q_{\geq \tau.\bc}^{\bx,\by}(\Sigma)=\Q_{\geq \tau.\bc}^{\bx,\by-\e^{[m+1, m+r]}}(\Sigma_{\swarrow v})\Q_{\geq \tau.\bc[m+1, m+r]}(\bD),
\ee
where we use the notation $\Q_{\geq \bc}(\bD)$ from \eqref{defoflocalQ} and we set
\be
\tau.\bc[m+1, m+r]=(c_{\tau^{-1}(m+1)}, \dots, c_{\tau^{-1}(m+r)}).
\ee
Taking expectations we obtain
\be
\E[\Q_{\geq \tau.\bc}^{\bx,\by}(\Sigma)]=\E\left[\Q_{\geq \tau.\bc}^{\bx,\by-\e^{[m+1, m+r]}}(\Sigma_{\swarrow v})\  \E[\Q_{\geq \tau.\bc[m+1, m+r]}(\bD)  \mid \Sigma_{\swarrow v}]\right].
\ee
Now we can apply Corollary \ref{localRelationSub} to the conditional expectation inside to get
\be
\E[\Q_{\geq \tau.\bc[m+1, m+r]}(\bD)  \mid \Sigma_{\swarrow v}]=\sum_{p=0}^r\sum_{\rho\in S^{p|r-p}_{[m+1,m+r]}} (\kappa/\mu)^p\frac{(\kappa/\mu;q)_{r-p}(\lambda/\kappa)_p}{(\lambda/\mu)_r}q^{l(\rho)}\Q_{\geq \rho\tau.\bc[m+1,m+p]}(\bA),
\ee
where the second sum is over $[m+1, m+p]\times [m+p+1, m+r]$-ordered permutations of $m+1, \dots, m+r$. For the same reasons as in Step 1.1, for any such permutation $\rho$ we have $\Q_{\geq \tau.\bc}^{\bx,\by-\e^{[m+1, m+r]}}=\Q_{\geq \rho\tau.\bc}^{\bx,\by-\e^{[m+1, m+r]}}$ and using the definition of the height function in a way similar to \eqref{useLocalHeight} we have
\be
\Q_{\geq \tau.\bc}^{\bx,\by-\e^{[m+1, m+r]}}(\Sigma_{\swarrow v})\Q_{\geq \rho\tau.\bc[m+1, m+p]}(\bA)=\Q_{\geq \rho\tau.\bc}^{\bx-\e^{[m+1,m+p]},\by-\e^{[m+1, m+r]}}(\Sigma_{\swarrow v}).
\ee
So the local relation from Corollary \ref{localRelationSub} gives the following relation on the left-hand side of \eqref{qHahnIntExpression}:
\begin{equation}
\label{nonlocalQ}
\E[\Q_{\geq \tau.\bc}^{\bx,\by}(\Sigma)]=\sum_{p=0}^r\sum_{\rho\in S^{p|r-p}_{[m+1,m+r]}} (\kappa/\mu)^p\frac{(\kappa/\mu;q)_{r-p}(\lambda/\kappa;q)_p}{(\lambda/\mu;q)_r}q^{l(\rho)}\ \E\left[\Q^{\bx-\e^{[m+1,m+p]},\by-\e^{[m+1, m+r]}}_{\geq \rho\tau.\bc}(\Sigma)\right].
\end{equation}

\paragraph*{{\bfseries Step 1.3.}}\ Continuing with the same setup, we now want to apply the other local relation to the right-hand side of \eqref{qHahnIntExpression}. Using the notation from Step 1.1, namely \eqref{GFnotationTheoProof}, we rewrite the right-hand side as 
\be
\frac{(-1)^kq^{\frac{k(k-1)}{2}-l(\tau)}}{(2\pi\i)^k}\oint_{\Gamma_1}\cdots\oint_{\Gamma_k}\prod_{a<b}\frac{w_b-w_a}{w_b-qw_a}\ G(\bw)\ T_{\tau^{-1}}\( F^{\bx,\by}(\bw)\) \frac{dw_a}{w_a}.
\ee
Note that
\be
F^{\bx,\by}(\bw)=\(\prod_{a=m+1}^{m+r}\frac{1-\kappa w_a}{1-\lambda w_a}\) F^{\bx,\by-\e^{[m+1,m+r]}}(\bw) ,
\ee
so we can apply Proposition \ref{localRelationRat} with $t=\sqrt{\lambda/\kappa}, s=\sqrt{\lambda/\mu}$ to obtain
\be
\prod_{a=m+1}^{m+r}\frac{1-\kappa w_a}{1-\lambda w_a}=\sum_{p=0}^r\sum_{\rho\in S_{[m+1, m+r]}^{p | r-p}} (\kappa/\mu)^p\frac{(\kappa/\mu;q)_{r-p} (\lambda/\kappa;q)_{p}}{(\lambda/\mu;q)_r}T_{\rho^{-1}}\(\prod_{a=m+1}^{m+p}\frac{1-\mu w_a}{1-\lambda w_a}\).
\ee
Since $F^{\bx,\by-\e^{[m+1,m+r]}}(\bw)$ is symmetric in $w_{m+1}, \dots, w_{m+r}$ and
\be
F^{\bx-\e^{[m+1, m+p]},\by-\e^{[m+1,m+r]}}(\bw)=\(\prod_{a=m+1}^{m+p}\frac{1-\mu w_a}{1-\lambda w_a}\) F^{\bx,\by-\e^{[m+1,m+r]}}(\bw) ,
\ee
we get
\be
T_{\tau^{-1}}F^{\bx,\by}(\bw)=\sum_{p=0}^r\sum_{\rho\in S_{[m+1, m+r]}^{p | r-p}} (\kappa/\mu)^p\frac{(\kappa/\mu;q)_{r-p} (\lambda/\kappa;q)_{p}}{(\lambda/\mu;q)_r}T_{(\rho\tau)^{-1}} F^{\bx-\e^{[m+1, m+p]},\by-\e^{[m+1,m+r]}}(\bw).
\ee
Let $\mathcal I^{\bx,\by}_{\tau}$ denote the right-hand side of \eqref{qHahnIntExpression}. Then, the application of the local relation above gives
\be
\mathcal I_{\tau}^{\bx,\by}=\sum_{p=0}^r\sum_{\rho\in S_{[m+1, m+r]}^{p | r-p}} (\kappa/\mu)^p\frac{(\kappa/\mu;q)_{r-p} (\lambda/\kappa;q)_{p}}{(\lambda/\mu;q)_r}q^{l(\rho)}\mathcal{I}_{\rho\tau}^{\bx-\e^{[m+1,m+p]},\by-\e^{[m+1, m+r]}},
\ee
which looks identical to the analogous relation for the left-hand side \eqref{nonlocalQ}. Thus, the identity \eqref{qHahnIntExpression} for $(\bx,\by,\tau)$ follows from $(\bx-\e^{[m+1,m+p]},\by-\e^{[m+1, m+r]}, \rho\tau)$ for  various $p\in [0,r]$ and $\rho\in S_{[m+1, m+r]}^{p | r-p}$.

\paragraph*{{\bfseries Step 2.}}\ Applying Step 1 we can reduce the coordinates $\bx,\by$ until either $x_1=\dots=x_k=\frac{1}{2}$ or $x_k=y_k$. Here we deal with the latter case, assuming that $x_k=y_k$ and showing that in this situation we can reduce $k$ by $1$ and replace the data $(\bx,\by,\bc,\tau)$ by 
\be
\bx'=(x_1, \dots, x_{k-1}),\qquad \by'=(y_1, \dots, y_{k-1}),
\ee 
\be
\bc=(c_{\rho(1)}, \dots, c_{\rho(k-1)})=\rho^{-1}.\bc[1,k-1],\qquad \tau'=\tau\rho,
\ee
where $\rho$ is the cycle sending $\tau^{-1}(k), \tau^{-1}(k)+1, \dots, k$ to $\tau^{-1}(k)+1, \dots, k, \tau^{-1}(k)$ respectively.

For the left-hand side of \eqref{qHahnIntExpression} note that for any color $c$ we have
\be
h^{(x_k,y_k)}_{\geq c}(\Sigma)=0\qquad \text{for}\ x_k=y_k 
\ee
due to the behavior of the model: all edges adjacent to vertices $(i,j)$ for $i>j$ have compositions $\bm 0$. This means that we can freely remove this height function from $\Q_{\geq\tau.\bc}^{\bx,\by}$, obtaining
\be
\Q_{\geq\tau.\bc}^{\bx,\by}(\Sigma)=\Q_{\geq\tau'.\bc'}^{\bx',\by'}(\Sigma).
\ee
for any possible configuration $\Sigma$. Taking expectation we conclude that the left-hand sides for $(\bx,\by,\bc,\tau)$ and $(\bx',\by',\bc',\tau')$ are equal.

For the right-hand side of \eqref{qHahnIntExpression} we are going to apply Proposition \ref{integralReduction}. Set
\be
f_a(w)=\prod_{i=0}^{i<x_a}\frac{1}{1-\mu_i w}\prod_{j=1}^{j<y_a}(1-\kappa_j w_a)\prod_{d=1}^{d\leq y_a-x_a}\frac{1}{1-\lambda_d w}, \qquad g_a(w)=\prod_{j=1}^{l_a}\frac{1-\lambda_j w}{1-\kappa_j w}
\ee
Since $x_k=y_k$, the function $f_k(w)$ has no singularities outside of $\Gamma_k$, as well as the functions $g_a(w)$ for any $a\in [1,k]$. Thus, the conditions of Proposition \ref{integralReduction} hold and we have
\begin{multline*}
\frac{(-1)^kq^{\frac{k(k-1)}{2}-l(\tau)}}{(2\pi\i)^k}\oint_{\Gamma_1}\cdots\oint_{\Gamma_k}\prod_{a<b}\frac{w_b-w_a}{w_b-qw_a}\prod_{a=1}^kg_a(w_a)\  T_{\tau^{-1}}\(\prod_{a=1}^kf_a(w_a)\) \prod_{a=1}^k\frac{dw_a}{w_a}\\
=\frac{(-1)^{k-1}q^{\frac{(k-1)(k-2)}{2}-l(\tau')}}{(2{\pi}\i)^{k-1}}\oint_{\Gamma_1}\cdots\oint_{\Gamma_{k-1}}\prod_{a<b\leq k-1}\frac{w_b-w_a}{w_b-qw_a}\prod_{a=1}^{k-1}g_{\rho(a)}(w_a)\ T_{\tau'^{-1}}\(\prod_{a=1}^{k-1}f_a(w_a)\)\prod_{a=1}^{k-1}\frac{dw_a}{w_a}
\end{multline*}
where $\rho$ and $\tau'$ are exactly the permutations defined above. Plugging back the expressions for $f_a, g_a$, we get the right-hand side of \eqref{qHahnIntExpression}  for $(\bx',\by',\tau',\bc')$, as desired.

\paragraph*{{\bfseries Step 3.}}\ Repeating Steps 1 and 2, at some point we either reach $k=0$, when the claim is trivial, or the situation when $x_1=x_2=\dots=x_k=\frac{1}{2}$. In the latter case Proposition \ref{initialCondition} implies that the right-hand side of \eqref{qHahnIntExpression} is equal to
\be
\prod_{j=1}^N\frac{(\kappa_j\mu_0^{-1};q)_{r_j}}{(\lambda_j\mu_0^{-1};q)_{r_j}}, \qquad r_j=\#\left\{a\in[1,k] \mid j\in\left[l_{\tau^{-1}(a)}+1, y_a-\frac{1}{2}\right]\right\}.
\ee
At the same time, recall that the compositions of the incoming edges are mono-colored, and the color of the incoming edge on row $j$ is $\geq c_{\tau^{-1}(a)}$ if and only if $j> l_{{\tau^{-1}(a)}}$.  Recall that $b_j=|\bB^{(0,j)}|$ denote the random integers used to define the compositions at the left boundary and distributed according to \eqref{leftProb}, so we get for $\bx=\(\frac{1}{2},\dots, \frac{1}{2}\)$
\begin{multline*}
\E[\Q_{\tau.\bc}^{\bx; \by}]= \sum_{b_1, b_2,\dots\geq 0}q^{\sum_{a=1}^k\sum^{j<y_{\tau(a)}}_{j=l_{a}+1} b_j}\prod_{j\geq 1}(\kappa_j/\mu_0)^{b_j}\frac{(\lambda_j/\kappa_j;q)_{b_j}}{(q;q)_{b_j}}\frac{(\kappa_j/\mu_0;q)_\infty}{(\lambda_j/\mu_0;q)_\infty}\\
=\prod_{i\geq 1}\sum_{b\geq 0}q^{r_jb}(\kappa_j/\mu_0)^{b}\frac{(\lambda_j/\kappa_j;q)_{b}}{(q;q)_{b}}\frac{(\kappa_j/\mu_0;q)_\infty}{(\lambda_j/\mu_0;q)_\infty}=\prod_{j\geq 1}\frac{(q^{r_j}\lambda_j/\mu_0;q)_\infty}{(q^{r_j}\kappa_j/\mu_0;q)_\infty}\frac{(\kappa_j/\mu_0;q)_\infty}{(\lambda_j/\mu_0;q)_\infty},
\end{multline*}
which is equal to the value of the right-hand side, concluding the base case. Note that for the last identity we have used \eqref{qbinom}.
\qed


\section{Shift invariance}\label{shiftSect} In this section we show how the integral expressions from Theorem \ref{qHahnResultTheo} lead to certain symmetries of the diagonally inhomogeneous $q$-Hahn model, which are similar to the shift-invariance symmetries of the stochastic colored six-vertex model from \cite{BGW19} and \cite{Gal20}.  Our argument is identical to \cite[Section~7]{BK20}.

To state the result we need a finer description of the height functions. For simplicity, consider a $q$-Hahn vertex model with incoming colors along the left boundary defined by $\bI=(1,1,1,\dots)$, that is, in the incoming boundary composition at row $j$ only $j$th entry is nonzero. Then, for each color $c$ we can define a cutoff point $(\frac{1}{2}, c-\frac{1}{2})$, which separates the rows with the incoming colors on the left $\geq c$  and $<c$. Since such cutoff points uniquely determine the corresponding colors, instead of using a color $c$ and a point $(x,y)$ to identify a colored height function $h^{(x,y)}_{\geq c}$  we can equivalently use a pair of points $(\frac{1}{2}, c-\frac{1}{2})$ and $(x,y)$. It turns out that the latter pair of points plays an important role in visualizing the  behavior of the height function.

For a color $c\in\mathbb Z_{\geq 1}$ and a point $(x,y)\in\(\mathbb Z_{\geq 0}+\frac{1}{2}\)^2$ such that $y-x\geq c$ define the following integer intervals:
\be
\Row_{\geq c}^{(x,y)}:=\left[c, y-\frac{1}{2}\right], \quad \Col_{\geq c}^{(x,y)}:=\left[0, x-\frac{1}{2}\right], \quad \Diag_{\geq c}^{(x,y)}:=[c, y-x].
\ee
Here $[a,b]$ denotes an integer interval $a, a+1, \dots, b$. In other words, $\Row,\Col,\Diag$ are exactly the labels of the rows, columns\footnote{With the exception of the artificially added $0$th column.} and diagonals passing between the points $(\frac{1}{2}, c-\frac{1}{2})$ and $(x,y)$, possibly containing $(x,y)$ but avoiding $(\frac{1}{2}, c-\frac{1}{2})$. The additional constraint $y-x\geq c$ listed above is not restrictive, since  $h_{\geq c}^{(x,y)}\equiv 0$ for $y-x<c$.

We start with the informal statement of the shift-invariance symmetry, which is rigorously described in Proposition \ref{shiftTheo}. Given a vector of height functions $\(h_{\geq c_1}^{(x_{\tau(1)}, y_{\tau(1)})},\dots, h_{\geq c_k}^{(x_{\tau(k)}, y_{\tau(k)})}\)$ with $x_a,y_a, c_a$ ordered as before in Theorem \ref{qHahnResultTheo}, the joint $k$-dimensional distribution is determined solely by
\begin{itemize}
\item The collections of parameters attached to the rows $\Row_{\geq c_a}^{(x_{\tau(a)},y_{\tau(a)})}$, columns $\Col_{\geq c_a}^{(x_{\tau(a)},y_{\tau(a)})}$ and diagonals $\Diag_{\geq c_a}^{(x_{\tau(a)},y_{\tau(a)})}$, corresponding to the height functions,

\item The relative order $\tau$ of evaluation and cutoff points of the height functions.
\end{itemize}
The data in the first part corresponds to the one-dimensional distributions of $h^{(x,y)}_{\geq c}$, which are determined by the listed parameters in view of Theorem \ref{qHahnResultTheo}. Since the one-dimensional distributions are determined by the joint $k$-dimensional distributions, one can expect this data to be necessary. On the contrary, the second part corresponds to the relative position of the height functions with respect to each other on the plane, and this data is not \emph{a priori} contained in the joint distribution. Moreover, we actually expect it to be redundant, see Remark \ref{shiftRem}.

\begin{figure}
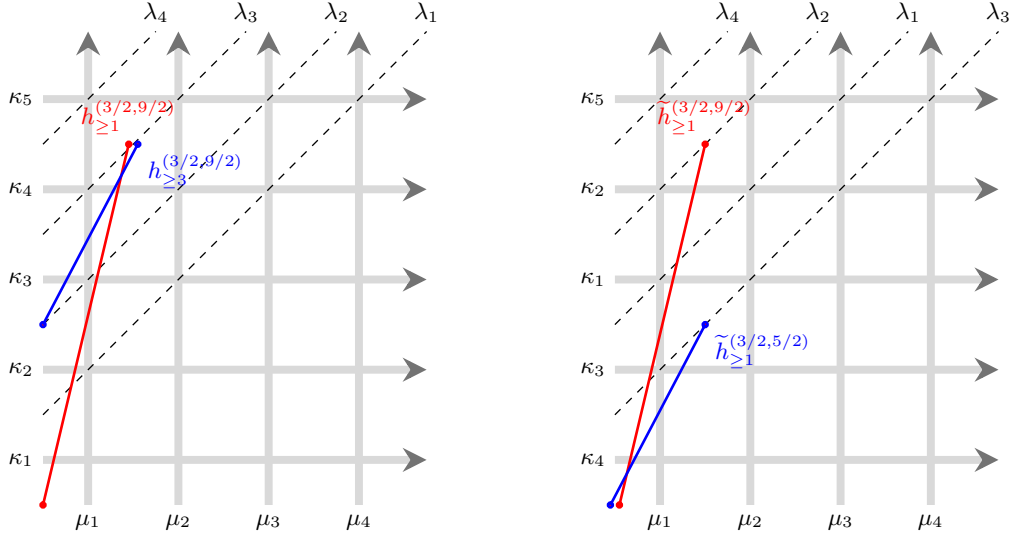

\centerline{
\tikz{0.6}{
	\foreach\y in {1,...,5}{
		\draw[lgray, line width=3pt, arrows={-Stealth[scale=0.9,length=10,width=10,dgray]}] (1,2*\y) -- (9.5,2*\y);
	}
	\foreach\x in {1,...,4}{
		\draw[lgray, line width=3pt, arrows={-Stealth[scale=0.9,length=10,width=10,dgray]}] (2*\x,1) -- (2*\x,11.5);
	}
	\draw[black, line width = 0.5pt, dashed] (1,3) -- (9.5, 11.5) node[above] {\small $\lambda_1$};
	\draw[black, line width = 0.5pt, dashed] (1,5) -- (7.5, 11.5) node[above] {\small $\lambda_2$};
	\draw[black, line width = 0.5pt, dashed] (1,7) -- (5.5, 11.5) node[above] {\small $\lambda_3$};
	\draw[black, line width = 0.5pt, dashed] (1,9) -- (3.5, 11.5) node[above] {\small $\lambda_4$};
	\node[left] at (1,2) {\small $\kappa_1$};
	\node[left] at (1,4) {\small $\kappa_2$};
	\node[left] at (1,6) {\small $\kappa_3$};
	\node[left] at (1,8) {\small $\kappa_4$};
	\node[left] at (1,10) {\small $\kappa_5$};
	\node[below] at (8,1) {\small$\mu_4$};
	\node[below] at (6,1) {\small$\mu_3$};
	\node[below] at (4,1) {\small$\mu_2$};
	\node[below] at (2,1) {\small$\mu_1$};
	
	\draw[red, fill=red, line width = 1pt] (1,1) circle (1.5pt) -- (2.9, 9) circle (1.5pt) node[above] {\small $h_{\geq 1}^{(3/2,9/2)}$};
	\draw[blue, fill=red, line width = 1pt] (1,5) circle (1.5pt) -- (3.1, 9) circle (1.5pt) node[below right] {\small $h_{\geq 3}^{(3/2,9/2)}$};
}
\qquad\qquad
\tikz{0.6}{
	\foreach\y in {1,...,5}{
		\draw[lgray, line width=3pt, arrows={-Stealth[scale=0.9,length=10,width=10,dgray]}] (1,2*\y) -- (9.5,2*\y);
	}
	\foreach\x in {1,...,4}{
		\draw[lgray, line width=3pt, arrows={-Stealth[scale=0.9,length=10,width=10,dgray]}] (2*\x,1) -- (2*\x,11.5);
	}
	\draw[black, line width = 0.5pt, dashed] (1,3) -- (9.5, 11.5) node[above] {\small $\lambda_3$};
	\draw[black, line width = 0.5pt, dashed] (1,5) -- (7.5, 11.5) node[above] {\small $\lambda_1$};
	\draw[black, line width = 0.5pt, dashed] (1,7) -- (5.5, 11.5) node[above] {\small $\lambda_2$};
	\draw[black, line width = 0.5pt, dashed] (1,9) -- (3.5, 11.5) node[above] {\small $\lambda_4$};
	\node[left] at (1,2) {\small $\kappa_4$};
	\node[left] at (1,4) {\small $\kappa_3$};
	\node[left] at (1,6) {\small $\kappa_1$};
	\node[left] at (1,8) {\small $\kappa_2$};
	\node[left] at (1,10) {\small $\kappa_5$};
	\node[below] at (8,1) {\small$\mu_4$};
	\node[below] at (6,1) {\small$\mu_3$};
	\node[below] at (4,1) {\small$\mu_2$};
	\node[below] at (2,1) {\small$\mu_1$};
	
	\draw[red, fill=red, line width = 1pt] (1.1,1) circle (1.5pt) -- (3, 9) circle (1.5pt) node[above] {\small $\widetilde{h}_{\geq 1}^{(3/2,9/2)}$};
	\draw[blue, fill=red, line width = 1pt] (0.9,1) circle (1.5pt) -- (3, 5) circle (1.5pt) node[below right] {\small $\widetilde{h}_{\geq 1}^{(3/2,5/2)}$};
}}
\caption{\label{shiftInvarianceFigure} An example of the shift invariance:  by Proposition \ref{shiftTheo} the joint distribution of the height functions $(h_{\geq 1}^{(3/2,9/2)}, h^{(3/2,9/2)}_{\geq 3})$ on the left is equal to the joint distribution of  $(\widetilde h_{\geq 1}^{(3/2,9/2)}, \widetilde h^{(3/2,5/2)}_{\geq 1})$ on the right. Here the two models have the same row, column and diagonal parameters, but with different orderings corresponding to $\psi_{\Row}, \psi_{\Col}, \psi_{\Diag}$. To illustrate assumptions of Proposition \ref{shiftTheo}, note that for $h^{(3/2,9/2)}_{\geq 3}$ on the left we have rows $\Row^{(3/2,9/2)}_{\geq 3}=\{3,4\}$ with the parameters $\{\kappa_3,\kappa_4\}$, while for $\widetilde h^{(3/2,5/2)}_{\geq 1}$ on the right $\Row^{(3/2,5/2)}_{\geq 1}=\{1,2\}$ with the same parameters $\{\kappa_3,\kappa_4\}$.}
\end{figure}

\begin{prop}\label{shiftTheo} Let $\{\mu_i\}_{i\geq 0}$, $\{\kappa_j\}_{j\geq 1}$, $\{\lambda_d\}_{d\geq 1}$ be parameters of a $q$-Hahn model, and let $\bc=(c_1, \dots, c_k)$, $\bx=(x_1, \dots, x_k)$ and $\by=(y_1, \dots, y_k)$ be sequences satisfying
\be
x_1 \leq x_2\leq\dots\leq x_k,\qquad y_1\geq y_2\geq\dots\geq y_k, \qquad x_a, y_a\in\mathbb Z_{\geq 0}+\frac{1}{2};
\ee
\be
c_1\leq c_2\leq\dots\leq c_k,\qquad c_a\in\mathbb Z_{\geq 0}.
\ee
Additionally, let $\tau\in S_k$ be a permutation such that $y_{\tau(a)}-x_{\tau(a)}\geq c_{a}$. Similarly, let  $\{\widetilde{\mu}_i\}_{i\geq 0}$, $\{\widetilde{\kappa}_j\}_{j\geq 1}$, $\{\widetilde{\lambda}_d\}_{d\geq 1}$, $\widetilde{\bc}=(\widetilde{c}_1, \dots, \widetilde{c}_k)$, $\widetilde{\bx}=(\widetilde{x}_1, \dots, \widetilde{x}_k)$, $\widetilde{\by}=(\widetilde{y}_1, \dots, \widetilde{y}_k)$ and $\widetilde{\tau}=\tau$ be another collection of data satisfying identical constraints, which we treat as another $q$-Hahn model. 

Assume that there exist bijections $\psi_{\Row}, \psi_{\Diag}:\mathbb Z_{\geq 1}\to \mathbb Z_{\geq 1}, \psi_{\Col}:\mathbb Z_{\geq 0}\to \mathbb Z_{\geq 0}$ satisfying 
\be
\psi_{\Row}\(\Row_{\geq c_{a}}^{(x_{\tau(a)}, y_{\tau(a)})}\)=\Row_{\geq \widetilde{c}_{a}}^{(\widetilde{x}_{\tau(a)}, \widetilde{y}_{\tau(a)})},\quad \psi_{\Col}\(\Col_{\geq c_{a}}^{(x_{\tau(a)}, y_{\tau(a)})}\)=\Col_{\geq \widetilde{c}_{a}}^{(\widetilde{x}_{\tau(a)}, \widetilde{y}_{\tau(a)})},
\ee
\be
\psi_{\Diag}\(\Diag_{\geq c_{a}}^{(x_{\tau(a)}, y_{\tau(a)})}\)=\Diag_{\geq \widetilde{c}_{a}}^{(\widetilde{x}_{\tau(a)}, \widetilde{y}_{\tau(a)})},
\ee
for any $a=1, \dots, k$ and 
\be
\widetilde{\kappa}_j=\kappa_{\psi^{-1}_{\Row}(j)}, \quad \widetilde{\mu}_i=\mu_{\psi^{-1}_{\Col}(i)},\quad \widetilde{\lambda}_d=\lambda_{\psi^{-1}_{\Col}(d)}.
\ee
for any $i\geq 0, j\geq1, d\geq 1$. Then the following random vectors are equal in finite-dimensional distributions:
\be
\(h_{\geq c_1}^{(x_{\tau(1)}, y_{\tau(1)})},\dots, h_{\geq c_k}^{(x_{\tau(k)}, y_{\tau(k)})}\)=\(\widetilde{h}_{\geq \widetilde{c}_1}^{(\widetilde{x}_{\tau(1)}, \widetilde{y}_{\tau(1)})},\dots, \widetilde{h}_{\geq \widetilde{c}_k}^{(\widetilde{x}_{\tau(k)}, \widetilde{y}_{\tau(k)})}\),
\ee
where $h$ and $\widetilde{h}$ denote the height functions of the two $q$-Hahn models. 
\end{prop}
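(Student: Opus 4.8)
The plan is to reduce the equality in distribution to an equality of joint $q$-moments, and then to identify both sides of that equality with the same nested contour integral produced by Theorem~\ref{qHahnResultTheo}.

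\emph{Reduction to $q$-moments.} For fixed evaluation points every height function $h^{(x_a,y_a)}_{\geq c_a}$ is determined by the configuration inside a finite sub-grid, so the random vector $\big(h_{\geq c_1}^{(x_{\tau(1)}, y_{\tau(1)})},\dots, h_{\geq c_k}^{(x_{\tau(k)}, y_{\tau(k)})}\big)$ takes finitely many values and its joint probability generating function $(t_1,\dots,t_k)\mapsto\E[\prod_a t_a^{h_a}]$ is a polynomial; hence its law is determined by the values of that polynomial on the grid $t_a=q^{n_a}$, $n_a\in\mathbb Z_{\geq0}$, i.e.\ by the joint $q$-moments $\E\big[\prod_{a=1}^k q^{n_a h^{(x_{\tau(a)},y_{\tau(a)})}_{\geq c_a}}\big]$. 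It therefore suffices to show that these moments agree for the two models. Writing each $q^{n_a h}$ as a product of $n_a$ equal factors, such a moment equals $\E\big[\Q^{\bx',\by'}_{\geq\sigma.\bc'}(\Sigma)\big]$, where the triple $(x_{\tau(a)},y_{\tau(a)},c_a)$ is repeated with multiplicity $n_a$, the $\sum_a n_a$ copies are put in the canonical order ($x$ non-decreasing, $y$ non-increasing, $c$ non-decreasing) and $\sigma=\sigma(\tau,\mathbf n)$ records the resulting permutation. Since $\psi_{\Col}$ maps the initial segment $\Col^{(x_{\tau(a)},y_{\tau(a)})}_{\geq c_a}=[0,x_{\tau(a)}-\tfrac12]$ bijectively onto $[0,\widetilde x_{\tau(a)}-\tfrac12]$, one gets $\bx=\widetilde\bx$; combined with the length constraints coming from $\psi_{\Row}$ and $\psi_{\Diag}$ this forces the order patterns of $(\bx,\by,\bc)$ and $(\widetilde\bx,\widetilde\by,\widetilde\bc)$ to match, so $\sigma$ is common to the two models (height functions that literally coincide being merged beforehand).

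\emph{Equality of the two integrals.} Applying Theorem~\ref{qHahnResultTheo} to the blown-up data, it remains to show that the two integrals in \eqref{qHahnIntExpression} coincide. The decisive structural fact, established exactly as in \cite[Section~7]{BK20}, is that after using the self-adjointness of the Demazure--Lusztig operators with respect to the contour pairing (Proposition~\ref{selfAdjProp}) together with the elementary identities
\be
\prod_{i=1}^{c-1}\frac{1}{1-\kappa_i w}\cdot\prod_{j=1}^{j<y}(1-\kappa_j w)=\prod_{j\in\Row^{(x,y)}_{\geq c}}(1-\kappa_j w),\qquad \prod_{i=1}^{c-1}(1-\lambda_i w)\cdot\prod_{d=1}^{d\le y-x}\frac{1}{1-\lambda_d w}=\prod_{d\in\Diag^{(x,y)}_{\geq c}}\frac{1}{1-\lambda_d w}
\ee
(valid since $c\le y-x\le y-\tfrac12$), the integrand can be organized so that the parameter families $\{\mu_i\},\{\kappa_j\},\{\lambda_d\}$ enter only through, for each height function in the blown-up list, the single-variable products
\be
\prod_{i\in\Col}\frac{1}{1-\mu_i w},\qquad \prod_{j\in\Row}(1-\kappa_j w),\qquad \prod_{d\in\Diag}\frac{1}{1-\lambda_d w}
\ee
over its own sets $\Col,\Row,\Diag$ (the $\mu$-product already has this shape in \eqref{qHahnIntExpression}, since $\prod_{i<x}$ is symmetric in $\mu_0,\dots,\mu_{x-1/2}$). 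The remainder of the integrand — the factor $\prod_{a<b}\frac{w_b-w_a}{w_b-qw_a}$, the Hecke operator $T_\sigma$, and the bookkeeping attaching integration variables to height functions — is parameter-free. Consequently the integral depends on the parameters only through the multisets $\{\mu_i:i\in\Col\}$, $\{\kappa_j:j\in\Row\}$, $\{\lambda_d:d\in\Diag\}$ attached to the individual height functions.

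\emph{Conclusion.} The hypotheses give, for every height function, $\psi_{\Col}(\Col)=\widetilde\Col$, $\psi_{\Row}(\Row)=\widetilde\Row$, $\psi_{\Diag}(\Diag)=\widetilde\Diag$, together with $\widetilde\mu_i=\mu_{\psi_{\Col}^{-1}(i)}$ and the analogous relations for $\widetilde\kappa,\widetilde\lambda$; hence $\{\widetilde\mu_i:i\in\widetilde\Col\}=\{\mu_j:j\in\Col\}$ and likewise for $\kappa$ and $\lambda$. So the relevant multisets coincide for the two models, and with them the two integrals, the two joint $q$-moments, and finally the two finite-dimensional distributions. The main obstacle is the structural rewriting in the second step: one must track carefully how $T_\sigma$ and the cross-term $\prod_{a<b}\frac{w_b-w_a}{w_b-qw_a}$ interact with the $\kappa$- and $\lambda$-combining identities above, so as to confine the parameter dependence to the per-height-function factors; once this is done, the reductions in the first and third steps are routine.
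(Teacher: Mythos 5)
Your overall strategy coincides with the paper's: reduce the equality of laws to equality of joint $q$-moments, apply Theorem~\ref{qHahnResultTheo} to both models, and match the two integral expressions; the combining identities you write for the $\kappa$- and $\lambda$-products are exactly the rewriting \eqref{shiftterm} used in the paper. However, your ``decisive structural fact'' is not correct as stated, and this is where the real content of the proof lives. The colour factors $\prod_{i=1}^{c_a-1}\frac{1-\lambda_iw_a}{1-\kappa_iw_a}$ sit \emph{inside} $T_\tau$ in the variable indexed by the colour order, while the position factors sit \emph{outside} it in the variable indexed by the position order; your combining identities can only be applied once a colour factor and a position factor share the same integration variable. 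After expanding $T_\tau=\sum_{\rho\preceq\tau}Z_\tau^\rho(\bw)\,\rho$ over the Bruhat order (as the paper does, following \cite[Proposition 3.1]{BK20}), only the top term $\rho=\tau$ pairs colour $c_a$ with the point $(x_{\tau(a)},y_{\tau(a)})$ of the statement; every lower term $\rho\prec\tau$ pairs $c_a$ with the \emph{wrong} point $(x_{\rho(a)},y_{\rho(a)})$, so its parameter dependence runs through $\Col/\Row/\Diag$ sets of height functions that do not appear in the hypotheses. Hence the integrand does not decompose into a parameter-free Hecke part times per-height-function products for the given list, and Proposition~\ref{selfAdjProp} (self-adjointness) does not repair this: it merely moves $T_\tau$ onto the other factor as $T_{\tau^{-1}}$ without aligning the variables.

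What is missing is precisely the case analysis that handles the lower Bruhat terms: for each $\rho\prec\tau$ one must show that either some $\rho$-paired height function is degenerate ($y_{\rho(a)}-x_{\rho(a)}<c_{\rho(a)}$) simultaneously in both models, in which case both integrals vanish because the $w_a$-integrand has no residues outside the contours, or else the interval-matching hypotheses of the proposition remain valid with $\tau$ replaced by $\rho$, so that the multiset argument you give for the top term applies to that term as well. This is the content of \cite[Lemma 7.1.3]{BK20} and it is a genuinely combinatorial statement about the bijections $\psi_{\Row},\psi_{\Col},\psi_{\Diag}$ and the Bruhat order; it cannot be absorbed into ``careful tracking'' of how $T_\sigma$ interacts with the combining identities. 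A smaller point: the matching of the blow-up permutation $\sigma$ between the two models is guaranteed by the hypothesis $\widetilde\tau=\tau$ together with the assumed monotonicity of $\widetilde\bx,\widetilde\by,\widetilde\bc$, not ``forced'' by the cardinalities of the intervals — those only give $x_{\tau(a)}=\widetilde x_{\tau(a)}$ and $y_{\tau(a)}-c_a=\widetilde y_{\tau(a)}-\widetilde c_a$, and indeed $y$ and $c$ can shift together, which is the whole point of the symmetry.
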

See Figure \ref{shiftInvarianceFigure} for an example of the setup of Proposition \ref{shiftTheo}. We are not going to give a complete proof of Proposition \ref{shiftTheo}, since it is identical to the proof of \cite[Theorem 7.1]{BK20}. Below we just outline the main steps of that argument.

\begin{proof}[Sketch of the proof] The proof is based on the following observation: the joint $k$-dimensional distribution of a random vector $(h_1, \dots, h_k)$ of nonnegative real numbers is completely determined by the $q$-moments $\E\left[q^{a_1h_1+\dots+a_kh_k}\right]$ for $a_1, \dots, a_k\in\mathbb Z_{\geq 0}$. Repeating in the statement of Proposition \ref{shiftTheo} the first height function for $a_1$ times, the next height function for $a_2$ times and so on, one readily see that it is enough to prove 
\be
\E\left[\Q^{(\bx,\by)}_{\geq \tau.\bc}(\Sigma)\right]=\widetilde{\E}\left[\Q^{(\widetilde{\bx},\widetilde{\by})}_{\geq \tau.\widetilde{\bc}}(\Sigma)\right],
\ee
where $\widetilde{\E}$ denotes expectation in the second model. Applying Theorem \ref{qHahnResultTheo} to both sides, the problem is reduced to matching the right-hand sides of the integral expression \eqref{qHahnIntExpression}.

To match integral expressions, we expand the operator $T_\tau$ as a linear combination of the permutation operators acting on the variables $\bw$:
\be
T_\tau=\sum_{\rho\preceq \tau}Z_{\tau}^\rho(\bw)\rho, \qquad \rho f(w_1, \dots, w_k)=f(w_{\rho(1)}, \dots, w_{\rho(k)}),
\ee
where $Z_{\tau}^\rho(\bw)$ are rational functions in $\bw$ and the inequality $\rho\preceq \tau$ is taken with respect to the strong Bruhat order, see \cite[Proposition 3.1]{BK20}. Then, to match the integral expressions, it is enough to prove for any $\rho\preceq \tau$ that
\begin{multline}\label{tmpshift}
\oint\cdots\oint\prod_{a<b}\frac{w_b-w_a}{w_b-qw_a}\ Z_\tau^\rho(\bw)\prod_{a=1}^k\prod_{i=1}^{c_{a}-1}\frac{1-\lambda_iw_{\rho(a)}}{1-\kappa_iw_{\rho(a)}}\prod_{i=0}^{i<x_a}\frac{1}{1-\mu_iw_a}\prod_{j=1}^{j<y_a}(1-\kappa_jw_a)\prod_{d=1}^{d\leq y_a-x_a}\frac{1}{1-\lambda_dw_a}\frac{dw_a}{w_a}\\
=\oint\cdots\oint\prod_{a<b}\frac{w_b-w_a}{w_b-qw_a}\ Z_\tau^\rho(\bw)\prod_{a=1}^k\prod_{i=1}^{\widetilde{c}_{a}-1}\frac{1-\widetilde{\lambda}_iw_{\rho(a)}}{1-\widetilde{\kappa}_iw_{\rho(a)}}\prod_{i=0}^{i<\widetilde{x}_a}\frac{1}{1-\widetilde{\mu}_iw_a}\prod_{j=1}^{j<\widetilde{y}_a}(1-\widetilde{\kappa}_jw_a)\prod_{d=1}^{d\leq \widetilde{y}_a-\widetilde{x}_a}\frac{1}{1-\widetilde{\lambda}_dw_a}\frac{dw_a}{w_a}.
\end{multline}
When $\rho=\tau$  both sides of \eqref{tmpshift} are equal by the assumptions of the claim: for the left-hand side the factors in the product over $a$ can be rewritten as
\begin{equation}
\label{shiftterm}
\prod_{a=1}^k\prod_{i\in\Col^{(x_a, y_a)}_{\geq c_{\overline{\rho}(a)}}}\frac{1}{1-\mu_iw_a}\prod_{j\in\Row^{(x_a,y_a)}_{\geq c_{\overline{\rho}(a)}}}(1-\kappa_jw_a)\prod_{d\in\Diag^{(x_a,y_a)}_{\geq c_{\overline{\rho}(a)}}}\frac{1}{1-\lambda_dw_a},
\end{equation}
where $\overline{\rho}=\rho^{-1}$, and the statement of Proposition \ref{shiftTheo} essentially just tells that the expression above is equal to the analogous expression for the right-hand side of \eqref{tmpshift} when $\rho=\tau$.

For general $\rho$ one can closely follow the proof of \cite[Lemma 7.1.3]{BK20} and show that for any $\rho\preceq\tau$ one of the following situations holds:
\begin{itemize}
\item For some $a$ we have $y_{\rho(a)}-x_{\rho(a)}< c_{\rho(a)}$ and $\widetilde{y}_{\rho(a)}-\widetilde{x}_{\rho(a)}<\widetilde{c}_{\rho(a)}$, that is, the corresponding height functions $h^{(x_{\rho(a)}, y_{\rho(a)})}_{\geq c_a}$ and $\widetilde{h}^{(\widetilde{x}_{\rho(a)}, \widetilde{y}_{\rho(a)})}_{\geq \widetilde{c}_a}$ are degenerate and equal to $0$. Then both sides of \eqref{tmpshift} have no nonzero residues with respect to $w_a$ outside of the integration contours, and hence vanish.

\item For every $a$ we have $y_{\rho(a)}-x_{\rho(a)}\geq c_{\rho(a)}$ and $\widetilde{y}_{\rho(a)}-\widetilde{x}_{\rho(a)}\geq\widetilde{c}_{\rho(a)}$. Then one can verify that the conditions of Proposition \ref{shiftTheo} still hold when $\tau$ is replaced by $\rho$ and, in particular, the expressions of the form \eqref{shiftterm} match for $\rho$ as well. 
\end{itemize}
\end{proof}

\begin{rem}\label{shiftRem}\normalfont
This section is based on \cite[Section 7]{BK20}, where the shift-invariance for the \emph{stochastic colored six-vertex model} is derived directly from the integral expressions for the $q$-moments of the height functions. The shift-invariance for the six-vertex model was initially introduced in \cite{BGW19} and in full generality it was proved in \cite{Gal20}, where \emph{flip symmetries} were used as a basic block for all other symmetries. Moreover, it was shown in \cite{Gal20} that the relative position of the height functions, encoded by $\tau$ in our notation, is irrelevant and joint distributions of height functions are determined solely by their one-dimensional distributions. Since we don't know how the $q$-Hahn model with diagonal parameters can be constructed from the six-vertex model, the more general invariance result from \cite{Gal20} cannot be applied to our model, but we suspect it to be true.
\end{rem}


\section{Reduction to Beta polymer}\label{reductionSect}
In this section we follow \cite{BGW19} in order to reduce the results from Section \ref{qHahnSec} to the \emph{Beta polymer} model. See also \cite{BW20} and \cite{BK20} for similar arguments.
\subsection{Continuous model.} As before, we treat $q$-Hahn vertex model from Section \ref{qHahnSec} as a sequence of independent stochastic samplings corresponding to the vertices, depending on the parameters $\mu_i, \kappa_j,\lambda_d$. We slightly simplify the model by assuming that the incoming colors from the left are different and are encoded by the composition $\bI=(1,1,1,\dots)$. We consider the following limit regime:
\begin{equation}
\label{limitRegime}
q=e^{-\ve},\qquad \mu_i=q^{-\sigma_i},\qquad \kappa_i=q^{-\rho_i},\qquad \lambda_i=q^{-\omega_i};\qquad \ve\to0,
\end{equation}
where $\sigma_i, \rho_j,\omega_d$ are new real parameters satisfying
\begin{equation}\label{conditionzero}
\omega_d<\rho_j<\sigma_i,\qquad \text{for\ any\ } i,j,d. 
\end{equation}
It turns out that the limiting model in this regime is described in terms of the Beta distribution. Below we present without proofs the exact description, referring to \cite{BGW19} for the general case, and to \cite{BC15b} for the one-colored case.
\begin{prop} In the regime \eqref{limitRegime} the random variable $\exp(-\ve b_j)$ for $b_j$ distributed as in \eqref{leftProb} converges weakly as $\ve\to 0$ to $\Beta(\sigma_0-\rho_j, \rho_j-\omega_j)$.
\end{prop}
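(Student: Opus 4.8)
The plan is to establish the convergence by the method of moments, which here is both the shortest and the most robust route. First I would rewrite everything in the limit regime \eqref{limitRegime}: putting $a:=\sigma_0-\rho_j$ and $c:=\rho_j-\omega_j$, which are strictly positive by \eqref{conditionzero}, one has $\kappa_j/\mu_0=q^{a}$, $\lambda_j/\kappa_j=q^{c}$ and $\lambda_j/\mu_0=q^{a+c}$, so that \eqref{leftProb} becomes
\[
\P(b_j=k)=q^{ak}\,\frac{(q^{c};q)_k}{(q;q)_k}\,\frac{(q^{a};q)_\infty}{(q^{a+c};q)_\infty},\qquad k\in\mathbb Z_{\geq 0},
\]
and $\exp(-\ve b_j)=q^{b_j}$ is a random variable taking values in the compact interval $[0,1]$. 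Since a probability measure supported on $[0,1]$ is determined by its moments, it suffices to show that $\E\!\left[q^{nb_j}\right]$ converges, for every fixed $n\in\mathbb Z_{\geq 0}$, to the $n$-th moment of $\Beta(a,c)$, namely $\Gamma(a+n)\Gamma(a+c)\big/\big(\Gamma(a)\Gamma(a+n+c)\big)$.

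The second step is an exact evaluation of $\E[q^{nb_j}]$ at fixed $\ve>0$. Inserting $q^{nk}$ into the defining sum and applying the $q$-binomial theorem \eqref{qbinom} with $x=q^{a+n}$, $y=q^{c}$ gives
\[
\E\!\left[q^{nb_j}\right]=\frac{(q^{a};q)_\infty}{(q^{a+c};q)_\infty}\sum_{k\geq 0}\frac{(q^{c};q)_k}{(q;q)_k}\,q^{(a+n)k}
=\frac{(q^{a};q)_\infty\,(q^{a+n+c};q)_\infty}{(q^{a+c};q)_\infty\,(q^{a+n};q)_\infty}.
\]
Now I would rewrite each infinite Pochhammer symbol through the $q$-Gamma function $\Gamma_q(x)=(1-q)^{1-x}(q;q)_\infty/(q^{x};q)_\infty$; the factors $(1-q)^{\bullet}$ and $(q;q)_\infty$ cancel pairwise and the identity collapses to
\[
\E[q^{nb_j}]=\frac{\Gamma_q(a+c)\,\Gamma_q(a+n)}{\Gamma_q(a)\,\Gamma_q(a+n+c)}.
\]

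For the third step I would let $\ve\to 0$, i.e.\ $q\to 1^{-}$. Using the classical limit $\Gamma_q(x)\to\Gamma(x)$, valid for every fixed $x>0$ (all four arguments $a$, $a+c$, $a+n$, $a+n+c$ are positive since $a,c>0$ and $n\geq 0$), one gets $\E[q^{nb_j}]\to \Gamma(a+c)\Gamma(a+n)\big/\big(\Gamma(a)\Gamma(a+n+c)\big)$, which is exactly the $n$-th moment of $\Beta(a,c)=\Beta(\sigma_0-\rho_j,\rho_j-\omega_j)$. As the variables $q^{b_j}$ are uniformly bounded and the limiting distribution is compactly supported hence moment-determinate, convergence of all moments upgrades to weak convergence, which is the claim. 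As for difficulties, there is essentially no obstacle here: the only mildly delicate points are the two standard facts used as black boxes (termwise $\Gamma_q\to\Gamma$ on $\mathbb R_{>0}$, and moment-determinacy of distributions on $[0,1]$). A more hands-on alternative would be to turn the lattice sum $\sum_k\P(b_j=k)f(q^{k})$ directly into a Riemann integral, using $q^{k}-q^{k+1}\sim\ve q^{k}$ together with $\ln(x;q)_\infty\sim-\ve^{-1}\mathrm{Li}_2(x)$ and a second-order Euler--Maclaurin expansion for the ratios $(q^{1+k};q)_\infty/(q^{c+k};q)_\infty$; this recovers the Beta density $\tfrac{\Gamma(a+c)}{\Gamma(a)\Gamma(c)}u^{a-1}(1-u)^{c-1}$ explicitly but requires more careful uniform estimates, so I would not pursue it.
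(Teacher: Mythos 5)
Your proof is correct. Note that the paper itself does not prove this proposition — it only cites \cite{BGW19} (Proposition 6.16) and \cite{BC15b} (Lemma 2.3) — so your argument supplies a self-contained verification that the paper omits. The computation checks out: with $a=\sigma_0-\rho_j>0$ and $c=\rho_j-\omega_j>0$ the weights \eqref{leftProb} become $q^{ak}\frac{(q^c;q)_k}{(q;q)_k}\frac{(q^a;q)_\infty}{(q^{a+c};q)_\infty}$, the $q$-binomial theorem \eqref{qbinom} with $x=q^{a+n}$, $y=q^c$ gives $\E[q^{nb_j}]=\frac{(q^a;q)_\infty(q^{a+n+c};q)_\infty}{(q^{a+c};q)_\infty(q^{a+n};q)_\infty}$, the powers of $(1-q)$ in the four $\Gamma_q$-factors cancel exactly, and $\Gamma_q\to\Gamma$ yields $\frac{\Gamma(a+n)\Gamma(a+c)}{\Gamma(a)\Gamma(a+n+c)}$, which is indeed the $n$-th moment of $\Beta(a,c)$. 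The upgrade from moment convergence to weak convergence is legitimate since the variables live in $[0,1]$ and the Hausdorff moment problem is determinate. The cited references argue essentially along the lines of your "hands-on alternative" (local convergence of the mass function to the Beta density via Riemann-sum/asymptotic estimates), so your moment-method route is arguably cleaner for this particular statement.
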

\begin{proof}
See \cite[Proposition 6.16]{BGW19} or \cite[Lemma 2.3]{BC15b}.
\end{proof}
\begin{prop} The stochastic sampling rule for $(\bC,\bD)$ given $(\bA,\bB)$ defined by the vertex weights $W_{\sqrt{\lambda_{j-i}/\kappa_j},\sqrt{\lambda_{j-i}/\mu_i}}(\bA,\bB;\bC,\bD)$ weakly converges in the limit regime
\be
q=e^{-\ve},\qquad \mu_i=q^{-\sigma_i},\qquad \kappa_i=q^{-\rho_i},\qquad \lambda_i=q^{-\omega_i};\qquad \ve\to0,
\ee 
\be
\ve\bA\to {\bm \alpha},\qquad \ve\bB\to {\bm \beta},\qquad \ve\bC\to {\bm {\gamma}}\qquad, \ve\bD\to {\bm {\delta}},
\ee
to the following sampling procedure for the outgoing masses ${\bm \gamma}=(\gamma_1, \dots, \gamma_n)$ and ${\bm \delta}=(\delta_1, \dots, \delta_n)$ given the incoming masses ${\bm \alpha}=(\alpha_1, \dots, \alpha_n)$ and ${\bm \beta}=(\beta_1, \dots, \beta_n)$: We take a Beta-distributed random variable $\eta\sim \Beta(\sigma_i-\rho_j,\rho_j-\omega_{j-i})$ and define $\delta_k$, $0\leq\delta_k\leq\alpha_k$, for all $k$ through
\be
\exp(-\delta_{\geq k})=\exp(-\alpha_{\geq k}) + (1-\exp(-\alpha_{\geq k}))\eta, \qquad k=1,2,\dots,
\ee
where we set $\delta_{\geq k}=\sum_{l\geq k}\delta_l$ and $\alpha_{\geq k}=\sum_{l\geq k}\alpha_l$. The remaining masses $\gamma_k$  are defined by
\be
\gamma_k=\alpha_k+\beta_k-\delta_k, \qquad k=1,2,\dots,
\ee
so the conservation law holds for each color.
\end{prop}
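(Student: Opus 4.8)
The plan is to prove convergence of all joint $q$-moments of the rescaled masses and then upgrade this to weak convergence. First I would dispose of the conservation laws: since $\bC=\bA+\bB-\bD$ and $\gamma_k=\alpha_k+\beta_k-\delta_k$, and since $W_{t,s}(\bA,\bB;\bC,\bD)$ does not depend on $\bB$, the statement reduces by the continuous mapping theorem (using that $\varepsilon\bA\to\bm\alpha$ and $\varepsilon\bB\to\bm\beta$ are deterministic limits) to showing that the law of $\varepsilon\bD=(\varepsilon D_1,\dots,\varepsilon D_n)$ under $\P_{t,s}[\,\cdot\mid\bA\,]$ converges weakly to the law of $\bm\delta$ given by the Beta recursion, where $t=\sqrt{\lambda_{j-i}/\kappa_j}$, $s=\sqrt{\lambda_{j-i}/\mu_i}$, so that under the regime \eqref{limitRegime} one has $s^2/t^2=q^{a}$, $t^2=q^{b}$, $s^2=q^{a+b}$ with $a:=\sigma_i-\rho_j>0$, $b:=\rho_j-\omega_{j-i}>0$. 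Since $0\le D_{\geq k}\le A_{\geq k}$, the vectors $\varepsilon\bD$ eventually lie in a fixed compact box, so by Stone--Weierstrass it suffices to prove convergence of the exponential moments $\E\bigl[\prod_k e^{-m_k\varepsilon D_{\geq k}}\bigr]$ for all $m_1,\dots,m_n\in\mathbb Z_{\geq 0}$.

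The key point is that these are exactly the observables computed in Corollary \ref{localRelationSub}. For fixed $(m_1,\dots,m_n)$ put $r=m_1+\dots+m_n$ and $\bc=1^{m_1}2^{m_2}\cdots n^{m_n}$; then $\Q_{\geq\bc}(\bD)=q^{\sum_k m_k D_{\geq k}}$ by \eqref{defoflocalQ}, and \eqref{localRelationSubEqI} provides a closed-form finite sum over subsets $\mathcal I\subseteq\{1,\dots,r\}$ for $\E_{t,s}[\Q_{\geq\bc}(\bD)\mid\bA]$. I would then let $\varepsilon\to 0$ termwise, using the elementary asymptotics $(q^{x};q)_m=\prod_{l=0}^{m-1}\bigl(1-e^{-\varepsilon(x+l)}\bigr)=\varepsilon^{m}\bigl(\prod_{l=0}^{m-1}(x+l)\bigr)\bigl(1+O(\varepsilon)\bigr)$ valid for $x>0$: the ratio of $q$-Pochhammer symbols in the $\mathcal I$-th term tends to $(a)_{r-|\mathcal I|}(b)_{|\mathcal I|}/(a+b)_r$ (rising factorials, the powers of $\varepsilon$ cancelling), the bounded prefactors $(s^2/t^2)^{|\mathcal I|}=q^{a|\mathcal I|}$ and $q^{-\binom{|\mathcal I|+1}{2}+\sum_{i\in\mathcal I}i}$ tend to $1$, and $q^{\sum_{i\in\mathcal I}A_{\geq c_i}}\to\prod_{i\in\mathcal I}e^{-\alpha_{\geq c_i}}$. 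This gives $\E_{t,s}[\Q_{\geq\bc}(\bD)\mid\bA]\to\sum_{\mathcal I}\frac{(a)_{r-|\mathcal I|}(b)_{|\mathcal I|}}{(a+b)_r}\prod_{i\in\mathcal I}e^{-\alpha_{\geq c_i}}$.

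Finally I would match this limit with the Beta side. For $\eta\sim\Beta(a,b)$ one has $\E[\eta^{r-j}(1-\eta)^{j}]=\Beta(a+r-j,b+j)/\Beta(a,b)=(a)_{r-j}(b)_j/(a+b)_r$, so the limit above equals $\E\bigl[\sum_{\mathcal I}\prod_{i\in\mathcal I}\bigl((1-\eta)e^{-\alpha_{\geq c_i}}\bigr)\prod_{i\notin\mathcal I}\eta\bigr]=\E\bigl[\prod_{i=1}^{r}\bigl(\eta+(1-\eta)e^{-\alpha_{\geq c_i}}\bigr)\bigr]$. On the other hand the Beta recursion reads $e^{-\delta_{\geq k}}=e^{-\alpha_{\geq k}}+(1-e^{-\alpha_{\geq k}})\eta=\eta+(1-\eta)e^{-\alpha_{\geq k}}$, with a single $\eta\sim\Beta(\sigma_i-\rho_j,\rho_j-\omega_{j-i})=\Beta(a,b)$ shared across all $k$, so $\E\bigl[\prod_k e^{-m_k\delta_{\geq k}}\bigr]=\E\bigl[\prod_{i=1}^{r}(\eta+(1-\eta)e^{-\alpha_{\geq c_i}})\bigr]$ — the same expression. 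Hence all joint $q$-moments converge to those of $\bm\delta$, and the weak-convergence claim follows. (The one-colour case, involving only $|\bD|$, is precisely the convergence of the $q$-Hahn distribution to the Beta distribution used in \cite{BC15b}.)

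I do not expect a genuine obstacle here: the $q$-Pochhammer asymptotics and the bookkeeping of Corollary \ref{localRelationSub} are routine. The step requiring the most care is the passage from convergence of the exponential moments to weak convergence of the sampling kernel as a whole; this is exactly where the a priori bound $D_{\geq k}\le A_{\geq k}$ is used, keeping $\varepsilon\bD$ in a fixed compact set so that Stone--Weierstrass applies and no mass escapes to infinity. One should also record that Corollary \ref{localRelationSub} is applicable to our specialisation, i.e. $s^2=q^{a+b}\in(0,1)$ avoids the forbidden values $q^{-\mathbb Z_{\geq 0}}$, which is immediate from $a,b>0$.
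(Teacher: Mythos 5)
Your proposal is correct, and it is worth noting that the paper does not actually prove this proposition itself: it defers entirely to \cite[Corollary 6.21]{BGW19} and, for $n=1$, to \cite[Lemma 2.4]{BC15b}, where the convergence is established by working directly with the explicit form of the fused weights (respectively, of the $q$-Hahn distribution). Your route is genuinely different and arguably more economical within this paper's framework: you observe that the joint exponential moments $\E[\prod_k e^{-m_k\ve D_{\geq k}}]$ of the rescaled outgoing masses are exactly the observables $\Q_{\geq\bc}(\bD)$ already computed in closed form by Corollary \ref{localRelationSub}, so the entire limit reduces to termwise $q$-Pochhammer asymptotics in a finite sum, and the resulting expression $\sum_{\mathcal I}\frac{(a)_{r-|\mathcal I|}(b)_{|\mathcal I|}}{(a+b)_r}\prod_{i\in\mathcal I}e^{-\alpha_{\geq c_i}}$ visibly matches $\E[\prod_{i=1}^r(\eta+(1-\eta)e^{-\alpha_{\geq c_i}})]$ via the Beta moment formula. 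There is no circularity, since Corollary \ref{localRelationSub} is proved in Section \ref{localSect} from the Yang--Baxter equation with no reference to the polymer limit. All the checks I would want are in place: the identification $s^2/t^2=q^a$, $t^2=q^b$, $s^2=q^{a+b}$ with $a=\sigma_i-\rho_j>0$, $b=\rho_j-\omega_{j-i}>0$; the exact cancellation of the powers of $\ve$ between numerator and denominator Pochhammers; the reduction to $\bD$ alone via the conservation law and the fact that $W_{t,s}$ does not depend on $\bB$; and the compactness coming from $D_{\geq k}\leq A_{\geq k}$, which is what legitimizes the Stone--Weierstrass upgrade from moment convergence to weak convergence. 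What your approach buys is a self-contained derivation that exhibits the Beta sampling rule as the $q\to1$ shadow of the local relation itself; what the cited references buy is a pointwise description of the limiting transition density, which is slightly more information than convergence of moments but is not needed for the way the proposition is used here.
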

\begin{proof}
See \cite[Corollary 6.21]{BGW19}. For $n=1$ this is also proved in  \cite[Lemma 2.4]{BC15b}.
\end{proof}

The two statements above result in the following model, serving as the limit under \eqref{limitRegime} of the inhomogeneous $q$-Hahn vertex model and depending on the parameters $\sigma_i, \rho_j, \omega_d$:
\begin{itemize}

\item Each lattice edge of the positive quadrant $\mathbb Z_{\geq 1}^2$ has a random real-valued vector ${\bm u}=(u_1, u_2, u_3, \dots)$ attached to it. Coordinate $u_i\geq 0$ is interpreted as the \emph{mass} of color $i$;

\item To sample a configuration of the model we first sample independent Beta-distributed variables $\eta_{i,j}\sim \Beta(\sigma_i-\rho_j, \rho_j-\omega_{j-i})$ for all $ i\geq 0, j\geq 1$;

\item For the edges entering the quadrant from below all masses are set to $0$;

\item For the edge entering the quadrant at row $j$ only the mass $u_j$ is non-zero and it is equal to $-\ln(\eta_{0,j})$;

\item Given the incoming masses of colors ${\bm \alpha},{\bm \beta}$ entering into a vertex $(i,j)$ from the bottom and from the left, the outgoing masses ${\bm\gamma}, {\bm \delta}$ exiting the vertex to the top and to the right are defined through
\be
\exp(-\delta_{\geq k})=\exp(-\alpha_{\geq k})+(1-\exp(-\alpha_{\geq k}))\eta_{i,j}, \qquad k=1,2,\dots,
\ee
\be
\gamma_k=\alpha_k+\beta_k-\delta_k,\qquad k=1,2,\dots,
\ee
where $\delta_{\geq k}=\sum_{l\geq k}\delta_l$ and $\alpha_{\geq k}=\sum_{l\geq k}\alpha_l$.

\item This definition implies that only colors $\leq j$ pass through vertex $(i,j)$, guaranteeing that at any vertex we are working only with finite sequences of masses. 
\end{itemize}

We can define the colored height functions $\widetilde{h}_{\geq c}^{(x,y)}$ of the continuous model in the same manner as for the $q$-Hahn model: we set $\widetilde{h}_{\geq c}^{(x,\frac{1}{2})}=0$ and 
\begin{itemize}
\item If the vertical edge $(x-\frac{1}{2},y-\frac{1}{2})\to(x-\frac{1}{2}, y+\frac{1}{2})$ has masses ${\bm u}=(u_1, u_2,\dots)$, then
\be
\widetilde{h}^{(x-1,y)}_{\geq c}=\widetilde{h}^{(x,y)}_{\geq c} + u_{\geq c}.
\ee
\item If the horizontal edge $(x-\frac{1}{2},y+\frac{1}{2})\to(x+\frac{1}{2}, y+\frac{1}{2})$ has masses ${\bm u}=(u_1, u_2, \dots)$, then
\be
\widetilde{h}^{(x,y +1)}_{\geq c}=\widetilde{h}^{(x,y)}_{\geq c} + u_{\geq c}.
\ee
\end{itemize}

The following generalization of \cite[Corollary 6.22]{BGW19} straightforwardly follows from the statements above and clarifies in what sense the continuous model is the limit of the $q$-Hahn model.

\begin{cor}\label{limitCor}
Let $h^{(x,y)}_{\geq c}[\ve]$ denote the height function of the $q$-Hahn model in the regime \eqref{limitRegime}. Then in finite-dimensional distributions 
\be
\lim_{\ve\to 0}\ve h^{(x,y)}_{\geq c}[\ve]=\widetilde{h}_{\geq c}^{(x,y)}.
\ee
\end{cor}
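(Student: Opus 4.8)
The plan is to derive Corollary~\ref{limitCor} from the two preceding propositions together with the observation that, for finite-dimensional distributions, only a bounded part of the model is relevant.

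First I would fix a finite collection of points $(x_a,y_a)\in\left(\mathbb Z_{\geq 0}+\tfrac12\right)^2$ and colors $c_a$, and note that each of the height functions $\widetilde h^{(x_a,y_a)}_{\geq c_a}$ and $h^{(x_a,y_a)}_{\geq c_a}[\ve]$ is determined by the configuration inside a fixed finite rectangle $R$ containing all the points: iterating the local rules for the height functions, it equals a fixed finite sum of mass (resp.\ composition) coordinates of edges lying in $R$. Only finitely many vertices of $R$ carry a nontrivial sampling — the vertices $(i,j)$ with $i>j$ are frozen, and at vertex $(i,j)$ only colors $\leq j$ occur — so the sub-configuration $\Sigma_R$ formed by these edges is produced by finitely many independent samplings: the boundary variables $b_j$ and the vertex samplings at $(i,j)\in R$ with $i\leq j$. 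It therefore suffices to show that the scaled sub-configuration $\ve\,\Sigma_R$ converges in distribution, as $\ve\to0$, to the corresponding sub-configuration of the continuous model.

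I would prove this convergence by induction along the sampling order. Listing the relevant vertices lexicographically, the law of $\ve\,\Sigma_R$ is the image of the independent family $\bigl(\exp(-\ve b_j)\bigr)_j$ under the successive application of the scaled vertex sampling kernels $K^{\ve}_{i,j}$. By the first proposition $\exp(-\ve b_j)\Rightarrow \Beta(\sigma_0-\rho_j,\rho_j-\omega_j)$, which matches the incoming masses $u_j=-\ln\eta_{0,j}$ of the continuous model; by the second proposition, whenever $\ve\bA\to\bm\alpha$ and $\ve\bB\to\bm\beta$ the kernel $K^{\ve}_{i,j}$ evaluated at $(\bA,\bB)$ converges weakly to the limiting kernel $K^{0}_{i,j}$ evaluated at $(\bm\alpha,\bm\beta)$. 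Since the limiting kernel is obtained by applying a \emph{continuous} map to $(\bm\alpha,\bm\beta)$ and an independent $\Beta(\sigma_i-\rho_j,\rho_j-\omega_{j-i})$ variable, it is weakly continuous (Feller) in its argument. The standard fact that a finite composition of weakly converging Feller kernels, started from weakly converging inputs that are asymptotically independent of the fresh randomness injected at each step, yields a weakly converging output then gives the joint convergence of $\ve\,\Sigma_R$; the required asymptotic independence is automatic, since in both models the fresh sampling variable at a vertex is independent of everything sampled earlier. Finally, $\ve\,h^{(x_a,y_a)}_{\geq c_a}[\ve]$ is the same fixed finite linear combination of coordinates of $\ve\,\Sigma_R$ that recovers $\widetilde h^{(x_a,y_a)}_{\geq c_a}$ from the continuous sub-configuration, so the continuous mapping theorem yields the claim. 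The only genuinely delicate point is the inductive weak-convergence step: one has to recast the second proposition as convergence of Markov kernels and check that the limiting vertex kernel is Feller, so that composition over $R$ is continuous; granting this, the remainder is routine bookkeeping, exactly as in \cite[Corollary~6.22]{BGW19}.
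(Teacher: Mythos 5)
Your proposal is correct and follows essentially the same route as the paper, which simply asserts that the corollary ``straightforwardly follows'' from the two preceding propositions in the manner of \cite[Corollary 6.22]{BGW19}; your write-up is a faithful fleshing-out of that argument (finite rectangle, induction along the sampling order, Feller property of the limiting vertex kernel, continuous mapping theorem). No gaps.
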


\subsection{Beta polymer.} Fix the same collection of parameters $\{\sigma_i, \rho_j, \omega_{d}\}$ as before. For $(x,y)\in\mathbb Z^2_{\geq 0}$\footnote{From now on $x,y$ will usually be integers, rather than half-integers as before.} and $r\in\mathbb Z_{\geq0}$ such that $r\leq y-x$ define the \emph{delayed inhomogeneous Beta polymer} partition function $\Z_{x,y}^{(r)}$ as follows:
\begin{itemize}
\item Let $\{\eta_{i,j}\}_{i\geq 0, j\geq 0}$ be a collection of independent Beta distributed variables
\be
\eta_{i,j}\sim \Beta(\sigma_i-\rho_j, \rho_j-\omega_{j-i});
\ee

\item For all $t,r\geq0$ set
\be
\Z_{t,r+t}^{(r)}\equiv1,\qquad \Z_{0, r+t}^{(r)}=\eta_{0,r+1}\eta_{0,r+2}\dots\eta_{0,r+t};
\ee

\item For general $(x,y)\in\mathbb Z^2_{\geq 1}$ such that $x>y+r$ the partition function $\Z^{(r)}_{x,y}$ is defined by the recurrence relation
\be
\Z^{(r)}_{x,y}=\eta_{x,y}\Z^{(r)}_{x,y-1}+(1-\eta_{x,y})\Z^{(r)}_{x-1,y-1}.
\ee
\end{itemize}
The model just described is an inhomogeneous generalization of the Beta polymer model from \cite{BC15b}, with three families of the parameters. See Section \ref{introSect} for a graphical description of $\Z^{(r)}_{x,y}$.


It turns out that the continuous vertex model described earlier can be identified with the delayed inhomogeneous Beta polymer model, in the way identical to the homogeneous situation \cite{BGW19}: repeating the proof of \cite[Proposition 7.1]{BGW19} verbatim one can verify that for $(x,y)\in\mathbb Z^2_{\geq 0}$ the height functions $\widetilde{h}_{\geq c}^{(x+\frac{1}{2},y+\frac{1}{2})}$ of the continuous vertex model as random variables can be defined by the recurrence relation
\be
\exp\(-\widetilde{h}_{\geq c}^{(x+\frac{1}{2},y+\frac{1}{2})}\)=\eta_{x,y}\exp\(-\widetilde{h}_{\geq c}^{(x+\frac{1}{2},y-\frac{1}{2})}\)+(1-\eta_{x,y})\exp\(-\widetilde{h}_{\geq c}^{(x-\frac{1}{2},y-\frac{1}{2})}\)
\ee 
and the initial conditions
\be
\widetilde{h}_{\geq c}^{(x+\frac{1}{2},x+\frac{1}{2})}=0, \qquad \widetilde{h}_{\geq c}^{(\frac{1}{2},y+\frac{1}{2})}=\sum_{j=c}^{y} -\ln(\eta_{0,j})
\ee
for exactly the same choice of $\eta_{i,j}$. Comparing the recurrence relations and initial conditions for $\widetilde{h}_{\geq c}^{(x+\frac{1}{2},y+\frac{1}{2})}$ and $\Z_{x,y}^{(r)}$ one obtains
\begin{cor}\label{qHahnContVMCor} The finite dimensional distributions of
\be
\left\{\Z_{x,y}^{(r)}\right\}_{x,r\geq 0; y\geq x+r}
\ee
coincide with the finite dimensional distributions of 
\be
\left\{\exp\(-\widetilde{h}_{\geq r+1}^{(x+\frac{1}{2}, y+\frac{1}{2})}\)\right\}_{x,r\geq 0; y\geq x+r}.
\ee
\end{cor}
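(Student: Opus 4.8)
The plan is to fix $c=r+1$, introduce the shorthand $\widetilde{\Z}^{(r)}_{x,y}:=\exp\(-\widetilde{h}_{\geq r+1}^{(x+\frac12,y+\frac12)}\)$, and prove that the two arrays $\bigl\{\Z^{(r)}_{x,y}\bigr\}$ and $\bigl\{\widetilde{\Z}^{(r)}_{x,y}\bigr\}$ are deterministic functions of one and the same family of independent Beta variables $\{\eta_{i,j}\}$, satisfying the same recurrence with the same boundary data. Since the law of such an array is uniquely determined by these, this yields the asserted equality of finite-dimensional distributions; in fact, using the same $\{\eta_{i,j}\}$ on both sides couples the two arrays so that they coincide almost surely, simultaneously for all $(x,y,r)$ with $y\geq x+r$, which also accounts for the joint dependence on the delay parameter $r$.

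First I would read off, from the recurrence and initial conditions for the continuous-model height functions $\widetilde{h}_{\geq c}^{(x+\frac12,y+\frac12)}$ recorded just above, the corresponding relations for $\widetilde{\Z}^{(r)}_{x,y}$. Exponentiating and substituting $c=r+1$: one has $\widetilde{\Z}^{(r)}_{x,y}=1$ whenever $y-x\leq r$, because $\widetilde{h}_{\geq r+1}^{(x+\frac12,y+\frac12)}\equiv 0$ in that range (only colors $\leq j$ occur at vertex $(i,j)$ in the continuous model); $\widetilde{\Z}^{(r)}_{0,y}=\exp\(\sum_{j=r+1}^{y}\ln\eta_{0,j}\)=\prod_{j=r+1}^{y}\eta_{0,j}$ on the column $x=0$; and
\be
\widetilde{\Z}^{(r)}_{x,y}=\eta_{x,y}\,\widetilde{\Z}^{(r)}_{x,y-1}+(1-\eta_{x,y})\,\widetilde{\Z}^{(r)}_{x-1,y-1}
\ee
for $x\geq 1$ and $y>x+r$. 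These are precisely the defining relations of $\Z^{(r)}_{x,y}$ (diagonal base case $\Z^{(r)}_{t,r+t}\equiv 1$, boundary values $\Z^{(r)}_{0,r+t}=\eta_{0,r+1}\cdots\eta_{0,r+t}$, and the recurrence $\Z^{(r)}_{x,y}=\eta_{x,y}\Z^{(r)}_{x,y-1}+(1-\eta_{x,y})\Z^{(r)}_{x-1,y-1}$), matched term by term after writing $y=r+t$ in the boundary formulas; moreover both sides are driven by the very same independent collection $\{\eta_{i,j}\sim\Beta(\sigma_i-\rho_j,\rho_j-\omega_{j-i})\}$.

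I would then conclude by a straightforward induction on $y$, with the diagonal $y=x+r$ and the column $x=0$ serving as base cases (handled by the matching boundary data) and the common recurrence as the inductive step: it writes both $\Z^{(r)}_{x,y}$ and $\widetilde{\Z}^{(r)}_{x,y}$ as the same affine combination, with coefficients $\eta_{x,y}$ and $1-\eta_{x,y}$, of quantities already identified at level $y-1$. Hence $\Z^{(r)}_{x,y}=\widetilde{\Z}^{(r)}_{x,y}$ for all admissible $(x,y,r)$, and Corollary~\ref{qHahnContVMCor} follows.

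The only genuine input is the recurrence and boundary identities for the continuous-model height functions, which the excerpt obtains by repeating the proof of \cite[Proposition 7.1]{BGW19}; that step---translating the local sampling rule $\exp(-\delta_{\geq k})=\exp(-\alpha_{\geq k})+(1-\exp(-\alpha_{\geq k}))\eta_{i,j}$ together with the local rules defining $\widetilde{h}$ into a single recurrence for $\exp(-\widetilde{h}_{\geq c})$---is where the real work lies. Granting it, the corollary is bookkeeping, and the main point to be careful about is the index matching between $\Z^{(r)}$ (delay $r$, integer coordinates) and $\widetilde{h}_{\geq r+1}$ (color $r+1$, half-integer coordinates), together with checking that the ranges of validity of the two recurrences ($y>x+r$) coincide.
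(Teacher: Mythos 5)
Your proposal is correct and follows essentially the same route as the paper: the paper likewise observes that $\exp(-\widetilde{h}_{\geq r+1}^{(x+\frac12,y+\frac12)})$ and $\Z^{(r)}_{x,y}$ satisfy the same recurrence and the same boundary data driven by the identical family $\{\eta_{i,j}\}$, and concludes by comparison (your explicit induction on $y$ and the remark that this actually gives an almost-sure coupling are just a more detailed write-up of the same step). The index matching you flag ($c=r+1$, half-integer shift, base case on the diagonal $y=x+r$ via $\widetilde h_{\geq r+1}\equiv 0$ for $y-x\leq r$) is handled correctly.
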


As a result of the discussion in this section, we have realized the Beta polymer partition functions as certain limits of the height functions of the $q$-Hahn model. Thus, Theorem \ref{qHahnResultTheo} can be used to obtain an integral expression for the joint moments of the partitions functions $\{\Z_{i,j}^{(r)}\}$. To formulate the result, consider a degenerated representation $ t_\tau$ of the Hecke algebra, acting on functions in $v_1, \dots, v_k$ and defined by the generators
\be
t_i=1+\frac{v_{i+1}-v_{i}-1}{v_{i+1}-v_{i}}(\mathfrak s_i-1).
\ee

\begin{prop} \label{delayedBetaIntExpressionProp}
For any sequences $(r_1, \dots, r_k)$, $(x_1, \dots, x_k)$ and $(y_1, \dots, y_k)$ satisfying
\be
x_1 \leq x_2\leq\dots\leq x_k, \qquad y_1\geq y_2\geq\dots\geq y_k, \qquad, x_a\leq y_a, \qquad x_a,y_b\in\mathbb Z_{\geq 0};
\ee
\be
r_1\leq r_2\leq\dots\leq r_k,\qquad r_a\in\mathbb Z_{\geq 0};
\ee
and for any permutation $\tau\in S_k$ the following holds:
\begin{multline}
\label{delayedBetaIntExpression}
\E\left[\Z_{x_1,y_1}^{\(r_{\tau^{-1}(1)}\)} \Z_{x_2,y_2}^{\(r_{\tau^{-1}(2)}\)}\dots \Z_{x_k,y_k}^{\(r_{\tau^{-1}(k)}\)}\right]=\oint_{\mathcal S_1}\cdots\oint_{\mathcal S_k}\prod_{a<b}\frac{v_b-v_a}{v_b-v_a-1}\ t_\tau\(\prod_{a=1}^k\prod_{j=1}^{r_a}\frac{v_a-\omega_j}{v_a-\rho_j}\)\\
\times \prod_{a=1}^k\(\prod_{j=1}^{j\leq y_a}(v_a-\rho_j)\prod_{i=0}^{i\leq x_a}\frac{1}{v_a-\sigma_i}\prod_{d=1}^{d\leq y_a-x_a}\frac{1}{v_a-\omega_d}\)\frac{dv_a}{2\pi\i},
\end{multline}
where the integration contours $\mathcal S_a$ encircle the points $\{\sigma_i\}_{i}$ and $\{\rho_j\}_j$ as well as the contours $\mathcal S_b, \mathcal S_b+1$ for any $b<a$, while the points $\{\omega_d\}_d$ are outside of the contours. Here $\mathcal S_b+1$ denotes the contour $\mathcal S_b$ shifted to the right by $1$. 
\end{prop}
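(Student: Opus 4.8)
The proof is a direct degeneration of Theorem \ref{qHahnResultTheo} under the limit regime \eqref{limitRegime}, following the scheme already used in \cite{BGW19}, \cite{BW20}, \cite{BK20}. The plan is as follows. First I would apply Corollary \ref{qHahnContVMCor} to rewrite the left-hand side of \eqref{delayedBetaIntExpression} as a limit of $q$-moments of the $q$-Hahn height functions. Concretely, repeating the argument behind Corollary \ref{limitCor} and Corollary \ref{qHahnContVMCor}, for integer $x_a,y_a,r_a$ the product $\Z_{x_1,y_1}^{(r_{\tau^{-1}(1)})}\cdots\Z_{x_k,y_k}^{(r_{\tau^{-1}(k)})}$ is the limit of $\prod_a \exp(-\ve\, h^{(x_a+\frac12,\,y_a+\frac12)}_{\geq r_{\tau^{-1}(a)}+1}[\ve])$ in the regime $q=e^{-\ve}$, $\mu_i=q^{-\sigma_i}$, $\kappa_i=q^{-\rho_i}$, $\lambda_i=q^{-\omega_i}$, and since all the random variables involved are bounded in $(0,1)$, convergence in distribution upgrades to convergence of moments. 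The colors appearing are $c_a=r_{\tau^{-1}(a)}+1$, with cutoff positions $l_a=I_{[1,c_a-1]}=r_a$ because $\bI=(1,1,1,\dots)$; note the ordering hypotheses on $r_a$ guarantee $c_1\le\dots\le c_k$ as required by Theorem \ref{qHahnResultTheo}, and the constraint $x_a\le y_a$ together with $r_a\le y_a-x_a$ (which holds on the domain where $\Z^{(r_a)}_{x_a,y_a}$ is defined) makes the height functions non-degenerate.

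Next I would take the $\ve\to 0$ limit of the right-hand side of \eqref{qHahnIntExpression}. The substitution is $w_a=1-\ve v_a+O(\ve^2)$, so that $1-\mu_iw_a=1-q^{-\sigma_i}w_a\to \ve(v_a-\sigma_i)$, and similarly $1-\kappa_jw_a\to\ve(v_a-\rho_j)$, $1-\lambda_dw_a\to\ve(v_a-\omega_d)$; the cross terms behave as $\frac{w_b-w_a}{w_b-qw_a}\to\frac{v_b-v_a}{v_b-v_a-1}$ and $\frac{dw_a}{w_a}\to -\ve\,dv_a$. A careful bookkeeping of the powers of $\ve$: the prefactor $(-1)^kq^{k(k-1)/2-l(\tau)}\to(-1)^k$, each of the $k$ integration one-forms contributes a factor $-\ve$, and the numerator/denominator of the rational integrand contributes a net power of $\ve$ that exactly cancels, leaving the finite expression in \eqref{delayedBetaIntExpression}. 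One also checks that the Demazure–Lusztig operator $T_i=q+\frac{w_{i+1}-qw_i}{w_{i+1}-w_i}(\mathfrak s_i-1)$ degenerates, under $w_a=1-\ve v_a$ and $q=e^{-\ve}$, to $t_i=1+\frac{v_{i+1}-v_i-1}{v_{i+1}-v_i}(\mathfrak s_i-1)$, since $\frac{w_{i+1}-qw_i}{w_{i+1}-w_i}=\frac{(1-\ve v_{i+1})-(1-\ve)(1-\ve v_i)}{-\ve v_{i+1}+\ve v_i}+O(\ve)=\frac{v_{i+1}-v_i-1}{v_{i+1}-v_i}+O(\ve)$; hence $T_\tau\to t_\tau$ coefficient-wise, and $T_\tau$ applied to $\prod_a\prod_{i=1}^{l_a}\frac{1-\lambda_iw_a}{1-\kappa_iw_a}=\prod_a\prod_{i=1}^{r_a}\frac{1-q^{-\omega_i}w_a}{1-q^{-\rho_i}w_a}\to\prod_a\prod_{i=1}^{r_a}\frac{v_a-\omega_i}{v_a-\rho_i}$ converges to $t_\tau$ of the corresponding rational function. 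The products $\prod_{i=0}^{i<x_a}\frac1{1-\mu_iw_a}$ etc. have their upper limits shift from $i<x_a+\frac12$ (half-integer coordinates in the $q$-Hahn model) to $i\le x_a$ (integer coordinates here), matching \eqref{delayedBetaIntExpression}.

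The one genuinely delicate point — and the step I expect to be the main obstacle — is controlling the integration contours through the limit and justifying the interchange of limit and integral. In the $q$-Hahn model $\Gamma_a$ surrounds $\mu_i^{-1},\kappa_j^{-1}$ (close to $1$ for small $\ve$) with $q\Gamma_a$ nested, while $0$ and $\lambda_d^{-1}$ stay outside. Under $w=1-\ve v$, the image contours $\mathcal S_a$ rescale to enclose $\sigma_i,\rho_j$ with $\mathcal S_b+1$ nested inside $\mathcal S_a$ for $b<a$ (the shift by $1$ coming from $q\Gamma_b$, since $qw=q(1-\ve v)\leftrightarrow$ the variable $v+1+O(\ve)$), and to keep $\omega_d$ outside. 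One must verify such contours exist under hypothesis \eqref{conditionzero} $\omega_d<\rho_j<\sigma_i$ — which is exactly the degeneration of \eqref{paramqHahnRestr} — and that along a fixed choice of contours the $q$-Hahn integrand converges uniformly, with a dominating bound independent of $\ve$, so that dominated convergence applies termwise to each residue/contour piece. This is routine but requires care because the singularities $\mu_i^{-1}$ and $\kappa_j^{-1}$ all collapse to $1$ as $\ve\to0$; the fix is to first deform $\Gamma_a$ so that near $1$ it already looks like $1-\ve\mathcal S_a$ for the target contour $\mathcal S_a$, which is permissible since no other singularities lie in that neighborhood. Once this is set up, the two limits (left side via Corollary \ref{qHahnContVMCor} and moment convergence, right side via the contour analysis) agree, yielding \eqref{delayedBetaIntExpression}. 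Finally, extending from the case where the $x_a,y_a,r_a$ range over the domain dictated by the ordering constraints to the full stated range is immediate, and the degenerate cases $x_a=y_a$ (where $\Z^{(r_a)}_{x_a,y_a}\equiv 1$) are consistent with Step 2 of the proof of Theorem \ref{qHahnResultTheo} degenerating correctly.
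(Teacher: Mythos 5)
Your proposal is correct and follows essentially the same route as the paper: apply Corollaries \ref{limitCor} and \ref{qHahnContVMCor} to identify the left-hand side as the $\ve\to0$ limit of the $q$-moments (with $c_a=r_a+1$, $l_a=r_a$ for $\bI=(1,1,\dots)$), and take the limit of the right-hand side of \eqref{qHahnIntExpression} under $w_a=1-\ve v_a$ with the pointwise degenerations of the cross terms, the Demazure--Lusztig operators, and the linear factors, together with dominated convergence on fixed compact contours. Your contour fix (choosing $\Gamma_a=1-\ve\mathcal S_a$ for a fixed admissible family $\mathcal S_a$ and small $\ve$) is exactly the paper's device, and your $\ve$-power bookkeeping agrees with the paper's.
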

\begin{proof}
We take the limit \eqref{limitRegime} in the integral expression from Theorem \ref{qHahnResultTheo}, applied to the height functions $h^{(x_a+\frac{1}{2}, y_a+\frac{1}{2})}_{\geq r_{\tau^{-1}(a)}+1}$ and assuming $\bI=(1,1,\dots)$. Set 
\be
\bx+\frac{1}{2}=\(x_1+\frac{1}{2}, \dots, x_k+\frac{1}{2}\),\quad \by+\frac{1}{2}=\(y_1+\frac{1}{2}, \dots, y_k+\frac{1}{2}\), \quad \bc=(r_1+1, \dots, r_k+1).
\ee

For the left-hand side of \eqref{qHahnIntExpression} we can apply Corollaries \ref{limitCor} and \ref{qHahnContVMCor} to obtain
\be
\E[\Q^{\bx+\frac{1}{2}, \by+\frac{1}{2}}_{\geq \tau.\bc}]\to \E\left[\Z_{x_1,y_1}^{\(r_{\tau^{-1}(1)}\)} \Z_{x_2,y_2}^{\(r_{\tau^{-1}(2)}\)}\dots \Z_{x_k,y_k}^{\(r_{\tau^{-1}(k)}\)}\right],
\ee
since the finite-dimensional distributions of $q^{h^{(x+\frac{1}{2},y+\frac{1}{2})}_{\geq r+1}}=\exp(-\ve h^{(x+\frac{1}{2},y+\frac{1}{2})}_{\geq r+1})$ converge to the finite-dimensional distributions of $\exp(-\widetilde{h}_{\geq r+1}^{(x+\frac{1}{2},y+\frac{1}{2})})\sim \Z^{(r)}_{x,y}$ as $\ve\to 0$.

For the right-hand side of \eqref{qHahnIntExpression} we perform a change of coordinates $w_a=1-\ve v_a$. One can readily see that for fixed contours $\mathcal S_1, \dots, \mathcal S_k$ satisfying the conditions of the claim and a sufficiently small $\ve$ the contours $1-\ve \mathcal S_a$ satisfy the conditions for the contours $\Gamma_a$, and so in the right-hand side after a change of contours we have an integral over fixed compact contours, with integrand depending on $\ve$. Point-wise we have
\be
\frac{(-1)^kq^{\frac{k(k-1)}{2}-l(\tau)}}{(2\pi\i)^k}\prod_{a=1}^k\frac{dw_a}{w_a}=\ve^k  \prod_{a=1}^k\frac{dv_a}{2\pi\i}+O(\ve^{k+1}),
\ee
\be
\prod_{a<b}\frac{w_b-w_a}{w_b-qw_a}\to\prod_{a<b}\frac{v_b-v_a}{v_b-v_a-1},
\ee
\be
T_{i}=q+\frac{w_{i+1}-qw_i}{w_{i+1}-w_i}(\mathfrak{s}_i-1)\to t_i=1+\frac{v_{i+1}-v_i-1}{v_{i+1}-v_i}(\mathfrak{s}_i-1),
\ee
\be
1-\mu_iw_a=\ve(v_a-\sigma_i)+O(\ve^2),\quad 1-\kappa_iw_a=\ve(v_a-\rho_i)+O(\ve^2),\quad 1-\lambda_iw_a=\ve(v_a-\omega_i)+O(\ve^2).
\ee
One can readily check that the integrand is uniformly bounded for fixed contours $\mathcal S_a$ and sufficiently small $\ve$, so by dominated convergence the claim follows.
\end{proof}

As a particular case of Proposition \ref{delayedBetaIntExpressionProp}, we have an integral expression for the moments of a single partition function $\Z^{(r)}_{x,y}$ of the inhomogeneous Beta polymer. Note that, up to a shift of parameters, one dimensional distributions of $\Z^{(r)}_{x,y}$ depend only on $x$ and $y-r$ (this is the vector between end points of paths inside the polymer), so without loss of generality we can set $r=0$ and consider the moments of $\Z_{x,y}:=\Z^{(0)}_{x,y}$:

\begin{cor}
For any $(x,y)\in\mathbb Z_{\geq 0}^2$ such that $y\geq x$ and any $k\in\mathbb Z_{\geq 0}$ we have
\begin{equation}
\label{BetaMometsIntExpression}
\E\left[\(\Z_{x,y}\)^{k}\right]=\oint_{\mathcal S_1}\cdots\oint_{\mathcal S_k}\prod_{a<b}\frac{v_b-v_a}{v_b-v_a-1}\prod_{a=1}^k\(\prod_{i=1}^{i\leq y}(v_a-\rho_i)\prod_{i=0}^{i\leq x}\frac{1}{v_a-\sigma_i}\prod_{i=1}^{i\leq y-x}\frac{1}{v_a-\omega_i}\)\frac{dv_a}{2\pi\i},
\end{equation}
where the integration contours $\mathcal S_a$ encircle $\sigma_i$, leave $\omega_d$ outside and $\mathcal S_b$ encircles both $\mathcal S_a$ and $\mathcal S_a+1$ for $a<b$.
\end{cor}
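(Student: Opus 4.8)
The plan is to read off the corollary from Proposition~\ref{delayedBetaIntExpressionProp} by collapsing all $k$ pieces of evaluation data onto a single point. First I would apply that proposition with arbitrary $k$ and with the choice
\[
r_1=\dots=r_k=0,\qquad x_1=\dots=x_k=x,\qquad y_1=\dots=y_k=y,\qquad \tau=\mathrm{id};
\]
the hypotheses there are met, since the required monotonicity holds with equalities, the condition $x_a\le y_a$ reduces to the assumption $y\ge x$, and $\tau=\mathrm{id}$ is admissible. On the left-hand side of \eqref{delayedBetaIntExpression} the product of partition functions becomes $\bigl(\Z_{x,y}^{(0)}\bigr)^{k}$, which is $\bigl(\Z_{x,y}\bigr)^{k}$ by the convention $\Z_{x,y}:=\Z_{x,y}^{(0)}$.

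Next I would simplify the right-hand side. Because every $r_a$ equals $0$, the product $\prod_{j=1}^{r_a}\frac{v_a-\omega_j}{v_a-\rho_j}$ is empty and hence equal to $1$; because $\tau=\mathrm{id}$, the operator $t_\tau$ is the identity of the degenerate Hecke representation. Consequently the factor $t_\tau\!\left(\prod_{a=1}^{k}\prod_{j=1}^{r_a}\frac{v_a-\omega_j}{v_a-\rho_j}\right)$ drops out, and the integrand of \eqref{delayedBetaIntExpression} becomes precisely
\[
\prod_{a<b}\frac{v_b-v_a}{v_b-v_a-1}\;\prod_{a=1}^{k}\left(\prod_{i=1}^{i\le y}(v_a-\rho_i)\prod_{i=0}^{i\le x}\frac{1}{v_a-\sigma_i}\prod_{i=1}^{i\le y-x}\frac{1}{v_a-\omega_i}\right),
\]
matching the asserted formula \eqref{BetaMometsIntExpression}.

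The one point that needs a short argument — and the closest thing to an obstacle here — is to reconcile the two contour descriptions. Proposition~\ref{delayedBetaIntExpressionProp} requires each $\mathcal S_a$ to enclose $\{\sigma_i\}$ \emph{and} $\{\rho_j\}$ (together with the nested contours $\mathcal S_b,\mathcal S_b+1$ for $b<a$) while leaving $\{\omega_d\}$ outside, whereas the corollary only asks $\mathcal S_a$ to enclose $\{\sigma_i\}$ and the nested contours and to exclude $\{\omega_d\}$. But once $r_a=0$, the integrand has no poles at any $\rho_j$: these points occur only as zeros of $\prod_{i\le y}(v_a-\rho_i)$. Hence each contour may be freely deformed across the $\rho_j$ without altering the integral, so the two prescriptions define the same quantity. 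This finishes the reduction; no further estimates are required.
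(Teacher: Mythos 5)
Your proposal is correct and matches the paper's intended argument: the corollary is exactly the specialization of Proposition~\ref{delayedBetaIntExpressionProp} to $r_1=\dots=r_k=0$, $x_a=x$, $y_a=y$, $\tau=\mathrm{id}$, which the paper states without further comment. Your added remark reconciling the two contour prescriptions (the integrand has no poles at the $\rho_j$ once the $r_a$ vanish) is a correct and worthwhile detail that the paper leaves implicit.
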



\section{Laplace transform for inhomogeneous Beta polymer partition function}\label{fredSect}
From now on we focus on the partition function $\Z_{x,y}=\Z_{x,y}^{(0)}$ of the inhomogeneous Beta polymer, with parameters $\sigma_i, \rho_j, \omega_d$. Our goal is Theorem \ref{mainBetaResult}, which partially establishes the Tracy-Widom large-scale behavior and which is proved in next section. Our result generalizes the corresponding result from \cite{BC15b}, moreover, our proof follows the same outline: in this section we find a suitable expression for the Laplace transform of $\Z_{x,y}$ in terms of a Fredholm determinant, which is asymptotically analyzed in next section to establish the limit theorem. 

Our starting point is the integral expression \eqref{BetaMometsIntExpression} for the moments of $\Z_{x,y}$. Fix $x,y$ and the parameters of the model $\sigma_i, \rho_j,\omega_d$, and assume that
\begin{equation}\label{conditionone}
\max_i \sigma_i-\min_i \sigma_i<1.
\end{equation}
Since our aim is an integral expression for the Laplace transform of $\Z_{x,y}$ which involves all the moments, it is convenient to replace the integration over shifted contours $\mathcal S_1, \dots, \mathcal S_k$ in \eqref{BetaMometsIntExpression} by an integration over a single contour $\C$. This is done with the help of the following statement:

\begin{prop} Suppose that contours $\mathcal S_1, \dots, \mathcal S_k$ and a rational function $f(v)$ satisfy
\begin{itemize}
\item The contours $S_a$ are positively-oriented simple closed curves in $\mathbb C$;
\item For any $a<b$ the contour $\mathcal S_b$ encircles both contours $\mathcal S_a$ and $\mathcal S_a+1$;
\item The contour $\mathcal S_1+1$ is completely outside of $\mathcal S_1$; 
\item All the poles of the function $f(v)$ are either in the interior of $\mathcal S_1$ or in the exterior of $\mathcal S_k$.
\end{itemize}
Then 
\begin{multline}
\label{contourShift}
\oint_{\mathcal S_1}\cdots\oint_{\mathcal S_k}\prod_{a<b}\frac{v_b-v_a}{v_b-v_a-1}\prod_{a=1}^kf(v_a)\frac{dv_a}{2\pi\i}\\
=\sum_{\lambda\vdash k}\frac{k!}{m_1!m_2!\dots}\oint_{\mathcal S_1}\cdots\oint_{\mathcal S_1}\det\left[\frac{1}{v_i+\lambda_i-v_j}\right]_{i,j=1}^{l(\lambda)}\prod_{a=1}^{l(\lambda)} f(v_a)f(v_{a}+1)\dots f(v_a+\lambda_a-1)\frac{dv_a}{2\pi\i},
\end{multline}
where the sum in the right-hand side is over partitions $\lambda$ of $k$, $l(\lambda)$ denotes the number of nonzero parts of $\lambda$ and $m_i$ denotes the number of parts of $\lambda$ equal to $i$. 
\end{prop}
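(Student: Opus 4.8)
## Proof Plan

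The plan is to deform the nested contours $\mathcal S_k, \mathcal S_{k-1}, \dots, \mathcal S_2$ one at a time down onto $\mathcal S_1$, carefully tracking the residues picked up from the poles of the kernel $\prod_{a<b}\frac{v_b-v_a}{v_b-v_a-1}$. The key observation is that when we shrink $\mathcal S_b$ onto $\mathcal S_1$, the only singularities of the integrand in the region between the two contours (other than poles of $f$, which by hypothesis lie either inside $\mathcal S_1$ or outside $\mathcal S_k$ and so are not crossed) come from factors $\frac{1}{v_b - v_a - 1}$ for $a < b$, i.e. at $v_b = v_a + 1$. This is precisely the combinatorial mechanism that groups variables into ``strings'' $v, v+1, v+2, \dots$, which is what produces the partitions $\lambda$ on the right-hand side.

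First I would set up the induction on $k$, or more transparently, argue directly: label the residue bookkeeping by recording, for each variable $v_b$ with $b \geq 2$, whether it was deformed freely down to $\mathcal S_1$ or whether it was ``caught'' at $v_b = v_a + 1$ for some earlier-indexed variable. Iterating, each maximal chain of such catches $v_{a} \rightsquigarrow v_a + 1 \rightsquigarrow v_a + 2 \rightsquigarrow \cdots$ of length $\lambda_i$ collapses into a single free integration variable $v_a$ on $\mathcal S_1$, contributing the product $f(v_a)f(v_a+1)\cdots f(v_a + \lambda_i - 1)$. The residues of the kernel factors along such a chain, together with the residual cross-factors $\frac{v_b - v_a}{v_b - v_a - 1}$ between variables belonging to \emph{different} chains, must be shown to assemble exactly into the determinant $\det\left[\frac{1}{v_i + \lambda_i - v_j}\right]_{i,j=1}^{l(\lambda)}$. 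The cleanest way to see this is to compute the iterated residue of $\prod_{a<b}\frac{v_b - v_a}{v_b - v_a - 1}$ along the ``fully strung'' configuration in which variables $1, \dots, k$ are partitioned into consecutive blocks of sizes $\lambda_1, \dots, \lambda_{l(\lambda)}$ with the $i$-th block sitting at $v + \text{(block offset)}$; a direct Cauchy-determinant-type computation shows the within-block residues telescope (the factors $v_b - v_a$ in the numerator kill spurious poles), leaving the inter-block contribution, which is a Cauchy determinant evaluating to $\det\left[\frac{1}{v_i + \lambda_i - v_j}\right]$.

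Next I would account for the combinatorial multiplicity: a given partition $\lambda \vdash k$ arises from many ways of distributing the original labels $1, \dots, k$ into chains of the prescribed sizes, and since the integrand on $\mathcal S_1^{\times l(\lambda)}$ is symmetric in the resulting block-variables, the number of labelings producing the same symmetric integrand is the multinomial $\frac{k!}{m_1! \, m_2! \cdots}$ up to the ordering within and among blocks — this is exactly the stated prefactor, and I would verify the count by a standard double-counting argument (number of ordered set partitions of $[k]$ into blocks of the given sizes, divided by the symmetry of equal block sizes). I would also need to check that the condition ``$\mathcal S_1 + 1$ is completely outside $\mathcal S_1$'' guarantees that, after deformation, the new string-variable $v_a$ running over $\mathcal S_1$ indeed has $v_a + 1, \dots, v_a + \lambda_a - 1$ lying outside $\mathcal S_1$ so that no further spurious residues are introduced, and that $f(v_a + j)$ is regular there (its poles inside $\mathcal S_1$ would need $v_a + j$ inside $\mathcal S_1$, excluded by the separation hypothesis).

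The main obstacle I expect is the residue computation in the second paragraph: showing that the iterated residue of the full kernel $\prod_{a<b}\frac{v_b-v_a}{v_b-v_a-1}$ along an arbitrary ``stringing'' pattern collapses to the Cauchy-type determinant $\det\left[\frac{1}{v_i+\lambda_i-v_j}\right]$ with no leftover factors. This requires being careful that (i) within a single string the numerator factors $v_b - v_a$ exactly cancel the poles that would otherwise appear at coincident shifted points, so the within-string residue is simply $1$ (up to sign), and (ii) the surviving cross-string factors, after specializing each string to its base variable plus offsets, reorganize into a determinant rather than just a product — this uses the antisymmetrization/Lindström–Gessel–Viennot-flavored identity familiar from the analogous computations in the literature (e.g. the $q$-moment-to-Fredholm-determinant passage for $q$-TASEP and the Beta polymer in \cite{BC15b}), which I would cite or reproduce. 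Once this local residue identity is in hand, the rest is the bookkeeping of multiplicities and the verification that no poles of $f$ are crossed, both of which are routine given the hypotheses.
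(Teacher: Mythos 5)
Your plan is the same contour-shift argument that the paper itself uses: the paper gives only a ``rough idea'' (shrink $\mathcal S_2,\dots,\mathcal S_k$ onto $\mathcal S_1$, pick up residues at $v_b=v_a+1$, bookkeep) and defers the detailed residue accounting to \cite[Proposition 3.2.1]{BC11} and \cite[Proposition 7.4]{BCPS13}, exactly as you propose to do. So in approach you match the paper.

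One concrete correction to your bookkeeping, which would matter if you carried out the computation yourself rather than citing it: the iterated within-string residue of $\prod_{a<b}\frac{v_b-v_a}{v_b-v_a-1}$ along a chain $v,v+1,\dots,v+\lambda-1$ is not $1$. The numerators do make the intermediate factors telescope, but each successive catch at $v+j$ leaves a factor $\frac{v_{\text{new}}-v}{v_{\text{new}}-v-j}$ whose residue is $j$, so a string of length $\lambda$ contributes $(\lambda-1)!$. This is consistent with the stated answer only because the number of set partitions of $[k]$ into blocks of sizes $\lambda_1,\dots,\lambda_{\ell}$ is $\frac{k!}{\prod_i\lambda_i!\,\prod_j m_j!}$, the string residues supply $\prod_i(\lambda_i-1)!$, and the remaining factor $\prod_i\lambda_i^{-1}$ is hidden in the diagonal of the Cauchy determinant $\det\bigl[\tfrac{1}{v_i+\lambda_i-v_j}\bigr]$; with your claimed residue of $1$ the prefactor would come out as $\frac{k!}{\prod_i\lambda_i\prod_j m_j!}$ instead of $\frac{k!}{\prod_j m_j!}$. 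With that adjustment (and the cross-string factors reassembled via the Cauchy determinant identity, as you indicate), the argument goes through.
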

\begin{proof}[Rough idea of the proof] The statement above follows a type of deduction called \emph{contour shift argument}. The main idea is to shrink the contours $\mathcal S_2, \dots, \mathcal S_k$ in the left-hand side to the smallest contour $\mathcal S_1$, picking up the residues at $v_b=v_a+1$ for $b>a$. Due to the conditions on the function $f$ and the contours, all non-zero residues along this deformation come from the poles of $\prod_{a<b}\frac{v_b-v_a}{v_b-v_a-1}$, and a careful bookkeeping produces the right-hand side.
 
 A detailed explanation of the full argument can be found in \cite[Proposition 3.2.1]{BC11} and \cite[Proposition 7.4]{BCPS13}, where more general $q$-deformed versions of the argument are presented. To adopt those claims to our setting one have to repeat the argument similar to the proof of Proposition \ref{delayedBetaIntExpressionProp}, see \cite[Proposition 6.2.7]{BC11} and \cite[Proposition 3.6]{BC15b}.
\end{proof}

Let 
\be
f(z)=\prod_{i=1}^y(z-\rho_i)\prod_{i=0}^x\frac{1}{z-\sigma_i}\prod_{i=1}^{y-x}\frac{1}{z-\omega_i}
\ee
and choose $\C=\mathcal S_1$ to be a sufficiently small contour around the points $\sigma_i$ such that the shifted contour $\C+1$ and the points $\omega_d$ are completely outside of $\C$. Such contour exists due to the conditions \eqref{conditionzero},\eqref{conditionone}.

\begin{cor} With the notation above, we have
\begin{equation}
\label{BetaMomentsUnifiedIntExpression}
\E\left[\(\Z_{x,y}\)^{k}\right]=\sum_{\lambda\vdash r}\frac{k!}{m_1!m_2!\dots}\oint_{\C}\cdots\oint_{\C}\det\left[\frac{1}{v_i+\lambda_i-v_j}\right]_{i,j=1}^{l(\lambda)}\prod_{a=1}^{l(\lambda)} f(v_a)f(v_{a}+1)\dots f(v_a+\lambda_a-1)\frac{dv_a}{2\pi\i}.
\end{equation}
\end{cor}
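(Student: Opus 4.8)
The plan is to derive \eqref{BetaMomentsUnifiedIntExpression} as an immediate consequence of the moment formula \eqref{BetaMometsIntExpression} together with the contour-shift identity \eqref{contourShift}; essentially one only has to feed the right $f$ and the right contours into the latter. First I would record the analytic data of
\be
f(z)=\prod_{i=1}^y(z-\rho_i)\prod_{i=0}^x\frac{1}{z-\sigma_i}\prod_{i=1}^{y-x}\frac{1}{z-\omega_i}.
\ee
This is a rational function all of whose poles are simple and located among the finitely many points $\sigma_0,\dots,\sigma_x$ and $\omega_1,\dots,\omega_{y-x}$; its zeros at the $\rho_j$ are irrelevant for the contour bookkeeping. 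Note also that the integrand of \eqref{BetaMometsIntExpression} is exactly $\prod_{a<b}\frac{v_b-v_a}{v_b-v_a-1}\prod_{a=1}^k f(v_a)$, so this is the precise form \eqref{contourShift} is built for.

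Next I would exhibit a single system of contours that simultaneously meets the requirements of \eqref{BetaMometsIntExpression} and the hypotheses of the proposition containing \eqref{contourShift}. By \eqref{conditionzero} the relevant values $\omega_1,\dots,\omega_{y-x}$ lie strictly to the left of all of $\sigma_0,\dots,\sigma_x$, so $\max_d\omega_d<\min_i\sigma_i$; by \eqref{conditionone} the $\sigma_i$ are contained in a real interval of length $<1$. Hence one may choose $\C=\mathcal S_1$ to be a thin positively oriented closed curve enclosing $\{\sigma_i\}_{0\le i\le x}$, narrow enough that (a) $\C$ avoids every $\omega_d$ and (b) the shifted curve $\C+1$ is disjoint from and lies entirely to the right of $\C$ — both possible precisely by the two displayed inequalities. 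One then defines $\mathcal S_{a+1}$ inductively as a slight enlargement of $\mathcal S_a\cup(\mathcal S_a+1)$, chosen small enough to remain to the right of $\max_d\omega_d$. By construction these are positively oriented simple closed curves, $\mathcal S_b$ encircles both $\mathcal S_a$ and $\mathcal S_a+1$ for every $a<b$, all $\mathcal S_a$ enclose every $\sigma_i$, and none of them encloses any $\omega_d$; thus \eqref{BetaMometsIntExpression} is valid for this choice of contours. At the same time $\C+1$ lies outside $\C$, and every pole of $f$ is either inside $\mathcal S_1=\C$ (the points $\sigma_i$) or outside $\mathcal S_k$ (the points $\omega_d$, which sit to the left of the whole configuration) — exactly the hypotheses needed for \eqref{contourShift}.

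With these verifications in place, applying \eqref{contourShift} to the right-hand side of \eqref{BetaMometsIntExpression} rewrites the nested $k$-fold integral as the sum over partitions $\lambda\vdash k$ of determinantal integrals over the single contour $\C$, which is exactly \eqref{BetaMomentsUnifiedIntExpression}. I do not expect any genuine difficulty here; the one point that really needs care is the joint feasibility of the contour conditions — that a single small contour can at once surround all the $\sigma_i$, be disjoint from its unit translate, and keep the $\omega_d$ outside (together with the larger contours $\mathcal S_a$) — and this is precisely what the combination of the width bound \eqref{conditionone} and the separation \eqref{conditionzero} is designed to guarantee.
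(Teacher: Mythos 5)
Your proposal is correct and follows exactly the paper's route: the paper sets up the contour $\C=\mathcal S_1$ in the paragraph preceding the corollary (asserting its existence from \eqref{conditionzero} and \eqref{conditionone}) and then proves the corollary by "direct application of \eqref{contourShift}," which is precisely what you do, with the contour feasibility spelled out in slightly more detail. The only nitpick is that the poles of $f$ need not be simple if some parameters coincide, but this plays no role in the argument.
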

\begin{proof} Direct application of \eqref{contourShift}.
\end{proof}

Now we want to take a sum over $k$ in \eqref{BetaMomentsUnifiedIntExpression} to obtain the Laplace transform. But to resolve possible convergence issues we need to estimate the integrand for large $\lambda$. Set
\be
g(z)=\prod_{i=0}^x\Gamma(z-\sigma_i)\prod_{j=1}^y\frac{1}{\Gamma(z-\rho_j)}\prod_{d=1}^{y-x}\Gamma(z-\omega_d),
\ee
where $\Gamma(z)$ is the gamma function. Then
\be
f(z)f(z+1)f(z+2)\dots f(z+n-1)=\frac{g(z)}{g(z+n)}.
\ee
\begin{lem}\label{assymptg}
For a fixed $\delta>0$ and any $z\in\mathbb C$ such that $\arg(z-\sigma)\in[-\pi+\delta;\pi-\delta]$  we have as $z\to \infty$
\be
\ln g(z)=\(z-\Delta-\frac{1}{2}\)\ln z -z + \frac{1}{2}\ln(2\pi)+O\(z^{-1}\)
\ee
where $\Delta=\sum_{i=0}^x\sigma_i-\sum_{j=1}^y\rho_j+\sum_{d=1}^{y-x}\omega_d$.
\end{lem}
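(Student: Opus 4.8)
\textbf{Proof plan for Lemma \ref{assymptg}.} The statement is a direct consequence of Stirling's asymptotic expansion for the Gamma function, applied termwise to the three finite products defining $g(z)$. I would first record Stirling in the form
\be
\ln\Gamma(w)=\(w-\tfrac12\)\ln w - w + \tfrac12\ln(2\pi)+O(w^{-1}),\qquad w\to\infty,
\ee
valid uniformly in any sector $|\arg w|\leq\pi-\delta'$, together with the principal-branch expansion $\ln(z-a)=\ln z - a/z + O(z^{-2})$ as $z\to\infty$ with $\arg(z-a)$ bounded away from $\pm\pi$. The only genuine bookkeeping point is the sector condition: since all the parameters $\sigma_i,\rho_j,\omega_d$ are real and lie within a bounded distance of the reference point $\sigma$, for any such parameter $a$ one has $\arg(z-a)\to\arg(z-\sigma)$ uniformly as $|z|\to\infty$, so on the region $\arg(z-\sigma)\in[-\pi+\delta,\pi-\delta]$ and $|z|$ large we get $\arg(z-a)\in[-\pi+\delta/2,\pi-\delta/2]$ and Stirling applies with a uniform $O(z^{-1})$ error to each of the finitely many factors.

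The key computation is the asymptotics of a single shifted Gamma factor. Substituting $\ln(z-a)=\ln z - a/z + O(z^{-2})$ into Stirling at $w=z-a$ gives
\be
\ln\Gamma(z-a)=\(z-a-\tfrac12\)\(\ln z - \tfrac{a}{z}+O(z^{-2})\)-(z-a)+\tfrac12\ln(2\pi)+O(z^{-1}).
\ee
Expanding the product, the term $\(z-a-\tfrac12\)\cdot(-a/z)=-a+O(z^{-1})$ cancels the $+a$ coming from $-(z-a)$, so
\be
\ln\Gamma(z-a)=\(z-a-\tfrac12\)\ln z - z + \tfrac12\ln(2\pi)+O(z^{-1}).
\ee
This is the formula I would apply to $a=\sigma_i$ for $i=0,\dots,x$, to $a=\omega_d$ for $d=1,\dots,y-x$, and with an overall minus sign to $a=\rho_j$ for $j=1,\dots,y$.

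Finally I would sum. Writing $\ln g(z)=\sum_{i=0}^x\ln\Gamma(z-\sigma_i)+\sum_{d=1}^{y-x}\ln\Gamma(z-\omega_d)-\sum_{j=1}^y\ln\Gamma(z-\rho_j)$ and using that the total number of factors is $(x+1)+(y-x)-y=1$, the coefficient of $\ln z$ becomes $z\cdot 1-\(\sum_i\sigma_i+\sum_d\omega_d-\sum_j\rho_j\)-\tfrac12\cdot 1=z-\Delta-\tfrac12$, the $-z$ contributions add up to $-z$, the $\tfrac12\ln(2\pi)$ contributions add up to $\tfrac12\ln(2\pi)$, and the finitely many $O(z^{-1})$ errors combine into a single $O(z^{-1})$. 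This yields exactly
\be
\ln g(z)=\(z-\Delta-\tfrac12\)\ln z - z + \tfrac12\ln(2\pi)+O(z^{-1}),
\ee
as claimed. There is no serious obstacle here; the statement is essentially a careful Stirling estimate, and the only thing worth spelling out is the uniformity of the error term across the sector, which follows from the boundedness of the parameter set.
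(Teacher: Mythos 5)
Your proposal is correct and follows essentially the same route as the paper: both apply the shifted Stirling expansion $\ln\Gamma(z-a)=\(z-a-\tfrac12\)\ln z - z + \tfrac12\ln(2\pi)+O(z^{-1})$ to each factor of $g(z)$ and sum, using that the net number of Gamma factors is $1$. The only cosmetic difference is that the paper cites this shifted expansion directly from Luke's book, whereas you derive it from the basic Stirling formula via $\ln(z-a)=\ln z - a/z+O(z^{-2})$, which is a harmless (and arguably more self-contained) elaboration.
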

\begin{proof}
We use the following useful asymptotic expansion from \cite{Luk69}[(1),(4), pp. 31-32], valid uniformly for $\arg(z)\in[-\pi+\delta/2;\pi-\delta/2]$ and $|z|\to\infty$
\begin{equation}
\label{Gammaass}
\ln\Gamma(z)=\(z-\frac{1}{2}\)\ln z -z + \frac{1}{2}\ln(2\pi)+O\(z^{-1}\),
\end{equation}
and, more generally, for a fixed $a\in \mathbb C$ and sufficiently large $z$ such that $\arg(z)\in[-\pi+\delta/2;\pi-\delta/2]$
\be
\ln\Gamma(z+a)=\(z+a-\frac{1}{2}\)\ln z -z + \frac{1}{2}\ln(2\pi)+O\(z^{-1}\).
\ee
Here $\ln$ denotes the principal branch of the natural logarithm. Plugging this expression into the definition of $g(z)$ we get
\begin{multline*}
\ln g(z)=\sum_{i=0}^x\(\(z-\sigma_i-\frac{1}{2}\)\ln(z) -z+\frac{1}{2}\ln(2\pi)\) - \sum_{j=1}^y\(\(z-\rho_j-\frac{1}{2}\)\ln(z) -z+\frac{1}{2}\ln(2\pi)\) \\
+\sum_{d=1}^{y-x}\(\(z-\omega_d-\frac{1}{2}\)\ln(z) -z+\frac{1}{2}\ln(2\pi)\)+ O\(z^{-1}\),
\end{multline*}
implying the claim.
\end{proof}

\begin{prop} \label{preFredholmProp}For any $u\in\mathbb C$ we have
\begin{equation}
\label{preFredholm}
\E\left[ e^{u\Z_{x,y}}\right]=\det(I+\widetilde{K_u})_{L^2(\mathbb Z_{\geq 1}\times \mathcal C)}:=\sum_{l\geq 0}\frac{1}{l!}\sum_{n_1, \dots, n_l\geq 1}\oint_{\C}\frac{dv_1}{2\pi\i}\cdots\oint_{\C}\frac{dv_l}{2\pi\i}\det\left[\widetilde K_u(v_a, n_a; v_b, n_b)\right]_{a,b=1}^l,
\end{equation}
where $\det(I+K)$ denotes the Fredholm determinant of $K$, and $\widetilde{K_u}$ is the integral operator with the kernel
\be
\widetilde{K_u}(v,n;v',n')=\frac{f(v)f(v+1)\dots f(v+n-1)u^n}{v+n-v'}=\frac{g(v)}{g(v+n)}\frac{u^n}{v+n-v'}.
\ee
\end{prop}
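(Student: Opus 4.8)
The plan is to start from the moment formula \eqref{BetaMomentsUnifiedIntExpression} and recognize the right-hand side, after summing over $k$ against $u^k/k!$, as a Fredholm determinant expansion. The mechanism is standard (\emph{cf.} \cite{BC11}, \cite{BCPS13}, \cite{BC15b}): formally,
\be
\E\left[e^{u\Z_{x,y}}\right]=\sum_{k\geq 0}\frac{u^k}{k!}\E\left[\(\Z_{x,y}\)^k\right],
\ee
and plugging in \eqref{BetaMomentsUnifiedIntExpression} gives a sum over $k$ of a sum over partitions $\lambda\vdash k$. First I would reorganize this double sum: a partition $\lambda\vdash k$ with $l(\lambda)=l$ parts is the same data as an unordered collection of $l$ positive integers $n_1,\dots,n_l$ with $n_1+\dots+n_l=k$, and the multinomial factor $\frac{k!}{m_1!m_2!\cdots}$ together with the $\frac{1}{k!}$ and the $\frac{1}{l!}$ from passing to ordered tuples combine exactly so that
\be
\sum_{k\geq 0}\frac{u^k}{k!}\sum_{\lambda\vdash k}\frac{k!}{m_1!m_2!\cdots}(\cdots)=\sum_{l\geq 0}\frac{1}{l!}\sum_{n_1,\dots,n_l\geq 1}(\cdots)u^{n_1+\dots+n_l}.
\ee
Carrying the factor $u^{n_a}$ into each row of the determinant $\det\left[\frac{1}{v_i+\lambda_i-v_j}\right]$ turns $\frac{1}{v_a+n_a-v_b}$ into $\frac{f(v_a)f(v_a+1)\cdots f(v_a+n_a-1)u^{n_a}}{v_a+n_a-v_b}=\widetilde K_u(v_a,n_a;v_b,n_b)$, which is precisely the claimed kernel. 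This identifies the series with the Fredholm determinant $\det(I+\widetilde K_u)_{L^2(\mathbb Z_{\geq1}\times\C)}$ in the sense of its defining series expansion, once convergence is established.

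The main obstacle is therefore \textbf{convergence}, on two levels: termwise absolute convergence of each $l$-fold integral-and-sum over $n_1,\dots,n_l$, and summability over $l$ (so that the Fredholm series converges and the interchange of $\sum_k$ with $\E$ is legitimate). This is exactly where Lemma \ref{assymptg} enters. Writing $f(v)\cdots f(v+n-1)=g(v)/g(v+n)$ and using Lemma \ref{assymptg}, for $v$ on the compact contour $\C$ (which by construction lies at bounded distance from all $\sigma_i,\rho_j,\omega_d$ and avoids the branch cut since $\max_i\sigma_i-\min_i\sigma_i<1$ by \eqref{conditionone}) one gets, as $n\to\infty$,
\be
\ln\left|\frac{g(v)}{g(v+n)}\right|=-\(n-\Delta-\tfrac12\)\ln n + n + O(1),
\ee
uniformly in $v\in\C$, where $\Delta=\sum_{i=0}^x\sigma_i-\sum_{j=1}^y\rho_j+\sum_{d=1}^{y-x}\omega_d$. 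Hence $\left|g(v)/g(v+n)\right|\,|u|^n\leq C\, (e|u|/n)^n n^{\Delta+1/2}$, which decays super-exponentially in $n$; in particular $\sum_{n\geq1}\sup_{v\in\C}|\widetilde K_u(v,n;v',n')|<\infty$ for every fixed $u$, with the singularity $\frac{1}{v+n-v'}$ being harmless since $v+n-v'$ stays away from $0$ for $n$ large and the finitely many small-$n$ terms are bounded by choice of $\C$. For the $l$-sum one invokes Hadamard's inequality: $\left|\det[\widetilde K_u(v_a,n_a;v_b,n_b)]_{a,b=1}^l\right|\leq l^{l/2}\prod_a\big(\sum_b|\widetilde K_u(v_a,n_a;v_b,n_b)|^2\big)^{1/2}$, and the super-exponential decay in each $n_a$ kills the $l^{l/2}/l!$ growth, giving an absolutely convergent Fredholm series and justifying all rearrangements.

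Concretely I would organize the write-up as: (i) state and prove the uniform bound $\sup_{v\in\C}|g(v)/g(v+n)|\leq C e^{-n\ln n + n + O(\ln n)}$ from Lemma \ref{assymptg}; (ii) deduce that $\widetilde K_u$ defines a trace-class (indeed, with rapidly decaying entries) operator on $L^2(\mathbb Z_{\geq1}\times\C)$ for all $u$, so the Fredholm determinant on the right of \eqref{preFredholm} is an entire function of $u$; (iii) verify that $\E[(\Z_{x,y})^k]=O(C^k k!)$ — which follows from the same bound applied to the $k=n$ term, or simply from $0\le\Z_{x,y}\le 1$ a.s. giving $\E[(\Z_{x,y})^k]\le 1$, so $\sum_k \frac{|u|^k}{k!}\E[(\Z_{x,y})^k]<\infty$ and the exchange of expectation and summation is valid by dominated convergence; (iv) perform the combinatorial rearrangement of the partition sum into the Fredholm expansion as above, matching the kernel. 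Step (iii) is in fact immediate from $\Z_{x,y}\in(0,1)$ a.s., which is the cleanest route; the only genuinely technical point is the asymptotic bound in step (i), but that is handed to us by Lemma \ref{assymptg}.
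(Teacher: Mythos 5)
Your proposal is correct and follows essentially the same route as the paper: the same combinatorial rearrangement of the partition sum into ordered tuples absorbing the multinomial coefficient, the same use of $\Z_{x,y}\in(0,1)$ a.s. to justify exchanging expectation and summation on the left, and the same super-exponential bound $|g(v)/g(v+n)|<\Lambda n^{-\ve n}$ from Lemma \ref{assymptg} combined with Hadamard's inequality to control the Fredholm series on the right. No gaps.
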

\begin{proof}
The proof essentially repeats \cite[Proposition 3.2.8]{BC11}. First we rewrite \eqref{BetaMomentsUnifiedIntExpression} as
\be
\frac{u^k\E\left[\(\Z_{x,y}\)^k \right]}{k!}=\sum_{\lambda\vdash k}\frac{1}{m_1!m_2!\dots}\oint_{\C}\frac{dv_1}{2\pi\i}\dots\oint_{\C}\frac{dv_{l(\lambda)}}{2\pi\i}\det\left[\widetilde{K}_u(v_a, \lambda_a; v_b, \lambda_b)\right]_{a,b=1}^{l(\lambda)}
\ee
and then change the summation over partitions to a summation over arbitrary sequences, absorbing the multinomial coefficient:
\begin{equation}
\label{preFredholmtemp}
\frac{u^k\E\left[\(\Z_{x,y}\)^k \right]}{k!}=\sum_{l=0}^k\frac{1}{l!}\sum_{\substack{n_1, \dots, n_l\geq 1\\n_1+\dots+n_l=k}}\oint_{\C}\frac{dv_1}{2\pi\i}\dots\oint_{\C}\frac{dv_l}{2\pi\i}\det\left[\widetilde{K}_u(v_a, n_a; v_b, n_b)\right]_{a,b=1}^l.
\end{equation}
Now we want to take the sum over $k$, but we need to be careful about possible convergence issues. For the left-hand side we have $|\Z_{x,y}|\leq 1$ almost surely, so the sum converges absolutely for any $u$ and by dominated convergence gives the Laplace transform:
\be
\E\left[e^{u\Z_{x,y}}\right]=\sum_{k\geq 0}\frac{u^k\E\left[\Z_{x,y}^k \right]}{k!}.
\ee
For the right-hand side we can use the conditions on the parameters $\sigma_i, \rho_j,\omega_d$ and on the compact contour $\C$ to give a uniform in $(v,v',n)\in\C\times\C\times\mathbb Z_{\geq 1}$ upper-bound
\be
\left|\frac{g(v)}{v+n-v'}\right|<\Lambda_1.
\ee
At the same time, by Lemma \ref{assymptg} we have
\be
\frac{1}{g(v+n)}=\exp\(-\(v+n-\Delta-\frac{1}{2}\)\ln(n) +n - \frac{1}{2}\ln(2\pi)+O\(n^{-1}\)\),
\ee
where $\Delta=\sum_{i=0}^x\sigma_i-\sum_{j=1}^y\rho_j+\sum_{d=1}^{y-x}\omega_d$ and the constants of $O\(n^{-1}\)$ can be chosen independently of $v\in\C$, since $\C$ is bounded. Hence for some constants $\Lambda_2,\ve>0$ and any $n\in\mathbb Z_{\geq 1}$ we have
\be
\frac{1}{|g(v+n)|}<\Lambda_2 n^{-\ve n}.
\ee
Gathering all pieces, we obtain
\be
|\widetilde{K}_u(v, n; v', n')|<\Lambda_1\Lambda_2n^{-\ve n}|u|^n
\ee
and hence, by Hadamard's inequality,
\be 
\left|\oint_{\C}\frac{dv_1}{2\pi\i}\dots\oint_{\C}\frac{dv_l}{2\pi\i}\det\left[\widetilde{K}_u(v_a, n_a; v_b, n_b)\right]_{a,b=1}^l\right|\leq \Lambda_1\Lambda_2 \xi^l l^{l/2}|u|^{{n_1+\dots n_l}}\prod_{i=1}^l n_i^{-\ve n_i},
\ee
where $\xi>0$ is the length of the contour $\C$. Since
\be
\sum_{l\geq 0}\sum_{\substack{n_1, \dots, n_l}\geq 1} \frac{\xi^l l^{l/2}}{l!} |u|^{{n_1+\dots n_l}}\prod_{i=1}^l n_i^{-\ve n_i} <\infty
\ee
for any $u\in\mathbb C$,  the right-hand side of \eqref{preFredholmtemp} absolutely converges.
\end{proof}

So, we have obtained an expression for the Laplace transform in terms of a Fredholm determinant over $L^2(\mathbb Z_{\geq 1}\times \C)$. But the summation over $\mathbb Z_{\geq 1}$ is inconvenient for the asymptotical analysis, so we replace it by another integration using Mellin-Barnes integral formula.  To proceed we add another assumption, namely, we assume that there exists a vertical line $\L=\{z \mid \Re[z]=h\}$ for some $h$ such that the contours $\C$ and $\C+1$ are separated by $\mathcal L$. Note that this is still possible because of \eqref{conditionone}.

\begin{prop}
\label{MB}
For any $u\in\mathbb C$ such that $\arg(-u)\in \(-\frac{\pi}{2};\frac{\pi}{2}\)$ we have
\be
\sum_{n\geq 1} \frac{g(v)}{g(v+n)}\frac{u^n}{v+n-v'}=\frac{1}{2\pi\i}\int_{\L}\frac{-\pi}{\sin(\pi (z-v))}(-u)^{z-v}\frac{g(v)}{g(z)}\frac{dz}{z-v'},
\ee
where $\L$ is a vertical line $\{z\mid \Re[z]=h\}$ directed upwards and for $(-u)^{z-v}=e^{(z-v)\ln(-u)}$ we use the principal branch of the natural logarithm.
\end{prop}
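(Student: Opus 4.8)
The plan is to prove this by the standard Mellin--Barnes residue computation: the right-hand side integral along $\L$ should be closed to the right, and the residues picked up along the way should reproduce the left-hand side series term by term. Concretely, I would first fix a large $R>0$ and close the contour $\L$ with a large half-circle $C_R$ to the right, obtaining a closed contour $\L_R$ (oriented counterclockwise after reversing $\L$). Inside the region to the right of $\L$, the integrand
\be
\frac{-\pi}{\sin(\pi(z-v))}(-u)^{z-v}\frac{g(v)}{g(z)}\frac{1}{z-v'}
\ee
has poles exactly at $z=v+n$ for $n\in\mathbb Z_{\geq 1}$ coming from the zeros of $\sin(\pi(z-v))$ (the potential pole at $z=v$ is to the left of $\L$, since $\C$ and $\C+1$ are separated by $\L$, so $v\in\C$ lies to the left of $\L$ and $v+1\in\C+1$ lies to the right; the pole at $z=v'$ is likewise to the left, as $v'\in\C$; and $1/g(z)$ is entire so contributes no poles). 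The residue of $-\pi/\sin(\pi(z-v))$ at $z=v+n$ is $(-1)^{n+1}\cdot(-1)\cdot\ldots$ — more precisely $\operatorname{Res}_{z=v+n}\frac{-\pi}{\sin(\pi(z-v))}=(-1)^n$, so that the residue of the whole integrand at $z=v+n$ equals $(-1)^n(-u)^n\frac{g(v)}{g(v+n)}\frac{1}{v+n-v'}=u^n\frac{g(v)}{g(v+n)}\frac{1}{v+n-v'}$, using $(-1)^n(-u)^n=u^n$. Summing over $n\geq1$ gives exactly the left-hand side.

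The remaining, and main, task is to justify that the contribution of the closing arc $C_R$ vanishes as $R\to\infty$, which requires care because $1/g(z)$ decays super-exponentially along the real direction but one must also control the behavior as $\Im(z)\to\pm\infty$ and the interplay with $\pi/\sin(\pi(z-v))$, which grows like $e^{-\pi|\Im(z-v)|}$... wait, actually $1/|\sin(\pi(z-v))|$ decays like $2e^{-\pi|\Im(z)|}$ away from the real axis, which is favorable. The genuinely delicate region is near the real axis where $z\to+\infty$ with bounded imaginary part: there I would invoke Lemma \ref{assymptg}, which gives $\ln g(z)=(z-\Delta-\tfrac12)\ln z - z+\tfrac12\ln(2\pi)+O(z^{-1})$ uniformly for $\arg(z)$ bounded away from $\pm\pi$, hence $1/|g(z)|\leq \Lambda\, |z|^{-\ve|z|}$ for $\Re(z)$ large, as already used in the proof of Proposition \ref{preFredholmProp}. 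Meanwhile $|(-u)^{z-v}|=|{-u}|^{\Re(z-v)}e^{-\arg(-u)\Im(z-v)}$ grows only exponentially in $\Re(z)$, and since $\arg(-u)\in(-\tfrac\pi2,\tfrac\pi2)$ the factor $e^{-\arg(-u)\Im(z)}$ is dominated by the $e^{-\pi|\Im(z)|}$ decay from $1/\sin(\pi(z-v))$ once we stay on the arc; and the factor $1/(z-v')$ decays like $1/R$. Combining these bounds shows the integrand is $O(R^{-\ve R}\cdot e^{CR})=o(1)$ on the part of $C_R$ with $\Re(z)$ comparable to $R$, and $O(e^{-\pi|\Im z|}\cdot \text{(polynomial)})$ on the part with $|\Im z|$ large, so $\int_{C_R}\to0$.

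Finally I would check absolute convergence of the resulting residue series (which is immediate from the bound $|u^n g(v)/g(v+n)|\leq \Lambda n^{-\ve n}|u|^n$ already established), and confirm that the order of "close contour, sum residues, send $R\to\infty$" is legitimate — i.e. that for each fixed $R$ we pick up finitely many residues $z=v+n$ with $v+n$ inside $\L_R$, and the partial sums converge to the full series. The hard part is genuinely the arc estimate, and within it the sub-region where $z$ travels near the positive real axis; everything else is bookkeeping. I would also note explicitly that this identity parallels \cite[Lemma 4.1.18]{BC11} / the Mellin--Barnes step in \cite{BC15b}, so one could alternatively cite those and only indicate the modifications needed to accommodate the function $g$ defined here, but since the decay of $1/g$ is in fact stronger than in the $q$-deformed references, the direct argument above is clean.

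\be
\text{(End of proof plan.)}
\ee
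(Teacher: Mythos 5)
Your overall strategy is the same as the paper's: close $\L$ to the right, pick up the residues of $-\pi/\sin(\pi(z-v))$ at $z=v+n$, $n\geq 1$, and show the closing contour contributes nothing; the paper uses growing rectangles rather than half-circles, which is immaterial. Two things need fixing, though. First, the sign bookkeeping: $\operatorname{Res}_{z=v+n}\frac{-\pi}{\sin(\pi(z-v))}=\frac{-\pi}{\pi\cos(\pi n)}=(-1)^{n+1}$, not $(-1)^n$, so the residue of the full integrand at $z=v+n$ is $-u^n\,\frac{g(v)}{g(v+n)}\frac{1}{v+n-v'}$; this extra minus is exactly compensated by the minus you drop when relating $\frac{1}{2\pi\i}\int_{\L}$ to the counterclockwise closed contour (which contains the segment of $\L$ with \emph{reversed} orientation, so $\frac{1}{2\pi\i}\int_\L\to-\sum_n\operatorname{Res}$). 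Your identity comes out right only because two sign errors cancel. A smaller slip: $1/g(z)$ is not entire --- the factors $\Gamma(z-\rho_j)$ appearing in $1/g$ have poles at $z=\rho_j-m$, $m\geq 0$ --- but since $\rho_j<\sigma_i$ these all lie to the left of $\L$, so your conclusion that only the sine poles contribute is still correct.

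The substantive gap is in the arc estimate, precisely where the hypothesis $\arg(-u)\in(-\tfrac{\pi}{2},\tfrac{\pi}{2})$ gets used. The bound $1/|g(z)|\leq\Lambda|z|^{-\ve|z|}$ from Proposition \ref{preFredholmProp} is only valid near the positive real axis. Away from it, Lemma \ref{assymptg} gives $-\Re[z\ln z]=-\Re(z)\ln|z|+\arg(z)\Im(z)$, so $1/|g(z)|\lesssim |z|^{\Delta+1/2}(|z|/e)^{-\Re z}\,e^{\arg(z)\Im(z)}$, and on the portions of the closing contour where $\arg(z)$ approaches $\pm\tfrac{\pi}{2}$ the factor $(|z|/e)^{-\Re z}$ provides no decay while $e^{\arg(z)\Im(z)}$ grows like $e^{\frac{\pi}{2}|\Im z|}$. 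The total exponential rate in $|\Im z|$ is therefore $-\pi+\tfrac{\pi}{2}+|\arg(-u)|<0$, not $-\pi+|\arg(-u)|$ as your ``$O(e^{-\pi|\Im z|}\cdot(\text{polynomial}))$'' claim suggests. Omitting the $e^{\frac{\pi}{2}|\Im z|}$ growth of $1/|g|$ hides why the strict bound $|\arg(-u)|<\tfrac{\pi}{2}$ is exactly what is needed (the paper explicitly notes that its restriction on $u$ differs from the one in \cite{BC15b} for this reason). Once that term is tracked, the rest of your plan --- finitely many residues for each fixed $R$, absolute convergence of the series via $|g(v)/g(v+n)|\leq\Lambda n^{-\ve n}$ --- goes through as in the paper.
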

\begin{proof} The statement and the proof are similar to \cite[Lemma 3.7]{BC15b}, but our restrictions on $u$ are different.

Since the contour $\C$ is compact, there exists $N_0\in\mathbb Z_{>0}$ such that the contour $\C$ is entirely inside the rectangle formed by the lines $\mathcal L, \mathcal L-1, \mathbb R+N_0\i, \mathbb R-N_0\i$. For $N>N_0$ let $\mathcal H_N$ denote the rectangle formed by the lines $\mathcal L$, $\mathcal L+N$, $\mathbb R+N\i, \mathbb R-N\i$, oriented \emph{clockwise}. Then the integral
\be
\frac{1}{2\pi\i}\int_{\mathcal H_N}\frac{-\pi}{\sin(\pi (z-v))}(-u)^{z-v}\frac{g(v)}{g(z)}\frac{dz}{z-v'}
\ee
has nonzero residues only at the poles of $\frac{\pi}{\sin(\pi (z-v))}$, since all the residues of $\frac{g(v)}{g(z)}$ are to the left of the line $\mathcal L$. Computing the residues at $z=v+n$ for $n=1,\dots, N$ we get
\be
\sum_{n\geq 1}^N \frac{g(v)}{g(v+n)}\frac{u^n}{v+n-v'}=\frac{1}{2\pi\i}\int_{\mathcal H_N}\frac{-\pi}{\sin(\pi (z-v))}(-u)^{z-v}\frac{g(v)}{g(z)}\frac{dz}{z-v'}.
\ee
Now we want to take $N\to\infty$ in both sides. For the left-hand side, using the same upper-bound on $K(v,n;v'n')$ as in Proposition \ref{preFredholmProp}, we have
\begin{equation}
\label{upperboundKtilde}
\sum_{n\geq 1} \left|\frac{g(v)}{g(v+n)}\frac{u^n}{v+n-v'}\right|<\sum_{n\geq 1} \Lambda_1\Lambda_2n^{-\ve n}|u|^n<\infty.
\end{equation}
so the absolute convergence holds for any $u\in\mathbb C$.

For the right-hand side we need to show that the integral along the symmetric difference of $\mathcal L$ and $\mathcal H_N$ converges to $0$. This symmetric difference $\mathcal D_N$ consists of the line segments sequentially connecting  the points $h-\infty \i, h-N\i, h+N-N\i, h+N+N\i, h+N\i, h+\infty \i$.

Clearly, $\frac{1}{z-v'}$ for $z\in\mathcal D_N$ can be bounded uniformly in $N$, while $\frac{\pi}{\sin(\pi (z-v))}$ decays exponentially in $\Im[z]$:
\be
\left|\frac{-\pi}{\sin(\pi (z-v))}\right|<\Lambda_3e^{-\pi |\Im(z)|}, \qquad z\in\mathcal D_N,
\ee
where $\Lambda_3$ does not depend on $N$. To bound $\frac{1}{g(z)}$ we use Lemma \ref{assymptg} to get
\be
\frac{1}{|g(z)|}=\(1+O(z^{-1})\)\frac{|e^zz^{-z+\Delta+\frac{1}{2}}|}{\sqrt{2\pi}}<\Lambda_4 |z|^{\Delta+\frac{1}{2}} (|z|/e)^{-\Re[z]} e^{\frac{\pi}{2}|\Im[z]|}
\ee
for a constant $\Lambda_4$ and $z\in\mathcal D_N$ for sufficiently large $N$. Finally, due to the condition on $u$, we have
\be
|(-u)^z|<|u|^{\Re[z]}e^{\(\frac{\pi}{2}-\delta\)|\Im[z]|}
\ee
for some $\delta>0$. Combining everything together we see that for some constant $\Lambda_5>0$ the integrand is bounded by 
\be
\Lambda_5 e^{-\delta |\Im[z]|}|z|^{-\Re[z]} (|u|e)^{\Re[z]} |z|^{\Delta+\frac{1}{2}}
\ee
which is enough to conclude that the integral over $\mathcal D_N$ tends to $0$.
\end{proof}

\begin{prop}\label{fredholm} For any $u\in\mathbb C$ such that $\arg(-u)\in (-\frac{\pi}{2};\frac{\pi}{2})$ we have
\be
\E\left[ e^{u\Z_{x,y}}\right]=\det(I+{K_u})_{L^2(\mathcal C)}=\sum_{l\geq 0}\frac{1}{l!}\oint_{\C}\frac{dv_1}{2\pi\i}\cdots\oint_{\C}\frac{dv_l}{2\pi\i}\det\left[K_u(v_a; v_b)\right]_{a,b=1}^l,
\ee
where the integral operator ${K_u}$ is defined by the kernel
\be
{K_u}(v;v')=\frac{1}{2\pi\i}\int_{\mathcal L}\frac{-\pi}{\sin(\pi (z-v))}(-u)^{z-v}\frac{g(v)}{g(z)}\frac{dz}{z-v'}.
\ee
Here $\C$ is a sufficiently small contour containing $\sigma_i$ for all $i$, and $\L$ is a vertical line separating $\C$ and $\C+1$.
\end{prop}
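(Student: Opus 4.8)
The plan is to deduce this from Proposition \ref{preFredholmProp}, which already gives $\E[e^{u\Z_{x,y}}]=\det(I+\widetilde{K_u})_{L^2(\mathbb Z_{\geq1}\times\mathcal C)}$ for all $u\in\mathbb C$, by ``integrating out'' the discrete variable using Proposition \ref{MB}. The structural point I would exploit is that the kernel $\widetilde K_u(v,n;v',n')=\frac{g(v)}{g(v+n)}\frac{u^n}{v+n-v'}$ does not depend on $n'$, and that in the matrix $\bigl[\widetilde K_u(v_a,n_a;v_b,n_b)\bigr]$ the variable $n_a$ enters only the $a$-th row; hence the scalar $\frac{g(v_a)u^{n_a}}{g(v_a+n_a)}$ can be factored out of that row.

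Concretely, I would expand $\det(I+\widetilde{K_u})_{L^2(\mathbb Z_{\geq1}\times\mathcal C)}$ as the absolutely convergent series \eqref{preFredholm}, and in the $l$-th term write
\[
\det\bigl[\widetilde K_u(v_a,n_a;v_b,n_b)\bigr]_{a,b=1}^l
=\prod_{a=1}^l\frac{g(v_a)\,u^{n_a}}{g(v_a+n_a)}\;\det\Bigl[\tfrac{1}{v_a+n_a-v_b}\Bigr]_{a,b=1}^l .
\]
Since each $n_a$ appears only in row $a$, multilinearity of the determinant in its rows lets me push the summations over $n_1,\dots,n_l\geq1$ inside:
\[
\sum_{n_1,\dots,n_l\geq1}\det\bigl[\widetilde K_u(v_a,n_a;v_b,n_b)\bigr]_{a,b=1}^l
=\det\Bigl[\sum_{n\geq1}\tfrac{g(v_a)}{g(v_a+n)}\tfrac{u^n}{v_a+n-v_b}\Bigr]_{a,b=1}^l .
\]
This interchange is legitimate because the estimate from the proof of Proposition \ref{preFredholmProp}, namely $|\widetilde K_u(v,n;v',n')|<\Lambda_1\Lambda_2\,n^{-\varepsilon n}|u|^n$ uniformly for $v,v'\in\mathcal C$, makes every series in sight absolutely (and uniformly) convergent. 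Next, using the hypothesis $\arg(-u)\in(-\tfrac\pi2,\tfrac\pi2)$, I would apply Proposition \ref{MB} with $v=v_a$, $v'=v_b$ to recognize the $(a,b)$-entry above as $K_u(v_a;v_b)$; here one uses the vertical line $\mathcal L$ separating $\mathcal C$ and $\mathcal C+1$ whose existence is guaranteed by \eqref{conditionone}. Substituting back, the $l$-th term becomes $\frac{1}{l!}\oint_{\mathcal C}\cdots\oint_{\mathcal C}\det[K_u(v_a;v_b)]_{a,b=1}^l\prod_a\frac{dv_a}{2\pi\i}$, and the resulting series converges absolutely (so $K_u$ is trace class on $L^2(\mathcal C)$ and $\det(I+K_u)$ makes sense): indeed $|K_u(v;v')|\le\sum_{n\geq1}|\widetilde K_u(v,n;v',n)|<\Lambda_1\Lambda_2\sum_{n\geq1}n^{-\varepsilon n}|u|^n<\infty$ uniformly on the compact contour $\mathcal C$, and one then bounds the $l$-th term by Hadamard's inequality exactly as in Proposition \ref{preFredholmProp}. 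This chain of equalities yields $\E[e^{u\Z_{x,y}}]=\det(I+\widetilde K_u)_{L^2(\mathbb Z_{\geq1}\times\mathcal C)}=\det(I+K_u)_{L^2(\mathcal C)}$.

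The only genuinely delicate point — the step I would write out most carefully — is the justification of interchanging the infinite summations over the $n_a$'s with the determinant expansion and the contour integrals. All of this is controlled by the super-geometric decay $n^{-\varepsilon n}$ coming from Lemma \ref{assymptg} and already isolated in the proof of Proposition \ref{preFredholmProp}; the role of the angular restriction on $u$ is solely to permit the application of Proposition \ref{MB}. Everything else is routine bookkeeping with multilinearity of determinants.
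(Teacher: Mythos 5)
Your proposal is correct and follows essentially the same route as the paper: starting from Proposition \ref{preFredholmProp}, using the fact that $\widetilde K_u(v,n;v',n')$ is independent of $n'$ together with multilinearity of the determinant in its rows to push the sums over $n_1,\dots,n_l$ inside, justifying the interchange via the bound \eqref{upperboundKtilde}, and then applying Proposition \ref{MB} to identify each entry as $K_u(v_a;v_b)$. No gaps.
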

\begin{proof}
From Proposition \ref{preFredholmProp} we have
\be
\E\left[ e^{u\Z_{x,y}}\right]=\sum_{l\geq 0}\frac{1}{l!}\sum_{\substack{n_1, \dots, n_l\geq 1}}\oint_{\C}\frac{dv_1}{2\pi\i}\dots\oint_{\C}\frac{dv_l}{2\pi\i}\det\left[\widetilde{K}_u(v_a, n_a; v_b, n_b)\right]_{a,b=1}^l.
\ee
Due to the dominated convergence, which is valid since the contour $\mathcal C$ is compact and we have an integrable converging upper-bound \eqref{upperboundKtilde}, we can exchange the second summation and the integration. Since $\widetilde{K}_u(v, n; v', n')$ does not depend on $n'$, we can use use the same absolute upper-bound \eqref{upperboundKtilde} and multi-linearity of the determinant with respect to rows to get
\be
\sum_{\substack{n_1, \dots, n_l\geq 1}}\det\left[\widetilde{K}_u(v_a, n_a; v_b, n_b)\right]_{a,b=1}^l=\det \left[\sum_{n_a\geq 1} \frac{g(v_a)}{g(v_a+n_a)}\frac{u^{n_a}}{v_a+n_a-v_b'}\right]_{a,b=1}^l .
\ee
Then the claim follows from Proposition~\ref{MB}.
\end{proof}


\section{Limit theorem} \label{limitSect}

This section is devoted to the partial result about the Tracy-Widom large-scale limit of the partition function $\Z_{x,y}$ of the inhomogeneous Beta polymer model, when $x,y\to \infty$ at a constant ratio and the parameters of the model have finite number of values repeated with certain frequencies. Our approach follows \cite{BC15b}: we perform a steepest descent analysis of the Fredholm determinant from Proposition \ref{fredholm}.

\subsection{Overview} Fix $k$ and consider three families of parameters $\{\sigma_1,\dots, \sigma_k\}, \{\rho_1,\dots, \rho_k\}$ and $\{\omega_1, \dots, \omega_k\}$. Let $\{\alpha_1, \dots, \alpha_k\}$, $\{\beta_1, \dots, \beta_k\}, \{\gamma_1, \dots, \gamma_k\}$ be collections of corresponding ``frequencies" satisfying 
\be
\sum_{i=1}^k{\alpha_i}=\sum_{j=1}^k{\beta_j}=\sum_{d=1}^k{\gamma_d}=1, \qquad \alpha_i,\beta_j,\gamma_d\in \mathbb R_{\geq 0}.
\ee

For a fixed slope $x/y$ we consider the partition function $\Z_{\lfloor{xt}\rfloor,\lfloor{yt}\rfloor}$ as $t\to\infty$, where parameters of the model $\{\widetilde{\sigma}_0, \dots, \widetilde{\sigma}_{\lfloor{xt}\rfloor}\}$, $\{\widetilde{\rho}_1, \dots, \widetilde{\rho}_{\lfloor{yt}\rfloor}\}$, $\{\widetilde{\omega}_1, \dots, \widetilde{\omega}_{\lfloor{yt}\rfloor-\lfloor{xt}\rfloor}\}$\footnote{In earlier sections $\sigma_i, \rho_j,\omega_d$ were used to denote the parameters of individual columns, rows and diagonals. To lighten the expressions in this section it is convenient for us to change the notation by using $\widetilde{\sigma}_i,\widetilde{\rho}_j,\widetilde{\omega}_d$ for individual parameters while $\sigma_i, \rho_j,\omega_d$ denote their possible values; this abuse of notation should not cause confusion.} are defined in the following way: For the column parameters, exactly $\alpha_i^{[t]} xt $ out of $\lfloor xt\rfloor+1$ parameters $\widetilde\sigma_i$ are equal to $\sigma_i$, where $\alpha_i^{[t]}$ is a sequence satisfying
\be
\alpha_i^{[t]}-\alpha_i=O(t^{-1})\qquad \text{as}\ t\to\infty.
\ee
Similarly, the row parameters $\rho_j$ and the diagonal parameters $\omega_d$ are repeated for $\beta_j^{[t]}yt$ and $\gamma^{[t]}_d(y-x)t$ times respectively, where
\be
\beta^{[t]}_j-\beta_j=O(t^{-1}),\qquad \gamma^{[t]}_d-\gamma_d=O(t^{-1}) \qquad t\to\infty.
\ee

Motivated by the homogeneous setting \cite{BC15b} we are looking for a result of the form
\begin{equation}\label{generallimiteq}
\lim_{t\to\infty}\P\(\frac{\ln\Z_{\floor{xt},\floor{yt}}+It}{c t^{1/3}}\leq r\)=F_{GUE}(r),
\end{equation}
where $F_{GUE}(p)$ is the GUE Tracy-Widom distribution \cite{TW92}. A probabilistic interpretation, which one can find more natural, is given by identification of $\Z_{\floor{xt},\floor{yt}}$ with the distribution of a random walk in a random Beta environment. From this point of view, we are looking at large deviations of the random walk, studying fluctuations of a random rate function (depending on the random environment) from the expected limit $I(x/y)$ depending on the slope. See \cite{BC15b} for more details.   

To prove a limit relation \eqref{generallimiteq} it is enough to set $u^{[t]}=-\exp(tI-t^{1/3}cr)$ and show that
\be
\lim_{t\to\infty}\E\left[e^{u^{[t]}\Z_{\floor{xt},\floor{yt}}}\right]=F_{GUE}(r),
\ee
because, by \cite[Lemma 4.1.39]{BC11}\footnote{Following the notation of \cite[Lemma 4.1.39]{BC11}, we need to set $f(x)=e^{-e^{xct^{1/3}}}$.} we have
\be
\lim_{t\to\infty}\P\(\frac{\ln\Z_{\floor{xt},\floor{yt}}+It}{c t^{1/3}}\leq r\)=\lim_{t\to\infty}\E\left[e^{u^{[t]}\Z_{\floor{xt},\floor{yt}}}\right]
\ee
given that the limit is a continuous distribution function. So, the problem is reduced to studying the Laplace transform, which by Proposition \ref{fredholm} has an expression in terms of a Fredholm determinant. The latter can be analyzed using \emph{the steepest descent method}: rewrite the kernel $K_{u^{[t]}}$ as
\be
K_{u^{[t]}}(v,v')=\frac{1}{2\pi\i}\int_{\mathcal L}\frac{-\pi}{\sin(\pi (z-v))}\exp\(t(h^{[t]}(z)-h^{[t]}(v))- crt^{1/3}(z-v)\)\frac{dz}{z-v'},
\ee
where
\be
h^{[t]}(z)=Iz-\sum_{i=1}^kx\alpha^{[t]}_i\ln\Gamma(z-\sigma_i)+\sum_{i=1}^ky\beta_i^{[t]}\ln\Gamma(z-\rho_i)-\sum_{i=1}^k(y-x)\gamma_i^{[t]}\ln\Gamma(z-\omega_i).
\ee
As $t\to\infty$ the function $h^{[t]}(z)$ converges to the function
\be
h(z)=Iz-\sum_ix\alpha_i\ln\Gamma(z-\sigma_i)+\sum_iy\beta_i\ln\Gamma(z-\rho_i)-\sum_i(y-x)\gamma_i\ln\Gamma(z-\omega_i),
\ee
which, for the right choice of $I$, has a second order critical point $\theta$ depending on the slope $y/x$ in a nontrivial way. However, the dependence of the slope $x/y$ on $\theta$ can be easily described, suggesting the following parametrization by $\theta\in(\max_i\sigma_i,\infty)$:
\begin{equation}
\label{thetaparametrizationcollection}
\begin{gathered}
x=x(\theta)=\sum_i\beta_i\Psi_1(\theta-\rho_i)-\sum_i\gamma_i\Psi_1(\theta-\omega_i),\\
y=y(\theta)=\sum_i\alpha_i\Psi_1(\theta-\sigma_i)-\sum_i\gamma_i\Psi_1(\theta-\omega_i),\\
I=I(\theta)=x(\theta)\sum_i\alpha_i\Psi(\theta-\sigma_i)-y(\theta)\sum_i\beta_i\Psi(\theta-\rho_i)+ (y(\theta)-x(\theta))\sum_i\gamma_i\Psi(\theta-\omega_i),\\
c^3=c(\theta)^3=-\frac{x(\theta)}{2}\sum_i\alpha_i\Psi_2(\theta-\sigma_i)+\frac{y(\theta)}{2}\sum_i\beta_i\Psi_2(\theta-\rho_i)-\frac{y(\theta)-x(\theta)}{2}\sum_i\gamma_i\Psi_2(\theta-\omega_i),
\end{gathered}
\end{equation}
where $\Psi(z)$ denotes the digamma function and $\Psi_k(z)$ denotes the $k$th polygamma function:
\be
\Psi(z)=\frac{d}{dz}\ln\Gamma(z),\qquad \Psi_{k}(z)=\frac{d^{k+1}}{dz^{k+1}}\ln\Gamma(z).
\ee

\begin{prop}\label{goodParam}
1) The function $x(\theta)/y(\theta)$ is increasing in $\theta\in(\max_i \sigma_i, \infty)$, with the limit values
\be
\lim_{\theta\to\max_i\sigma_i}\frac{x(\theta)}{y(\theta)}=0,\qquad \lim_{\theta\to\infty}\frac{x(\theta)}{y(\theta)}=\frac{\sum_i\beta_i\rho_i-\sum_{i}\gamma_i\omega_i}{\sum_i\alpha_i\sigma_i-\sum_i\gamma_i\omega_i}.
\ee

2) $c(\theta)$ can be chosen to be positive.
\end{prop}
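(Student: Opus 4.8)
The plan is to recast the quantities in \eqref{thetaparametrizationcollection} as Laplace transforms and reduce everything to one elementary inequality. Using complete monotonicity of $\Psi_1$ on $(0,\infty)$, write $\Psi_1(w)=\int_0^\infty e^{-wt}\,d\nu(t)$ with the positive measure $d\nu(t)=\frac{t}{1-e^{-t}}\,dt$ (so that $\int_0^\infty e^{-wt}\,d\nu(t)<\infty$ for $w>0$), hence $\Psi_2(w)=-\int_0^\infty t\,e^{-wt}\,d\nu(t)$. Set $\widehat\alpha(t)=\sum_i\alpha_ie^{\sigma_it}$, $\widehat\beta(t)=\sum_j\beta_je^{\rho_jt}$, $\widehat\gamma(t)=\sum_d\gamma_de^{\omega_dt}$ and $g=\widehat\beta-\widehat\gamma$, $h=\widehat\alpha-\widehat\gamma$; then for $\theta>\max_i\sigma_i$ one has $x(\theta)=\int_0^\infty g(t)e^{-\theta t}\,d\nu(t)$ and $y(\theta)=\int_0^\infty h(t)e^{-\theta t}\,d\nu(t)$. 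The constraint $\omega_d<\rho_j<\sigma_i$ for all indices gives $0<g(t)<h(t)$ for $t>0$ (e.g. $\widehat\gamma(t)\le e^{(\max_d\omega_d)t}<e^{(\min_j\rho_j)t}\le\widehat\beta(t)$), hence $x(\theta),y(\theta)>0$ (and likewise $y-x>0$). Finally, a routine expansion with $A=\sum_i\alpha_i\Psi_1(\theta-\sigma_i)$, $B=\sum_j\beta_j\Psi_1(\theta-\rho_j)$, $C=\sum_d\gamma_d\Psi_1(\theta-\omega_d)$ (so $x=B-C$, $y=A-C$) yields the algebraic identity $c(\theta)^3=\tfrac12\big(x'(\theta)y(\theta)-x(\theta)y'(\theta)\big)=\tfrac12\,y(\theta)^2\,\tfrac{d}{d\theta}(x/y)$; thus part~2 follows formally from part~1, and both reduce to proving $x'y-xy'>0$.

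Differentiating under the integral sign and symmetrizing the resulting double integral in its two variables gives
\[
x'y-xy'=\tfrac12\int_0^\infty\!\!\int_0^\infty(t-s)\,e^{-\theta(t+s)}h(t)h(s)\big(\phi(s)-\phi(t)\big)\,d\nu(t)\,d\nu(s),\qquad\phi:=g/h,
\]
so it suffices to show $\phi$ is strictly decreasing on $(0,\infty)$, equivalently that the Wronskian $W(g,h)=gh'-g'h$ is strictly positive there. With $W(e^{at},e^{bt})=(b-a)e^{(a+b)t}$ and bilinearity, $W(g,h)=-W(\widehat\alpha,\widehat\beta)-W(\widehat\beta,\widehat\gamma)+W(\widehat\alpha,\widehat\gamma)$; the first two terms are positive ($\sigma_i>\rho_j$, $\rho_j>\omega_d$) while the last is negative, so a term-by-term sign estimate fails. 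The key step is to homogenize: insert $\sum_d\gamma_d=1$, $\sum_i\alpha_i=1$, $\sum_j\beta_j=1$ into the three sums respectively and use $\sigma_i-\omega_d=(\sigma_i-\rho_j)+(\rho_j-\omega_d)$, which rewrites $W(g,h)(t)=\sum_{i,j,d}\alpha_i\beta_j\gamma_d\,B_{ijd}(t)$ with
\[
B_{ijd}(t)=(\sigma_i-\rho_j)\big(e^{(\sigma_i+\rho_j)t}-e^{(\sigma_i+\omega_d)t}\big)+(\rho_j-\omega_d)\big(e^{(\rho_j+\omega_d)t}-e^{(\sigma_i+\omega_d)t}\big).
\]
The only computation needed is that each $B_{ijd}(t)>0$ for $t>0$: putting $a=\sigma_i-\omega_d>b=\rho_j-\omega_d>0$ one finds $e^{-(\omega_d+\rho_j)t}B_{ijd}(t)=(a-b)e^{at}+b-ae^{(a-b)t}=:G(t)$, and $G(0)=0$, $G'(t)=a(a-b)\big(e^{at}-e^{(a-b)t}\big)>0$ for $t>0$, hence $G>0$. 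Therefore $W(g,h)>0$, $\phi$ is strictly decreasing, $x'y-xy'>0$, so $\theta\mapsto x(\theta)/y(\theta)$ is strictly increasing and $c(\theta)^3>0$, and one takes $c(\theta)=\big(c(\theta)^3\big)^{1/3}>0$.

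It remains to identify the two limits of $x/y$. Since $\widehat\alpha(0)=\widehat\beta(0)=\widehat\gamma(0)=1$, the function $\phi$ extends continuously to $0$ with $\phi(0^+)=g'(0)/h'(0)=\frac{\sum_j\beta_j\rho_j-\sum_d\gamma_d\omega_d}{\sum_i\alpha_i\sigma_i-\sum_d\gamma_d\omega_d}$, the denominator being positive because $\sigma_i>\omega_d$; writing $x/y=\int_0^\infty\phi\,d\mu_\theta$ with $d\mu_\theta=e^{-\theta t}h(t)\,d\nu(t)/y(\theta)$, a routine Laplace-type estimate (the $\mu_\theta$-mass of $[\delta,\infty)$ decays exponentially in $\theta$ for each $\delta>0$) shows $x/y\to\phi(0^+)$ as $\theta\to\infty$, which is exactly the stated value. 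For $\theta\to(\max_i\sigma_i)^+$, the numerator $x(\theta)$ stays bounded (all arguments $\theta-\rho_j$, $\theta-\omega_d$ stay bounded away from $0$) whereas $y(\theta)\to+\infty$, since the term $\alpha_{i^\ast}\Psi_1(\theta-\sigma_{i^\ast})$ with $\sigma_{i^\ast}=\max_i\sigma_i$ blows up (we may assume all frequencies are positive, parameters of zero frequency being vacuous); hence $x/y\to0$. The one genuinely delicate point is the homogenization trick that converts a mixed-sign combination of Wronskians into a manifestly positive sum; everything else — the integral representations, the symmetrization, the identity for $c^3$, and the Laplace asymptotics — is routine.
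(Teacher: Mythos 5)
Your proof is correct, and its core positivity argument takes a genuinely different route from the paper's. Both proofs reduce part 1) to the inequality $x'(\theta)y(\theta)-x(\theta)y'(\theta)>0$ and identify this quantity with $2c(\theta)^3$, so that part 2) comes for free. The paper then observes that $2c(\theta)^3=h'''(\theta)$ and obtains positivity from Lemma \ref{derivatives}, whose proof runs through the convexity reduction \eqref{convex} to the $k=1$ functions $h_{\sigma,\rho,\omega}$, two applications of Cauchy's mean value theorem, and a series computation showing that $(-1)^{d-1}\Psi_{d}(z)/\Psi_2(z)$ is increasing. You instead represent $x$ and $y$ as Laplace transforms of $g=\widehat\beta-\widehat\gamma$ and $h=\widehat\alpha-\widehat\gamma$ against $d\nu(t)=\frac{t}{1-e^{-t}}\,dt$, symmetrize the double integral for $x'y-xy'$ (a Chebyshev-correlation argument), and reduce everything to strict positivity of the Wronskian $W(g,h)$; your homogenization step, inserting $\sum_d\gamma_d=\sum_i\alpha_i=\sum_j\beta_j=1$ and splitting $\sigma_i-\omega_d=(\sigma_i-\rho_j)+(\rho_j-\omega_d)$, converts the mixed-sign combination of Wronskians into the manifestly positive terms $B_{ijd}$, and the elementary estimate $G(t)>0$ checks out. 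Your treatment of the limits is also slightly more careful than the paper's one-line appeal to trigamma asymptotics: you flag the degenerate case where the maximizing $\sigma_i$ carries frequency zero, and you derive the $\theta\to\infty$ limit from concentration of the measure $\mu_\theta$ at $t=0$ rather than from the expansion $\Psi_1(z)=z^{-1}+\tfrac12 z^{-2}+O(z^{-3})$. What the paper's route buys is the stronger conclusion of Lemma \ref{derivatives} — the sign of every $h^{(d)}(\theta)$ for $d\geq 3$ — which is reused later (e.g.\ $h^{(4)}(\theta)<0$ in the proof of Proposition \ref{AcontProp}), so its extra machinery is not wasted; your argument proves exactly what Proposition \ref{goodParam} needs, more self-containedly.
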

Proposition \ref{goodParam} is proved in Appendix \ref{app:num}.

With this parametrization we have $h(z)-h(\theta)=\frac{c^3}{3}(z-\theta)^3 +O((z-\theta)^4)$ for $z$ close to $\theta$, and the general philosophy of the steepest descent method suggests that asymptotical behavior of the Fredholm determinant is dominated by its behavior in a small neighborhood of $\theta$, which, after certain manipulation, can be shown to produce $F_{GUE}(r)$. 

\begin{conj}\label{conjecture} For any $\theta\in(\max(\sigma), \infty)$ we have
\be
\lim_{t\to\infty}\P\(\frac{\ln\Z_{\floor{xt},\floor{yt}}+It}{c t^{1/3}}\leq r\)=F_{GUE}(r),
\ee
with $x=x(\theta), y=y(\theta), I=I(\theta), c=c(\theta)$ defined above in terms of $\theta$.
\end{conj}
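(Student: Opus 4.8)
The plan is to follow the steepest-descent scheme of \cite{BC15b}, applied to the Fredholm determinant of Proposition \ref{fredholm}. Fix $\theta\in(\max_i\sigma_i,\infty)$ and set $x=x(\theta)$, $y=y(\theta)$, $I=I(\theta)$, $c=c(\theta)$ as in \eqref{thetaparametrizationcollection}, with $c>0$ by Proposition \ref{goodParam}. By the reduction recalled before the statement, and since $F_{GUE}$ is a continuous distribution function, it suffices to substitute $u=u^{[t]}:=-\exp(tI-crt^{1/3})$ (so that $\arg(-u)=0\in(-\tfrac\pi2,\tfrac\pi2)$) into Proposition \ref{fredholm} and to prove
\[
\lim_{t\to\infty}\det(I+K_{u^{[t]}})_{L^2(\C)}=F_{GUE}(r)=\det(I-K_{\mathrm{Ai}})_{L^2(r,\infty)}.
\]
Writing the kernel as $K_{u^{[t]}}(v,v')=\frac{1}{2\pi\i}\int_{\L}\frac{-\pi}{\sin(\pi(z-v))}\exp\!\big(t(h^{[t]}(z)-h^{[t]}(v))-crt^{1/3}(z-v)\big)\frac{dz}{z-v'}$, one has $h^{[t]}\to h$ uniformly on compact sets, while by the choice of $I$ one has $h'(\theta)=h''(\theta)=0$ and $h(z)-h(\theta)=\tfrac{c^3}{3}(z-\theta)^3+O((z-\theta)^4)$ near $\theta$; the asymptotics of the determinant are therefore governed by the local behaviour of $h$ at $\theta$.

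The first and principal task is to produce the correct global geometry. One must deform the small contour $\C$ (which encircles all $\sigma_i$ and excludes all $\omega_d$) and the vertical line $\L$ (which separates $\C$ from $\C+1$), keeping these incidence constraints, into \emph{steep descent} contours through $\theta$: $\C$ should satisfy $\Re[h(v)]<\Re[h(\theta)]$ for $v\in\C\setminus\{\theta\}$ and leave $\theta$ in the directions $e^{\pm 2\pi\i/3}$, while $\L$ should satisfy $\Re[h(z)]>\Re[h(\theta)]$ for $z\in\L\setminus\{\theta\}$ and leave $\theta$ in the directions $e^{\pm\pi\i/3}$, so that $\Re\big[t(h(z)-h(v))\big]\to-\infty$ away from $\theta$. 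Since $h$ is a finite $\mathbb R$-linear combination of $\ln\Gamma(z-\sigma_i)$, $\ln\Gamma(z-\rho_j)$ and $\ln\Gamma(z-\omega_d)$, it has logarithmic singularities at all of these points, and one cannot simply invoke convexity of a single polygamma function; instead the sign of $\Re[h]$ along candidate contours must be controlled by combining the monotonicity in Proposition \ref{goodParam} with a careful analysis near each pole. I expect this to be the main obstacle: it is precisely where the argument breaks down for general parameters, and the restrictive hypotheses of Theorem \ref{mainBetaResult} ($\sigma_i\equiv 0$, $\rho_j\equiv-1$, $\omega_d<-1$, $\theta\in(0,\tfrac12)$) are exactly what makes admissible contours available.

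Granting such contours, the remainder is routine. One first localizes: by the steep descent property together with the exponential decay of $\frac{-\pi}{\sin(\pi(z-v))}$ in $\Im[z]$ (which also guarantees convergence of the $z$-integral), the portions of $\C$ and $\L$ outside a $\delta$-ball around $\theta$ contribute an error that is exponentially small in $t$, uniformly in the Fredholm series via the same type of Hadamard bound used in Proposition \ref{preFredholmProp}, and may be dropped. On the remaining local arcs one rescales $v=\theta+\tilde w\,t^{-1/3}$, $v'=\theta+\tilde w'\,t^{-1/3}$, $z=\theta+\zeta\,t^{-1/3}$, so that $t\big(h^{[t]}(z)-h^{[t]}(v)\big)\to\tfrac{c^3}{3}(\zeta^3-\tilde w^3)$, $crt^{1/3}(z-v)\to cr(\zeta-\tilde w)$, $\frac{-\pi}{\sin(\pi(z-v))}\sim-\frac{1}{z-v}$ and $\frac{dz}{z-v'}\sim\frac{d\zeta}{\zeta-\tilde w'}$; after absorbing the Jacobian $t^{-1/3}$, carrying out the residue bookkeeping coming from the pole of the sine factor at $z=v$ exactly as in \cite{BC15b}, and conjugating the kernel by $\exp\!\big(-\tfrac{c^3}{3}\tilde w^3+cr\tilde w\big)$ (a similarity transformation that leaves the Fredholm determinant unchanged), the rescaled kernel converges pointwise, with a uniform integrable dominating bound, to the contour-integral form of $K_{\mathrm{Ai}}$ recorded in the notation section, with spectral parameter $r$. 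A dominated-convergence argument at the level of the Fredholm expansion then yields $\det(I+K_{u^{[t]}})_{L^2(\C)}\to\det(I-K_{\mathrm{Ai}})_{L^2(r,\infty)}=F_{GUE}(r)$, which is the claim.
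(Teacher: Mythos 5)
What you have written is not a proof of the statement, and to your credit you essentially say so yourself. The statement is a \emph{conjecture} in the paper: the author explicitly does not prove it in general, and only establishes it under Assumption \ref{assumptions} ($\sigma_i\equiv 0$, $\rho_j\equiv -1$, $\omega_d<-1$, $\theta\in(0,\tfrac12)$), which is Theorem \ref{mainBetaResult}. Your outline is exactly the strategy of that partial result (localize via steep descent, rescale by $t^{-1/3}$ around the double critical point, identify the limit with the Airy kernel via the $\det(I-AB)=\det(I-BA)$ trick), so as a description of the intended route it is accurate. But the sentence ``Granting such contours, the remainder is routine'' conditions the whole argument on precisely the step that is open. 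The paper only knows a descent contour for $-\Re[h(v)]$ in the special case where it can be taken to be the circle $\C_\theta=\{\theta e^{\i\phi}\}$ (Proposition \ref{CcontProp}, whose proof uses $\sigma-\rho=1$ and $\theta<\tfrac12$ in an essential way); no construction is known for general $\sigma_i,\rho_j,\omega_d$. So the main gap you flag is real and is not closed by your proposal.

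There is a second gap you underplay. You say one deforms $\C$ and $\L$ to the descent contours ``keeping these incidence constraints,'' but for $\theta\geq\tfrac12$ this is generally impossible: the deformation forces the contours to cross poles of $\tfrac{\pi}{\sin(\pi(z-v))}$ (equivalently, $\L$ can no longer separate $\C_\theta$ from $\C_\theta+1$), and one picks up residues whose contribution to the Fredholm expansion must be shown to vanish asymptotically. The paper identifies this as a separate technical obstruction and imposes $\theta<\tfrac12$ exactly to avoid it. A complete proof of the conjecture would need (a) a steep descent contour for $-\Re[h]$ valid for arbitrary admissible $\sigma_i,\rho_j,\omega_d$ and $\theta$, and (b) control of the residues created by the contour deformation; neither appears in your proposal, and neither appears in the paper.
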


Unfortunately, the general case turns out to be technically challenging, and we do not pursue it here. In this section we prove Conjecture \ref{conjecture} for parameters $\sigma_i,\rho_i,\omega_i,\theta$ satisfying the following restrictions:
\begin{ass}\label{assumptions}
We assume that
\begin{itemize}
\item All $\sigma_i$ are equal to $0$, while all $\rho_i$ are equal to $-1$:
\be
\sigma_1=\sigma_2=\dots=\sigma_k=0,\qquad \rho_1=\rho_2=\dots=\rho_k=-1.
\ee
\item $\theta\in \(0, \frac{1}{2}\)$.
\end{itemize}
\end{ass}

\begin{theo}\label{mainBetaResult} Under Assumption \ref{assumptions} we have 
\be
\lim_{t\to\infty}\P\(\frac{\ln\Z_{\floor{xt},\floor{yt}}+It}{c t^{1/3}}\leq r\)=F_{GUE}(r).
\ee
\end{theo}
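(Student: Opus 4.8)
The plan is to carry out a steepest descent analysis of the Fredholm determinant $\det(I+K_{u^{[t]}})_{L^2(\C)}$ from Proposition \ref{fredholm}, following the outline of \cite{BC15b} but adapted to the specific parameter values of Assumption \ref{assumptions}. Under those assumptions the function $g$ simplifies dramatically: since all $\sigma_i=0$ and all $\rho_i=-1$, we get $g(z)=\Gamma(z)^{xt+1}\Gamma(z+1)^{-yt}\prod_d\Gamma(z-\omega_d)^{\gamma_d(y-x)t}$ up to the frequency bookkeeping, and using $\Gamma(z+1)=z\Gamma(z)$ this collapses further, so that the ratio $g(v)/g(z)$ entering the kernel has an essentially explicit product form. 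The limiting exponent is $h(z)=Iz-x\ln\Gamma(z)+y\ln\Gamma(z+1)-(y-x)\sum_i\gamma_i\ln\Gamma(z-\omega_i)$, and with the $\theta$-parametrization \eqref{thetaparametrizationcollection} we have $h'(\theta)=h''(\theta)=0$ and $h'''(\theta)=c^3>0$ by Proposition \ref{goodParam}. The restriction $\theta\in(0,\tfrac12)$ together with $\omega_i<-1$ is what makes the contours behave: it should guarantee that one can deform $\C$ to a small circle around $\theta$ passing through $\theta$ in the steep-ascent direction $\theta+e^{\pm 2\pi i/3}\R_{>0}$ for $\Re h$, and simultaneously deform the Mellin--Barnes line $\L$ to pass through $\theta$ in the steep-descent direction $\theta+e^{\pm\pi i/3}\R_{>0}$, while keeping $\C$ inside the region where $\sigma_i$ lie and keeping $\C+1$ on the correct side of $\L$ and away from the $\omega_d$.

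First I would fix the scaling $u^{[t]}=-\exp(tI-crt^{1/3})$, so that the kernel becomes
\be
K_{u^{[t]}}(v,v')=\frac{1}{2\pi\i}\int_{\L}\frac{-\pi}{\sin(\pi(z-v))}\exp\(t(h^{[t]}(z)-h^{[t]}(v))-crt^{1/3}(z-v)\)\frac{dz}{z-v'},
\ee
and note that $h^{[t]}\to h$ uniformly with $h^{[t]}-h=O(t^{-1})$ on the relevant compact neighborhoods by the frequency assumptions $\alpha_i^{[t]}-\alpha_i,\beta_i^{[t]}-\beta_i,\gamma_i^{[t]}-\gamma_i=O(t^{-1})$. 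Next I would perform the change of variables $v=\theta+t^{-1/3}\tilde v/c$, $z=\theta+t^{-1/3}\tilde z/c$, $v'=\theta+t^{-1/3}\tilde v'/c$, which rescales $L^2(\C)$ appropriately; under this change $t(h(z)-h(v))-crt^{1/3}(z-v)\to \tfrac13(\tilde z^3-\tilde v^3)-r(\tilde z-\tilde v)$, the factor $\tfrac{-\pi}{\sin(\pi(z-v))}$ becomes $\tfrac{-1}{z-v}\to \tfrac{-c t^{1/3}}{\tilde z-\tilde v}$ (the pole structure being harmless after the rescaling combines with $dz$), and $\tfrac{1}{z-v'}\to\tfrac{ct^{1/3}}{\tilde z-\tilde v'}$, so the rescaled kernel converges pointwise to
\be
\frac{1}{2\pi\i}\int \frac{1}{\tilde z-\tilde v}\,\frac{e^{\tilde z^3/3-r\tilde z}}{e^{\tilde v^3/3-r\tilde v}}\,\frac{d\tilde z}{\tilde z-\tilde v'},
\ee
which after a standard contour/residue rearrangement is exactly the Airy kernel $K_{\mathrm{Ai}}$ shifted by $r$, giving $F_{GUE}(r)=\det(I-K_{\mathrm{Ai}})_{L^2(r,\infty)}$. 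The bulk of the work is then: (i) the local analysis near $\theta$, Taylor-expanding $h$ and controlling the $O((z-\theta)^4)$ and $O(t^{-1})$ errors on a window $|v-\theta|,|z-\theta|\le \delta$, and (ii) the tail estimates showing the contributions from $|v-\theta|>\delta$ or $|z-\theta|>\delta$ are exponentially negligible, which requires exhibiting explicit global steep-descent/ascent contours for $\Re h$ and verifying $\Re(h(z)-h(\theta))$ has the right sign away from $\theta$ — this is where Assumption \ref{assumptions} is used, and where the numerical estimates analogous to \cite{BC15b} and to Appendix \ref{app:num} enter. Finally I would invoke \cite[Lemma 4.1.39]{BC11} with $f(x)=e^{-e^{xct^{1/3}}}$ and the continuity of $F_{GUE}$ to pass from the convergence of Laplace transforms $\E[e^{u^{[t]}\Z_{\floor{xt},\floor{yt}}}]\to F_{GUE}(r)$ to the convergence in distribution in the statement.

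For the rigorous justification of exchanging the limit with the Fredholm determinant expansion, I would use the uniform bounds from the proof of Proposition \ref{preFredholmProp} and Proposition \ref{MB} — in particular $|\widetilde K_u(v,n;v',n')|<\Lambda_1\Lambda_2 n^{-\epsilon n}|u|^n$ and the exponential decay of $\tfrac{\pi}{\sin(\pi(z-v))}$ along $\L$ — combined with Hadamard's inequality to get a dominating series for the Fredholm expansion uniformly in $t$; this legitimizes term-by-term passage to the limit. One technical point needing care is that after rescaling, $\C$ becomes a curve of shrinking size while $\L$ becomes a line still passing near $\theta$ at the right angle; I would split $\L$ into the portion within distance $\delta$ of $\theta$ (where the quadratic and cubic terms govern) and the complement (negligible), mirroring the estimate for $\mathcal D_N$ in Proposition \ref{MB}.

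\textbf{The main obstacle} I anticipate is step (ii): constructing explicit global contours along which $\Re(h(z)-h(\theta))$ is monotone / sign-definite, and proving the requisite numerical inequalities when the diagonal parameters $\omega_d<-1$ are arbitrary (with arbitrary frequencies $\gamma_d$). The presence of the $\omega_d$-terms $-(y-x)\sum_i\gamma_i\ln\Gamma(z-\omega_i)$ complicates the landscape of $\Re h$ compared to the homogeneous Beta polymer; the constraints $\theta\in(0,\tfrac12)$, $\sigma_i=0$, $\rho_i=-1$ are presumably exactly what is needed to keep $\C$ (a small circle around $0$) separated from $\C+1$ by a vertical line $\L$ lying in $(0,1)$ and to the right of all $\omega_d+1<0$, and to make the sign analysis tractable — verifying this carefully, including that the poles of $\tfrac{1}{z-v'}$ and of $1/g(z)$ stay on the correct sides throughout the deformation, is the delicate part. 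I expect this to reduce, as in \cite{BC15b} and Appendix \ref{app:num}, to convexity/monotonicity properties of polygamma functions, but with an extra summation over the $\omega$-frequencies that must be handled uniformly.
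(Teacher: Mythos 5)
Your proposal follows essentially the same route as the paper: localize the Fredholm determinant from Proposition \ref{fredholm} near the critical point $\theta$, rescale by $t^{-1/3}$ to obtain the Airy kernel via the $\det(I-AB)=\det(I-BA)$ trick, and invoke \cite[Lemma 4.1.39]{BC11}, with the global steep-descent contours and polygamma inequalities (Propositions \ref{LcontProp}--\ref{AcontProp}) carrying the technical weight exactly as you anticipate. One small correction: the deformed $v$-contour cannot be a small circle around $\theta$ — it must still enclose the pole of $g(v)$ at $v=\sigma_i=0$, which is why the paper takes $\C_\theta$ to be the circle of radius $\theta$ centered at the origin, locally modified near $\theta$ into the wedge in directions $e^{\pm 2\pi\i/3}$.
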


\begin{rem}\normalfont
Theorem \ref{mainBetaResult} generalizes \cite[Theorem 5.2]{BC15b} by adding a degree of freedom corresponding to the diagonal parameters $\omega_i$, while in \cite{BC15b} they were additionally assumed to be equal to $-2$. Returning to the Beta polymer model, Theorem \ref{mainBetaResult} corresponds to the model with Beta distributions $\Beta(1,-\widetilde{\omega}-1)$ with parameters $\widetilde{\omega}<-1$ constant along the diagonals and having finitely many possible values $\omega_1, \dots, \omega_k$. 
\end{rem}

The restrictions from Assumption \ref{assumptions} come from two technical difficulties. Namely, to justify the steepest descent argument one needs to
\begin{itemize}
\item Find contours $\C_\theta,\L_\theta$ passing through $\theta$ and satisfying $\Re[h(z)-h(v)]\leq 0$ for $z\in \L_\theta, v\in\C_\theta$, with equality only if $v=z=\theta$;
\item Prove that one can deform the initial contours $\C,\L$ to $\C_\theta,\L_\theta$ without changing the limit of the Fredholm determinant.
\end{itemize}
For the first step a suitable descent contour $\L_{\theta}$ for $\Re[h(z)]$ valid for any parameters $\sigma_i,\rho_i,\omega_i$ was found in \cite{BC15b} and it is given by a vertical line passing through $\theta$. At the same time, finding a descent contour for $-\Re[h(v)]$ is much more involved for general parameters: we know how to find it only if $\sigma_i-\rho_j=1$ for any $i,j$ and $\theta-\max_i\sigma_i<1$, forcing the first part of Assumption \ref{assumptions}\footnote{To simplify the assumptions we have also used the invariance of the model under simultaneous additive shifts of all the parameters $\sigma_i,\rho_i,\omega_i$.}. Under these restrictions another contour from \cite{BC15b} turns out to be sufficient, we denote it by $\C_\theta$  and it is equal to a circle around $0$ of radius $\theta$. However, numerical computations and related results suggest that a descent contour for $-\Re[h(v)]$ should exist in general: see \cite{BR19} for an example of a significantly more involved argument used to lift the analogous assumption in a different but related setting.

The second step is complicated by potential singularities of the Fredholm determinant along the deformation of $\C, \L$ to $\C_\theta,\L_\theta$. One can hope to estimate the residues at these singularities and show that their contribution asymptotically vanishes, but the required argument is technically challenging. The second part of Assumption \ref{assumptions} allows us to avoid this issue, since for $\theta<\frac{1}{2}$ the contours can be deformed without crossing any singularities of the relevant integrands.

The remainder of Section \ref{limitSect} is devoted to the proof of Theorem \ref{mainBetaResult}. As clarified above, it is enough to prove
\begin{equation}
\label{laplacelimiteq}
\lim_{t\to\infty}\E\left[e^{u^{[t]}\Z_{\floor{xt},\floor{yt}}}\right]=F_{GUE}(r),
\end{equation}
where $u^{[t]}=-\exp(tI-t^{1/3}cr)$. The proof is structured as follows: first we describe the needed properties of the function $h(z)$, as well as specify estimates on convergence of $h^{(t)}(z)$ to $h(z)$. Then we prove various bounds on the kernel $K_u$ from Proposition \ref{fredholm}. Finally, to prove Theorem~\ref{mainBetaResult} we reduce the computation of the limit to a small neighborhood of $\theta$, perform a change of variables to get rid of lower degree terms and use standard algebraic manipulations to establish \eqref{laplacelimiteq}.

\subsection{$h$-functions and descent contours} Here we list the needed properties of the functions $h(z)$ and $h^{[t]}(z)$, which are defined by
\be
h^{[t]}(z)=Iz-\sum_{i=1}^kx\alpha^{[t]}_i\ln\Gamma(z-\sigma_i)+\sum_{i=1}^ky\beta_i^{[t]}\ln\Gamma(z-\rho_i)-\sum_{i=1}^k(y-x)\gamma_i^{[t]}\ln\Gamma(z-\omega_i),
\ee
\be
h(z)=Iz-\sum_{i=1}^kx\alpha_i\ln\Gamma(z-\sigma_i)+\sum_{i=1}^ky\beta_i\ln\Gamma(z-\rho_i)-\sum_{i=1}^k(y-x)\gamma_i\ln\Gamma(z-\omega_i),
\ee
with $x,y,I$ defined in terms of $\theta$ as before.

We start with the properties of $h(z)$. Define the contour $\L_\theta$ as a vertical line passing through $\theta$, oriented upwards:
\be
\L_\theta=\{\theta+b\i\mid b\in \mathbb R\}.
\ee
Under Assumption \ref{assumptions} we also define the contour
\be
\C_\theta=\{\theta e^{i\phi}\mid\phi\in [-\pi,\pi]\}
\ee 
oriented counter-clockwise. See Figure \ref{BigContoursFigure} for a depiction of the contours.

\begin{figure}
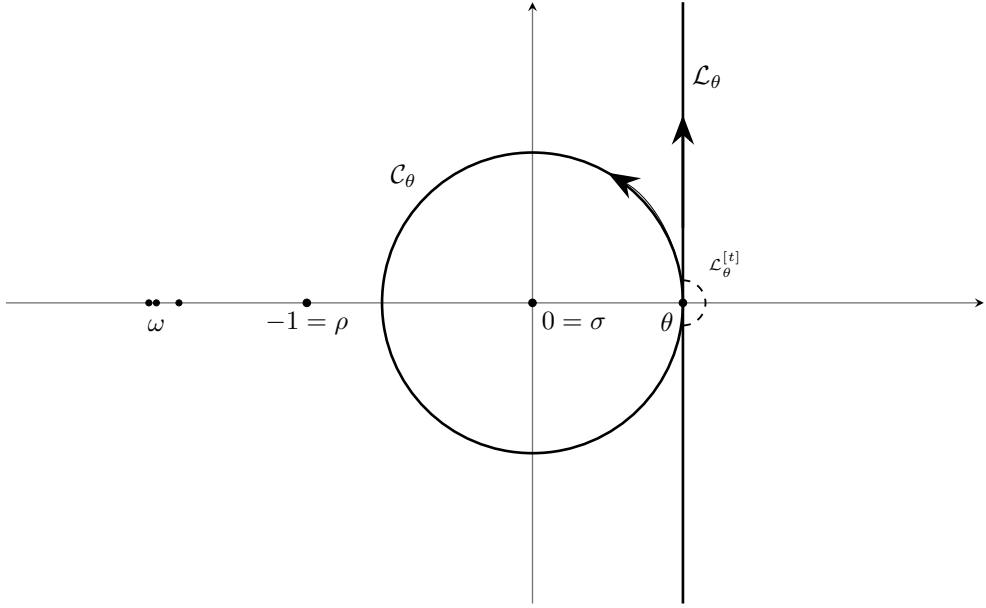

\centerline{
\tikz{1}{

\draw[line width=0.5pt, draw=dgray, arrows={->[scale=4]}] (-7,0) -- (6,0);
\draw[line width=0.5pt, draw=dgray, arrows={->[scale=4]}] (0,-4) -- (0,4);

\draw[fill=black] (0,0) circle (0.05);
\node[below right] at (0,0) {$0=\sigma$};

\draw[fill=black] (-3,0) circle (0.05);
\node[below] at (-3,0) {$-1=\rho$};

\draw[fill=black] (2,0) circle (0.05);
\node[below left] at (2,0) {$\theta$};


\draw[fill=black] (-5,0) circle (0.04);
\draw[fill=black] (-4.7,0) circle (0.04);
\draw[fill=black] (-5.1,0) circle (0.04);
\node[below] at (-5,-0.1) {$\omega$};

\draw[line width=1pt, draw=black] (0,0) circle (2);
\draw[line width=0pt, draw=black, arrows={-Stealth[scale=4]}] (2,0) arc (0:60:2);
\node[above left] at (135:2) {$\C_\theta$};

\draw[line width=0.7pt, dashed,  draw=black] (2,0) +(-90:0.3) arc (-90:45:0.3) node[above right] {\tiny $\L^{[t]}_\theta$} arc (45:90:0.3);

\draw[line width=1pt, arrows={-Stealth[scale=1.5]}] (2,-4) -- (2,2.5);
\draw[line width=1pt] (2,1) -- (2,4);
\node[right] at (2, 3) {$\L_\theta$};
}}
\caption{\label{BigContoursFigure} Contours $\C_\theta$ and $\L_\theta$, as well as the $t$-dependent deformation $\L^{[t]}_\theta$ needed to avoid singularity at $z=\theta$.  }
\end{figure}

The proofs of the following three statements are technical and are omitted from this text, they can be found in Appendix \ref{app:num}. The first statement does not require  Assumption~\ref{assumptions}.

\begin{prop}\label{LcontProp}
$\L_\theta$ is a steep descent contour for $\Re[h(z)]$, that is, $\Re[h(\theta+bi)]$ decreases for positive $b$ and increases for negative $b$.
\end{prop}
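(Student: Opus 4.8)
The plan is to reduce the claim to the behavior of a single one-variable real function and then use the convexity-type properties of polygamma functions established in the parametrization \eqref{thetaparametrizationcollection}. First I would write, for $z = \theta + b\mathbf i$ with $b \in \mathbb R$,
\begin{equation*}
\frac{d}{db}\Re\left[h(\theta + b\mathbf i)\right] = \Re\left[\mathbf i\, h'(\theta + b\mathbf i)\right] = -\Im\left[h'(\theta + b\mathbf i)\right],
\end{equation*}
so that the claim "$\Re[h(\theta + b\mathbf i)]$ decreases for $b>0$ and increases for $b<0$" is equivalent to showing $\Im[h'(\theta + b\mathbf i)] > 0$ for $b > 0$ (and the symmetric statement for $b<0$, which follows by conjugation since $h$ has real coefficients and real shifts). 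Using $h'(z) = I - \sum_i x\alpha_i \Psi(z-\sigma_i) + \sum_i y\beta_i \Psi(z-\rho_i) - \sum_i (y-x)\gamma_i \Psi(z-\omega_i)$, and that $I$ is real, it suffices to control $\Im[\Psi(\theta + b\mathbf i - a)]$ for each real shift $a \in \{\sigma_i, \rho_i, \omega_i\}$.

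The key analytic input is the integral representation $\Psi'(w) = \Psi_1(w) = \int_0^\infty \frac{s e^{-ws}}{1 - e^{-s}}\,ds$ for $\Re[w] > 0$, which I would apply with $w = \theta - a + \xi \mathbf i$ for $\xi$ between $0$ and $b$ (note $\theta - a > 0$ for all relevant $a$, since $\theta > \max_i \sigma_i \geq \rho_i > \omega_i$). Then
\begin{equation*}
\Im\left[\Psi(\theta + b\mathbf i - a)\right] = \int_0^b \Re\left[\Psi_1(\theta - a + \xi\mathbf i)\right]\,d\xi = \int_0^b \left(\int_0^\infty \frac{s e^{-(\theta-a)s}\cos(\xi s)}{1 - e^{-s}}\,ds\right)d\xi.
\end{equation*}
This is not sign-definite pointwise in $\xi$, so the naive bound fails; instead I would assemble the combination appearing in $h'$ and exploit the defining relation for $I$. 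The cleanest route: observe that $\Im[h'(\theta + b\mathbf i)] = -\int_0^b \Re[h''(\theta + \xi\mathbf i)]\,d\xi$ where $h''(z) = -\sum_i x\alpha_i \Psi_1(z-\sigma_i) + \sum_i y\beta_i\Psi_1(z-\rho_i) - \sum_i(y-x)\gamma_i\Psi_1(z-\omega_i)$, so I want $\Re[h''(\theta+\xi\mathbf i)] < 0$ for all $\xi \neq 0$, with $h''(\theta) = 0$ by the critical-point condition (this last vanishing is exactly how $x(\theta), y(\theta)$ were chosen in \eqref{thetaparametrizationcollection}). Writing $h''$ via the integral representation gives
\begin{equation*}
\Re\left[h''(\theta + \xi\mathbf i)\right] = \int_0^\infty \frac{s\cos(\xi s)}{1 - e^{-s}}\left(-\sum_i x\alpha_i e^{-(\theta-\sigma_i)s} + \sum_i y\beta_i e^{-(\theta-\rho_i)s} - \sum_i (y-x)\gamma_i e^{-(\theta-\omega_i)s}\right)ds,
\end{equation*}
and I would denote the bracketed function by $-\Phi(s)$, so $h''(\theta) = -\int_0^\infty \frac{s\,\Phi(s)}{1-e^{-s}}ds = 0$.

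The main obstacle is therefore the following: show that $\int_0^\infty \frac{s\cos(\xi s)}{1-e^{-s}}\Phi(s)\,ds > 0$ for $\xi \neq 0$, given that $\int_0^\infty \frac{s}{1-e^{-s}}\Phi(s)\,ds = 0$. This would follow if $\Phi(s)$ changes sign exactly once on $(0,\infty)$ — say $\Phi(s) > 0$ for small $s$ and $\Phi(s) < 0$ for large $s$ (or vice versa) — because then $\cos(\xi s) - \cos(\xi s_0)$, evaluated at the sign-change point $s_0$, can be combined with the zero-mean condition via a standard single-crossing / Chebyshev-type argument (the measure $\frac{s}{1-e^{-s}}\Phi(s)\,ds$ has total mass zero and $\cos(\xi s)$ is, up to the constant $\cos(\xi s_0)$, of one sign against it weighted appropriately). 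Establishing the single sign change of $\Phi$ is the crux: $\Phi(s)$ is a signed combination of exponentials $e^{-(\theta - c)s}$ with $c$ ranging over $\{\sigma_i\}, \{\rho_i\}, \{\omega_i\}$, and by Descartes/variation-diminishing properties of exponential sums the number of sign changes is bounded by the number of sign changes in the coefficient sequence ordered by exponent; I expect the ordering $\omega_i < \rho_i < \sigma_i < \theta$ together with positivity of $x, y, y-x$ and the frequencies to force exactly one sign change. This reduces to a finite combinatorial check on the exponent ordering, which I would carry out in the appendix (this is presumably where the deferred proof in Appendix \ref{app:num} does the work). The symmetric statement for $b < 0$ is immediate from $\overline{h(\bar z)} = h(z)$, and steepness (strict monotonicity, not just monotonicity) follows since $\Re[h''(\theta + \xi\mathbf i)] < 0$ strictly for $\xi \neq 0$ once the single-crossing of $\Phi$ is known.
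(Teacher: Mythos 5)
Your route — integrating $\Re[h'']$ along the vertical line and reducing to a single-crossing property of an exponential sum — is genuinely different from the paper's, but it has a fatal gap, plus a sign slip. The slip: from $\frac{d}{d\xi}h'(\theta+\xi\i)=\i\, h''(\theta+\xi\i)$ and $h'(\theta)\in\mathbb R$ one gets $\Im[h'(\theta+b\i)]=+\int_0^b\Re[h''(\theta+\xi\i)]\,d\xi$, so you would need $\Re[h'']>0$ off the axis, not $<0$. The gap is the crux itself: your $\Phi(s)$ changes sign exactly \emph{twice}, not once. Ordered by decay rate the exponents satisfy $\theta-\sigma_i<\theta-\rho_j<\theta-\omega_d$ for all indices, and the corresponding coefficients are $+x\alpha_i$, $-y\beta_j$, $+(y-x)\gamma_d$ with $x,y,y-x>0$, so the sign pattern is $+,-,+$ and the variation-diminishing bound gives \emph{at most} two sign changes. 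On the other hand, the choice of $x,y,I$ makes $\theta$ a double critical point, $h'(\theta)=h''(\theta)=0$, which in your Laplace picture gives \emph{two} vanishing moments, $\int_0^\infty\frac{\Phi(s)}{1-e^{-s}}\,ds=\int_0^\infty\frac{s\,\Phi(s)}{1-e^{-s}}\,ds=0$ (the constant terms in $h'(\theta)$ cancel because the coefficients sum to $-x+y-(y-x)=0$); a function with a single crossing at $s_0$ cannot annihilate $\int\frac{(s-s_0)\Phi(s)}{1-e^{-s}}\,ds$. Hence exactly two crossings, the single-crossing/Chebyshev step collapses, and there is no cheap replacement: $\{1,s,\cos(\xi s)\}$ is not a Chebyshev system on $(0,\infty)$, so two prescribed vanishing moments do not pin down the sign of $\int\cos(\xi s)\,d\mu$. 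Note also that sign-definiteness of $\Re[h'']$ on the whole line is strictly stronger than steep descent, which only needs the antiderivative to be signed.

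The paper never differentiates twice. It first uses the convexity decomposition $h=\sum_{i,j,d}\alpha_i\beta_j\gamma_d\,h_{\sigma_i,\rho_j,\omega_d}$ to reduce to a single triple of parameters, then computes $\Re[\i\,h'(\theta+b\i)]$ directly from the series representation of $\Psi$, and reduces the desired sign to monotonicity of the ratio $\Psi_2(z)/\Phi'(z)$ (with the paper's $\Phi(A)=\sum_{n\ge0}((n+A)^2+b^2)^{-1}$, unrelated to your $\Phi$) via Cauchy's mean value theorem, finishing by symmetrizing a double sum. To salvage your approach you would need a genuinely new inequality handling the two-crossing case.
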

\begin{prop}\label{CcontProp}
Under Assumption \ref{assumptions} $\mathcal C_\theta$ is a steep descent contour for $-\Re[h(z)]$, that is, $-\Re[h(\theta e^{\phi \i})]$ decreases for $\phi>0$ and increases for $\phi<0$.
\end{prop}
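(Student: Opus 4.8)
\textbf{Proof proposal for Proposition \ref{CcontProp}.}

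Under Assumption \ref{assumptions} the parameters simplify dramatically: $\sigma_i\equiv 0$, $\rho_i\equiv -1$, and the frequencies $\alpha_i,\beta_j$ become irrelevant since $\sum\alpha_i=\sum\beta_j=1$, so the first two groups of terms collapse to $-xt\ln\Gamma(z)$ and $yt\ln\Gamma(z+1)=yt\ln\Gamma(z)+yt\ln z$. Thus on the contour $\C_\theta=\{\theta e^{\i\phi}\}$ the function $h$ takes the form
\be
h(\theta e^{\i\phi})=I\theta e^{\i\phi}+(y-x)\ln\Gamma(\theta e^{\i\phi})+y\ln(\theta e^{\i\phi})-(y-x)\sum_i\gamma_i\ln\Gamma(\theta e^{\i\phi}-\omega_i).
\ee
The plan is to compute $\frac{d}{d\phi}\big(-\Re[h(\theta e^{\i\phi})]\big)$ and show it has the claimed sign. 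Writing $z=\theta e^{\i\phi}$ so that $\frac{dz}{d\phi}=\i z$, the chain rule gives $\frac{d}{d\phi}\big(-\Re[h(z)]\big)=\Re\big[-\i z\,h'(z)\big]=\Im\big[z\,h'(z)\big]$, so the task reduces to proving that $\Im[z h'(z)]$ has the sign of $-\phi$ on $\C_\theta$ (i.e.\ is negative for $\phi>0$, positive for $\phi<0$), vanishing only at $\phi=0$.

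First I would assemble $h'(z)$ explicitly in terms of digamma functions using the $\theta$-parametrization \eqref{thetaparametrizationcollection}: by construction $\theta$ is the double critical point, so $h'(\theta)=h''(\theta)=0$, which fixes $I$ and pins down the relations between $x,y$ and the $\Psi_1$-values. Concretely $h'(z)=I-(y-x)\Psi(z)-\tfrac{x}{?}\cdots$ — more precisely, after the collapse, $h'(z)=I+(y-x)\Psi(z)+\frac{y}{z}-(y-x)\sum_i\gamma_i\Psi(z-\omega_i)$ with the relations $y-x=(y-x)\sum_i\gamma_i\Psi_1(\theta)\cdot(\dots)$ dictated by $\theta$ being critical. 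The key analytic input is the integral representation $\Psi(z)=-\gamma_{\mathrm{E}}+\int_0^\infty \frac{e^{-s}-e^{-sz}}{1-e^{-s}}\,ds$ for $\Re z>0$, equivalently $\Psi(z)=\int_0^\infty\left(\frac{e^{-s}}{s}-\frac{e^{-zs}}{1-e^{-s}}\right)ds$, which lets one write differences like $\Psi(z)-\Psi(\theta)$ and the critical-point normalization as single integrals over $s\in(0,\infty)$ with a positive kernel. Substituting $z=\theta e^{\i\phi}$ and taking imaginary parts converts $\Im[z h'(z)]$ into an integral of the form $\int_0^\infty K(s)\,\Im\big[\text{(elementary function of }\theta e^{\i\phi}s)\big]\,ds$ with $K(s)>0$; the elementary factor, after using the double-zero normalization to kill the constant and linear-in-$(z-\theta)$ contributions, should be something whose imaginary part is manifestly sign-definite in $\phi$ — this is exactly where the restriction $\theta\in(0,\tfrac12)$ and $\omega_i<-1$ enters, keeping $\theta e^{\i\phi}$ and $\theta e^{\i\phi}-\omega_i$ in the right half-plane and controlling the signs. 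The term $\frac{y}{z}$ contributes $\Im[y]=0$ trivially and $\frac{y}{z}\cdot z = y$ real, so it drops out of $\Im[zh'(z)]$ — a minor simplification to note.

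The main obstacle, as the authors themselves flag in the text, is precisely this sign-definiteness of the $s$-integrand: one must verify that the positive-kernel representation really does force $\Im[zh'(z)]$ to change sign only at $\phi=0$, and this is delicate because $h'$ has a genuine double zero there, so one is extracting the sign of a quantity that vanishes to second order. I would handle it by factoring out the double zero — writing $zh'(z)=(z-\theta)^2 g(z)$ or, better, working with $\int_0^1 (1-u)\,h''(\theta+u(z-\theta))\,du$ — and then showing the remaining factor has positive real part on $\C_\theta\setminus\{\theta\}$ via the integral representation, using convexity/monotonicity of $\Psi_1,\Psi_2$ on the positive axis together with $\theta<\tfrac12$ to control the curvature of the circular contour relative to the (genuinely steep) vertical line. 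Since the statement is asserted to live in Appendix \ref{app:num} and is described as "technical", I would present the reduction to the positive-kernel integral cleanly and then carry out the sign estimate as the computational heart, cross-referencing the analogous computation in \cite{BC15b} where $\omega\equiv -2$ so that only the location of the extra $\Psi(z-\omega_i)$ poles differs.
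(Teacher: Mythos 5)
Your reduction is correct: passing to $\Im[z h'(z)]$ via $\frac{d}{d\phi}(-\Re[h(\theta e^{\i\phi})])=\Im[\theta e^{\i\phi}h'(\theta e^{\i\phi})]$, and the idea of extracting the double zero at $z=\theta$ before arguing sign-definiteness, both match the skeleton of the paper's argument (which works with the series $\Psi(z)-\Psi(\theta)=\sum_{n\ge0}\frac{z-\theta}{(n+\theta)(n+z)}$ rather than your proposed Gauss integral representation, and obtains the clean factorization $h'(z)=\frac{(z-\theta)^2}{z\theta^2}\sum_{n\ge 0}\bigl[\frac{n+1}{(n+\theta+1)^2(n+z+1)}-\frac{n-\omega}{(n+\theta-\omega)^2(n+z-\omega)}\bigr]$ for $h=h_{0,-1,\omega}$). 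However, the heart of the proposition — the actual sign estimate after the double zero is removed — is only announced in your proposal, not carried out, and the sufficient condition you state for it is wrong. If $zh'(z)=(z-\theta)^2g(z)$, then on $\C_\theta$ one has $(z-\theta)^2=-4\theta^2\sin^2(\phi/2)e^{\i\phi}$, so $\Im[zh'(z)]=-4\theta^2\sin^2(\phi/2)\,\Im[e^{\i\phi}g(z)]$; what you need is a definite sign for $\Im[e^{\i\phi}g(z)]$, and $\Re[g(z)]>0$ does not imply this (the cross term $\cos\phi\,\Im g$ is uncontrolled). The paper instead evaluates the imaginary part termwise: each summand $\frac{1}{z+\tau}$ contributes $|1-e^{\i\phi}|^2\frac{\tau\sin\phi}{\tau^2+\theta^2+2\theta\tau\cos\phi}$, and the claim reduces to the monotonicity in $X\ge 1$ of $f(X)=(X^2+\theta^2+2\theta X\cos\phi)(1+\theta/X)^2$, compared at $X_1=n+1$ and $X_2=n-\omega>X_1$. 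This last elementary inequality is precisely where $\omega<-1$ and $\theta<\tfrac12$ enter, and it is the step your proposal defers.

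A smaller issue: you cannot expect the "$\frac{y}{z}$ term drops out" simplification and the double-zero normalization to combine for free; the reason the cofactor of $(z-\theta)^2$ ends up being a sum of terms of the single simple form $\frac{c}{z+\tau}$ with sign-coherent coefficients is a specific algebraic cancellation between the $\Psi_1(\theta+1)-\Psi_1(\theta-\omega)$ prefactor and the $\Psi(z-\omega)-\Psi(z+1)$ difference in $h_{0,-1,\omega}'$. Without exhibiting that cancellation (or an equivalent one in the integral representation, where the kernel would be $\frac{e^{-\theta s}}{1-e^{-s}}$ times a factor that must be checked to be of one sign), the "positive kernel" claim is an assumption rather than a fact. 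So the proposal is a reasonable plan but has a genuine gap at the decisive step.
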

\begin{prop}\label{AcontProp} Suppose that Assumption \ref{assumptions} is satisfied. Let $\psi(\ve)=\arg(v_{\ve}-\theta)$, where $v_{\ve}$ is the intersection point of $\mathcal C_\theta$ with the circle of radius $\ve$ around $\theta$ satisfying $\Im[v_{\ve}]>0$. Then for sufficiently small $\ve$ there exists a constant $b$ such that
\be
\Re\left[h(\theta+\ve e^{\pm i\phi})-h(\theta)\right]>b, \qquad \phi\in[\psi(\ve), 2\pi/3].
\ee
\end{prop}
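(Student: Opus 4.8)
The plan is to reduce everything to a local analysis of $h$ near its double critical point $\theta$. By the parametrization \eqref{thetaparametrizationcollection} one checks directly that $h'(\theta)=h''(\theta)=0$ and $h'''(\theta)=2c^3$, with $c>0$ by Proposition \ref{goodParam}(2); moreover $h$ is analytic on the open disk $\{|z-\theta|<\theta\}$ (under Assumption \ref{assumptions} the nearest singularities of $\ln\Gamma(z-\sigma_i)$, $\ln\Gamma(z-\rho_i)$, $\ln\Gamma(z-\omega_i)$ sit at $0$, $-1$ and $\omega_i<-1$, while $\theta\in(0,\tfrac12)$). Hence, for $|w|\le\theta/4$, Taylor's theorem with remainder yields constants $M_3,M_4$ depending only on the parameters with
\begin{equation*}
\left|\,h(\theta+w)-h(\theta)-\tfrac{c^3}{3}w^3\,\right|\le M_4|w|^4,\qquad \left|\,h'(\theta+w)-c^3w^2\,\right|\le M_3|w|^3 .
\end{equation*}
Since $\theta,\sigma_i,\rho_i,\omega_i,I$ are all real, $h(\overline z)=\overline{h(z)}$, so $\Re[h(\theta+\ve e^{-i\phi})-h(\theta)]=\Re[h(\theta+\ve e^{i\phi})-h(\theta)]$ and it suffices to treat the $+$ sign. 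Writing $\mathcal C_\theta=\{\theta e^{i\alpha}\}$, the intersection point obeys $2\theta\sin(\alpha_\ve/2)=\ve$, hence $\psi(\ve)=\arg(\theta(e^{i\alpha_\ve}-1))=\tfrac\pi2+\tfrac{\alpha_\ve}{2}$, which decreases to $\tfrac\pi2$ as $\ve\to0$ and in particular lies in $(\tfrac\pi2,\tfrac{7\pi}{12})$ for all small $\ve$.

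I would then fix $\phi_0=\tfrac{7\pi}{12}$ and split the arc into $[\phi_0,\tfrac{2\pi}{3}]$ and $[\psi(\ve),\phi_0]$. On the first piece $3\phi\in[\tfrac{7\pi}{4},2\pi]$, so $\cos(3\phi)\ge\tfrac{\sqrt2}{2}$ and the Taylor bound gives $\Re[h(\theta+\ve e^{i\phi})-h(\theta)]\ge\tfrac{\sqrt2}{6}c^3\ve^3-M_4\ve^4>0$ for $\ve$ small. On the second piece the leading cubic term is too small for a direct estimate, so instead I would show that $\phi\mapsto\Re[h(\theta+\ve e^{i\phi})]$ is strictly increasing there: from $\tfrac{\partial}{\partial\phi}\Re[h(\theta+\ve e^{i\phi})]=\Re[\,i\ve e^{i\phi}h'(\theta+\ve e^{i\phi})\,]=-c^3\ve^3\sin(3\phi)+O(\ve^4)$, and since $3\phi\in[\tfrac{3\pi}{2},\tfrac{7\pi}{4}]$ so that $-\sin(3\phi)\ge\tfrac{\sqrt2}{2}$, the derivative is $\ge\tfrac{\sqrt2}{2}c^3\ve^3-M_3\ve^4>0$ for $\ve$ small. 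Consequently, on $[\psi(\ve),\phi_0]$,
\begin{equation*}
\Re[h(\theta+\ve e^{i\phi})-h(\theta)]\ \ge\ \Re[h(\theta+\ve e^{i\psi(\ve)})-h(\theta)]\ =\ \Re[h(v_\ve)-h(\theta)]\ >\ 0,
\end{equation*}
the last step being Proposition \ref{CcontProp}, which asserts that $-\Re[h]$ strictly decreases along $\mathcal C_\theta$ away from $\theta$, so that $\Re[h(v_\ve)]>\Re[h(\theta)]$. Together with the first piece this gives strict positivity of $\Re[h(\theta+\ve e^{\pm i\phi})-h(\theta)]$ on all of $[\psi(\ve),\tfrac{2\pi}{3}]$, and taking $b$ to be the (positive, attained) infimum of this continuous function completes the argument.

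The delicate part is the sub-arc $[\psi(\ve),\phi_0]$ adjacent to $\phi=\tfrac\pi2$: this is exactly the direction in which the cubic term $\tfrac{c^3}{3}\Re[w^3]$ vanishes — the vertical direction, tangent to both $\mathcal C_\theta$ and $\L_\theta$ at $\theta$ — so along it $\Re[h-h(\theta)]$ is only of order $\ve^4$ and no crude Taylor bound is available. The device above, namely propagating positivity from the anchor point $v_\ve\in\mathcal C_\theta$ (where it is guaranteed by the already-established steep-descent property of $\mathcal C_\theta$) by means of monotonicity in the angle $\phi$, resolves this; and the monotonicity is itself clean precisely because $-\sin(3\phi)$ is bounded away from zero on this sub-arc. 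Everything else reduces to routine verification of the Taylor estimates and of the elementary trigonometric inequalities on the stated angular ranges.
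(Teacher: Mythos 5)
Your argument is correct, and it diverges from the paper's proof exactly at the step you single out as delicate. Both proofs split the arc at an intermediate angle, handle the outer piece by the cubic Taylor term alone, and reduce the inner piece (adjacent to the vertical direction $\phi=\pi/2$, where $\Re[(z-\theta)^3]$ degenerates) to monotonicity in $\phi$ plus positivity at the anchor $\phi=\psi(\ve)$. The difference is how positivity at the anchor is obtained. The paper retains the quartic term in the Taylor expansion, shows that the explicit cubic-plus-quartic lower bound is increasing in $\phi$ on $[\pi/2,5\pi/8]$ (this uses $h^{(4)}(\theta)<0$ from Lemma \ref{derivatives}), and must then check positivity of that lower bound at $\psi(\ve)=\tfrac{\pi}{2}+\tfrac{\ve}{2\theta}+O(\ve^2)$, where the cubic and quartic contributions are both of order $\ve^4$ and compete; this reduces to the inequality $h'''(\theta)/\theta+h^{(4)}(\theta)/6>0$, which the paper verifies by a separate polygamma-series computation for $h_{0,-1,\omega}$. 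You instead differentiate the actual function $\phi\mapsto\Re[h(\theta+\ve e^{i\phi})]$, which needs only $h'''(\theta)=2c^3>0$ and a remainder bound on $h'$, and anchor at the true intersection point $v_\ve\in\mathcal C_\theta$, where $\Re[h(v_\ve)-h(\theta)]>0$ is precisely the (strict) steep-descent property already established in Proposition \ref{CcontProp}. This eliminates any need for $h^{(4)}(\theta)$ and the extra inequality, at the harmless cost of making Proposition \ref{AcontProp} depend on Proposition \ref{CcontProp}; there is no circularity, since the latter is proved independently. Two cosmetic points: the reflection $h(\overline z)=\overline{h(z)}$ does justify treating only the $+$ sign, as all parameters are real; and since the conclusion is a strict inequality $>b$, take $b$ to be, say, half the attained minimum rather than the minimum itself.
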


The next statement reiterates the fact that $\theta$ is a critical point of $h(z)$.
\begin{lem}\label{nbhlemma}
For $z$ in a neighborhood of $\theta$ we have
\be
h(z)-h(\theta)=\frac{c^3}{3}(z-\theta)^3+O((z-\theta)^4),
\ee
where $c$ is given by \eqref{thetaparametrizationcollection}. Moreover, $c>0$.
\end{lem}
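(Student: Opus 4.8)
\textbf{Proof plan for Lemma \ref{nbhlemma}.} The plan is to compute the Taylor expansion of $h(z)$ around $z=\theta$ directly and check that the defining equations \eqref{thetaparametrizationcollection} force the first and second derivatives to vanish while pinning down the third derivative to be exactly $2c^3$. First I would recall that $h'(z)=I-\sum_i x\alpha_i\Psi(z-\sigma_i)+\sum_i y\beta_i\Psi(z-\rho_i)-\sum_i(y-x)\gamma_i\Psi(z-\omega_i)$, using $\Psi=\Psi_0$ for the digamma function, and similarly $h^{(m+1)}(z)=-\sum_i x\alpha_i\Psi_m(z-\sigma_i)+\sum_i y\beta_i\Psi_m(z-\rho_i)-\sum_i(y-x)\gamma_i\Psi_m(z-\omega_i)$ for $m\geq 1$, where $\Psi_m$ is the $m$th polygamma function. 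Evaluating at $z=\theta$, the formula for $I=I(\theta)$ in \eqref{thetaparametrizationcollection} is precisely the statement $h'(\theta)=0$.

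Next I would verify $h''(\theta)=0$. We have $h''(\theta)=-x\sum_i\alpha_i\Psi_1(\theta-\sigma_i)+y\sum_i\beta_i\Psi_1(\theta-\rho_i)-(y-x)\sum_i\gamma_i\Psi_1(\theta-\omega_i)$. Substituting the expressions $x=x(\theta)=\sum_i\beta_i\Psi_1(\theta-\rho_i)-\sum_i\gamma_i\Psi_1(\theta-\omega_i)$ and $y=y(\theta)=\sum_i\alpha_i\Psi_1(\theta-\sigma_i)-\sum_i\gamma_i\Psi_1(\theta-\omega_i)$, the combination $-xy+yx-(y-x)\big(\sum_i\gamma_i\Psi_1(\theta-\omega_i)\big)+\cdots$ collapses: writing $A=\sum_i\alpha_i\Psi_1(\theta-\sigma_i)$, $B=\sum_i\beta_i\Psi_1(\theta-\rho_i)$, $C=\sum_i\gamma_i\Psi_1(\theta-\omega_i)$, we have $x=B-C$, $y=A-C$, and $h''(\theta)=-xA+yB-(y-x)C=-(B-C)A+(A-C)B-(A-B)C = -AB+AC+AB-BC-AC+BC=0$. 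Then $h'''(\theta)=-x\sum_i\alpha_i\Psi_2(\theta-\sigma_i)+y\sum_i\beta_i\Psi_2(\theta-\rho_i)-(y-x)\sum_i\gamma_i\Psi_2(\theta-\omega_i)$, which by the definition of $c^3=c(\theta)^3$ in \eqref{thetaparametrizationcollection} equals $2c^3$. Hence the Taylor expansion gives $h(z)-h(\theta)=\tfrac{1}{3!}h'''(\theta)(z-\theta)^3+O((z-\theta)^4)=\tfrac{c^3}{3}(z-\theta)^3+O((z-\theta)^4)$, as claimed. Analyticity of $h$ in a neighborhood of $\theta$ (which holds since $\theta>\max_i\sigma_i>\rho_i>\omega_i$, so all the $\ln\Gamma(z-\cdot)$ are holomorphic near $\theta$) justifies the remainder form.

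Finally, for positivity of $c$ I would invoke Proposition \ref{goodParam}(2), which asserts exactly that $c(\theta)$ can be chosen positive; this is the only nontrivial input and it is proved separately in Appendix \ref{app:num}. Alternatively, one can note directly that $-\Psi_2(w)=\int_0^\infty \frac{s^2 e^{-sw}}{1-e^{-s}}\,ds>0$ for $w>0$, so $c^3=\tfrac{1}{2}\big(x\sum_i\alpha_i(-\Psi_2(\theta-\sigma_i))+(y-x)\sum_i\gamma_i(-\Psi_2(\theta-\omega_i))\big)-\tfrac{1}{2}y\sum_i\beta_i(-\Psi_2(\theta-\rho_i))$ has a sign that must be checked; the cleanest route is to quote Proposition \ref{goodParam}.

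\textbf{Main obstacle.} There is essentially no obstacle here — the lemma is a bookkeeping computation once the algebraic cancellation in $h''(\theta)$ is spotted, and the positivity of $c$ is delegated to Proposition \ref{goodParam}. The only mild subtlety is making sure the expansion is genuinely a convergent Taylor expansion rather than merely asymptotic, which follows from holomorphy of $h$ near $\theta$ under the standing ordering $\omega_d<\rho_j<\sigma_i<\theta$.
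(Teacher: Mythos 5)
Your proof is correct and follows exactly the route the paper takes: Taylor expansion at $\theta$, with $h'(\theta)=0$ coming from the definition of $I(\theta)$, $h''(\theta)=0$ from the algebraic cancellation $-xA+yB-(y-x)C=0$ with $x=B-C$, $y=A-C$, the identification $h'''(\theta)=2c^3$, and positivity of $c$ delegated to Proposition \ref{goodParam} (whose proof in Appendix \ref{app:num} in turn rests on Lemma \ref{derivatives}). The paper states this proof in one line; your write-up merely supplies the bookkeeping.
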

\begin{proof} Readily follows by considering the Taylor expansion of $h(z)$, taking derivatives and using the definitions of $x(\theta), y(\theta), I(\theta)$ and $c(\theta)$. The positivity of $c$ follows from Proposition \ref{goodParam}.
\end{proof}

Finally, we need the following statements regarding the convergence of $h^{[t]}(z)$ to $h(z)$ in various situations.

\begin{lem}
\label{htsmallbound}
Assume that a sequence of points $z_t\in\mathbb C$ converges to $\theta$ in a way that $h^{[t]}(z_t)$ is well defined for all $t>0$. Then
\be
\lim_{t\to\infty} t\(h^{[t]}(z_t)-h^{[t]}(\theta)\) = \lim_{t\to\infty} t\(h(z_t)-h(\theta)\),
\ee
assuming that the right-hand side exists.
\end{lem}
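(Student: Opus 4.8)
The plan is to isolate the difference $g_t(z):=h^{[t]}(z)-h(z)$ and to prove that $t\bigl(g_t(z_t)-g_t(\theta)\bigr)\to 0$. Granting this, the lemma follows immediately by writing
\[
t\bigl(h^{[t]}(z_t)-h^{[t]}(\theta)\bigr)=t\bigl(h(z_t)-h(\theta)\bigr)+t\bigl(g_t(z_t)-g_t(\theta)\bigr),
\]
and observing that the second summand vanishes in the limit while the first converges by hypothesis, so the two limits agree.

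First I would note that the constants $x,y,I$ (all functions of the fixed $\theta$) are the same in $h^{[t]}$ and in $h$, so the linear term $Iz$ cancels and
\[
g_t(z)=\sum_{i=1}^k\Bigl(-x(\alpha_i^{[t]}-\alpha_i)\ln\Gamma(z-\sigma_i)+y(\beta_i^{[t]}-\beta_i)\ln\Gamma(z-\rho_i)-(y-x)(\gamma_i^{[t]}-\gamma_i)\ln\Gamma(z-\omega_i)\Bigr).
\]
By the assumed rates $\alpha_i^{[t]}-\alpha_i,\ \beta_i^{[t]}-\beta_i,\ \gamma_i^{[t]}-\gamma_i=O(t^{-1})$, every coefficient above is $O(t^{-1})$; thus $g_t=\sum_m c_m^{[t]}\phi_m$ for a fixed finite family of functions $\phi_m\in\{\ln\Gamma(z-\sigma_i),\ \ln\Gamma(z-\rho_i),\ \ln\Gamma(z-\omega_i)\}$ and scalars $c_m^{[t]}=O(t^{-1})$.

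Next, since $\theta>\max_i\sigma_i$ and $\rho_i,\omega_i<\theta$ for all $i$, each of $\theta-\sigma_i,\ \theta-\rho_i,\ \theta-\omega_i$ is strictly positive, hence bounded away from the poles of $\Gamma$; therefore there is a closed disc $U$ centred at $\theta$ on which every $\phi_m$ is holomorphic and its derivative $\phi_m'$ is bounded, say by $M$. For $t$ large enough $z_t\in U$, and then the whole segment $[\theta,z_t]$ lies in the convex set $U$; using $g_t(z_t)-g_t(\theta)=(z_t-\theta)\int_0^1 g_t'\bigl(\theta+s(z_t-\theta)\bigr)\,ds$ together with $|g_t'|\le M\sum_m|c_m^{[t]}|=O(t^{-1})$ on $U$, we get $|g_t(z_t)-g_t(\theta)|\le C\,t^{-1}|z_t-\theta|$ for some constant $C$, whence $t\,|g_t(z_t)-g_t(\theta)|\le C\,|z_t-\theta|\to 0$ because $z_t\to\theta$. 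This completes the argument.

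I do not expect any genuine obstacle here; the one place demanding a word of care is the uniform bound on the derivatives $\phi_m'$ in a fixed neighbourhood of $\theta$, and this is handled precisely by the observation that, under the ordering $\omega_i<\rho_i<\sigma_i<\theta$, the point $\theta$ (and all nearby points) stays at positive distance from the poles of the Gamma function, so the relevant digamma values are bounded on a disc around $\theta$.
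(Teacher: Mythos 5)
Your proof is correct and follows essentially the same route as the paper: both isolate the difference $h^{[t]}-h$, use that the $t$-scaled coefficients $t(\alpha_i^{[t]}-\alpha_i)$, etc., stay bounded, and that $\ln\Gamma(\cdot-\sigma_i)$, $\ln\Gamma(\cdot-\rho_i)$, $\ln\Gamma(\cdot-\omega_i)$ are regular near $\theta$ since $\theta$ exceeds all the parameters. The only (harmless) difference is that you derive the quantitative bound $O(t^{-1}|z_t-\theta|)$ via a derivative estimate on a disc, whereas the paper simply invokes continuity of $\ln\Gamma$ at $\theta$.
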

\begin{proof} 
Note that
\begin{multline*}
th^{[t]}(z)-th(z)=-x\sum_{i=1}^k t(\alpha_i^{[t]}-\alpha_i)\ln\Gamma(z-\sigma_i)\\
+y\sum_{i=1}^kt(\beta_i^{[t]}-\beta_i)\ln\Gamma(z-\rho_i)-(y-x)\sum_{i=1}^kt(\gamma_i^{[t]}-\gamma_i)\ln\Gamma(z-\omega_i)
\end{multline*}
Since $\alpha^{[t]}_i-\alpha_i=O(1/t)$ and $\ln\Gamma(z-\sigma_i)$ is continuous around $\theta$, we have
\be
\lim_{t\to \infty} t(\alpha_i^{[t]}-\alpha_i)\(\ln\Gamma(z_t-\sigma_i)-\ln\Gamma(\theta-\sigma_i)\)=0.
\ee
Taking sum over $i$ and repeating the same argument for $\beta_i,\beta_i^{[t]},\rho_i$ and $\gamma_i,\gamma_i^{[t]},\omega_i$ we obtain
\be
\lim_{t\to\infty} t\(h^{[t]}(z_t)-h(z_t)\)- t\(h^{[t]}(\theta)-h(\theta)\)=0,
\ee
which implies the claim.
\end{proof}
\begin{lem}
\label{htbound}
Let $K\subset \mathbb C$ be a compact subset not containing any of the parameters $\sigma_i,\rho_i,\omega_i$. Then for any $z\in K$ and $t>0$ we have
\be
|th^{[t]}(z)-th(z)|<\Lambda
\ee
for some constant $\Lambda>0$ depending only on the compact set $K$ but not on $t$.
\end{lem}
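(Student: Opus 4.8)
\textbf{Proof plan for Lemma \ref{htbound}.} The plan is to write $th^{[t]}(z)-th(z)$ explicitly as a finite sum of terms of the form $(\text{coefficient})\cdot\ln\Gamma(z-a)$, where the coefficient is a difference of the form $t(\alpha_i^{[t]}-\alpha_i)$, $t(\beta_i^{[t]}-\beta_i)$ or $t(\gamma_i^{[t]}-\gamma_i)$ (up to the fixed prefactors $-x$, $y$, $-(y-x)$), and then bound each piece separately. Precisely, from the definitions of $h^{[t]}$ and $h$,
\begin{multline*}
th^{[t]}(z)-th(z)=-x\sum_{i=1}^k t(\alpha_i^{[t]}-\alpha_i)\ln\Gamma(z-\sigma_i)\\
+y\sum_{i=1}^k t(\beta_i^{[t]}-\beta_i)\ln\Gamma(z-\rho_i)-(y-x)\sum_{i=1}^k t(\gamma_i^{[t]}-\gamma_i)\ln\Gamma(z-\omega_i),
\end{multline*}
the only $t$-dependence on the right-hand side being through the frequency differences and (implicitly) through the branch of $\ln\Gamma$, which we fix as the principal branch throughout.

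First I would use the hypotheses $\alpha_i^{[t]}-\alpha_i=O(t^{-1})$, $\beta_i^{[t]}-\beta_i=O(t^{-1})$, $\gamma_i^{[t]}-\gamma_i=O(t^{-1})$: by definition of the $O$-notation there is a constant $M>0$, independent of $t$ (one may take $t$ ranging over $t\geq 1$, or over the relevant sequence $t\to\infty$; for small $t$ there are only finitely many values and each term is finite, so enlarging $M$ handles them), such that $t|\alpha_i^{[t]}-\alpha_i|\leq M$, $t|\beta_i^{[t]}-\beta_i|\leq M$, $t|\gamma_i^{[t]}-\gamma_i|\leq M$ for all $i$ and all $t$. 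Next, since $K$ is compact and contains none of the points $\sigma_i,\rho_i,\omega_i$, the function $z\mapsto\ln\Gamma(z-a)$ is continuous (indeed holomorphic) on a neighborhood of $K$ for each $a\in\{\sigma_i,\rho_i,\omega_i\}$, hence bounded on $K$: set $L=\max_i\sup_{z\in K}\bigl(|\ln\Gamma(z-\sigma_i)|+|\ln\Gamma(z-\rho_i)|+|\ln\Gamma(z-\omega_i)|\bigr)$, a finite constant depending only on $K$ and the fixed parameters. Combining the two bounds via the triangle inequality gives
\be
|th^{[t]}(z)-th(z)|\leq (|x|+|y|+|y-x|)\,k\,M\,L=:\Lambda
\ee
for all $z\in K$ and all $t$, which is the desired estimate with $\Lambda$ independent of $t$.

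There is no real obstacle here: the statement is a direct consequence of the $O(t^{-1})$ control on the frequencies and the local boundedness of $\ln\Gamma$ away from its poles. The only points requiring a modicum of care are (i) fixing once and for all the branch of $\ln\Gamma$ so that the expression above is unambiguous on $K$ — this is fine because $K$ avoids the poles of $\Gamma$ and may be taken simply connected, or one simply works with $|\ln\Gamma(z-a)|$ which is branch-independent up to bounded additive $2\pi\i$ shifts that are themselves bounded on compact sets; and (ii) absorbing the finitely many small values of $t$ into the constant $\Lambda$, which is harmless. I would present the argument in the three short steps above: expand the difference, bound the frequency coefficients by $M/t\cdot t=M$, bound the $\ln\Gamma$ factors by $L$ on $K$, and multiply out.
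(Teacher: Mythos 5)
Your proof is correct and follows essentially the same route as the paper's: expand $th^{[t]}(z)-th(z)$ as a sum of $\ln\Gamma$ terms weighted by the frequency differences, bound $t(\alpha_i^{[t]}-\alpha_i)$ etc.\ by a uniform constant from the $O(t^{-1})$ hypothesis, and bound $|\ln\Gamma(z-a)|$ on the compact set $K$ away from the poles. The extra remarks about branch choice and small $t$ are harmless refinements of the same argument.
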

\begin{proof}
Following the lines of the proof of Lemma \ref{htsmallbound}, since $\alpha^{[t]}_i-\alpha_i=O(1/t), \beta^{[t]}_i-\beta_i=O(1/t), \gamma^{[t]}_i-\gamma_i=O(1/t)$ there exists a constant $\Lambda_1>0$ such that
\be
t(\alpha^{[t]}_i-\alpha_i), t(\beta^{[t]}_i-\beta_i), t(\gamma^{[t]}_i-\gamma_i)<\Lambda_1, \qquad i=1,\dots, k.
\ee
At the same time, since $K$ is bounded away from $\sigma_i,\rho_i,\omega_i$, there exists another constant $\Lambda_2>0$ such that
\be
|\ln\Gamma(z-\sigma_i)|,|\ln\Gamma(z-\rho_i)|,|\ln\Gamma(z-\omega_i)|<\Lambda_2,  \qquad z\in K,\  i=1,\dots, k.
\ee
Combining these upper bounds concludes the proof.
\end{proof}

\begin{lem}
\label{htbigbound}
For sufficiently large $z$ such that $\arg(z)\in [-\pi+\delta,\pi-\delta]$ for a fixed $\delta$ we have
\be
\exp(th^{[t]}(z))=\exp(th(z))\frac{z^{-\Delta^{[t]}}}{\Gamma(z)}(1+O\(z^{-1}\)),
\ee
where the constants of $O(z^{-1})$ can be chosen independently of $t$, and $\Delta^{[t]}$ is given by
\be
\Delta^{[t]}=-\sum_i xt(\alpha_i^{[t]}-\alpha_i)\sigma_i+\sum_iyt(\beta_i^{[t]}-\beta_i)\rho_i-\sum_it(y-x)(\gamma_i^{[t]}-\gamma_i)\omega_i.
\ee
\end{lem}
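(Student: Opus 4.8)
The plan is to subtract $t h(z)$ from $t h^{[t]}(z)$, analyze the difference on its own, and recover the factor $z^{-\Delta^{[t]}}/\Gamma(z)$ from Stirling's asymptotics. Since $h^{[t]}$ and $h$ share the same linear term $Iz$ and the same constants $x=x(\theta)$, $y=y(\theta)$, the $Iz$ contributions cancel and
\begin{equation*}
t h^{[t]}(z)-t h(z)=-\sum_{i=1}^k a_i\ln\Gamma(z-\sigma_i)+\sum_{i=1}^k b_i\ln\Gamma(z-\rho_i)-\sum_{i=1}^k c_i\ln\Gamma(z-\omega_i),
\end{equation*}
where $a_i:=xt(\alpha_i^{[t]}-\alpha_i)$, $b_i:=yt(\beta_i^{[t]}-\beta_i)$, $c_i:=(y-x)t(\gamma_i^{[t]}-\gamma_i)$. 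The assumptions $\alpha_i^{[t]}-\alpha_i=O(t^{-1})$, $\beta_i^{[t]}-\beta_i=O(t^{-1})$, $\gamma_i^{[t]}-\gamma_i=O(t^{-1})$ make $a_i,b_i,c_i$ bounded uniformly in $t$; this uniformity is what ultimately keeps all the error terms below independent of $t$.

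First I would record the counting identities coming from the construction of the model: there are $\lfloor xt\rfloor+1$ column parameters, $\lfloor yt\rfloor$ row parameters and $\lfloor yt\rfloor-\lfloor xt\rfloor$ diagonal parameters, so, using $\sum_i\alpha_i=\sum_i\beta_i=\sum_i\gamma_i=1$,
\begin{equation*}
\sum_i a_i=\lfloor xt\rfloor+1-xt,\qquad \sum_i b_i=\lfloor yt\rfloor-yt,\qquad \sum_i c_i=(\lfloor yt\rfloor-\lfloor xt\rfloor)-(y-x)t.
\end{equation*}
These give $-\sum_i a_i+\sum_i b_i-\sum_i c_i=-1$, while $\sum_i a_i\sigma_i-\sum_i b_i\rho_i+\sum_i c_i\omega_i=-\Delta^{[t]}$ is merely the definition of $\Delta^{[t]}$.

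Then I would insert the uniform Stirling expansion $\ln\Gamma(z-a)=\bigl((z-\tfrac12)\ln z-z+\tfrac12\ln(2\pi)\bigr)-a\ln z+O(z^{-1})$, valid for $\arg(z)\in[-\pi+\delta,\pi-\delta]$ and $|z|\to\infty$, uniformly over $a$ in the finite set $\{\sigma_i,\rho_i,\omega_i\}$ (this is \eqref{Gammaass} and the remark following it in the proof of Lemma \ref{assymptg}). Collecting the universal term $(z-\tfrac12)\ln z-z+\tfrac12\ln(2\pi)$ and the $\ln z$-linear term, and using the two identities above together with the boundedness of $a_i,b_i,c_i$ to control the accumulated $O(z^{-1})$ uniformly in $t$, one obtains
\begin{equation*}
t h^{[t]}(z)-t h(z)=-\bigl((z-\tfrac12)\ln z-z+\tfrac12\ln(2\pi)\bigr)-\Delta^{[t]}\ln z+O(z^{-1}).
\end{equation*}
Applying $(z-\tfrac12)\ln z-z+\tfrac12\ln(2\pi)=\ln\Gamma(z)+O(z^{-1})$ once more and exponentiating, with $e^{O(z^{-1})}=1+O(z^{-1})$, yields the claim; the $O(z^{-1})$ constants depend only on $\delta$ and on uniform bounds for $a_i,b_i,c_i,\Delta^{[t]}$, hence not on $t$.

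The argument is routine Stirling bookkeeping and I do not expect a genuine obstacle; the only point deserving attention is the exact cancellation $-\sum_i a_i+\sum_i b_i-\sum_i c_i=-1$, which is precisely what produces the single factor $\Gamma(z)^{-1}$ in the statement, together with the discipline of keeping every estimate independent of $t$.
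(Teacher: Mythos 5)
Your proposal is correct and follows essentially the same route as the paper: subtract $th(z)$ from $th^{[t]}(z)$, use the counting identities $\sum_i xt\alpha_i^{[t]}=\lfloor xt\rfloor+1$, $\sum_i yt\beta_i^{[t]}=\lfloor yt\rfloor$, $\sum_i (y-x)t\gamma_i^{[t]}=\lfloor yt\rfloor-\lfloor xt\rfloor$ to obtain the cancellation $-\sum_i a_i+\sum_i b_i-\sum_i c_i=-1$, and then apply the uniform Stirling expansion to extract $-\ln\Gamma(z)-\Delta^{[t]}\ln z+O(z^{-1})$. The sign bookkeeping for $\Delta^{[t]}$ and the uniformity-in-$t$ justification are both handled correctly.
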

Note that, due to our conventions on $\alpha_i^{[t]}, \beta_i^{[t]}$ and $\gamma_i^{[t]}$, we have a uniform in $t$ bound $|\Delta^{[t]}|<\Lambda$ for a constant $\Lambda>0$ independent of $t$.
\begin{proof}
By taking logarithm the claim can be rephrased as
\begin{multline*}
-\sum_i xt(\alpha_i^{[t]}-\alpha_i)\ln\Gamma(z-\sigma_i)+\sum_iyt(\beta_i^{[t]}-\beta_i)\ln\Gamma(z-\rho_i)-\sum_it(y-x)(\gamma_i^{[t]}-\gamma_i)\ln\Gamma(z-\omega_i)\\
=-\Delta^{[t]}\ln(z)-\ln\Gamma(z)+O(z^{-1}).
\end{multline*}
Now we can use the asymptotic expansion for Gamma function, used before in Lemma \ref{assymptg}:  with the same restrictions on $z$ as in the claim and any fixed $a\in\mathbb C$ we have
\be
\ln\Gamma(z-a)=\(z-a-\frac{1}{2}\)\ln z -z + \frac{1}{2}\ln(2\pi)+O(z^{-1}).
\ee
Recall that $\sum_{i=1}^k\alpha_i=\sum_{i=1}^k\beta_i=\sum_{i=1}^k\gamma_i=1$, while for $\alpha^{[t]}_i,\beta^{[t]}_i,\gamma^{[t]}_i$ we have
\be
\sum_{i=1}^k xt\alpha^{[t]}_i=\floor{xt}+1,\quad \sum_{i=1}^k yt\beta^{[t]}_i=\floor{yt}, \quad \sum_{i=1}^k (y-x)t\gamma^{[t]}_i=\floor{yt}-\floor{xt},
\ee
with $1$ in the first equality coming from having $\floor{xt}+1$ column parameters $\{\widetilde{\sigma}_i\}_{i=0,\dots, \floor{xt}}$. Thus
\be
-\sum_{i=1}^k tx(\alpha_i^{[t]}-\alpha_i)+\sum_{i=1}^kty(\beta_i^{[t]}-\beta_i)-\sum_{i=1}^kt(y-x)(\gamma_i^{[t]}-\gamma_i)=-1.
\ee
Combining the identity above with the asymptotic expansion for the Gamma function, we obtain
\begin{multline*}
-\sum_i xt(\alpha_i^{[t]}-\alpha_i)\ln\Gamma(z-\sigma_i)+\sum_iyt(\beta_i^{[t]}-\beta_i)\ln\Gamma(z-\rho_i)-\sum_it(y-x)(\gamma_i^{[t]}-\gamma_i)\ln\Gamma(z-\omega_i)\\
=\(-z-\Delta^{[t]}+\frac{1}{2}\)\ln z + z -\frac{1}{2}\ln(2\pi)+2y\Lambda_1O(z^{-1}), 
\end{multline*}
where $\Lambda_1$ is the same constant as in the proof of Lemma \ref{htbound}. Applying the asymptotic expansion for the Gamma function to the right-hand side implies the claim.
\end{proof}

\subsection{Kernel estimates}
Recall that the substitution of $u=-\exp(tI-t^{1/3}cr)$ into the integral kernel from Proposition \ref{fredholm} leads to the following kernel:
\be
K^{[t]}(v,v')=\frac{1}{2\pi\i}\int_{\mathcal L_\theta}\frac{-\pi}{\sin(\pi (z-v))}\exp\(t(h^{[t]}(z)-h^{[t]}(v))- crt^{1/3}(z-v)\)\frac{dz}{z-v'}.
\ee
For $v,v'=\theta$ we interpret the integral above using its principal value as the contour $\mathcal L_\theta$ passes to the right of $\theta$. In other words, we slightly deform the contour $\L_\theta$  to an upwards oriented contour $\L_\theta^{[t]}$ consisting of vertical lines $\{\theta\pm Y\i\mid Y\in[t^{-1/3},\infty)\}$ and a semicircle $\{\theta+t^{-1/3}e^{\phi\i}\mid \phi\in [-\pi,\pi]\}$, see Figure \ref{BigContoursFigure}.

The aim of this section is to give asymptotical bounds on behavior of the kernel. We use the following notation. For $\ve>0$ let $\L_{\theta,\ve}^{[t]}$ and $\C_{\theta,\ve}$ denote the parts of the contours $\L_{\theta}^{[t]}$ and $\C_\theta$ contained inside the $\ve$-neighborhood of $\theta$. Define the kernel $K_{\ve}^{[t]}$ by
\be
K_{\ve}^{[t]}(v,v')=\frac{1}{2\pi\i}\int_{\mathcal L_{\theta,\ve}^{[t]}}\frac{-\pi}{\sin(\pi (z-v))}\exp\(t(h^{[t]}(z)-h^{[t]}(v))- crt^{1/3}(z-v)\)\frac{dz}{z-v'}.
\ee

Finally, define $\mathcal{K}(\ve)=\D_\ve\cup\mathcal A_\ve\cup (\C_\theta\backslash \C_{\theta,\ve})$ as a contour consisting of the upward-oriented wedge
\be
\D_\ve=\{\theta+Re^{\pm 2\pi\i/3}\mid R\in [0,\ve]\}, 
\ee
a part of the contour $\C_\theta$ outside of the $\ve$-neighborhood of $\theta$, which we denote as $\C_{\theta,\geq \ve}$, and a couple of upward-oriented arcs 
\be
\mathcal A_\ve=\{\theta+\ve e^{\phi\i}\mid\phi\in [\psi(\ve), 2\pi/3]\},
\ee
where $\psi(\ve)$ is chosen so that the result is connected. See Figure \ref{SmallContoursFigure} for a depiction of the contour $\mathcal{K}(\ve)$.

For the following statements we fix $\ve>0$.

\begin{lem} Let $v,v'\in \K(\ve)$ be a pair of points such that $\Re[h(v)-h(\theta)]\geq b$ for some fixed $b\geq 0$. Then for $t>\ve^{-3}$ we have
\label{Kreducelem}
\be
|K^{[t]}(v,v')-K^{[t]}_\ve(v,v')|<\Lambda e^{-t(a+b)}, 
\ee
where the constants $\Lambda, a>0$ depend only on $\ve$ and the parameters of the model.
\end{lem}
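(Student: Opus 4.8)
The plan is to bound the difference $K^{[t]}(v,v') - K^{[t]}_\ve(v,v')$, which by definition equals the contour integral over the ``tail'' $\L^{[t]}_\theta \setminus \L^{[t]}_{\theta,\ve}$, i.e. the two vertical rays $\{\theta \pm Y\i \mid Y \geq \ve\}$. On these tails the integrand is
\be
\frac{-\pi}{\sin(\pi(z-v))}\exp\(t(h^{[t]}(z)-h^{[t]}(v)) - crt^{1/3}(z-v)\)\frac{1}{z-v'},
\ee
so the estimate reduces to three separate bounds: on $\bigl|\frac{\pi}{\sin(\pi(z-v))}\bigr|$, on $\bigl|\frac{1}{z-v'}\bigr|$, and, crucially, on $\Re[h^{[t]}(z) - h^{[t]}(v)]$ for $z$ on the tails and $v \in \K(\ve)$.

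First I would handle the two easy factors. Since $v \in \K(\ve)$ stays in a fixed compact set bounded away from $\L_\theta$ (here one uses $\theta < \tfrac12$, which keeps $\C_\theta$ and the wedge $\D_\ve$ on one side of $\L_\theta$), the distance from $z$ on the tail to $v$ and to $v'$ is bounded below, so $\bigl|\frac{1}{z-v'}\bigr|$ is uniformly bounded and $\bigl|\frac{\pi}{\sin(\pi(z-v))}\bigr| \leq \Lambda_3 e^{-\pi|\Im(z)|}$ decays exponentially in $\Im(z) = \pm Y$, exactly as in the proof of Proposition~\ref{MB}. The linear term $e^{-crt^{1/3}(z-v)}$ has modulus $e^{-crt^{1/3}(\theta - \Re v)}$, which is bounded by a constant of the form $e^{C t^{1/3}}$ since $\Re v$ ranges over a compact set; this is negligible against the exponential gain $e^{-ta}$ we will extract from $h$. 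For the $h$-term I would first replace $h^{[t]}$ by $h$ using Lemma~\ref{htbound} on the compact piece (contributing $v$) and a version of Lemma~\ref{htbigbound} on the tail (contributing $z$): $\Re[th^{[t]}(z)] = \Re[th(z)] - \Delta^{[t]}\ln|z| + O(|z|^{-1}\cdot t)$, wait — more carefully, $t h^{[t]}(z) - t h(z)$ is $-\Delta^{[t]}\ln z - \ln\Gamma(z) + O(z^{-1})$ with $t$-independent constants, and $|\Delta^{[t]}|$ is bounded; the $\ln\Gamma(z)$ correction only helps decay on the tail, so it can be absorbed.

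Then the core estimate is: $\Re[h(z) - h(\theta)] \leq -a'$ for $z$ on the tails, and $\Re[h(v) - h(\theta)] \geq b$ by hypothesis, hence
\be
\Re[h(z) - h(v)] = \Re[h(z)-h(\theta)] - \Re[h(v)-h(\theta)] \leq -a' - b,
\ee
giving $|\exp(t(h(z)-h(v)))| \leq e^{-t(a'+b)}$. The bound $\Re[h(z)-h(\theta)] \leq -a'$ for $z$ on the tail rays with $|\Im z| \geq \ve$ follows from Proposition~\ref{LcontProp} (steep descent of $\Re[h]$ along $\L_\theta$): since $\Re[h(\theta + b\i)]$ is strictly decreasing for $b > 0$ and strictly increasing for $b < 0$, its value at $|b| = \ve$ is strictly less than $\Re[h(\theta)]$, and it only decreases further along the tail; set $a' = \Re[h(\theta)] - \Re[h(\theta + \ve\i)] > 0$, which depends only on $\ve$ and the model parameters. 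Combining, the tail integral is bounded by $\int_\ve^\infty \Lambda_3 e^{-\pi Y} \Lambda_4 \, e^{-t(a'+b)} e^{Ct^{1/3}} \, dY \leq \Lambda e^{-t(a+b)}$ for $t > \ve^{-3}$ (so that $Ct^{1/3} \leq \tfrac{a'}{2} t$, allowing us to take $a = a'/2$), which is the claim. The main obstacle is the bookkeeping in passing from $h^{[t]}$ to $h$ on the unbounded tail while keeping all constants independent of $t$: one must verify that the $t$-dependence introduced by $\Delta^{[t]}$ and by the $O(z^{-1})$ errors in the Gamma asymptotics (which get multiplied by $t$) does not overwhelm the $e^{-ta'}$ gain — this is exactly why the semicircle radius $t^{-1/3}$ and the threshold $t > \ve^{-3}$ appear, and why the factor $\ln\Gamma(z)$ on the tail (which provides additional super-exponential decay in $|z|$) is needed to kill the $-\Delta^{[t]}\ln|z|$ growth uniformly in $t$.
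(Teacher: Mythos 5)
Your proposal follows essentially the same route as the paper: the difference is the integral over the two vertical tails $\{\theta\pm Y\i\mid Y\geq\ve\}$, the factor $\pi/\sin(\pi(z-v))$ supplies $e^{-\pi|\Im z|}$ decay because $\K(\ve)$ is bounded away from the shifted tails, the steep descent of Proposition \ref{LcontProp} combined with the hypothesis $\Re[h(v)-h(\theta)]\geq b$ yields $\Re[h(z)-h(v)]\leq -2a-b$, the passage from $h$ to $h^{[t]}$ is handled by Lemma \ref{htbound} on a compact piece and Lemma \ref{htbigbound} for $|z|$ large, and the $O(t^{1/3})$ contribution of the term $crt^{1/3}(z-v)$ is absorbed by halving $a$. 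This is exactly the paper's argument.

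One inaccuracy to fix: you assert that the correction factor $z^{-\Delta^{[t]}}/\Gamma(z)$ from Lemma \ref{htbigbound} ``provides additional super-exponential decay'' on the tail and can simply be absorbed into a constant $\Lambda_4$. That is the behaviour of $1/\Gamma$ along the real axis, but on the vertical line $z=\theta+Y\i$ one has $|\Gamma(z)|\sim e^{-\pi|Y|/2}|Y|^{\theta-1/2}\sqrt{2\pi}$, so $1/|\Gamma(z)|$ \emph{grows} exponentially in $|\Im z|$, and your final integral bound with a constant $\Lambda_4$ is not justified as written. The estimate still closes because this growth is at rate $e^{\pi|\Im z|/2}$ (the paper allows itself $e^{2\pi|\Im z|/3}$ for slack), which is strictly beaten by the $e^{-\pi|\Im z|}$ from the sine factor, leaving an integrable $e^{-\pi|\Im z|/3}$; this is precisely how the paper assembles the large-$|z|$ part of the bound, and your write-up should carry that exponential factor through rather than suppress it.
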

\begin{lem} Let $v,v'\in \K(\ve)$ be a pair of points such that $\Re[h(v)-h(\theta)]\geq b$ for a fixed $b\geq 0$. Then for $t>\ve^{-3}$ we have
\label{Kvboundlem}
\be
|K^{[t]}(v,v')|<\Lambda t^{1/3} e^{-bt},
\ee
where $\Lambda>0$ is a constant depending only on $\ve$ and $b$, but not on $v,v'$. In particular, if $b> 0$ we also have
\be
|K^{[t]}(v,v')|<\Lambda' e^{-bt/2}
\ee
for some other constant $\Lambda'>0$ depending on $\ve$ and $b$.
\end{lem}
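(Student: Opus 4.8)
The plan is to first localize the $z$-integral to an $\ve$-neighbourhood of the critical point $\theta$ via Lemma~\ref{Kreducelem}, and then bound the localized kernel $K^{[t]}_\ve$ directly. Since $a>0$ in Lemma~\ref{Kreducelem}, the difference $K^{[t]}-K^{[t]}_\ve$ is $O(e^{-t(a+b)})=O(e^{-tb})$, which is absorbed into the claimed bound, so it suffices to estimate $K^{[t]}_\ve(v,v')$ by controlling the three factors in its integrand along $\L^{[t]}_{\theta,\ve}$.

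For the exponential factor I would write the exponent as
\[
t\bigl(h^{[t]}(z)-h^{[t]}(v)\bigr)-crt^{1/3}(z-v)=\bigl(th^{[t]}(z)-crt^{1/3}z\bigr)-\bigl(th^{[t]}(v)-crt^{1/3}v\bigr).
\]
On $\L^{[t]}_{\theta,\ve}$ one has $\Re z=\theta$ on the vertical part and $|z-\theta|=t^{-1/3}$ on the small semicircle, so Proposition~\ref{LcontProp} (steep descent of $\L_\theta$ for $\Re h$), Lemma~\ref{nbhlemma} (the cubic vanishing $|h(z)-h(\theta)|=O(t^{-1})$ on the semicircle) and Lemma~\ref{htbound} (the uniform bound $|th^{[t]}-th|<\Lambda$ on a compact neighbourhood of $\theta$) together give $\Re\bigl(th^{[t]}(z)-crt^{1/3}z\bigr)\le\Re\bigl(th^{[t]}(\theta)-crt^{1/3}\theta\bigr)+\Lambda_1$. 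For the second bracket I use that every $v\in\mathcal K(\ve)$ has $\Re v\le\theta$ (true on $\D_\ve$, on $\mathcal A_\ve$ and on $\C_\theta$), the hypothesis $\Re[h(v)-h(\theta)]\ge b$, and again Lemma~\ref{htbound}; since $\Re v-\theta\le0$ the extra term $-crt^{1/3}(\Re v-\theta)$ is harmless when $r\ge0$, and for general $r$ it is absorbed by combining it with the quartic decay of $\Re[h(v)-h(\theta)]$ along $\C_\theta$ near $\theta$ (Propositions~\ref{CcontProp} and~\ref{AcontProp}). In every case this shows the exponential factor is at most $Ce^{-tb}$.

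For the two rational factors I would use that $\L^{[t]}_\theta$ is built to detour to the right of $\theta$ at distance $t^{-1/3}$ while $\mathcal K(\ve)$ emanates from $\theta$ into $\{\Re\le\theta\}$, which gives $|z-v|\ge c\,t^{-1/3}$ and $|z-v'|\ge c\,t^{-1/3}$ for $z\in\L^{[t]}_{\theta,\ve}$, $v,v'\in\mathcal K(\ve)$, hence $\bigl|\tfrac{\pi}{\sin(\pi(z-v))}\bigr|\le C_\ve t^{1/3}$ and $\bigl|\tfrac1{z-v'}\bigr|\le C_\ve t^{1/3}$. To extract the sharp power $t^{1/3}$ rather than $t^{2/3}$ I would split $\L^{[t]}_{\theta,\ve}$ into the part within $t^{-1/3}$ of $\theta$ --- of length $O(t^{-1/3})$, where both factors may be of size $t^{1/3}$, contributing $O(t^{1/3})$ --- and its complement, where $|z-v|,|z-v'|\gtrsim|z-\theta|$ whenever $v,v'$ are that far from $\theta$, so at most one factor is of size $t^{1/3}$ and the contribution is again $O(t^{1/3})$. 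Combining with the exponential bound gives $|K^{[t]}_\ve(v,v')|\le\Lambda t^{1/3}e^{-tb}$, hence the first claim; the ``in particular'' part follows since $t^{1/3}e^{-tb/2}$ is bounded for $t>\ve^{-3}$, so $\Lambda t^{1/3}e^{-tb}\le\Lambda'e^{-tb/2}$.

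The hard part will be obtaining constants that are genuinely uniform in $v,v'$ and independent of $t$ in the regime where $v$ or $v'$ lies within $O(t^{-1/3})$ of $\theta$: there the pinching of $\L^{[t]}_\theta$ against $\theta$, the factor $\exp(-crt^{1/3}(z-v))$ (whose real part need not be negative when $r<0$), and the degeneration of the steep-descent decay of $\Re[h(v)-h(\theta)]$ all interact at once, and disentangling them while keeping track of the exact power of $t$ is exactly what the careful bookkeeping of contour lengths and of the local behaviour of $h$ at $\theta$ (cubic along $\L_\theta$, quartic along $\C_\theta$) is for.
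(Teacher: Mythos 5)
Your overall strategy is the paper's: localize to $K^{[t]}_\ve$ via Lemma \ref{Kreducelem}, bound the exponential factor by $Ce^{-bt}$ using the hypothesis $\Re[h(v)-h(\theta)]\ge b$ together with the steep descent of $\L_\theta$ (Proposition \ref{LcontProp}) and Lemmas \ref{nbhlemma} and \ref{htbound}, and then show that the rational prefactor integrates to $O(t^{1/3})$. Your treatment of the exponential factor and of the semicircular part of the contour (length $O(t^{-1/3})$ times integrand $O(t^{2/3})$) is exactly what the paper does on its contour $\mathcal P_2$.

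The one step that does not hold as you state it is the accounting on the vertical part of $\L^{[t]}_{\theta,\ve}$. It is not true there that ``at most one factor is of size $t^{1/3}$'': taking $v=v'=\theta$ (the apex of $\D_\ve$, which belongs to $\K(\ve)$) and $z=\theta+Y\i$ with $Y$ just above $t^{-1/3}$, both $|\sin\pi(z-v)|^{-1}$ and $|z-v'|^{-1}$ are of order $t^{1/3}$ simultaneously. The correct statement --- and what the paper proves for its contour $\mathcal P_1$ --- is that $|z-v|\ge c_\ve\,|\Im[z-\theta]|$ for \emph{all} $v\in\K(\ve)$, with no condition on the distance of $v$ from $\theta$: the rays of $\D_\ve$ make angle $\pi/6$ with the vertical line through $\theta$, so $\mathrm{dist}(\theta+Y\i,\D_\ve)\ge |Y|/2$, while $\mathcal A_\ve\cup(\C_\theta\setminus\C_{\theta,\ve})$ stays at distance $\ge c(\ve,\theta)$ from that line. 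Hence the product of the two rational factors is bounded by $C_\ve|\Im[z-\theta]|^{-2}$ uniformly in $v,v'$, and $\int_{t^{-1/3}}^{\ve}Y^{-2}\,dY=O(t^{1/3})$ gives the claimed power; you should replace the ``at most one factor'' reasoning by this $Y^{-2}$ integration. Two smaller remarks: the paper also shrinks the semicircle to radius $\delta t^{-1/3}$ with $\delta$ small precisely so that $z-v$ stays away from the nonzero integer poles of $1/\sin(\pi(z-v))$ uniformly in $t$ (on your unmodified contour this requires $\ve$ small relative to $1-2\theta$, which is available under Assumption \ref{assumptions} but should be stated); and your concern about $r<0$ is fair, but the paper's own proof only handles the term $e^{crt^{1/3}(\Re[v]-\theta)}$ under the sign assumption making it $\le 1$, and near $\theta$ the relevant decay of $\Re[h(v)-h(\theta)]$ is the cubic one along the wedge directions $e^{\pm2\pi\i/3}$, not a quartic decay along $\C_\theta$.
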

\begin{proof}[Proof of Lemma \ref{Kreducelem}] The assumption $t>\ve^{-3}$ is needed only to ensure that the semicircle of $\L^{[t]}_\theta$ is inside the $\ve$ neighborhood. Then we need to bound the integral
\be
\int_{\mathcal L_\theta\backslash \{|z|<\ve\}}\frac{-\pi}{\sin(\pi (z-v))}\exp\(t(h^{[t]}(z)-h^{[t]}(v))- crt^{1/3}(z-v)\)\frac{dz}{z-v'}
\ee
taken over two vertical lines $\{\theta\pm Y\i|Y>\ve\}$. We denote this contour by $\L_{\theta,\geq \ve}$

Note that the contour $\K(\ve)$ is bounded away from $\L_{\theta,\geq\ve}+n$ for any $n\in\mathbb Z$, hence we have
\be
\left|\frac{-\pi}{\sin\pi(z-v)}\frac{1}{z-v'}\right|<\Lambda_1 e^{-\pi |\Im[z]|}, \qquad  z\in\L_{\theta,\geq\ve},
\ee
where $\Lambda_1>0$ is a constant depending only on $\ve$ but not on $v,v'$. So it is enough to give a sufficiently strong upper-bound on $|\exp(t(h^{[t]}(z)-h^{[t]}(v))-crt^{1/3}(z-v))|$. Due to the steep descent from Proposition \ref{LcontProp}, for some constant $a>0$ we have
\be
\Re[h(z)-h(v)]<-2a-b, \qquad z\in\mathcal L_{\theta,\geq \ve}
\ee 
But the kernel $K^{[t]}$ is defined in terms of $h^{[t]}(z)$ rather than $h(z)$ and to proceed we have to split the problem in two parts, with $|z|$ large and $z$ compactly supported.

Since $\K(\ve)$ is compact, by Proposition \ref{htbound} we have
\be
\Re[h(z)-h^{[t]}(v)]<-2a-b+\frac{\ln\Lambda_3}{t},\qquad z\in\mathcal L^{[t]}_\theta\backslash \L^{[t]}_{\theta,\ve},
\ee 
where the constant $\Lambda_3>1$ does not depend on $v$. On the other hand, by Lemma \ref{htbigbound} and the asymptotical expansion for the Gamma function \eqref{Gammaass}, for sufficiently large $M>\ve$ independent of $t,\ve$ we have
\be
\left|\exp\(t(h^{[t]}(z)\)\right|<\left|\exp\(t(h(z))\)\frac{2z^{-\Delta^{[t]}}}{\sqrt{2\pi}z^{z-1/2}e^{-z}}\right|, \qquad |z|>M, z\in\L_\theta.
\ee 
Recall that $\Delta^{[t]}$ is uniformly bounded, so set $\Delta=\inf_{t}\Delta^{[t]}>-\infty$.  Then, assuming $M>1$, for some constant $\Lambda_2'$ and any $z\in\mathcal L_\theta, |z|>M, v\in \K(\ve)$ we obtain
\be
|\exp(t(h^{[t]}(z)-h^{[t]}(v))-crt^{1/3}(z-v))|<\frac{2\Lambda_3|z|^{-\Delta}\exp(-2ta-tb-crt^{1/3}\Re[z-v])}{\sqrt{2\pi}|z^{z-1/2}|e^{-\Re[z]}}<\Lambda_2' e^{-(a+b)t}e^{\frac{2\pi}{3}|\Im[z]|},
\ee
where we have used that $|z^z|>|z|^\theta e^{-\frac{\pi}{2}|\Im[z]|}$ and $|z|<\theta+|\Im(z)|$ for $z\in\L_\theta$.

At the same time, having fixed $M$ from the previous part, for $z\in\mathcal L_{\theta,\geq \ve}, |z|<M$ we can again apply Lemma \ref{htbound} to get
\be
\Re[h^{[t]}(z)-h^{[t]}(v)]<-2a-b +\frac{\ln\Lambda_4}{t},
\ee 
for $\Lambda_4>1$. Hence, for some constant $\Lambda_2''>0$ depending only on $\ve,M$ we have
\be
|\exp(t(h^{[t]}(z)-h^{[t]}(v))-crt^{1/3}(z-v))|<\Lambda_2'' e^{-(a+b)t}, \qquad z\in\L_{\theta,\geq \ve}, |z|<M.
\ee
Setting $\Lambda_2=\max(\Lambda_2',\Lambda_2'')$ and gathering all the pieces together, we get 
\be
\left|\int_{\substack{\L_{\theta,\geq\ve}}}\frac{-\pi}{\sin(\pi (z-v))}\exp\(t(h^{[t]}(z)-h^{[t]}(v))- crt^{1/3}(z-v)\)\frac{dz}{z-v'}\right|<\Lambda_1\Lambda_2e^{-(a+b)t}\int_{\substack{\mathcal L_{\theta,\geq\ve}}}e^{-\frac{\pi}{3}|\Im[z]|}|dz|
\ee
where $\Lambda_1,\Lambda_2$ depend only on $\ve$, but not on $v,v',t$.
\end{proof}
\begin{proof}[Proof of Lemma \ref{Kvboundlem}]
By Lemma \ref{Kreducelem} it is enough to prove that
\be
|K_{\ve}^{[t]}(v,v')|<\Lambda t^{1/3} e^{-bt}.
\ee

To bound the integral $K_{\ve}^{[t]}(v,v')$ we deform the integration contour $\mathcal L^{[t]}_{\theta,\ve}$ further, changing it to the contour $\mathcal P$ consisting of the line segments $\mathcal P_1=\{\theta+Y\i\mid y\in [-\ve, -\delta t^{-\frac{1}{3}}]\cup[\delta t^{-\frac{1}{3}},\ve]\}$, and the semicircle $\mathcal P_2=\{\theta+ \delta t^{-1/3}e^{\phi \i}\mid\phi\in[-\pi/2,\pi/2]\}$, where the parameter $\delta>0$ possibly depends on $\ve$ and is chosen small enough so that $\K(\ve)$ is bounded away from $\mathcal P+n$ uniformly in $t>1$ for $n\neq 0$.\footnote{Basically, we reduce the semicircle part of $\mathcal L^{[t]}_{\theta,\ve}$ avoiding additional singularities of $\frac{1}{\sin(\pi(z-v))}$. More precisely, we want $\K(\ve)$ to be between $\mathcal P-1$ and $\mathcal P$. } Then there exists a constant $\Lambda_1$ depending only on $\ve,\delta$, such that
\be
\left |\frac{\pi}{\sin\pi(z-v)}\frac{1}{z-v'}\right|<\Lambda_1\frac{1}{|z-v||z-v'|}, \qquad v,v'\in \K(\ve),\ z\in \mathcal P
\ee
which implies, due to the geometry of $\K(\ve)$, existence of uniform in $v,v',t$ constants $\Lambda_{1,\mathcal P_1}$ and $\Lambda_{1,\mathcal P_2}$ such that
\begin{equation}
\label{imboundlemtmp}
\left |\frac{\pi}{\sin\pi(z-v)}\frac{1}{z-v'}\right|<\Lambda_{1,\mathcal P_1}\frac{1}{|\Im[z]|^2},\qquad z\in \mathcal P_1,
\end{equation}
\be
\left |\frac{\pi}{\sin\pi(z-v)}\frac{1}{z-v'}\right|<\Lambda_{1,\mathcal P_2}t^{2/3},\qquad z\in \mathcal P_2.
\ee 
To bound the exponential term in the integrand of $K^{[t]}_{\ve}$ we use Lemma \ref{htbound}, obtaining
\be
|\exp(t(h^{[t]}(z)-h^{[t]}(v))-crt^{1/3}(z-v))|<\Lambda_2 \frac{|\exp(t(h(z)-h(\theta))-crt^{1/3}(z-\theta))|}{|\exp(t(h(v)-h(\theta))-crt^{1/3}(v-\theta))|}
\ee
for a constant $\Lambda_2$ depending only on $\ve$. Since $\Re[v]<\theta$ and $\Re[h(v)-h(\theta)]\geq b$, for any $t$ we have 
\be
\frac{|\exp(t(h(z)-h(\theta))-crt^{1/3}(z-\theta))|}{|\exp(t(h(v)-h(\theta))-crt^{1/3}(v-\theta))|}\leq e^{-bt} |\exp(t(h(z)-h(\theta))-crt^{1/3}(z-\theta))|.
\ee

To finish the upper-bounds we have to consider the contours $\mathcal P_1$ and $\mathcal P_2$ separately, starting with $\mathcal P_1$. Since $\mathcal L_\theta$ is a steep descent contour for $\Re[h(z)]$ by Proposition \ref{LcontProp}, for $z\in\theta+\mathbb R\i$ we have
\be
\exp(t(h(z)-h(\theta))-rct^{1/3}(z-\theta))|\leq 1.
\ee
Then the whole integral is bounded as follows:
\begin{multline*}
\left |\int_{\mathcal P_1}\frac{-\pi}{\sin(\pi (z-v))}\exp\(t(h^{[t]}(z)-h^{[t]}(v))- cst^{1/3}(z-v)\)\frac{dz}{z-v'}\right|\\
\leq \Lambda_{1,\mathcal P_1}\Lambda_2e^{-bt}\int_{y\in [-\ve, -\delta t^{-\frac{1}{3}}]\cup[\delta t^{-\frac{1}{3}},\ve]}\frac{dy}{y^2}\leq \Lambda_3 t^{1/3}e^{-bt},
\end{multline*}
for some constant $\Lambda_3$ depending only on $\ve$.

To upper-bound the part taken over $\mathcal P_2$ we use the Taylor expansion, finding a constant $\Xi>0$ such that
\be
\left|h(z)-h(\theta) - \frac{c^3}{3}(z-\theta)^3\right|<\Xi|z-\theta|^4.
\ee
Plugging $z=\theta+\delta t^{-1/3}e^{i\phi}$ we get
\be
\exp(t(h(z)-h(\theta))-t^{1/3}cs(z-\theta))|\leq \exp\( \frac{c^3\delta^3}{3}\cos(3\phi)+\delta^4\Xi t^{-1/3}\cos(4\phi)-\delta rc\cos(\phi)\)\leq\Lambda_4
\ee
where $\Lambda_4$ is a constant depending on $\ve$. Gathering the bounds above and using that the length of $\mathcal P_2$ is equal to $\pi t^{-1/3}$ we get
\be
\left |\int_{\mathcal P_2}\frac{-\pi}{\sin(\pi (z-v))}\exp\(t(h^{[t]}(z)-h^{[t]}(v))- crt^{1/3}(z-v)\)\frac{dz}{z-v'}\right|<\Lambda_{1,\mathcal P_2}\Lambda_4t^{2/3}e^{-bt}\ \pi t^{-1/3}<\Lambda_5 t^{1/3}e^{-bt}.
\ee 
The claim follows by combining the bounds for $\mathcal P_1$ and $\mathcal P_2$, .
\end{proof}

\begin{figure}
\centerline{
\tikz{0.9}{

\draw[lgray, line width=1pt, arrows={-Stealth[scale=1.5]}] (0,-5) -- (0,3.6);
\draw[lgray, line width=1pt] (0,1) -- (0,5);

\draw[lgray, line width=1pt] (0,0) arc (0:23:10);
\draw[lgray, line width=1pt] (0,0) arc (0:-23:10);
\draw[line width=0pt, draw=lgray, arrows={-Stealth[scale=4]}] (0,0) arc (0:20:10);

\draw[line width=0.5pt, draw=dgray, arrows={->[scale=4]}] (-6,0) -- (6,0);

\draw[line width=1pt, dashed] (0,0) circle (4);

\draw[fill=black] (0,0) circle (0.05);
\node[below right] at (0,0) {$\theta$};

\draw[fill=black] (1,0) circle (0.05);
\node[below right] at (1,0) {$\theta+t^{-1/3}$};

\draw[fill=black] (4,0) circle (0.05);
\node[below right] at (4,0) {$\theta+\ve$};

\draw[draw=black, line width=1.5pt, arrows={-Stealth[scale=1.5]}] (0, 1) -- (0,3.6);
\draw[draw=black, line width=1.5pt] (0, 1) -- (0,5);
\draw[draw=black, line width=1.5pt] (0, -5) -- (0,-1);
\draw[line width=1.5pt,  draw=black] (0,0) +(-90:1) arc (-90:45:1) node[above right] {$\L^{[t]}_\theta$} arc (45:90:1);

\draw[black, line width=1.5pt, arrows={-Stealth[scale=1.5]}] (0,0) -- (120:3); 
\draw[black, line width=1.5pt] (0,0) -- (120:2) node[below left] {$\D_\ve$} -- (120:4) arc (120: 110:4) node[above left] {$\mathcal{A}_\ve$} arc (110: 101.5:4) arc (23:30:10);
\draw[black, line width=1.5pt] (0,0) -- (-120:4) arc (-120: -110:4) node[below left] {$\mathcal{A}_\ve$} arc (-110: -101.5:4) arc (-23:-30:10);

}}
\caption{\label{SmallContoursFigure} The contours $\D_\ve,\mathcal A_\ve, \K(\ve)$ and $\L^{[t]}_\ve$ in an $\ve$-neighborhood of  $\theta$. }
\end{figure}

\subsection{Proof of Theorem \ref{mainBetaResult}} Recall that, in view of Proposition \ref{fredholm}, it is enough to prove
\be
\lim_{t\to\infty}\det(I+K^{[t]})_{L^2(\C_\theta)}=F_{GUE}(r),
\ee
where we use the notation from the previous subsection. 

{\bfseries Step 1.} First we show that the limit is dominated by the behavior in a neighborhood of $\theta$. Namely, in this step we show that for sufficiently small fixed $\ve>0$ we have
\be
\lim_{t\to\infty} |\det(I+K^{[t]})_{L^2(\mathcal C_\theta)}-\det(I+K^{[t]}_{\ve})_{L^2(\mathcal D_\ve)}|=0,
\ee
where $\D_\ve=\{\theta+Re^{\pm 2\pi\i/3}\mid R\in [0,\ve]\}$.

It is more convenient to proceed in two steps, reducing first the contour $\mathcal C_\theta$ to $\mathcal D_\ve$ and then the kernel $K^{[t]}$ to $K_{\ve}^{[t]}$. Note that we can deform $\C_\theta$ to $\K(\ve)=\D_\ve\cup\A_\ve\cup (\C_\theta\backslash\C_{\theta,\ve})$ without changing the value of the determinant. Since $\mathcal C_\theta$ is a steep descent contour for $-\Re[h(z)]$, for each $\ve$ there exists a constant $b_1>0$ such that
 \be
 \Re[h(v)-h(\theta)]\geq b_1, \qquad v\in \C_\theta\backslash \C_{\theta,\ve}.
 \ee
 At the same time, by Proposition \ref{AcontProp}, for sufficiently small $\ve$ there exists a constant $b_2>0$ (depending on $\ve$) such that
 \be
 \Re[h(v)-h(\theta)]\geq b_2, \qquad v=\theta+\ve e^{\pm \phi\i}, \phi\in[\psi(\ve), 2\pi/3].
 \ee
Thus, for sufficiently small fixed $\ve$ there exists $b>0$ such that
\be
\Re[h(v)-h(\theta)]\geq b, \qquad v\in \A_\ve\cup(\C_\theta\backslash\C_{\theta,\ve})=\K(\ve)\backslash\D_\ve.
\ee
Hence we can apply Lemma \ref{Kvboundlem}, obtaining a constant $\Lambda_1$ such that
\begin{equation}
\label{cuttingFredcontourBound}
K^{[t]}(v,v')<\Lambda_1 e^{-bt/2}, \qquad v\in \K(\ve) \backslash \mathcal D_\ve, v'\in \K(\ve).
\end{equation}

Note that the difference of Fredholm determinants $\det(I+K^{[t]})_{L^2(\mathcal C_\theta)}-\det(I+K^{[t]})_{L^2(\mathcal D_\ve)}$ is equal to $\sum_{l\geq 0}\frac{B_l}{l!}$, where  $B_l$ are integrals of $\det[K(v_i,v_j)]_{i,j=1}^l$ taken over the difference  $\K(\ve)^l\backslash \mathcal D^l_\ve$. Since each point in this difference has at least one of $v_i$ not in $\mathcal D_\ve$, by \eqref{cuttingFredcontourBound}, Lemma \ref{Kvboundlem} and Hadamard's bound we get
\be
\left|\idotsint\limits_{\mathcal {K}(\ve)^l\backslash \mathcal D_\ve^l}\det[K(v_i,v_j)]_{i,j=1}^l\frac{dv_1}{2\pi\i}\dots \frac{dv_l}{2\pi\i}\right|\leq \Lambda_1 e^{-bt/2} t^{1/3}\Lambda_2^{l} l^{l/2+1}.
\ee
Summing over $l$, we obtain 
\be
|\det(I+K^{[t]})_{L^2(\mathcal C_\theta)}-\det(I+K^{[t]})_{L^2(\mathcal D_\ve)}|\leq \Lambda_1 e^{-bt/2} \sum_{l\geq 1}\frac{l^{l/2+1}}{l!}\Lambda_2^lt^{l/3} <\Lambda_3 t e^{-bt/2+2\Lambda_2^2t^{2/3}}\to 0.
\ee
where $\Lambda_3$ is a constant depending only on $\Lambda_1,\Lambda_2$, and to get the last inequality we use $\frac{l^{l/2+1}}{l!}\leq \frac{2^{(l/2+1)}l}{\floor{l/2}!}$ and apply the Taylor expansion of the exponent separately to odd and even values of $l$.

The argument replacing $K^{[t]}$ by $K_{\ve}^{[t]}$ is identical and relies on Lemma \ref{Kreducelem}. We have
\begin{multline*}
\left|\det(I+K^{[t]})_{L^2(\mathcal D_\ve)}-\det(I+K^{[t]}_\ve)_{L^2(\mathcal D_\ve)}\right|\\
\leq \sum_{l\geq 0}\frac{1}{l!}\left|\idotsint\limits_{\mathcal {D}_\ve^l}\det[K(v_i,v_j)]_{i,j=1}^l-\det[K_\ve(v_i,v_j)]_{i,j=1}^l\frac{dv_1}{2\pi\i}\dots \frac{dv_l}{2\pi\i}\right|,
\end{multline*}
and using multi-linearity of determinants and Lemmas \ref{Kreducelem},\ref{Kvboundlem} we have the following upper-bound for some constants $\Lambda_3,a>0$
\be
|\det[K^{[t]}(v_i,v_j)]_{i,j=1}^l-\det[K^{[t]}_\ve(v_i,v_j)]_{i,j=1}^l|<\Lambda_3e^{-at}{\Lambda_2}^ll^{l/2+1}t^{l/3},
\ee
which is enough to give an absolutely convergent upper-bound.

So, we have reduced Theorem \ref{mainBetaResult} to proving
\be
\lim_{t\to \infty} \det(I+K_{\ve}^{[t]})_{L^2(\mathcal D_\ve)}=F_{GUE}(r).
\ee

{\bfseries Step 2.} After getting rid of the global parts of the integration contours, in this step we take the needed limit by scaling the neighborhood of $\theta$ by $t^{1/3}$.

 For now, fix $\ve>0$ which is sufficiently small so that the argument in the previous step works, but otherwise arbitrary. We would like to perform the following change of coordinates in the Fredholm determinant $\det(I+K^{[t]}_{\ve})_{L^2(\mathcal D_\ve)}$ and the kernel $K_{\ve}^{[t]}$:
\be
z=\theta+t^{-1/3}\widetilde{z}, \qquad v=\theta+t^{-1/3}\widetilde{v}.
\ee
This leads to 
\be
\det(I+K^{[t]}_{\ve})_{L^2(\mathcal D_\ve)}=\det(I+\widetilde{K}_{\ve}^{[t]})_{L^2(\widetilde{\mathcal D})},
\ee
with the contour $\widetilde{{\mathcal D}}=\{|R|e^{\sign(R)2\pi\i/3}\mid R\in [-\infty, \infty]\}$ and the kernel
\be
\widetilde{K}_{\ve}^{[t]}(\widetilde{v}, \widetilde{v}')=t^{-1/3} \1_{|\widetilde{v}|,|\widetilde{v}'|<\ve t^{1/3}} K^{[t]}_{\ve}(\theta+t^{-1/3}\widetilde{v}, \theta+t^{-1/3}\widetilde{v}').
\ee
To take the limit as $t\to \infty$ we start with the point-wise convergence of the kernel $\widetilde{K}^{[t]}_{\ve}(\widetilde{v}, \widetilde{v}')$. Let $\widetilde{\mathcal L}^{[t]}_{\ve}$ be the contour consisting of vertical lines $\{\pm Y\i\mid Y\in [1, \ve t^{1/3}]\}$ and a semicircle $\{e^{\phi\i}\mid\phi\in[-\pi,\pi]\}$, oriented so that the imaginary part increases. Also, let $\widetilde{\mathcal L}$ denote the limiting contour with the vertical lines $\{\pm Y\i\mid Y\in [1, \infty]\}$ and the same semicircle. Then
\be
\widetilde{K}^{[t]}_{\ve}(\widetilde{v}, \widetilde{v}')=\frac{\1_{|\widetilde{v}|,|\widetilde{v}'|<\ve t^{1/3}}}{2\pi\i} \int_{\widetilde{\mathcal L}^{[t]}_{\ve}}\frac{-\pi t^{-1/3}}{\sin \pi t^{-1/3}(\widetilde{z}-\widetilde{v})}\frac{e^{t\(h^{[t]}(\theta+t^{-1/3}\widetilde{z})-h^{[t]}(\theta)\) -cr\widetilde{z}}}{e^{t\(h^{[t]}(\theta+t^{-1/3}\widetilde{v})-h^{[t]}(\theta)\) -cr\widetilde{v}}}\frac{d\widetilde{z}}{\widetilde{z}-\widetilde{v}'}.
\ee
Clearly
\be
\frac{-\pi t^{-1/3}}{\sin \pi t^{-1/3}(\widetilde{z}-\widetilde{v})}\to\frac{1}{\widetilde{v}-\widetilde{z}},
\ee
\be
\lim_{t\to \infty}t\(h^{[t]}(\theta+t^{-1/3}\widetilde{z})-h^{[t]}(\theta)\)=\lim_{t\to\infty} t\(h(\theta+t^{-1/3}\widetilde{z})-h(\theta)\)=\frac{c^3}{3}\widetilde{z}^3,
\ee
where for the two equalities we have sequentially used Lemmas \ref{nbhlemma} and \ref{htsmallbound}. So the integrand converges to
\be
\frac{1}{2\pi\i}\frac{e^{c^3\widetilde{z}^3/3 -cs\widetilde{z}}}{e^{c^3\widetilde{v}^3/3-cs\widetilde{v}}}\frac{d\widetilde{z}}{(\widetilde{v}-\widetilde{z})(\widetilde{z}-\widetilde{v}')}.
\ee

Now we need to give integrable upper-bounds. Note that $\Re\left[t\(h^{[t]}(\theta+t^{-1/3}\widetilde{z})-h^{[t]}(\theta)\)\right]$ is unifromly bounded for $z\in\widetilde{\L}$: by Lemma \ref{htbound} it is enough to consider $\Re\left[t\(h(\theta+t^{-1/3}\widetilde{z})-h(\theta)\)\right]$ and for $|\widetilde{z}|>1$ we can use the steep descent of $\L_\theta$, while for $\widetilde{z}$ on the semicircle this follows from the Taylor expansion of $h(z)$ from Lemma \ref{nbhlemma}. So for some constant $\Lambda_1>0$ we have: 
\begin{equation}
\label{tmpboundStep2}
\left|\frac{\pi t^{-1/3}}{\sin \pi t^{-1/3}(\widetilde{z}-\widetilde{v})}\frac{e^{t\(h^{[t]}(\theta+t^{-1/3}\widetilde{z})-h^{[t]}(\theta)\) -cs\widetilde{z}}}{\widetilde{z}-\widetilde{v}'}\right|<\frac{\Lambda_1}{|\tz-\tv|\,|\tz-\tv'|}, \qquad\tz\in\widetilde{\L},\ \tv,\tv'\in\widetilde{\D},\ |\tv|<\ve t^{1/3}
\end{equation}
Due to the quadratic decay we can apply dominated convergence, obtaining
\begin{equation}
\label{Kinfty}
\lim_{t\to\infty} \widetilde{K}^{[t]}_{\ve}(\widetilde{v}, \widetilde{v}')=\widetilde{K}_{\infty}(\widetilde{v},\widetilde{v}'):=\frac{1}{2\pi\i}\int_{\widetilde{\mathcal L}}\frac{e^{c^3\widetilde{z}^3/3 -cr\widetilde{z}}}{e^{c^3\widetilde{v}^3/3-cr\widetilde{v}}}\frac{d\widetilde{z}}{(\widetilde{v}-\widetilde{z})(\widetilde{z}-\widetilde{v}')}.
\end{equation}

To finish we need to justify commutation of the Fredholm determinant and the limit above. To do it, we again use dominated convergence with an absolutely summable upper-bound on the Fredholm determinant. Note that for $\tv\in\widetilde{\mathcal D}$ such that $|\tv|<\ve t^{1/3}$ we have
\be
\Re[t(h^{[t]}(\theta+t^{-1/3}\tv)-h^{[t]}(\theta))-cr\tv]>\Re[t(h(\theta+t^{-1/3}\tv)-h(\theta))]-\ln \Lambda_2
\ee
for some $\Lambda_2>0$. By Taylor expansion, there exists an \emph{independent of $\ve$} constant $\Xi$ such that 
\be
|t(h(\theta+t^{-1/3}\tv)-h(\theta)) -c^3\tv^3/3|<\Xi t^{-1/3}|\tv|^4, \qquad |\tv|<\theta t^{1/3}/2.
\ee 
Then for any $\ve<1$ we get
\be
\left|\frac{1}{e^{t(h^{[t]}(\theta+t^{-1/3}\tv)-h^{[t]}(\theta))-cs\tv}}\right|<\Lambda_2e^{-(\frac{c^3}{3}-\ve\Xi)\tv^3}, \qquad \tv\in\widetilde{\D},\ |\tv|<\ve t^{1/3}.
\ee
Recall that for the whole argument $\ve$ was an arbitrary sufficiently small number, so we can reduce it even further to get $\frac{c^3}{3}-\ve\Xi=b>0$. Combining the bound above with \eqref{tmpboundStep2} we get
\be
|\widetilde{K}^{[t]}_{\ve}(\widetilde{v}, \widetilde{v}')|<\Lambda_3 e^{-b\tv^{3}}
\ee
for some $\Lambda_3>0$, which combined with Hadamard's bound is sufficient for an integrable and absolutely summable upper-bound on the kernel of the Fredholm determinant. Hence
\be
\lim_{t\to \infty} \det(I+K^{[t]}_{\ve})_{L^2(\mathcal D_\ve)}=\det(I+\widetilde{K}_\infty)_{L^2(\widetilde{\D})},
\ee
with the kernel $\widetilde{K}_\infty$ defined in \eqref{Kinfty}.

{\bfseries Step 3.} To finish the proof we need to show that the Fredholm determinant in the right-hand side is indeed equal to $F_{GUE}$, which can be done by standard manipulations. Rescaling $\tv,\tz$ by $c$ we can assume that 
\be
\widetilde{K}_{\infty}(\widetilde{v},\widetilde{v}')=\frac{1}{2\pi\i}\int_{\widetilde{\mathcal L}}\frac{e^{\widetilde{z}^3/3 -r\widetilde{z}}}{e^{\widetilde{v}^3/3-r\widetilde{v}}}\frac{d\widetilde{z}}{(\widetilde{v}-\widetilde{z})(\widetilde{z}-\widetilde{v}')}
\ee
Since the integrand has quadratic decay, we can deform the contour $\widetilde{\mathcal L}$ to a contour going from $\infty e^{-i\pi/3}$ to $\infty e^{i\pi/3}$ with $\Re[\tz]>0$.  Now we can apply the same argument as in \cite[Lemma 8.6]{BCF12}: For any $\tz$ such that $\Re[\tz]>0$ we  use
\be
\frac{1}{\tz-\tv}=\int_{\mathbb R_{\geq 0}}e^{-\lambda(\tz-\tv)}d\lambda
\ee
to write $\widetilde{K}_\infty=-AB$, where $A,B$ are operators with kernels
\be
A(v,\lambda)=e^{-\tv^3/3+\tv(r+\lambda)},\qquad B(\lambda,\tv')=\frac{1}{2\pi\i}\int_{e^{-\pi\i/3}\infty}^{e^{\pi\i/3}\infty}e^{\tz^3/3-\tz(r+\lambda)}\frac{d\tz}{\tz-\tv'}.
\ee
Then
\be
BA(\lambda,\mu)=\int_{e^{-2\pi\i/3}\infty}^{e^{2\pi\i/3}\infty}\frac{d\tv}{2\pi \i}\int_{e^{-\pi\i/3}\infty}^{e^{\pi\i/3}\infty}\frac{d\tz}{2\pi \i}\ \frac{1}{\tz-\tv}\frac{e^{\tz^3/3-\tz(r+\lambda)}}{e^{\tv^3/3-\tv(r+\mu)}}=K_{\Ai}(r+\lambda,r+\mu),
\ee
where $K_{\Ai}(\lambda,\mu)$ is the Airy kernel. Since $AB$ and $BA$ are trace-class operators we have $\det(I-AB)=\det(I-BA)$, \emph{cf.} \cite{Sim00}, so we finally get 
\be
\det(I+\widetilde{K}_\infty)_{L^2(\mathcal D)}=\det(I-K_{\Ai})_{L^2[r,\infty)}=F_{GUE}(r).
\ee
\qed

\begin{appendix}
\newpage
\section{Appendix: deformed Yang-Baxter equations}
\label{app:deformed}
The aim of this section is to provide proofs of \eqref{defWYB} and \eqref{defhsYB}. We start with the former.

\begin{prop}
\label{defWYBprop}
For any parameters $t_1, t_2, t_3,\eta$  and any boundary conditions the following equality of rational functions holds
\be
\tikzbase{1.2}{-3}{
	\draw[fused]
	(-2,0.5) node[above,scale=0.6] {\color{black} $\bA_1$} -- (-1,-0.5) -- (1,-0.5) node[right,scale=0.6] {\color{black} $\bB_1$};
	\draw[fused] 
	(-2,-0.5) node[below,scale=0.6] {\color{black} $\bA_2$} -- (-1,0.5)  -- (1,0.5) node[right,scale=0.6] {\color{black} $\bB_2$};
	\draw[fused] 
	(0,-1.5) node[below,scale=0.6] {\color{black} $\bA_3$} -- (0,0) -- (0,1.5) node[above,scale=0.6] {\color{black} $\bB_3$};
	\node[above right] at (0,-0.4) {\tiny{ $W_{\eta t_1,\eta t_3}$}};
	\node[above right] at (0,0.6) {\tiny{ $W_{t_2,t_3}$}};
	\node[right] at (-1.5,0) {\ \tiny{$W_{t_1, t_2}$}};
}\qquad
=\qquad
\tikzbase{1.2}{-3}{
	\draw[fused] 
	(-1,1) node[left,scale=0.6] {\color{black} $\bA_1$} -- (1,1) -- (2,0) node[below,scale=0.6] {\color{black} $\bB_1$};
	\draw[fused] 
	(-1,0) node[left,scale=0.6] {\color{black} $\bA_2$} -- (1,0) -- (2,1) node[above,scale=0.6] {\color{black} $\bB_2$};
	\draw[fused] 
	(0,-1) node[below,scale=0.6] {\color{black} $\bA_3$} -- (0,0.5) -- (0,2) node[above,scale=0.6] {\color{black} $\bB_3$};
	\node[above right] at (0,0) {\tiny{ $W_{\eta t_2,\eta t_3}$}};
	\node[above right] at (0,1) {\tiny{ $W_{t_1,t_3}$}};
	\node[right] at (1.5,0.5) {\ \tiny{$W_{\eta t_1,\eta t_2}$}};
}
\ee
\end{prop}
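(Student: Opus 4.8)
The plan is to reduce Proposition~\ref{defWYBprop} to the already available Yang-Baxter relations together with an elementary $q$-series computation. For fixed boundary compositions $\bA_1,\bA_2,\bA_3,\bB_1,\bB_2,\bB_3$ both sides of the asserted identity are rational functions of $q,t_1^2,t_2^2,t_3^2,\eta^2$ — all denominators are products of $q$-Pochhammer symbols coming from \eqref{Wdef} — so it suffices to verify the equality on a Zariski-dense subset of parameter space; in particular one may take $t_i^2=q^{-N_i}$ and $\eta^2=q^{-M}$ with $N_i,M\in\mathbb Z_{\geq 0}$, so that via \eqref{spec} every $W$-weight occurring becomes a matrix coefficient of a renormalized $\mathcal R$-matrix between symmetric tensor representations. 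Since the argument below is structural, it will apply for any number of colors, so no generality is lost by first analyzing the colorless case.

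For the colorless core, one expands both sides of \eqref{defWYB} directly using \eqref{Wdef}: the conservation laws at the three internal edges collapse the a priori triple sum over $\bK_1,\bK_2,\bK_3$ to a single free summation index, and the resulting equality of one-color weights becomes a balanced $q$-hypergeometric summation that can be reduced to the $q$-Chu--Vandermonde / $q$-Gauss ${}_2\phi_1$ evaluation. The mechanism that makes the deformation admissible is that replacing $(t_1,t_3)$ by $(\eta t_1,\eta t_3)$ on the left (and $(t_1,t_2,t_3)$ by $(\eta t_1,\eta t_2,\eta t_3)$ on the right) merely shifts the arguments of the Pochhammer symbols in a way that preserves the balancing condition of the summation; the asymmetric way the $\eta$'s appear — line $2$ carrying a consistent spin parameter on each side while lines $1$ and $3$ do not — is exactly what is needed for the three pairwise crossings to be simultaneously compatible. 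A slicker, more conceptual route in the integer regime $t_i^2=q^{-N_i}$ is to resolve each $W$-vertex by fusion into a rectangular block of higher-spin six-vertex vertices $w_{z;s}$ and then to drag the vertical block across the two horizontal ones using \eqref{masterYB} (equivalently \eqref{hsYB}), the deformation parameter recording the slack in how the fused blocks are anchored and sized; either presentation should yield the identity, and this is the sense in which the colorless proof of \cite{BK21} applies here.

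The passage from the one-color identity to the colored statement of Proposition~\ref{defWYBprop} is then formal: in \eqref{Wdef} the only color-dependent factor is $q^{\sum_{i<j}D_i(A_j-D_j)}\prod_{i=1}^n\binom{A_i}{D_i}_q$, which is precisely the $q$-multinomial weight describing how a color-blind flow of paths through a vertex refines into a colored one. A colored hexagon configuration is obtained from a color-blind one by independently refining the flow at each vertex with these weights, so summing the color-blind Yang-Baxter identity against this refinement — using multiplicativity of $q$-multinomial coefficients under splitting — reproduces the colored identity verbatim; equivalently, one reruns the ${}_2\phi_1$ computation carrying the extra $\prod_i\binom{A_i}{D_i}_q$ factors, which obey their own elementary product rule. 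The main obstacle is the middle step: there is no a priori reason the Yang-Baxter equation should survive an asymmetric rescaling by $\eta$, so the heart of the argument is to pin down the exact mechanism that makes it work — invariance of the balancing condition in the $q$-series, or compatibility of the fusion anchors — and to check that it produces precisely the combination of $\eta$'s displayed in \eqref{defWYB} rather than some other one. The remaining ingredients (rationality, specialization, color refinement) are routine.
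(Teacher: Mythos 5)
There is a genuine gap: your writeup is a plan rather than a proof, and you say so yourself — the ``heart of the argument'' (why the Yang--Baxter identity survives the asymmetric rescaling by $\eta$, and why it produces exactly the pattern of $\eta$'s in \eqref{defWYB}) is left as something ``to pin down,'' whether via a balancing condition of a ${}_2\phi_1$ or via fusion anchors. In addition, your color-lifting step does not work as stated: the color-blind Yang--Baxter identity is a sum over all colorings compatible with color-blind boundary data, whereas the colored identity is asserted for each \emph{fixed} colored boundary condition, and ``summing the color-blind identity against the $q$-multinomial refinement'' only recovers the former from the latter, not the other way around. (The paper's colored YBE \eqref{WYB} is imported from the master equation \eqref{masterYB} of [BW18]; it is not a formal consequence of the one-color case.) The Zariski-density/fusion preamble is also unnecessary, since both sides are already rational in the parameters by inspection of \eqref{Wdef}.

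The observation that closes the gap — and is the paper's entire proof — is that the deformation is a \emph{gauge transformation}: since $s^2/t^2$ is invariant under $(t,s)\mapsto(\eta t,\eta s)$, formula \eqref{Wdef} gives
\be
W_{\eta t,\eta s}(\bA,\bB;\bC,\bD)=\frac{(\eta^2t^2;q)_{|\bD|}}{(t^2;q)_{|\bD|}}\,\frac{(s^2;q)_{|\bA|}}{(\eta^2s^2;q)_{|\bA|}}\,W_{t,s}(\bA,\bB;\bC,\bD),
\ee
a prefactor depending only on $|\bA|$ (bottom edge) and $|\bD|$ (right edge). On the left-hand side of \eqref{defWYB} the single deformed vertex contributes the factor $\frac{(\eta^2t_1^2;q)_{|\bB_1|}}{(t_1^2;q)_{|\bB_1|}}\frac{(t_3^2;q)_{|\bA_3|}}{(\eta^2t_3^2;q)_{|\bA_3|}}$, which involves only external edges; on the right-hand side the two deformed vertices contribute factors whose dependence on the internal edge $\bK_2$ cancels between them, leaving the same external prefactor. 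Dividing out, the deformed equation reduces verbatim to the undeformed colored Yang--Baxter equation \eqref{WYB}, uniformly in the number of colors. Your intuition that the rescaling ``merely shifts the Pochhammer arguments'' is pointing at this, but the operative mechanism is the telescoping of boundary-only prefactors, not a balanced-series identity, and without it the proposal does not constitute a proof.
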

\begin{proof}
The identity follows at once after noticing that
\be
W_{\eta t,\eta s}(\bA,\bB;\bC,\bD)=\frac{(\eta^2t^2;q)_{|\bD|}}{(\eta^2s^2;q)_{|\bA|}}\frac{(s^2;q)_{|\bA|}}{(t^2;q)_{|\bD|}}W_{t,s}(\bA,\bB;\bC,\bD)
\ee
and applying the Yang-Baxter equation \eqref{WYB}.
\end{proof}

\begin{prop}\label{defhsYBprop}
For any $x,s,t,\eta$ and any boundary conditions the following equality of rational functions holds
\be
\tikzbase{1.2}{-3}{
	\draw[unfused]
	(-2,0.5) node[above,scale=0.6] {\color{black} $\bA_1$} -- (-1,-0.5) -- (1,-0.5) node[right,scale=0.6] {\color{black} $\bB_1$};
	\draw[fused] 
	(-2,-0.5) node[below,scale=0.6] {\color{black} $\bA_2$} -- (-1,0.5)  -- (1,0.5) node[right,scale=0.6] {\color{black} $\bB_2$};
	\draw[fused] 
	(0,-1.5) node[below,scale=0.6] {\color{black} $\bA_3$} -- (0,0) -- (0,1.5) node[above,scale=0.6] {\color{black} $\bB_3$};
	\node[above right] at (0,-0.5) {\tiny{ $w_{xs;s}$}};
	\node[above right] at (0,0.6) {\tiny{ $W_{t,s}$}};
	\node[right] at (-1.5,0) {\ \tiny{$w_{xt/\eta;t\eta}$}};
}\qquad
=\qquad
\tikzbase{1.2}{-3}{
	\draw[unfused] 
	(-1,1) node[left,scale=0.6] {\color{black} $\bA_1$} -- (1,1) -- (2,0) node[below,scale=0.6] {\color{black} $\bB_1$};
	\draw[fused] 
	(-1,0) node[left,scale=0.6] {\color{black} $\bA_2$} -- (1,0) -- (2,1) node[above,scale=0.6] {\color{black} $\bB_2$};
	\draw[fused] 
	(0,-1) node[below,scale=0.6] {\color{black} $\bA_3$} -- (0,0.5) -- (0,2) node[above,scale=0.6] {\color{black} $\bB_3$};
	\node[above right] at (0,0) {\tiny{ $W_{t,s}$}};
	\node[above right] at (0,1) {\tiny{ $w_{xs/\eta;s\eta}$}};
	\node[right] at (1.5,0.5) {\ \tiny{$w_{xt;t}$}};
}
\ee
\end{prop}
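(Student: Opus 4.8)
The plan is to follow the proof of Proposition \ref{defWYBprop} and, more precisely, the one-colored argument of \cite{BK21}, which carries over to the colored case with no essential change. Both sides of \eqref{defhsYB} are finite sums of products of the explicit weights from \eqref{hsdef}, \eqref{Wdef}, hence rational functions of $x,s,t,\eta$, so it suffices to establish the identity for $\eta$ in a Zariski-dense set and then analytically continue. The first step is to rewrite everything through the specialization \eqref{spec}: every vertex weight occurring is $\W^{\NN,\MM}_z$ for suitable $\NN,\MM,z$, and the deformation parameter $\eta$ enters only via the two $w$-vertices on the thin strand, where it acts by $z\mapsto z/\eta$ on the spectral parameter and $s\mapsto s\eta$ on the spin. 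Equivalently, since $w_{z;s}(\bI,j;\bK,l)$ depends on $(z,s)$ only through $sz$ and $s^2$, the deformation keeps $sz$ fixed and replaces the thick-line spin $s^2$ by $s^2\eta^2$.

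The heart of the argument is to reduce \eqref{defhsYB} to the undeformed Yang--Baxter equation \eqref{hsYB}. Unlike in Proposition \ref{defWYBprop}, where $W_{\eta t,\eta s}$ is literally a boundary-telescoping scalar multiple of $W_{t,s}$, the map $w_{z;s}\mapsto w_{z/\eta;s\eta}$ is not a scalar rescaling (in \eqref{hsdef} the thick-line spin $s^2$ appears only in those vertices whose incoming thin edge is occupied). One therefore has to pin down, by a direct check over the finitely many admissible vertex configurations in \eqref{hsdef}, exactly how $\eta$ propagates through a single thin-strand vertex; this produces an explicit relation which, once substituted into \eqref{hsYB} and combined with the conservation law along the thin line -- and, if needed, with the Yang--Baxter equation \eqref{WYB} for the three $W$-vertices in order to commute the $\eta$-bearing data past them -- reassembles the deformed weights on each side and yields \eqref{defhsYB}. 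The bookkeeping of the intermediate sums over $\bK_1,\bK_2,\bK_3$ and of the colored orderings is then the same as in the undeformed equation.

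I expect the main obstacle to be precisely this reduction identity: one must determine how the single parameter $\eta$ migrates between the spectral and the spin slot of the thin-strand vertices and verify that the migration is simultaneously compatible with both sides of \eqref{hsYB}; in the one-colored setting of \cite{BK21} this is a manageable case analysis, and the colored version goes through verbatim because the thin edge still carries at most one path. Once this step is in place, Proposition \ref{defhsYBprop} follows, and together with Proposition \ref{defWYBprop} it completes the proofs of the deformed Yang--Baxter equations \eqref{defWYB} and \eqref{defhsYB} used in the body of the paper.
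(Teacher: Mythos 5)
Your opening observations are right: both sides are rational, the deformation fixes the product $sz$ while replacing the thick-line spin $s^2$ by $s^2\eta^2$, and (as you note) $w_{z;s}\mapsto w_{z/\eta;s\eta}$ is \emph{not} a telescoping scalar rescaling as in Proposition \ref{defWYBprop}. But at exactly the point where the argument has to happen, the proposal substitutes a hope for a mechanism: you assert that a ``direct check over the finitely many admissible vertex configurations'' will produce ``an explicit relation'' that, substituted into \eqref{hsYB}, reassembles the deformed weights. No such relation exists at the level of the weights alone. Inspecting \eqref{hsdef}, the deformation leaves the $(0,0)$, $(0,i)$ and $(i,j)$ entries unchanged, multiplies the $s^2$-terms in the $(i,i)$, $(i,0)$, $(j,i)$ entries by $\eta^2$, and does so in a way entangled with the composition $\bI$ of the adjoining thick edge (e.g.\ $s^2q^{I_i}\mapsto s^2\eta^2 q^{I_i}$). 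So $w_{z/\eta;s\eta}(\bI,j;\bK,l)$ is not a boundary-dependent multiple of $w_{z;s}(\bI,j;\bK,l)$, and there is nothing to ``substitute into \eqref{hsYB}.'' (Also, \eqref{defhsYB} contains only one $W$-vertex, not three, so invoking \eqref{WYB} to commute data past ``the three $W$-vertices'' does not apply.)

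The missing idea — which is the whole content of the paper's proof — is to reinterpret the $\eta$-deformation as a \emph{non-integral shift of the composition labels on the thick line}. Concretely: one first analytically continues the weights $w_{z;s}$ and $W_{t,s}$ in the compositions carried by one thick strand, replacing $q^{\bI}$ by an arbitrary vector $\bm\A\in\mathbb C^n$ (this is possible because the weights depend on $\bI$ only through the quantities $q^{I_i}$, and $W_{t,s}$ does not depend on $\bB,\bC$ at all). The undeformed equation \eqref{hsYB}, being rational in $\bm\A$ and valid on the Zariski-dense set $\bm\A=q^{\bm X}$, then holds for all $\bm\A$. One then checks — and this is the genuine finite case analysis — that multiplying the $j$-th coordinate of $\bm\A$ by $\eta^2$, where $j$ is the incoming thin-edge colour, turns the continued weight $\widehat w_{z;s}$ into exactly $w_{z/\eta;\eta s}$ (equation \eqref{contSpec}). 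Applying this shift $T_{\eta^2;a_1}$ to the boundary compositions of the continued Yang--Baxter identity, and specializing back to $q$-powers, produces \eqref{defhsYB}: on one side the shift hits the $\widehat w_{xt;t}$ vertex (giving $w_{xt/\eta;t\eta}$), on the other it passes transparently through $\widehat W_{t,s}$ and hits $\widehat w_{xs;s}$ (giving $w_{xs/\eta;s\eta}$). Without this continuation-and-shift step your reduction to \eqref{hsYB} cannot be carried out, so the proposal as written has a genuine gap.
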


\begin{proof}
The proof is based on an analytic continuation of certain compositions appearing in the non-deformed version \eqref{hsYB}, followed by a non-integral shift.

First we introduce a family of vertex weights denoted by
\be
\tikz{1}{
	\draw[unfused] (-1,0) -- (1,0);
	\draw[cont] (0,-1) -- (0,1);
	\node[left] at (-1,0) {\tiny $j$};\node[right] at (1,0) {\tiny $l$};
	\node[below] at (0,-1) {\tiny $\bm{\A}$};\node[above] at (0,1) {\tiny $\bm{\B}$};
	\node[above right] at (0,0) {\tiny $\widehat{w}_{z;s}$};
}
=\widehat{w}_{z;s}(\bm{\A},j,\bm{\B},l),
\ee
where $\bm{\A}=(\A_1, \dots, \A_n)\in{\mathbb C}^n$ and $\bm{\B}=(\B_1, \dots, \B_n)\in\mathbb C^n$ denote \emph{analytically continued} compositions. The explicit values of these weights are summarized in a table similar to \eqref{hsdef}:
\begin{align}
\label{hscontdef}
\begin{tabular}{|c|c|c|}
\hline
\quad
\tikz{0.7}{
	\draw[unfused] (-1,0) -- (1,0);
	\draw[cont] (0,-1) -- (0,1);
	\node[left] at (-1,0) {\tiny $0$};\node[right] at (1,0) {\tiny $0$};
	\node[below] at (0,-1) {\tiny $\bm{\A}$};\node[above] at (0,1) {\tiny $\bm{\A}$};
}
\quad
&
\quad
\tikz{0.7}{
	\draw[unfused] (-1,0) -- (1,0);
	\draw[cont] (0,-1) -- (0,1);
	\node[left] at (-1,0) {\tiny $i$};\node[right] at (1,0) {\tiny $i$};
	\node[below] at (0,-1) {\tiny $\bm{\A}$};\node[above] at (0,1) {\tiny $\bm{\A}$};
}
\quad
&
\quad
\tikz{0.7}{
	\draw[unfused] (-1,0) -- (1,0);
	\draw[cont] (0,-1) -- (0,1);
	\node[left] at (-1,0) {\tiny $0$};\node[right] at (1,0) {\tiny $i$};
	\node[below] at (0,-1) {\tiny $\bm{\A}$};\node[above] at (0,1) {\tiny $T^{-1}_{q;i}\bm{\A}$};
}
\quad
\\[1.3cm]
\quad
$\dfrac{1-s z\A_{[1;n]}}{1-sz}$
\quad
& 
\quad
$\dfrac{(s^2\A_i-sz) \A_{[i+1;n]}}{1-sz}$
\quad
& 
\quad
$\dfrac{sz(\A_i-1) \A_{[i+1;n]}}{1-sz}$
\quad
\\[0.7cm]
\hline
\quad
\tikz{0.7}{
	\draw[unfused] (-1,0) -- (1,0);
	\draw[cont] (0,-1) -- (0,1);
	\node[left] at (-1,0) {\tiny $i$};\node[right] at (1,0) {\tiny $0$};
	\node[below] at (0,-1) {\tiny $\bm{\A}$};\node[above] at (0,1) {\tiny $T_{q;i}\bm{\A}$};
}
\quad
&
\quad
\tikz{0.7}{
	\draw[unfused] (-1,0) -- (1,0);
	\draw[cont] (0,-1) -- (0,1);
	\node[left] at (-1,0) {\tiny $i$};\node[right] at (1,0) {\tiny $j$};
	\node[below] at (0,-1) {\tiny $\bm{\A}$};\node[above] at (0,1) 
	{\tiny $T_{q;i}T_{q;j}^{-1}\bm{\A}$};
}
\quad
&
\quad
\tikz{0.7}{
	\draw[unfused] (-1,0) -- (1,0);
	\draw[cont] (0,-1) -- (0,1);
	\node[left] at (-1,0) {\tiny $j$};\node[right] at (1,0) {\tiny $i$};
	\node[below] at (0,-1) {\tiny $\bm{\A}$};\node[above] at (0,1) {\tiny $T_{q;i}^{-1}T_{q;j}\bm{\A}$};
}
\quad
\\[1.3cm] 
\quad
$\dfrac{1-s^2 \A_{[1;n]}}{1-sz}$
\quad
& 
\quad
$\dfrac{sz(\A_j-1) \A_{[j+1;n]}}{1-sz}$
\quad
&
\quad
$\dfrac{s^2(\A_i-1)\A_{[i+1;n]}}{1-sz}$
\quad
\\[0.7cm]
\hline
\end{tabular} 
\end{align}
Here $1\leq i<j\leq n$, for the analytically continued composition $\bm{\A}=(\A_1, \dots, \A_n)\in\mathbb{C}^n$ we set $\A_{[a,b]}:=\prod_{i=a}^b\A_i$ and $T_{q;i}$ (resp. $T^{-1}_{q;i}$) denotes the operator multiplying the $i$th component by $q$ (resp. by $q^{-1}$). For the configurations not listed above we assume that the weight $\widehat{w}_{z;s}(\bm{\A}, j;\bm{\B}, l)$ vanishes, so that the conservation law holds:
\be
T_{q;j}\bm{\A}=T_{q;l}\bm{\B},
\ee
where we define $T_{q;0}$ as the identity operator. 

Comparing \eqref{hsdef} and \eqref{hscontdef} we get the following relation for any $0\leq j,l\leq n$ and compositions $\bI,\bK$:
\begin{equation}
\label{contSpec}
\tikz{1}{
	\draw[unfused] (-1,0) -- (1,0);
	\draw[cont] (0,-1) -- (0,1);
	\node[left] at (-1,0) {\tiny $j$};\node[right] at (1,0) {\tiny $l$};
	\node[below] at (0,-1) {\tiny $\restr{\(T_{\eta^2;j}{\bm{\A}}\)}{\substack{\A_c=q^{I_c},\\ c=1,\dots,n}}$};\node[above] at (0,1) {\tiny $\restr{\(T_{\eta^2; j}{\bm{\B}}\)}{\substack{\B_c=q^{K_c}\\c=1,\dots,n}}$};
	\node[above right] at (0,0) {\tiny $\widehat{w}_{z;s}$};
}=
\tikz{1}{
	\draw[unfused] (-1,0) -- (1,0);
	\draw[fused] (0,-1) -- (0,1);
	\node[left] at (-1,0) {\tiny ${j}$};\node[right] at (1,0) {\tiny $l$};
	\node[below] at (0,-1) {\tiny ${\bm I}$};\node[above] at (0,1) {\tiny ${\bm K}$};
	\node[above right] at (0,0) {\tiny $w_{z/\eta; \eta s}$};
},
\end{equation}
where in the left hand-side we specialize $\bm{\A}=(q^{I_1}, \dots, q^{I_n}), \bm{\B}=(q^{K_1}, \dots, q^{K_n})$ and, as before, $T_{\eta^2;j}$ denotes the operator multiplying the $j$th position by $\eta^2$, acting by identity if $j=0$. For $\eta=1$ the relation \eqref{contSpec} explains the interpretation of the weights $\widehat{w}_{z;s}$ as an analytic continuation of the weights $w_{z;s}$.

The next step is an analytic continuation of the weights $W_{t,s}(\bA,\bB;\bC,\bD)$. Here we make use of the fact that these weights actually do not depend on $\bB$ and $\bC$ as long as the conservation law holds, so we define
\be
\tikz{1}{
	\draw[fused] (0,-1) -- (0,0) -- (1,0);
	\draw[cont] (-1,0) -- (0,0) -- (0,1);
	\node[left] at (-1,0) {\tiny ${\bm{\B}}$};\node[right] at (1,0) {\tiny $\bD$};
	\node[below] at (0,-1) {\tiny ${\bA}$};\node[above] at (0,1) {\tiny ${\bm{\C}}$};
	\node[above right] at (0,0.1) {\tiny $\widehat{W}_{t,s}$};
}=\1_{q^{\bA}\bm{\B}=q^{\bD}\bm{\C}}\  (s^2/t^2)^{|\bD|}\frac{(s^2/t^2;q)_{|\bA|-|\bD|}(t^2;q)_{|\bD|}}{(s^2;q)_{|\bA|}}q^{\sum_{i<j}D_i(A_j-D_j)}\prod_{i=1}^n\binom{A_i}{D_i}_q,
\ee
where $\bm{\B}=(\B_1, \dots, \B_n)$ and $\bm{\C}=(\C_1, \dots, \C_n)$ are analytically continued compositions, and the conservation law $q^{\bA}\bm{\B}=q^{\bD}\bm{\C}$ reads as $q^{A_i}\B_i=q^{D_i}\C_i$ for every $i$.

Using the analytically continued weights $\widehat{w}_{z;s}$ and $\widehat{W}_{s,t}$, the Yang-Baxter equation \eqref{hsYB} can be written as 
\begin{equation}
\label{contYB}
\tikzbase{1.2}{-3}{
	\draw[unfused]
	(-2,0.5) node[above,scale=0.6] {\color{black} $a_1$} -- (-1,-0.5) -- (1,-0.5) node[right,scale=0.6] {\color{black} $b_1$};
	\draw[fused] 
	(0,-1.5) node[below,scale=0.6] {\color{black} $\bA_3$} -- (0,0) -- (0,0.5) -- (1,0.5) node[right,scale=0.6] {\color{black} $\bB_2$};
	\draw[cont] 
	(-2,-0.5) node[below,scale=0.6] {\color{black} $\bm{\A}$} -- (-1,0.5)  -- (0,0.5) -- (0,1.5) node[above,scale=0.6] {\color{black} $\bm{\B}$};
	\node[above right] at (0,-0.5) {\tiny{ $w_{xs;s}$}};
	\node[above right] at (0,0.6) {\tiny{ $\widehat{W}_{t,s}$}};
	\node[right] at (-1.5,0) {\ \tiny{$\widehat{w}_{xt;t}$}};
}\qquad
=\qquad
\tikzbase{1.2}{-3}{
	\draw[unfused] 
	(-1,1) node[left,scale=0.6] {\color{black} $a_1$} -- (1,1) -- (2,0) node[below,scale=0.6] {\color{black} $b_1$};
	\draw[fused] 
	(0,-1) node[below,scale=0.6] {\color{black} $\bA_3$} -- (0,0) -- (1,0) -- (2,1) node[above,scale=0.6] {\color{black} $\bB_2$} ;
	\draw[cont] 
	(-1,0) node[left,scale=0.6] {\color{black} $\bm{\A}$} -- (0,0) -- (0,2) node[above,scale=0.6] {\color{black} $\bm{\B}$};
	\node[above right] at (0,0) {\tiny{ $\widehat{W}_{t,s}$}};
	\node[above right] at (0,1) {\tiny{ $\widehat{w}_{xs;s}$}};
	\node[right] at (1.5,0.5) {\ \tiny{$w_{xt;t}$}};
}
\end{equation}
where $\bm{\A}=q^{\bm{X}}:=(q^{X_1}, \dots, q^{X_n})$ for sufficiently large $\bm{X}=(X_1, \dots, X_n)\in\mathbb Z_{\geq 0}$, and $\bm{\B}$ satisfies
\be
T_{q;a_1}q^{\bA_3}\bm{\A}=T_{q;b_1}q^{\bB_2}\bm{\B}.
\ee
As before, the diagrams in \eqref{contYB} denote sums over all configurations not violating the conservation law for the weights $w,\widehat{w}$ and $\widehat{W}$. For fixed boundary conditions there are at most $n+1$ such configurations for each side of \eqref{contYB}: each allowed configuration is uniquely determined by the label of the internal thin edge, which has $n+1$ possibilities. Since all weights depend rationally on $\bm{\A}=(\A_1, \dots, \A_n)$, both sides of \eqref{contYB} are rational functions in $\bm{\A}$ coinciding for $\bm{\A}=(q^{X_1}, \dots, q^{X_n})$, which implies that they are equal. So the analytically continued equation \eqref{contYB} holds for any $\bm{\A}$. 

The proof is finished by applying $T_{\eta^2;a_1}$ to $\bm{\A}$ and $\bm{\B}$ and then setting back $\bm{\A}=q^{\bA_2}$ and $\bm{\B}=q^{\bB_3}$. By \eqref{contSpec} the resulting equation coincides with the claim.
\end{proof}


\section{Appendix: Computations for Theorem \ref{mainBetaResult}}\label{app:num} Here we provide computational arguments for several statements used in the proof of Theorem \ref{mainBetaResult}. Recall that for fixed real parameters $\{\sigma_1 ,\dots, \sigma_k\}$, $\{\rho_1 ,\dots, \rho_k\}$, $\{\omega_1 ,\dots, \omega_k\}$, $\{\alpha_1 ,\dots, \alpha_k\}$, $\{\beta_1 ,\dots, \beta_k\}$, $\{\gamma_1 ,\dots, \gamma_k\}$, $\theta$ satisfying
\be
\sum_{i=1}^k\alpha_i=\sum_{i=1}^k\beta_i=\sum_{i=1}^k\gamma_i=1,\qquad \alpha_i,\beta_i,\gamma_i\geq 0,
\ee
\be
\omega_d<\rho_j<\sigma_i<\theta\qquad \text{for\ any\ } i,j,d,
\ee
we have defined
\be
x=\sum_i\beta_i\Psi_1(\theta-\rho_i)-\sum_i\gamma_i\Psi_1(\theta-\omega_i),\qquad y=\sum_i\alpha_i\Psi_1(\theta-\sigma_i)-\sum_i\gamma_i\Psi_1(\theta-\omega_i),
\ee
\be
I=x\sum_i\alpha_i\Psi(\theta-\sigma_i)-y\sum_i\beta_i\Psi(\theta-\rho_i)+ (y-x)\sum_i\gamma_i\Psi(\theta-\omega_i),
\ee
\be
h(z)=Iz-\sum_ix\alpha_i\ln\Gamma(z-\sigma_i)+\sum_iy\beta_i\ln\Gamma(z-\rho_i)-\sum_i(y-x)\gamma_i\ln\Gamma(z-\omega_i),
\ee
where 
\be
\Psi(z)=\frac{d}{dz}\ln\Gamma(z),\qquad \Psi_k(z)=\frac{d^{k+1}}{dz^{k+1}}\ln \Gamma(z).
\ee
For the computational purposes it is convenient to plug the expressions for $x,y,I$ into $h(z)$ to obtain
\begin{multline*}
h(z)=-\(\sum_i\beta_i\Psi_1(\theta-\rho_i)-\sum_i\gamma_i\Psi_1(\theta-\omega_i)\)\sum_i\alpha_i\(\ln\Gamma(z-\sigma_i)-z\Psi(\theta-\sigma_i)\)\\
+\(\sum_i\alpha_i\Psi_1(\theta-\sigma_i)-\sum_i\gamma_i\Psi_1(\theta-\omega_i)\)\sum_i\beta_i\(\ln\Gamma(z-\rho_i)-z\Psi(\theta-\rho_i)\)\\
-\(\sum_i\alpha_i\Psi_1(\theta-\sigma_i)-\sum_i\beta_i\Psi_1(\theta-\rho_i)\)\sum_i\gamma_i\(\ln\Gamma(z-\omega_i)-z\Psi(\theta-\omega_i)\).
\end{multline*}
The last expression can be rewritten as a convex sum of simpler functions, corresponding to $k=1$ case:
\begin{equation}\label{convex}
h(z)=\sum_{i=1}^k\sum_{j=1}^k\sum_{d=1}^k\alpha_i\beta_j\gamma_dh_{\sigma_i,\rho_j,\omega_d}(z),
\end{equation}
\begin{multline*}
h_{\sigma,\rho,\omega}(z)=-\(\Psi_1(\theta-\rho)-\Psi_1(\theta-\omega)\)\(\ln\Gamma(z-\sigma)-z\Psi(\theta-\sigma)\)\\
+\(\Psi_1(\theta-\sigma)-\Psi_1(\theta-\omega)\)\(\ln\Gamma(z-\rho)-z\Psi(\theta-\rho)\)\\
-\(\Psi_1(\theta-\sigma)-\Psi_1(\theta-\rho)\)\(\ln\Gamma(z-\omega)-z\Psi(\theta-\omega)\).
\end{multline*}
We will mostly work with the latter functions $h_{\sigma,\rho,\omega}(z)$, which for further convenience we rewrite as 
\begin{multline*}
h_{\sigma,\rho,\omega}(z)=\(\Psi_1(\theta-\rho)-\Psi_1(\theta-\omega)\)\(\ln\Gamma(z-\rho)-\ln\Gamma(z-\sigma)-z\Psi(\theta-\rho)+z\Psi(\theta-\sigma)\)\\
+\(\Psi_1(\theta-\sigma)-\Psi_1(\theta-\rho)\)\(\ln\Gamma(z-\rho)-\ln\Gamma(z-\omega)-z\Psi(\theta-\rho)+z\Psi(\theta-\omega)\).
\end{multline*}

In the following computations we frequently use the following series representations:
\begin{equation}
\label{Psiseries}
\Psi(z)-\Psi(\theta)=\sum_{n\geq 0}-\frac{1}{n+z}+\frac{1}{n+\theta}=\sum_{n\geq 0}\frac{z-\theta}{(n+\theta)(n+z)},
\end{equation}
\begin{equation}
\label{Psi1series}
\Psi_k(z)=\sum_{n\geq 0}\frac{(-1)^{k+1}k!}{(n+z)^{k+1}}.
\end{equation}
In particular, the functions $(-1)^{k}\Psi_k(z)$ are increasing functions from $(0,\infty)$ to $(-\infty,0)$, and 
\begin{equation}
\label{Psirecursion}
\Psi_k(z+1)=\Psi_k(z)-\frac{(-1)^{k+1}k!}{z^{k+1}}.
\end{equation}

\subsection{Derivatives of $h(z)$} This section is devoted to the signs of $h^{(d)}(\theta)$.

\begin{lem} \label{derivatives}
For any $d\geq 3$ we have $(-1)^{d+1}h^{(d)}(\theta)>0$.
\end{lem}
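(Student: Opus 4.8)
\emph{Proof plan.} The plan is to reduce to the one-parameter case via the convex decomposition \eqref{convex}, rewrite the $d$-th derivative at $\theta$ as a combination of polygamma increments, and then reduce the whole inequality to the monotonicity of a single ratio of polygamma-type series.

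\textbf{Step 1 (reduction to one triple).} By \eqref{convex} we have $h=\sum_{i,j,l}\alpha_i\beta_j\gamma_l\,h_{\sigma_i,\rho_j,\omega_l}$ with coefficients $\alpha_i\beta_j\gamma_l\ge 0$ summing to $1$, so at least one of them is strictly positive. Hence it suffices to prove $(-1)^{d+1}h_{\sigma,\rho,\omega}^{(d)}(\theta)>0$ for each fixed triple $\omega<\rho<\sigma<\theta$; summing over the triples then yields the strict inequality for $h$.

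\textbf{Step 2 (derivative formula).} Put $u_k(z):=(-1)^{k+1}\Psi_k(z)=\sum_{n\ge 0}\frac{k!}{(n+z)^{k+1}}$ for $k\ge 1$, so that on $(0,\infty)$ each $u_k$ is positive, strictly decreasing, and $u_k'=-u_{k+1}$. For $d\ge 3$, differentiating the displayed expression for $h_{\sigma,\rho,\omega}$ $d$ times annihilates every term linear in $z$ and sends each $\ln\Gamma(z-a)$ to $\Psi_{d-1}(z-a)$. Evaluating at $z=\theta$ and setting $a=\theta-\sigma$, $b=\theta-\rho$, $c=\theta-\omega$ (so $0<a<b<c$), and using $\Psi_1=u_1$, $\Psi_{d-1}=(-1)^{d}u_{d-1}$, $u_1'=-u_2$, $u_{d-1}'=-u_d$, a short computation gives
\[
(-1)^{d+1}h_{\sigma,\rho,\omega}^{(d)}(\theta)=\Big(\int_b^c u_2(t)\,dt\Big)\Big(\int_a^b u_d(t)\,dt\Big)-\Big(\int_a^b u_2(t)\,dt\Big)\Big(\int_b^c u_d(t)\,dt\Big).
\]
All four integrals are finite and strictly positive, so the claim is equivalent to $\dfrac{\int_a^b u_d}{\int_a^b u_2}>\dfrac{\int_b^c u_d}{\int_b^c u_2}$.

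\textbf{Step 3 (the key monotonicity).} I claim $u_d/u_2$ is strictly decreasing on $(0,\infty)$ for every $d\ge 3$. Writing $u_d/u_2=\prod_{k=2}^{d-1}(u_{k+1}/u_k)$ (a finite telescoping product of positive functions), it is enough to prove each $u_{k+1}/u_k$ is strictly decreasing. From $(u_{k+1}/u_k)'=(u_{k+1}^2-u_k u_{k+2})/u_k^2$ this reduces to $u_{k+1}(z)^2<u_k(z)u_{k+2}(z)$, which follows by applying the Cauchy--Schwarz inequality to $\sum_n(n+z)^{-(k+2)}=\sum_n(n+z)^{-(k+1)/2}(n+z)^{-(k+3)/2}$ (the two sequences are not proportional) together with the elementary estimate $((k+1)!)^2<k!\,(k+2)!$. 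Granting this, $\int_a^b u_d/\int_a^b u_2$ is a $u_2$-weighted average over $[a,b]$ of the strictly decreasing function $u_d/u_2$, hence is strictly larger than $(u_d/u_2)(b)$, which in turn strictly exceeds the analogous average $\int_b^c u_d/\int_b^c u_2$ over $[b,c]$; this is exactly the inequality needed in Step 2, completing the proof.

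The only step that is not purely formal is Step 3, i.e. the log-convexity-type bound $u_{k+1}^2<u_k u_{k+2}$ and its propagation through the product; but this is a one-line Cauchy--Schwarz argument, so no real obstacle is anticipated.
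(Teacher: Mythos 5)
Your proof is correct. The overall skeleton coincides with the paper's: both reduce to a single triple $(\sigma,\rho,\omega)$ via the convex decomposition \eqref{convex}, write $h^{(d)}_{\sigma,\rho,\omega}(\theta)$ as the same $2\times 2$ combination of polygamma increments, and reduce positivity to the strict monotonicity of $u_d/u_2$ with $u_k=(-1)^{k+1}\Psi_k$ (the paper states this as $(-1)^{d-1}\Psi_d/\Psi_2$ being increasing, which is the same claim). The differences lie in the two supporting steps. To pass from increment ratios to the pointwise ratio, the paper invokes Cauchy's mean value theorem, while you use $\Psi_k(b)-\Psi_k(a)=\pm\int_a^b u_{k+1}$ and a weighted-average comparison; these are essentially interchangeable. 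The genuine divergence is in the proof of the key monotonicity: the paper differentiates, expands $\Psi_{d+1}\Psi_2-\Psi_d\Psi_3$ as a double series and symmetrizes in $(n,m)$, with an estimate that depends on $d$, whereas you telescope $u_d/u_2=\prod_{k=2}^{d-1}u_{k+1}/u_k$ and reduce everything to the single Tur\'an-type inequality $u_{k+1}^2<u_k u_{k+2}$, obtained from Cauchy--Schwarz together with $((k+1)!)^2<k!\,(k+2)!$. Your route is more modular and arguably cleaner; note that both arguments use $d\geq 3$ in an essential way (your telescoping product is empty for $d=2$, as it must be, since $u_2/u_2$ is constant and the statement fails there).
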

\begin{proof}
By \eqref{convex} it is enough to prove that $(-1)^{d+1}h_{\sigma,\rho,\omega}^{(d)}(\theta)>0$ for $\omega<\rho<\sigma$. By a slight abuse of notation let $h$ denote $h_{\sigma,\rho,\omega}$ during the proof.

Our approach follows \cite[Lemma 5.3]{BC15b}. For $d\geq 3$ we have
\begin{multline*}
h^{(d)}(\theta)=\(\Psi_1(\theta-\rho)-\Psi_1(\theta-\omega)\)\(\Psi_{d-1}(\theta-\rho)-\Psi_{d-1}(\theta-\sigma)\)\\
+\(\Psi_1(\theta-\sigma)-\Psi_1(\theta-\rho)\)\(\Psi_{d-1}(\theta-\rho)-\Psi_{d-1}(\theta-\omega)\).
\end{multline*}
Thus, we need to prove 
\be
\frac{(-1)^{d-1}\Psi_{d-1}(\theta-\rho)-(-1)^{d-1}\Psi_{d-1}(\theta-\omega)}{\Psi_1(\theta-\rho)-\Psi_1(\theta-\omega)}>\frac{(-1)^{d-1}\Psi_{d-1}(\theta-\sigma)-(-1)^{d-1}\Psi_{d-1}(\theta-\rho)}{\Psi_1(\theta-\sigma)-\Psi_1(\theta-\rho)}.
\ee
By the Cauchy's mean value theorem this inequality is equivalent to
\be
\frac{(-1)^{d-1}\Psi_{d}(\zeta)}{\Psi_2(\zeta)}>\frac{(-1)^{d-1}\Psi_{d}(\xi)}{\Psi_2(\xi)}
\ee
for some $\xi\in(\theta-\sigma,\theta-\rho)$ and $\zeta\in(\theta-\rho,\theta-\omega)$. Since $\xi<\zeta$, it is enough to prove that the function $\frac{(-1)^{d-1}\Psi_{d}(z)}{\Psi_2(z)}$ is increasing on $\mathbb R_{>0}$. Taking derivative, it is enough to prove that
\be
(-1)^{d-1}\(\Psi_{d+1}(z)\Psi_{2}(z)-\Psi_{d}(z)\Psi_{3}(z)\)>0,\qquad z>0.
\ee
Using series representation \eqref{Psi1series}, the left hand side can be rewritten as 
\begin{multline*}
(-1)^{d-1}\(\Psi_{d+1}(z)\Psi_{2}(z)-\Psi_{d}(z)\Psi_{3}(z)\)=\sum_{n,m\geq 0}\frac{2(d+1)!}{(n+z)^{d+2}(m+z)^3}-\frac{6\ d!}{(n+z)^{d+1}(m+z)^4}\\
=2d! \sum_{n,m\geq 0}\frac{(d+1)(m+z)^{d-1}-3(n+z)(m+z)^{d-2}}{(n+z)^{d+2}(m+z)^{d+2}}\geq 6 d! \sum_{n,m\geq 0}\frac{(m-n)(m+z)^{d-2}}{(n+z)^{d+2}(m+z)^{d+2}},
\end{multline*}
where we have used $d\geq 3$ to establish inequality. The proof is finished by symmetrizing the last summation:
\be
2\sum_{n,m\geq 0}\frac{(m-n)(m+z)^{d-2}}{(n+z)^{d+2}(m+z)^{d+2}}=\sum_{n,m\geq 0}\frac{(m-n)\((m+z)^{d-2}-(n+z)^{d-2}\)}{(n+z)^{d+2}(m+z)^{d+2}}>0,
\ee
where for the last inequality we have used that the function $z^{d-2}$ is increasing for $d\geq 3$.
\end{proof}

\subsection{Proof of Proposition \ref{goodParam}} \label{app:goodparam} It turns out that the two parts Proposition \ref{goodParam} are closely related. In the first part we want to prove that $x(\theta)/y(\theta)$ is increasing as a function of $\theta\in (\max_i\sigma_i, \infty)$, where
\be
x=\sum_i\beta_i\Psi_1(\theta-\rho_i)-\sum_i\gamma_i\Psi_1(\theta-\omega_i),\qquad y=\sum_i\alpha_i\Psi_1(\theta-\sigma_i)-\sum_i\gamma_i\Psi_1(\theta-\omega_i).
\ee
Taking derivative, we need to prove that
\be
x'(\theta)y(\theta)-x(\theta)y'(\theta)>0.
\ee
After algebraic manipulations the left-hand side of the inequality above is equal to
\be
-x(\theta)\sum_i\alpha_i\Psi_2(\theta-\sigma_i)+y(\theta)\sum_i\beta_i\Psi_2(\theta-\rho_i) -(y(\theta)-x(\theta))\sum_i\gamma_i\Psi_2(\theta-\omega_i),
\ee
which is exactly how we have defined $2c(\theta)^3$. So, the monotonicity from the first part boils down to showing that $c(\theta)^3>0$, which would immediately imply that $c(\theta)$ can be chosen to be positive. But from the expression above it is clear that $\frac{c(\theta)^3}{3}=h'''(\theta)>0$, with the inequality following from Lemma~\ref{derivatives}.

The limits of $x(\theta)/y(\theta)$ as $\theta\to \max_i\sigma_i,\infty$ are readily obtained from the definitions and the limit behavior of the trigamma function:
\be
\lim_{z\to 0}\Psi_1(z)=\infty,\qquad \Psi_1(z)=\frac{1}{z}+\frac{1}{2z^2}+O(z^{-3}).
\ee
\qed

\subsection{Proof of Proposition \ref{LcontProp}} \label{app:Lcont} Here we give a slightly modified version of the proof of the corresponding fact in \cite{BC15b}. Recall that we want to show that $\Re[h(\theta+b\i)]$ decreases for positive $b$ and increases for negative $b$. Since $h(z)$ is a convex combination of $h_{\sigma,\rho,\omega}(z)$, it is enough to consider the latter functions. By an abuse of notation we again denote the function $h_{\sigma,\rho,\omega}(z)$ simply by $h(z)$. By symmetry, it is also enough to consider the case $\Im(z)>0$, so we want to prove that
\be
\Re[\i h'(\theta+b\i)]<0, \qquad b>0.
\ee
We have
\begin{multline*}
\Re[\i h'(\theta+b\i)]=-\Im[(\Psi_1(\theta-\rho)-\Psi_1(\theta-\omega))(\Psi(\theta+b\i-\rho)-\Psi(\theta+b\i-\sigma))\\
+(\Psi_1(\theta-\sigma)-\Psi_1(\theta-\rho))(\Psi(\theta+b\i-\rho)-\Psi(\theta+b\i-\omega))].
\end{multline*}
From \eqref{Psiseries} it follows that for any real $A_1,A_2>0$
\be
\Im[\Psi(A_1+b\i)-\Psi(A_2+b\i)]=\sum_{n\geq 0}\frac{b}{(n+A_1)^2+b^2}-\frac{b}{(n+A_2)^2+b^2}.
\ee
Thus, using that $b>0$ and setting 
\be
\Phi(A)=\sum_{n\geq 0}\frac{1}{(n+A)^2+b^2},
\ee
the claim is reduced to showing
\be
-(\Psi_1(\theta-\rho)-\Psi_1(\theta-\omega))(\Phi(\theta-\rho)-\Phi(\theta-\sigma))
-(\Psi_1(\theta-\sigma)-\Psi_1(\theta-\rho))(\Phi(\theta-\rho)-\Phi(\theta-\omega))<0.
\ee
This inequality can be rewritten in more convenient form 
\be
\frac{\Psi_1(\theta-\rho)-\Psi_1(\theta-\sigma)}{\Phi(\theta-\rho)-\Phi(\theta-\sigma)}>\frac{\Psi_1(\theta-\omega)-\Psi_1(\theta-\rho)}{\Phi(\theta-\omega)-\Phi(\theta-\rho)},
\ee
which by Cauchy's mean value theorem is equivalent to
\be
\frac{\Psi_2(\xi)}{\Phi'(\xi)}>\frac{\Psi_2(\zeta)}{\Phi'(\zeta)}
\ee
for some $\xi\in(\theta-\sigma, \theta-\rho)$ and $\zeta\in (\theta-\rho, \theta-\omega)$. Since $0<\xi<\zeta$, it is enough to show that the function $\frac{\Psi_2(z)}{\Phi'(z)}$ is decreasing for $z>0$, and by taking derivative this is equivalent to
\be
\Psi_2(z)\Phi''(z)-\Psi_3(z)\Phi'(z)>0,\qquad z>0.
\ee
Using the definition of $\Phi(x)$ and \eqref{Psi1series} we obtain
\begin{multline*}
\Psi_2(z)\Phi''(z)-\Psi_3(z)\Phi'(z)=\sum_{n,m\geq 0}-\frac{2}{(n+z)^3}\frac{6(z+m)^2-2b^2}{((m+z)^2+b^2)^3}+\frac{6}{(n+z)^4}\frac{2(m+z)}{((m+z)^2+b^2)^2}\\
=12\sum_{n,m\geq 0}\frac{(m+z)^3+b^2(m+z)-(n+z)(m+z)^2+\frac{b^2}{3}(n+z)}{(n+z)^4((m+z)^2+b^2)^3}.
\end{multline*}
Clearly the terms in the numerator containing $b^2$ are positive, while for the remaining terms we have
\be
\sum_{n,m\geq 0}\frac{(m+z)^3-(n+z)(m+z)^2}{(n+z)^4((m+z)^2+b^2)^3}=\frac{1}{2}\sum_{n,m\geq 0}\frac{(m-n)(m+z)^2}{(n+z)^4((m+z)^2+b^2)^3}+\frac{(n-m)(n+z)^2}{(m+z)^4((n+z)^2+b^2)^3},
\ee
where the equality follows by symmetrization. To finish the proof it is enough to show that each summand in the last expression is nonnegative for any $n,m,z\geq 0$, which is true:
\begin{multline*}
\frac{(m-n)(m+z)^2}{(n+z)^4((m+z)^2+b^2)^3}+\frac{(n-m)(n+z)^2}{(m+z)^4((n+z)^2+b^2)^3}\geq0\\
\Leftrightarrow (m-n)\left[\(\frac{(m+z)^2}{((m+z)^2+b^2)}\)^3-\(\frac{(n+z)^2}{((n+z)^2+b^2)}\)^3\right]\geq0\\
\Leftrightarrow (m-n)\left[\frac{1}{\(1+\frac{b^2}{(m+z)^2}\)^3}-\frac{1}{\(1+\frac{b^2}{(n+z)^2}\)^3}\right]\geq0.
\end{multline*}
The last inequality holds since $\(1+\frac{b^2}{(z+m)^2}\)^{-3}$ is increasing in $m$.\qed

\subsection{Proof of Proposition \ref{CcontProp}}\label{app:Ccont} From now on we are working under Assumption \ref{assumptions}, so $\theta\in \(0,\frac{1}{2}\)$ and the function $h(z)$ is a convex combination of the functions $h_{0,-1,\omega}(z)$ for $\omega<-1$. The latter functions are simpler, and for further convenience we provide an explicit expression for them:
\be
h_{0,-1,\omega}(z)=\(\Psi_1(\theta+1)-\Psi_1(\theta-\omega)\)\(\ln(z)-\frac{z}{\theta}\)-\frac{1}{\theta^2}\(\ln\Gamma(z-\omega)-\ln\Gamma(z+1)-z\Psi(\theta-\omega)+z\Psi(\theta+1)\).
\ee
Note that to get this expression we use \eqref{Psirecursion} in the form $\Psi_1(\theta+1)-\Psi_1(\theta)=-\frac{1}{\theta^2}$.

Recall that we want to show that $\Re[h(\theta e^{\phi \i})]$ is increasing for $\phi>0$ and decreasing for $\phi<0$. Due to assumptions it is enough to consider the functions $h_{0,-1,\omega}(z)$, so by an abuse of notation we let $h$ denote a function $h_{0,-1,\omega}(z)$ for the duration of the proof. We consider only the case $\phi>0$, the other case follows by symmetry.

We need to prove that
\be
\Re[\i \theta e^{\phi \i}h'(\theta e^{\phi \i})]>0, \qquad \phi\in(0,\pi).
\ee
Computing the derivative and applying \eqref{Psiseries},\eqref{Psi1series} we obtain
\begin{multline*}
h'(z)=(\Psi_1(\theta+1)-\Psi_1(\theta-\omega))\(\frac{1}{z}-\frac{1}{\theta}\)-\frac{1}{\theta^2}(\Psi(z-\omega)-\Psi(z+1)-\Psi(\theta-\omega)+\Psi(\theta+1))\\
=\sum_{n\geq 0}\left[\frac{\theta-z}{z\theta}\(\frac{1}{(n+\theta+1)^2}-\frac{1}{(n+\theta-\omega)^2}\)-\frac{1}{\theta^2}\(\frac{z-\theta}{(n+\theta-\omega)(n+z-\omega)}-\frac{z-\theta}{(n+\theta+1)(n+z+1)}\)\right]\\
=\frac{(z-\theta)^2}{z\theta^2}\sum_{n\geq 0}\left[\frac{n+1}{(n+\theta+1)^2(n+z+1)}-\frac{n-\omega}{(n+\theta-\omega)^2(n+z-\omega)}\right].
\end{multline*}
Note that for any real $\tau$
\be
\Re\left [\i\theta e^{\phi \i} \frac{(e^{\phi \i}-1)^2}{\theta e^{\phi \i}}\frac{1}{\theta e^{\phi \i}+\tau}\right ]=\left |1-e^{\phi \i}\right|^2 \Re\left[-\i \frac{e^{\phi \i}}{\theta e^{\phi \i}+\tau}\right]=\left|1-e^{\phi \i}\right|^2\frac{\tau\sin\phi}{\tau^2+\theta^2+2\theta \tau \cos\phi},
\ee
so plugging $z=\theta e^{\phi\i}$ in the expression for $h'(z)$ and factoring out positive terms  we need to prove that
\be
\sum_{n\geq 0}\left [\frac{(n+1)^2}{((n+1)^2+\theta^2+2\theta (n+1) \cos\phi)(n+1+\theta)^2}-\frac{(n-\omega)^2}{((n-\omega)^2+\theta^2+2\theta (n-\omega) \cos\phi)(n-\omega+\theta)^2}\right ]>0.
\ee
It is enough to show that each summand above is positive, which can be rewritten as
\be
\frac{X_1^2}{(X_1^2+\theta^2+2\theta X_1 \cos\phi)(X_1+\theta)^2}>\frac{X_2^2}{(X_2^2+\theta^2+2\theta X_2 \cos\phi)(X_2+\theta)^2}
\ee
for $X_1=n+1$ and $X_2=n-\omega$. Since $\omega<-1$ and both sides of the last inequality are clearly positive, we can equivalently show that
\be
f(X)=(X^2+\theta^2+2\theta X \cos\phi)\(1+\frac{\theta}{X}\)^2
\ee
is increasing in $X\geq 1$ for any fixed $\phi$. Taking derivative and using $\theta\in\(0,\frac{1}{2}\)$ we get
\begin{multline*}
f'(X)=2(X+\theta\cos\phi)\(1+\frac{\theta}{X}\)^2-\frac{2\theta}{X^2}(X^2+\theta^2+2\theta X \cos\phi)\(1+\frac{\theta}{X}\)\\
=2\(1-\frac{\theta^2}{X^2}\) \(X+\theta(1+\cos\phi)+\frac{\theta^2}{X}\)>0.
\end{multline*}
\qed


\subsection{Proof of Proposition \ref{AcontProp}}\label{app:Acont} For convenience, we repeat the statement: 

\begin{prop}Suppose that Assumption \ref{assumptions} is satisfied. Let $\psi(\ve)=\arg(v_{\ve}-\theta)$, where $v_{\ve}$ is the intersection point of $\mathcal C_\theta$ with the circle of radius $\ve>0$ around $\theta$ satisfying $\Im[v_{\ve}]>0$. Then, for sufficiently small fixed $\ve>0$ there exists a depending on $\ve$ constant $b>0$  such that
\be
\Re\left[h(\theta+\ve e^{\pm \phi \i})-h(\theta)\right]>b, \qquad \phi\in[\psi(\ve),2\pi/3].
\ee
\end{prop}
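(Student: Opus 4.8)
The plan is to combine the third-order Taylor expansion of $h$ at $\theta$ from Lemma \ref{nbhlemma} with the steep descent property of $\mathcal C_\theta$ from Proposition \ref{CcontProp}, after first reducing to the $+$ sign. Since the parameters $\sigma_i,\rho_i,\omega_i$ and the quantities $x,y,I,c$ are all real, $h(\bar z)=\overline{h(z)}$, and therefore $\Re[h(\theta+\ve e^{-\i\phi})-h(\theta)]=\Re[h(\theta+\ve e^{\i\phi})-h(\theta)]$; it thus suffices to bound the latter from below. By Lemma \ref{nbhlemma} there are a fixed $\ve_0\in(0,\theta)$ and a constant $C>0$ such that $|h(\theta+w)-h(\theta)-\tfrac{c^3}{3}w^3|\le C|w|^4$ and $|h'(\theta+w)-c^3w^2|\le C|w|^3$ for $|w|\le\ve_0$, where $c>0$ and the $\ve_0$-disk about $\theta$ avoids the poles $0,-1,\omega_i$ of $h$. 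Moreover, as $\theta$ is the rightmost point of the circle $\mathcal C_\theta$, the intersection point $v_\ve=\theta e^{\i\phi_\ve}$ (with small $\phi_\ve>0$) has $\Re(v_\ve-\theta)=\theta(\cos\phi_\ve-1)<0$ and $\Im(v_\ve-\theta)=\theta\sin\phi_\ve>0$, so $\psi(\ve)=\arg(v_\ve-\theta)\in(\pi/2,\pi)$ and $\psi(\ve)\to\pi/2$ as $\ve\to0$.

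Next I would fix $\eta=\pi/12$ and take $\ve$ small enough that $\ve<\ve_0$ and $\psi(\ve)<\pi/2+\eta$, splitting the interval as $[\psi(\ve),2\pi/3]=[\psi(\ve),\pi/2+\eta]\cup[\pi/2+\eta,2\pi/3]$. On the right piece I estimate the value directly: for $\phi\in[\pi/2+\eta,2\pi/3]$ one has $3\phi\in[3\pi/2+3\eta,2\pi]$, so $\cos(3\phi)\ge\sin(3\eta)>0$, and hence
\[
\Re\bigl[h(\theta+\ve e^{\i\phi})-h(\theta)\bigr]=\tfrac{c^3}{3}\ve^3\cos(3\phi)+O(\ve^4)\ \ge\ \tfrac{c^3}{3}\sin(3\eta)\,\ve^3-C\ve^4,
\]
which is strictly positive, uniformly in $\phi$, once $\ve<c^3\sin(3\eta)/(3C)$. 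On the left piece, near the tangent direction, the cubic term degenerates — there $\cos(3\phi)$ is of order $\ve$, the same size as the quartic error — so instead I would argue by monotonicity in $\phi$: differentiating,
\[
\frac{d}{d\phi}\Re\bigl[h(\theta+\ve e^{\i\phi})\bigr]=\Re\bigl[\i\ve e^{\i\phi}h'(\theta+\ve e^{\i\phi})\bigr]=-c^3\ve^3\sin(3\phi)+O(\ve^4),
\]
and for $\phi\in[\psi(\ve),\pi/2+\eta]$ we have $3\phi\in(3\pi/2,3\pi/2+3\eta]$, so $\sin(3\phi)\le-\cos(3\eta)<0$ and the derivative is $\ge c^3\cos(3\eta)\ve^3-C\ve^4>0$ for $\ve$ small. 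Thus $\phi\mapsto\Re[h(\theta+\ve e^{\i\phi})]$ increases on $[\psi(\ve),\pi/2+\eta]$, and combined with the bound $\Re[h(v_\ve)]>\Re[h(\theta)]$ — valid because Proposition \ref{CcontProp} makes $\mathcal C_\theta$ a steep descent contour for $-\Re[h]$, so $\Re[h(z)]>\Re[h(\theta)]$ for every $z\in\mathcal C_\theta\setminus\{\theta\}$ — this gives $\Re[h(\theta+\ve e^{\i\phi})-h(\theta)]\ge\Re[h(v_\ve)-h(\theta)]>0$ on the left piece as well.

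Putting the two pieces together, for every sufficiently small fixed $\ve$ the continuous map $\phi\mapsto\Re[h(\theta+\ve e^{\i\phi})-h(\theta)]$ is strictly positive on the compact set $[\psi(\ve),2\pi/3]$, hence bounded below by its positive minimum $b=b(\ve)$, which is the assertion. The main obstacle is precisely the behaviour near $\phi=\psi(\ve)$: since $\psi(\ve)\to\pi/2$, the leading Taylor coefficient $\cos(3\psi(\ve))$ vanishes to first order in $\ve$, of the same magnitude as the remainder, so a bare third-order expansion is not enough there and one genuinely needs the non-asymptotic information in Proposition \ref{CcontProp}, which enters through the monotonicity argument anchored at $v_\ve$.
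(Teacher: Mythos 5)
Your proof is correct, and on the delicate part of the range it takes a genuinely different route from the paper. Both arguments agree on the easy piece (away from the tangent direction, the cubic term $\tfrac{c^3}{3}\ve^3\cos(3\phi)$ dominates the Taylor remainder) and both exploit monotonicity in $\phi$ near $\phi=\psi(\ve)$, where $\cos(3\phi)=O(\ve)$ and the bare third-order expansion fails. The difference is in what is shown to be monotone and how the endpoint is handled. The paper expands $h$ to fourth order with an $O(\ve^5)$ remainder, shows the resulting explicit lower bound is increasing in $\phi$ on $[\psi(\ve),5\pi/8]$ using the signs $h'''(\theta)>0$, $h^{(4)}(\theta)<0$ from Lemma \ref{derivatives}, and then must verify positivity at $\phi=\psi(\ve)$ itself; since $\psi(\ve)=\pi/2+\tfrac{\ve}{2\theta}+O(\ve^2)$, this boils down to the inequality $\tfrac{h'''(\theta)}{\theta}+\tfrac{h^{(4)}(\theta)}{6}>0$, which the paper establishes by a separate series computation heavily tied to Assumption \ref{assumptions}. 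You instead differentiate the actual function, $\tfrac{d}{d\phi}\Re[h(\theta+\ve e^{\i\phi})]=-c^3\ve^3\sin(3\phi)+O(\ve^4)>0$ on $[\psi(\ve),\pi/2+\eta]$ (needing only the third-order expansion of $h'$), and anchor the endpoint value by observing that $v_\ve=\theta+\ve e^{\i\psi(\ve)}$ lies on $\C_\theta$, so Proposition \ref{CcontProp} already gives $\Re[h(v_\ve)-h(\theta)]>0$. This eliminates the fourth-derivative endpoint computation entirely and makes transparent exactly where the non-asymptotic content of Proposition \ref{CcontProp} is used; the price is that your lower bound $b(\ve)$ near the tangent direction is non-explicit (it is whatever $\Re[h(v_\ve)-h(\theta)]$ happens to be), whereas the paper's version yields a quantitative $b\sim\ve^4$, though the proposition as stated only requires existence of some $b>0$ for each fixed small $\ve$, so nothing is lost.
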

\begin{proof}
Recall that $\theta\in \(0,\frac{1}{2}\)$. Due to the symmetry, it is enough to consider  $\Re[h(\theta+\ve e^{\phi \i})-h(\theta)]$. Since $h$ is a convex sum of finitely many $h_{0,-1,\omega}$, it is enough to consider $h(z)\equiv h_{0,-1,\omega}(z)$.

We consider only $\ve<\theta/2$. By Taylor expansion there exists a constant $\Xi>0$ such that 
\be
\left|h(\theta+\ve e^{\phi\i})-h(\theta) - \frac{h'''(\theta)}{6}\ve^3e^{3\phi\i}-\frac{h^{(4)}(\theta)}{24}\ve^4e^{4\phi\i}\right|<\Xi\ve^5
\ee
Taking real part and using trigonometric equalities $\cos(3\phi)=\sin(3(\phi-\pi/2))$ and $\cos(4\phi)=\cos(4(\phi-\pi/2))$, we get
\be
\Re\left[h(\theta+\ve e^{\phi\i})-h(\theta)\right]\geq \frac{h'''(\theta)}{6}\ve^3\sin(3(\phi-\pi/2))+\frac{h^{(4)}(\theta)}{24}\ve^4\cos(4(\phi-\pi/2))-\Xi\ve^5
\ee
Since $h'''(\theta)>0$ by Lemma \ref{derivatives}, for any $\phi\in[5\pi/8, 2\pi/3]$ we clearly have
\be
\Re\left[h(\theta+\ve e^{\phi\i})-h(\theta)\right]\geq \frac{h'''(\theta)}{6}\ve^3\sin(3\pi/8)-\Lambda_1\ve^4>\Lambda_2\ve^3
\ee
for some constants $\Lambda_1,\Lambda_2>0$ and sufficiently small $\ve$. So it is enough to consider the claim for $\phi\in [\psi(\ve), 5\pi/8]$. Moreover, note that for $\phi\in[\pi/2, 5\pi/8]$ and fixed $\ve$ the function 
\begin{equation}
\label{studiedfunction}
\frac{h'''(\theta)}{6}\ve^3\sin(3(\phi-\pi/2))+\frac{h^{(4)}(\theta)}{24}\ve^4\cos(4(\phi-\pi/2))-\Xi\ve^5
\end{equation}
is increasing as a function of $\phi$, where we have again used Lemma \ref{derivatives} to get $h^{(4)}(\theta)<0$. Hence the claim reduces to showing that the function above is positive at $\phi=\psi(\ve)$.

Recall that $\psi(\ve)$ was defined using $v_\ve$, which can be explicitly described:
\be
v_\ve=\frac{2\theta^2-\ve^2}{2\theta}+\ve\sqrt{1-\frac{\ve^2}{4\theta^2}}\i.
\ee
On the other hand, $v_\ve=\theta+\ve e^{\psi(\ve)\i}$, so we obtain
\be
\frac{v_\ve-\theta}{\ve}=-\frac{\ve}{2\theta}+\sqrt{1-\frac{\ve^2}{4\theta^2}}\i=\cos(\psi(\ve))+\sin(\psi(\ve))\i.
\ee
This implies that $\psi(\ve)=\frac{\pi}{2}+\frac{\ve}{2\theta}+O(\ve^2)$ as $\ve\to 0$. Plugging it into the studied function \eqref{studiedfunction} we get
\be
\frac{h'''(\theta)}{6}\ve^3\sin(3(\psi(\ve)-\pi/2))+\frac{h^{(4)}(\theta)}{24}\ve^4\cos(4(\psi(\ve)-\pi/2))-\Xi\ve^5>\ve^4\(\frac{h'''(\theta)}{4\theta}+\frac{h^{(4)}(\theta)}{24}\)-\Lambda_3\ve^5
\ee
for some constant $\Lambda_3>0$. So the claim follows as long as
\be
\frac{h'''(\theta)}{\theta}+\frac{h^{(4)}(\theta)}{6}>0.
\ee

We prove the last inequality by direct computation. Recall that
\be
h(z)=(\Psi_1(\theta+1)-\Psi_1(\theta-\omega))\(\ln(z)-\frac{z}{\theta}\)-\frac{1}{\theta^2}(\ln\Gamma(z-\omega)-\ln\Gamma(z+1)-z\Psi(\theta-\omega)+z\Psi(\theta+1)),
\ee
where $\omega<-1$. The derivatives are given by
\be
h'''(\theta)=\frac{2}{\theta^3}(\Psi_1(\theta+1)-\Psi_1(\theta-\omega))-\frac{1}{\theta^2}(\Psi_2(\theta-\omega)-\Psi_2(\theta+1)),
\ee
\be
h^{(4)}(\theta)=-\frac{6}{\theta^4}(\Psi_1(\theta+1)-\Psi_1(\theta-\omega))-\frac{1}{\theta^2}(\Psi_3(\theta-\omega)-\Psi_3(\theta+1)).
\ee
Rearranging terms we obtain
\be
\frac{h'''(\theta)}{\theta}+\frac{h^{(4)}(\theta)}{6}=f(-\omega)-f(1)
\ee
where 
\be
f(\tau)=-\frac{1}{\theta^4}\Psi_1(\theta+\tau)-\frac{1}{\theta^3}\Psi_2(\theta+\tau)-\frac{1}{6\theta^2}\Psi_3(\theta+\tau).
\ee
So, it is enough to prove that $f(\tau)$ is increasing in $\tau\geq 1$. Using series representations \eqref{Psi1series} write
\be
f(\tau)=\frac{1}{\theta^4}\sum_{n\geq 0}-\frac{1}{(n+\theta+\tau)^2}+\frac{2\theta}{(n+\theta+\tau)^3}-\frac{\theta^2}{(n+\theta+\tau)^4}=\sum_{n\geq 0}\frac{-(n+\tau)^2}{\theta^4(n+\theta+\tau)^4}
\ee
The claim follows since $-\frac{X^2}{(X+\theta)^4}$ is increasing in $X\geq 1$:
\be
\(-\frac{X^2}{(X+\theta)^4}\)'=\frac{-2X(X+\theta)+4X^2}{(X+\theta)^5}=\frac{2X(X-\theta)}{(X+\theta)^5}>0.
\ee
\end{proof}

\end{appendix}


\begin{thebibliography}{999999999}

\bibitem[ACQ10]{ACQ10} G. Amir, I. Corwin, J. Quastel, \emph{Probability Distribution of the Free Energy of the Continuum Directed Random Polymer in 1+1 dimensions}, Comm. Pure Appl. Math 64, pp~466--537, 2011; {\tt arXiv:1003.0443}.

\bibitem[BC11]{BC11} A. Borodin, I. Corwin, \emph{Macdonald processes}, Probability Theory and Related Fields 158, pp~225–400, 2014; {\tt arXiv:1111.4408}.

\bibitem[BC15a]{BC15a} G. Barraquand, I. Corwin, \emph{The $q$-Hahn asymmetric exclusion process}, Ann. Appl. Probab. 26(4), pp~2304--2356, 2016; {\tt arXiv:1501.03445}.

\bibitem[BC15b]{BC15b} G. Barraquand, I. Corwin, \emph{Random walk in Beta-distributed random environment}, Probability Theory and Related Fields 167(3), pp~1057--1116, 2017; {\tt arXiv:1503.04117}.

\bibitem[BCD20]{BCD20} G. Barraquand, I. Corwin, E. Dimitrov, \emph{Fluctuations of the log-gamma polymer free energy with general parameters and slopes}, 2020; {\tt arXiv:2012.12316}.

\bibitem[BCF12]{BCF12} A. Borodin, I. Corwin, P. Ferrari, \emph{Free energy fluctuations for directed polymers in random media in 1+1 dimension}, Comm. Pure Appl. Math. 67(7), pp~1129–1214, 2014; {\tt arXiv:1204.1024}.

\bibitem[BCG14]{BCG14} A. Borodin, I. Corwin, V. Gorin, \emph{Stochastic six-vertex model}, Duke Math. J. 165(3), pp~563--624, 2016; {\tt arXiv:1407.6729}.

\bibitem[BCPS13]{BCPS13} A. Borodin, I. Corwin, L. Petrov, and T. Sasamoto, \emph{Spectral theory for the q-Boson particle system}, Compos. Math. 151, pp~1–67, 2015; {\tt arXiv:1308.3475}.

%
\bibitem[BLD19]{BLD19} G. Barraquand, P. Le Doussal, \emph{Moderate deviations for diffusion in time dependent random media}, J. Phys. A: Math. Theor. 53 215002, 2020;  {\tt arXiv:1912.11085}.
%

\bibitem[BG18]{BG18} A. Borodin, V. Gorin, \emph{A stochastic telegraph equation from the six-vertex model}, Ann. Probab. 47(6), 4137-4194, 2019; {\tt arXiv:1803.09137}.

\bibitem[BGW19]{BGW19} A. Borodin, V. Gorin, M. Wheeler, \emph{Shift-invariance for vertex models and polymers}, 2019; {\tt arXiv:1912.02957}.

\bibitem[BK20]{BK20} A. Bufetov, S. Korotkikh, \emph{Observables of stochastic colored vertex models and local relation}, 2020; {\tt arXiv:2011.11426}.

\bibitem[BK21]{BK21} A. Borodin, S. Korotkikh, \emph{Inhomogeneous spin $q$-Whittaker polynomials}, 2021; {\tt arXiv:2104.01415}.

\bibitem[BM16]{BM16}  G. Bosnjak, V. Mangazeev, \emph{Construction of $R$-matrices for symmetric tensor representations related to $U_q(\widehat{sl_n})$}, J. Phys. A: Math. Theor. 49, 2016, {\tt arXiv:1607.07968}.

\bibitem[Bor14]{Bor14} A. Borodin, \emph{On a family of symmetric rational functions}, Adv. Math., 306, pp~973--1018, 2017; {\tt arXiv:1410.0976}.

\bibitem[BP16]{BP16} A. Borodin and L. Petrov, \emph{Higher spin six vertex model and symmetric rational functions}, Selecta Mathematica 24, pp~751--874, 2018; {\tt arXiv:1601.05770}.

\bibitem[BR19]{BR19} G. Barraquand and M. Rychnovsky. \emph{Large deviations for sticky Brownian motions}, Electron. J. Probab. 25 119, pp 1--52, 2020; {\tt arXiv:1905.10280}.

%
\bibitem[BRS18]{BRS18} M. Bal\'{a}zs, F. Rassoul-Agha, T. Sepp\"{a}l\"{a}inen. \emph{Large deviations and wandering exponent for random walk in a dynamic beta environment}, Ann. Probab. 47(4), pp~2186-2229, 2019; {\tt arXiv:1801.08070}.
%

\bibitem[BW18]{BW18} A. Borodin, M. Wheeler. \emph{Coloured stochastic vertex models and their spectral theory}, 2018; {\tt arXiv:1808.01866}.

\bibitem[BW17]{BW17} A. Borodin and M. Wheeler. \emph{Spin $q$-Whittaker polynomials}, Adv. Math. 376, 107449, 2021; {\tt arXiv:1701.06292}.

\bibitem[BW20]{BW20} A. Borodin, M. Wheeler. \emph{Observables of coloured stochastic vertex models and their polymer limits}, Prob. Math. Phys. 1, pp~205--265, 2020; {\tt arXiv:2001.04913}.

\bibitem[Cor14]{Cor14} I. Corwin. \emph{The q-Hahn Boson process and q-Hahn TASEP},  Intern. Math. Research Notices 2015(14), pp~5577–5603, 2015; {\tt arXiv:1401.3321}.

\bibitem[COSZ11]{COSZ11} I. Corwin, N. O'Connell, T. Sepp\"{a}l\"{a}inen, N. Zygouras. \emph{Tropical Combinatorics and Whittaker functions}, Duke Math. J. 163(3), pp~513-563, 2014; {\tt arXiv:1110.3489}.

\bibitem[CP15]{CP15} I. Corwin, L. Petrov. \emph{Stochastic higher spin vertex models on the line}, Comm. Math. Phys. 343(2), pp~651-700, 2016; {\tt arXiv:1502.07374}.

\bibitem[CSS14]{CSS14} I. Corwin, T. Sepp\"{a}l\"{a}inen, H. Shen. \emph{The strict-weak lattice polymer}, Journal of Statistical Physics, 160, pp~1027–1053, 2015; {\tt arXiv:1409.1794}.

\bibitem[FV13]{FV13} P. Ferrari, B. Veto. \emph{Tracy-Widom asymptotics for q-TASEP}, Ann. Inst. Henri Poincaré Probab. Stat. 51(4), pp~1465-1485, 2015; {\tt arXiv:1310.2515}.

\bibitem[Gal20]{Gal20} P. Galashin. \emph{Symmetries of stochastic colored vertex models}, 2020; {\tt arXiv:2003.06330}.

\bibitem[HW06]{HW06} C. Howitt, J. Warren. \emph{Consistent families of Brownian motions and stochastic flows of kernels}, Annals of Probability 37(4), pp~1237--1272, 2009; {\tt arXiv:math/0611292}.

\bibitem[KC02]{KC02} V. Kac, P. Cheung. Quantum Calculus. Springer, New York, 2002.

\bibitem[KMMO16]{KMMO16} A. Kuniba, V. Mangazeev, S. Maruyama, and M. Okado. \emph{Stochastic $R$ matrix for $U_q(\widehat{sl_n})$}, NuclearPhysics B, 913, pp~248–277, 2016; {\tt arXiv:1604.08304}.

\bibitem[KPZ86]{KPZ86} K. Kardar, G. Parisi, Y.Z. Zhang. \emph{Dynamic scaling of growing interfaces}, Phys. Rev. Lett. 56, pp~889–892, 1986.

%
\bibitem[LDT17]{LDT17} P. Le Doussal, T. Thiery. \emph{Diffusion in time-dependent random media and the Kardar-Parisi-Zhang equation}, Phys. Rev. E 96, 010102, 2017; {\tt arXiv:1705.05159}.
%

\bibitem[LJL03]{LJL03} Y. Le Jan, S. Lemaire. \emph{Products of Beta matrices and sticky flows}, Probability Theory and Related Fields 130, pp~109–134, 2004; {\tt arXiv:math/0307106}.

\bibitem[Luk69]{Luk69} Y. Luke. The special functions and their approximations, Volume I. Academic press, New York, 1969.

\bibitem[MP20]{MP20} M. Mucciconi, L. Petrov. \emph{Spin q-Whittaker polynomials and deformed quantum Toda}, 2020; {\tt arXiv:2003.14260}.

\bibitem[OO14]{OO14} N. O'Connell, J. Ortmann. \emph{Tracy-Widom asymptotics for a random polymer model with gamma-distributed weights}, Electron. J. Probab. 20, pp~1--18, 2015; {\tt arXiv:1408.5326}.

\bibitem[OPR21]{OPR21} G. Oviedo, G. Panizo, A. Ramirez. \emph{Second order fluctuations of large deviations for perturbed random walks}, 2021; {\tt arXiv:2108.02877}.

\bibitem[Pov13]{Pov13} A. M. Povolotsky. \emph{On the integrability of zero-range chipping models with factorized steady states}, J. Phys. A: Math. Theor. 46, 465205, 2013; {\tt arXiv:1308.3250}.

\bibitem[Sep09]{Sep09} T. Sepp\"{a}l\"{a}inen. \emph{Scaling for a one-dimensional directed polymer with boundary conditions}, Annals of Probability 40(1), pp~19--73, 2012; {\tt arXiv:0911.2446}.

\bibitem[Sim00]{Sim00} B. Simon. Trace Ideals and Their Applications, American Mathematical Society, second edition, 2000.

\bibitem[TLD15]{TLD15} T. Thiery, P. Le Doussal. \emph{On integrable directed polymer models on the square lattice}, J. Phys. A: Math. Theor. 48 465001, 2015; {\tt arXiv:1506.05006}.

\bibitem[TV18]{TV18} Z. Talyig\'{a}s, B. Vet\H{o}. \emph{Borodin-P\`{e}ch\`{e} fluctuations of the free energy in directed random polymer models},  J. Theoret. Probab. 33(3), pp~1426--1444, 2020; {\tt arXiv:1809.09869}.

\bibitem[TW92]{TW92} C. Tracy and H. Widom. \emph{Level-spacing distributions and the Airy kernel}. Commun. Math. Phys. 159, pp~151–174, 1994; {\tt arXiv:hep-th/9211141}.

\bibitem[TW07]{TW07} C. Tracy and H. Widom. \emph{Integral formulas for the asymmetric simple exclusion process}, Communications in Mathematical Physics 279(3), pp~815--844, 2008; {\tt arXiv:0704.2633}.

\bibitem[TW08a]{TW08a} C. Tracy and H. Widom. \emph{A Fredholm determinant representation in ASEP}, Journal of Statistical Physics, 132(2), pp~291--300, 2008; {\tt arXiv:0804.1379}.

\bibitem[TW08b]{TW08b} C. Tracy and H. Widom. \emph{Asymptotics in ASEP with step initial condition}, Communications in Mathematical Physics, 290(1), pp~129--154, 2009; {\tt arXiv:0807.1713}. 

\end{thebibliography}
\end{document}